\title[Intersection theory of $\oM(3,6)$]{Intersection theory of the stable pair compactification of the moduli
  space of six lines in the plane}
\author{Nolan Schock}
\begin{document}
\maketitle

\begin{abstract}
  We describe sequences of blowups of $\overline{M}_{0,5} \times
  \overline{M}_{0,5}$ and $\mathbf{P}^2 \times \mathbf{P}^2$ yielding a small
  resolution of the stable pair compactification $\overline{M}(3,6)$ of the
  moduli space $M(3,6)$ of six lines in $\mathbf{P}^2$. These blowup sequences
  can be viewed, respectively, as generalizations of Keel's and Kapranov's
  constructions of $\overline{M}_{0,n}$. We use these blowup sequences to
  describe the intersection theory of $\overline{M}(3,6)$. In particular, we
  show that the Chow ring of any small resolution of $\overline{M}(3,6)$ has a
  presentation analogous to Keel's presentation of $A^*(\overline{M}_{0,n})$,
  and the Chow ring of $\overline{M}(3,6)$ is an explicit subring of the Chow
  ring of one of these small resolutions. We also introduce higher-dimensional
  versions of the $\psi$-classes on $\overline{M}_{0,n}$, and describe their
  intersections on $\overline{M}(3,6)$. Finally, we use our results to obtain an
  independent proof of Luxton's result that $\overline{M}(3,6)$ is the log
  canonical compactification of $M(3,6)$.
\end{abstract}

\section{Introduction}

Let $\oM(r,n)$ denote the moduli space of stable hyperplane arrangements,
compactifying the space $M(r,n)$ of arrangements of $n$ hyperplanes in
$\bP^{r-1}$ in general position \cite{hackingCompactificationModuliSpace2006}.
The moduli space $\oM(r,n)$ is the natural higher-dimensional generalization of
the moduli space $\oM_{0,n}$ of stable $n$-pointed curves of genus zero; indeed
$\oM(2,n) = \oM_{0,n}$.

The present paper is devoted to the study of the intersection theory of the
first nontrivial higher-dimensional case, the moduli space $\oM(3,6)$
compactifying the space $M(3,6)$ of six lines in general position in $\bP^2$.
This space was previously studied by Luxton, whose main result is that
$\oM(3,6)$ is the log canonical compactification of $M(3,6)$
\cite{luxtonLogCanonicalCompactification2008}. More precisely, for the remainder
of the paper, by $\oM(3,6)$ we actually mean the normalization of this space.
(Luxton shows the space is actually normal, so this is not an abuse of
notation.)

\subsection{Summary of results}

\subsubsection{Blowup construction}

In Section \ref{BlowupConstructions}, we give sequences of blowups along
explicit centers of $\oM_{0,5} \times \oM_{0,5}$ and $\bP^2 \times \bP^2$
yielding the small resolution $\wM_1(3,6)$ of $\oM(3,6)$ which has fibers
$\bP^1$ over all the singular points of $\oM(3,6)$ (Theorem
\ref{BlowupConstruction}). Our constructions can be viewed, respectively, as
generalizations of Keel's and Kapranov's constructions of $\oM_{0,n}$
\cite{keelIntersectionTheoryModuli1992},
\cite{kapranovChowQuotientsGrassmannians1993}.

\subsubsection{Intersection theory}

In Sections \ref{IntersectionTheorySmall}, \ref{IntersectionTheoryM},
\ref{TautologicalClasses}, we use our blowup constructions to study the
intersection theory of $\oM(3,6)$.

In Section \ref{IntersectionTheorySmall}, we show (Theorem
\ref{ChowResolutions}) that the Chow rings of the small resolutions of
$\oM(3,6)$ admit descriptions entirely analogous to Keel's description of
$A^*(\oM_{0,n})$ \cite{keelIntersectionTheoryModuli1992}. In particular, the
Chow rings of the small resolutions are generated by the classes of the boundary
divisors, and there are two types of relations.
\begin{enumerate}
\item The \textit{linear relations} are pullbacks of the linear relations on
  $\oM_{0,4} \cong \bP^1$.
\item The \textit{multiplicative relations} are just the obvious ones $\prod D_i
  = 0$ if $\bigcap D_i = \emptyset$, where $\{D_i\}$ is any collection of
  boundary divisors.
\end{enumerate}
We describe all of these relations more explicitly as well.

In Section \ref{IntersectionTheoryM} we use the descriptions of the Chow rings
of the small resolutions to describe the Chow ring of $\oM(3,6)$ as a particular
subring of $A^*(\wM_1(3,6))$ generated by Cartier divisors on $\oM(3,6)$.
(Theorem \ref{ChowM}).

In Section \ref{TautologicalClasses}, we introduce tautological classes
$\psi_{I,i}$ on any $\oM(r,n)$, generalizing the $\psi$-classes on $\oM_{0,n}$.
We describe the intersections of these $\psi$-classes on $\oM(3,6)$ (Theorem
\ref{psiIntersectionsM36}).

\subsubsection{Birational geometry}

In Section \ref{BirationalGeometry}, we use our results from the previous
sections to obtain an independent proof of Luxton's result that $(\oM(3,6),B)$
is log canonical and $K_{\oM(3,6)} + B$ is ample and Cartier (where $B =
\oM(3,6) \setminus M(3,6)$ is the boundary)
\cite{luxtonLogCanonicalCompactification2008}.

\subsection{Moduli of stable $n$-pointed rational curves} \label{M0n}

The moduli space $\oM_{0,n}$ was first constructed by Knudsen
\cite{knudsenProjectivityModuliSpace1983}, and has since been the object of
study of numerous papers. We are interested in generalizing the following
results to the higher-dimensional case.

\subsubsection{Basic results}
$\oM_{0,n}$ is a smooth irreducible variety of dimension $n-3$.
\subsubsection{Boundary} \label{boundaryM0n}
\begin{enumerate}
\item The boundary $B = \oM_{0,n} \setminus M_{0,n}$ is a normal crossings
  divisor.
\item The irreducible boundary divisors are denoted $D_{I,J}$ for $I \coprod J =
  [n]$ a partition of $[n]$ with $\lvert I \rvert, \lvert J \rvert \geq 2$. The
  boundary divisor $D_{I,J}$ parameterizes stable curves with two irreducible
  components meeting in a node, and their degenerations. One irreducible
  component contains exactly the marked points from $I$, while the other
  contains exactly the marked points from $J$. There is an isomorphism $D_{I,J}
  \cong \oM_{0,\lvert I \rvert + 1} \times \oM_{0,\lvert J \rvert + 1}$.
\item The boundary complex of $\oM_{0,n}$ (i.e. the dual complex of the normal
  crossings divisor $B$) is the tropical Grassmannian $TG(2,n)$, the flag
  complex known as the space of phylogenetic trees
  \cite{giansiracusaDualComplexOverlineM2016}.
\item Two boundary divisors $D_{I,J}$ and $D_{I',J'}$ intersect $\iff I \subset
  I', I \subset J', J \subset I'$, or $J \subset J'$.
\end{enumerate}
\subsubsection{Recursive structure}
There are $n$ forgetful maps $f_k : \oM_{0,n} \to \oM_{0,n-1}$, forgetting the
$k$th marked point and stabilizing. These realize $\oM_{0,n}$ as the universal
family over $\oM_{0,n-1}$.
\subsubsection{Kapranov's construction} Kapranov recognized $\oM_{0,n}$ as a
Chow quotient of the Grassmannian $G(2,n)$, and used this to give a construction
of $\oM_{0,n}$ as a sequence of blowups of $\bP^{n-3}$ along smooth centers of
increasing dimension \cite{kapranovChowQuotientsGrassmannians1993}. The morphism
$\oM_{0,n} \to \bP^{n-3}$ is induced by the complete linear system associated to
a certain divisor class $\psi_i \in A^1\oM_{0,n}$, defined in \ref{tautM0n}
below.

\subsubsection{Keel's construction} Keel showed that the map $\pi : \oM_{0,n}
\to \oM_{0,n-1} \times \oM_{0,4}$, given as the product of the forgetful map
$f_n: \oM_{0,n} \to \oM_{0,n-1}$, and the map $\rho_{123n}$, forgetting all
marked points except for $1,2,3,n$, factors as a sequence of blowups along
smooth centers of codimension two \cite{keelIntersectionTheoryModuli1992}.
\subsubsection{Intersection theory} Keel used his construction of $\oM_{0,n}$ to
give a complete description of the Chow ring of $\oM_{0,n}$
\cite{keelIntersectionTheoryModuli1992}:
\[
  A^*(\oM_{0,n}) = \frac{\bZ[D_{I,J} \mid I \coprod J = [n], \lvert I \rvert,
    \lvert J \rvert \geq 2]}{\text{the following relations}}
\]

\begin{itemize}
\item (Linear relations)
  \begin{enumerate}
  \item $D_{I,J} = D_{J,I}$, and
  \item $f^*(0) = f^*(1) = f^*(\infty)$ where $f$ is any forgetful map $f:
    \oM_{0,n} \to \oM_{0,4} \cong \bP^1$. This expands to
    \[
      \sum_{\substack{i,j \in I \\ k,l \in J}} D_{I,J} = \sum_{\substack{i,k \in
          I \\ j,l \in J}} D_{I,J} = \sum_{\substack{i,l \in I \\ j,k \in J}}
      D_{I,J}.
    \]
  \end{enumerate}
\item (Multiplicative relations) $D_{I,J}D_{I',J'} = 0$ if $D_{I,J}$ and
  $D_{I',J'}$ are disjoint in $\oM_{0,n}$ (cf \ref{boundaryM0n}(4)).
\end{itemize}
This result should be interpreted as saying that $A^*(\oM_{0,n})$ is generated
by the classes of the boundary divisors, and the only relations are the obvious
ones.

\subsubsection{Tautological classes} \label{tautM0n}

For genus $g > 0$, the Chow ring of $\oM_{g,n}$ is typically far more difficult
to describe than $A^*(\oM_{0,n})$. Instead, one usually restricts their
attention to a subring $R^*(\oM_{g,n}) \subset A^*(\oM_{g,n})$, called the
tautological ring; classes in the tautological ring are called tautological
classes. Tautological classes were first studied (in the unpointed case) by
Mumford \cite{mumfordEnumerativeGeometryModuli}.

The most important tautological classes are the $\psi$-classes, $\psi_i =
c_1(\sigma_i^*(\omega_{\pi}))$, where $\pi : \oM_{g,n+1} \to \oM_{g,n}$ is the
universal family, and $\sigma_i$ are the $n$ sections of $\pi$. By
\cite{faberAlgorithmsComputingIntersection1999}, all top intersections of
tautological classes on a given $\oM_{g,n}$ can be determined by top
intersections of $\psi$-classes on \emph{all} $\oM_{g,n}$. In turn, top
intersections of $\psi$-classes are determined by Witten's conjecture
\cite{wittenTwoDimensionalGravityIntersection1991}, first proven by Kontsevich
\cite{kontsevichIntersectionTheoryModuli1992}.

The situation is far simpler on $\oM_{0,n}$. As mentioned above, the Chow ring
is fully known. The $\psi$-classes on $\oM_{0,n}$ have simple expressions as
sums of boundary divisors,
\[
  \psi_i = \sum_{i \in I, j,k \in J} D_{I,J},
\]
and their top intersections are described by the nice combinatorial formula
\cite{wittenTwoDimensionalGravityIntersection1991}
\[
  \int \psi_1^{k_1}\cdots \psi_n^{k_n} \cap [\oM_{0,n}] =
  \binom{n-3}{k_1,\ldots,k_n}.
\]

\subsubsection{Birational geometry} The birational geometry of $\oM_{0,n}$ is an
ongoing area of study, beginning in \cite{keelContractibleExtremalRays1996}. In
this paper we are concerned only with the following results.
\begin{enumerate}
\item $K_{\oM_{0,n}} = \sum_{i=2}^{\lfloor n/2 \rfloor} \left(
    \frac{i(n-i)}{n-1} - 2 \right)B_i$, where $B_i = \sum_{\lvert I \rvert = i}
  D_{I,J}$ \cite[Lemma 3.5]{keelContractibleExtremalRays1996}.
\item $(\oM_{0,n},B)$ is log canonical, and the log canonical divisor
  $K_{\oM_{0,n}} + B$ is ample \cite[Lemma
  3.6]{keelContractibleExtremalRays1996}.
\end{enumerate}

\subsection{Moduli of stable hyperplane arrangements}
A detailed introduction to the moduli spaces $\oM(r,n)$ (and their weighted
versions $\oM_{\beta}(r,n)$, constructed in
\cite{alexeevWeightedGrassmanniansStable2008}) can be found in
\cite{alexeevModuliWeightedHyperplane2015}. In this section we summarize the
current progress towards generalizing the results of Section \ref{M0n} to any
$\oM(r,n)$.

\subsubsection{Basic results}In general, $\oM(r,n)$ is reducible \cite[Section
7]{hackingCompactificationModuliSpace2006}. The closure of $M(r,n)$ in
$\oM(r,n)$ is known as the main irreducible component and denoted $\oM^m(r,n)$.
By \cite[Corollary 3.9]{hackingCompactificationModuliSpace2006}, $\oM^m(r,n)$ is
Kapranov's Chow quotient of $G(r,n)$; in particular it is reduced. The main
irreducible component has dimension $(r-1)(n-r-1)$. It was studied in more depth
in \cite{keelGeometryChowQuotients2006}.

Note that $\oM(r,r+1) = \oM^m(r,r+1)$ has dimension $0$, i.e. it is a point.
\subsubsection{Boundary}
\begin{enumerate}
\item It was shown in \cite[Theorem 3.13]{keelGeometryChowQuotients2006} that
  $\oM^m(r,n)$, together with its boundary $B = \oM^m(r,n) \setminus M(r,n)$,
  has arbitrary singularities for $r \geq 3, n \geq 9$ and $r \geq 4, n \geq 8$.
\item The stable hyperplane arrangements parameterized by the boundary of
  $\oM(r,n)$ correspond, by Kapranov's visible contour construction, to matroid
  tilings of the hypersimplex $\Delta(r,n)$. The stable hyperplane arrangements
  parameterized by the boundary of the main irreducible component $\oM^m(r,n)$
  correspond to the regular matroid tilings of $\Delta(r,n)$
  \cite{hackingCompactificationModuliSpace2006},
  \cite{alexeevModuliWeightedHyperplane2015}.
\end{enumerate}

\subsubsection{Duality} \label{duality} The linear change of coordinates $y_i =
1-x_i$ induces an involution of $\bR^n$, sending the hypersimplex
\[
  \Delta(r,n) = \{(x_1,\ldots,x_n) \in \bR^n \mid 0 \leq x_i \leq 1, \sum x_i =
  r\}
\]
to the dual hypersimplex
\[
  \Delta(n-r,n) = \{(y_1,\ldots,y_n) \in \bR^n \mid 0 \leq y_i \leq 1, \sum y_i
  = n-r\}.
\]
The hypersimplex $\Delta(r,n)$ has $n$ facets $(x_k=0)$ isomorphic to
$\Delta(r,n-1)$, and $n$ facets $(x_k=1)$ isomorphic to $\Delta(r-1,n-1)$. The
duality involution sends the facets $(x_k=0)$ to the facets $(y_k=1)$, and the
facets $(x_k=1)$ to the facets $(y_k=0)$.

The duality involution induces a natural duality isomorphism $\varphi_{r,n} :
\oM(r,n) \xrightarrow{\sim} \oM(n-r,n)$ \cite[Section
4.9]{alexeevModuliWeightedHyperplane2015}. When $n=2r$, $\varphi_{r,n} :
\oM(r,2r) \to \oM(r,2r)$ is an automorphism.

Notice in particular that $\oM(r,r+2) \cong \oM(2,r+2) = \oM_{0,r+2}$.

% \begin{proposition}
%   The duality automorphism $\varphi_{r,n} : \oM(r,2r) \to \oM(r,2r)$ is the
%   identity only when $r=2$.
% \end{proposition}
%
%\begin{proof}
%  Any $\oM(r,2r)$ has a class of divisors $D_{I,J}$ corresponding to the
%  matroid tiling
%  \[
%    \{x_{I} \leq 1\}, \{x_J \leq r-1\},
%  \]
%  of $\Delta(r,2r)$, where $I,J$ form a partition of $2r$ with $\lvert I \rvert
%  = \lvert J \rvert = r$.
%
%  The duality automorphism $\varphi_{r,2r} : \oM(r,2r) \to \oM(r,2r)$ swaps the
%  indices of these divisors, i.e. $\varphi_{r,2r}(D_{I,J}) = D_{J,I}$. When $r
%  > 2$, $D_{I,J}$ and $D_{J,I}$ are different divisors, so $\varphi_{r,2r}$
%  cannot be the identity. When $r=2$, $D_{I,J} = D_{J,I}$, and these are the
%  only boundary divisors on $\oM(2,4) = \oM_{0,4}$. Thus $\varphi_{2,4} :
%  \oM_{0,4} \to \oM_{0,4}$ is the identity.
%\end{proof}

\subsubsection{Recursive structure} \label{recursivestructure} There are three
main types of maps from a given $\oM(r,n)$ to smaller moduli spaces.
\begin{enumerate}
\item Forgetful maps $f_k : \oM^m(r,n) \to \oM^m(r,n-1)$, forgetting the $k$th
  hyperplane and stabilizing. These are defined on $\oM^m(r,n)$ by \cite[Section
  1.6]{kapranovChowQuotientsGrassmannians1993}.
\item Restriction maps $r_k : \oM(r,n) \to \oM(r-1,n-1)$, restricting to the
  $k$th hyperplane. These are defined on $\oM(r,n)$ by
  \cite{hackingCompactificationModuliSpace2006}.
\item Restriction maps $r_{I \setminus i} : \oM(r,n) \to \oM_{0,n-r+2}$ for $I
  \subset [n]$, $\lvert I \rvert = r-1$, $i \in I$. These are given by
  restriction to the stable curve $C_{I \setminus i} = \bigcap_{j \in I
    \setminus i}B_j$ on the stable hyperplane arrangement $(X,B=\sum B_i)$. Note
  that these can be obtained as compositions of the first type of restriction
  maps. In the rank 3 case, $I = \{i,j\}$ and $r_{I \setminus i} = r_j$ is the
  first type of restriction map.
\end{enumerate}

In terms of matroid tilings of the hypersimplex $\Delta(r,n)$, the forgetful map
$f_k : \oM^m(r,n) \to \oM^m(r,n-1)$ corresponds to restricting to the face
$(x_k=0)$, and the restriction map $r_k : \oM^m(r,n) \to \oM^m(r-1,n-1)$
corresponds to restricting to the face $(x_k=1)$. Since the duality isomorphism
$\varphi_{r,n} : \oM(r,n) \to \oM(n-r,n)$ swaps these faces, it follows that the
forgetful and restriction maps also commute with duality, in the sense that the
$\varphi_{r-1,n-1} \circ f_k = r_k \circ \varphi_{r,n-1}$.

\subsubsection{Generalization of Kapranov's construction}
A construction of $\oM^m(3,n)$ as a sequence of blowups of $\bP^{n-4} \times
\bP^{n-4}$ was described by Gallardo and Routis in
\cite{gallardoWonderfulCompactificationsModuli2017a}. This could be viewed as a
generalization of Kapranov's blowup construction of $\oM_{0,n}$. Gallardo and
Routis show that the centers of the blowups for their construction can in
general be reducible and non-equidimensional. (In fact, it should be expected
that the centers are quite complicated in general, since $\oM^m(r,n)$ with its
boundary has arbitrary singularities.) However, they do not explicitly describe
these centers, so their construction falls short of a complete generalization of
Kapranov's construction.

More generally, one might seek a generalization of Kapranov's construction as
sequence of blowups of $(\bP^{n-r-1})^{r-1}$ yielding $\oM^m(r,n)$ or a small
resolution, where the map $\oM^m(r,n) \to (\bP^{n-r-1})^{r-1}$ is given by a
certain divisor class $\phi_I \in A^1(\oM^m(r,n))$, generalizing the
$\psi$-classes on $\oM_{0,n}$, see \ref{tautMrn} below.

\subsubsection{Generalization of Keel's construction}
There are two candidates for a generalization of Keel's construction to any
$\oM(r,n)$.
\begin{enumerate}
\item A sequence of blowups of $\oM^m(r,n-1) \times \oM(r,r+2)$, induced by the
  morphism $\oM^m(r,n) \to \oM^m(r,n-1) \times \oM(r,r+2)$ given as the product
  of the forgetful map $f_n : \oM^m(r,n) \to \oM^m(r,n-1)$ and the map $\rho :
  \oM^m(r,n) \to \oM(r,r+2)$ forgetting all but the the first $r+1$ and the last
  hyperplane.
\item A sequence of blowups of $(\oM_{0,n-r+2})^{r-1}$, where the morphism
  $\oM^m(r,n) \to (\oM_{0,n-r+2})^{r-1}$ is the product of restriction maps
  $\prod_{i \in I} r_{I \setminus i}$. If $q_i : \oM_{0,n-r+2} \to \bP^{n-r-1}$
  is Kapranov's map associated to $\psi_i$ on $\oM_{0,n-r+2}$, then the
  composition $\oM^m(r,n) \to (\oM_{0,n-r+2})^{r-1} \to (\bP^{n-r-1})^{r-1}$ is
  the candidate map for the generalization of Kapranov's construction above.
\end{enumerate}

\subsubsection{Intersection theory}

The present paper is motivated by the desire to understand the intersection
theory of $\oM(r,n)$. Since $\oM(r,n)$ is the higher-dimensional version of
$\oM_{0,n}$, one might hope for a description of $A^*(\oM(r,n))$ analogous to
Keel's description of $\oM_{0,n}$. In principle, one could obtain such a
description by generalizing Keel's construction of $\oM_{0,n}$ as discussed
above. This would allow one to determine (a small resolution of) $\oM^m(r,n)$ as
a sequence of blowups of a variety whose Chow ring is known. If the blowups are
nice enough, then the Chow ring can be computed by Keel's formula for the Chow
ring of a blowup \cite[Theorem A.1]{keelIntersectionTheoryModuli1992}, or by
other basic results on the Chow ring of a blowup (see Section
\ref{GeneralResults}). However, since $\oM^m(r,n)$ with its boundary is
arbitrarily singular in general, it is likely difficult to determine such an
inductive presentation for any $(r,n)$. Instead, one might seek to either
replace $\oM^m(r,n)$ with the log canonical compactification $\oM^{lc}(r,n)$ of
$M(r,n)$, or to instead define and study a tautological ring for $\oM(r,n)$.

\subsubsection{Tautological classes} \label{tautMrn}

In Section \ref{TautologicalClasses}, we will define on any $\oM(r,n)$ a
collection of divisor classes $\psi_{I,i}$, for $I \subset [n]$ with $\lvert I
\rvert = r-1$, and $i \in I$, generalizing the $\psi$-classes on $\oM_{0,n}$.
The class $\psi_{I,i}$ is the first Chern class of the line bundle whose fiber
at a stable hyperplane arrangement $(S, B = \sum B_i)$ is the cotangent line to
the curve $C_{I,i} = \bigcap_{j \in I \setminus i} B_j$ at the point $B_I =
\bigcap_{j \in I} B_j$.

We will also introduce classes $\phi_I = \sum_{i \in I} \psi_{I,i}$, which can
be viewed as symmetric versions of these generalized $\psi$-classes. The class
$\phi_I$ is the first Chern class of the vector bundle whose fiber at a stable
hyperplane arrangement $(S, B = \sum B_i)$ is the cotangent space to $S$ at $B_I
= \bigcap_{j \in I} B_j$.

\subsubsection{Birational geometry} By \cite[Theorem
1.5]{keelGeometryChowQuotients2006}, the pair $(\oM^m(r,n),B = \oM^m(r,n)
\setminus M(r,n))$ is not log canonical for $r \geq 3, n \geq 9$, and $r \geq 4,
n \geq 8$. It is conjectured (\cite[Conjecture
1.6]{keelGeometryChowQuotients2006}) that $\oM^m(r,n)$ is the log canonical
compactification of $M(r,n)$ in the remaining cases $r=2$, and $(r,n) =
(3,6),(3,7),(3,8)$. This is well-known for $r=2$ (see Section \ref{M0n} above),
and was proven by Luxton for $\oM^m(3,6)$
\cite{luxtonLogCanonicalCompactification2008}, and by Corey for $\oM^m(3,7)$
\cite{coreyInitialDegenerationsGrassmannians2020}. Both Luxton and Corey use
tropical methods. The remaining case $\oM^m(3,8)$ is still open.

\subsection{Moduli of six lines in the plane}
As mentioned above, $\oM(3,4)$ is a point, and $\oM(3,5) \cong \oM_{0,5}$. The
moduli space $\oM(3,6)$, compactifying the moduli space $M(3,6)$ of six lines in
general position in $\bP^2$, is therefore the first new higher-dimensional case.
All matroid tilings of $\Delta(3,6)$ are regular \cite[Section
5.7.4]{alexeevModuliWeightedHyperplane2015}, so $\oM(3,6) = \oM^m(3,6)$ is
Kapranov's Chow quotient compactification of $M(3,6)$, and in particular is
reduced irreducible. As described in the introduction, from now on whenever we
refer to $\oM(3,6)$ we actually mean the normalization of this space. The
present article is devoted to generalizing the results of the previous
subsections to $\oM(3,6)$.

\subsubsection*{Acknowledgements}

The author is grateful for the help and support of his advisor, Valery Alexeev.

\section{Boundary and resolutions} \label{BoundaryandResolutions} Most of this
section is a summary of results due to Luxton
\cite{luxtonLogCanonicalCompactification2008}.

\subsection{Boundary}

\subsubsection{Stable hyperplane arrangements}
The stable hyperplane arrangements parameterized by $\oM(3,6)$ are all described
explicitly in \cite[Section 5.7.4]{alexeevModuliWeightedHyperplane2015}.

\subsubsection{Boundary divisors} \label{BoundaryDivisors} The boundary $B =
\oM(3,6) \setminus M(3,6)$ of $\oM(3,6)$ consists of 65 irreducible boundary
divisors. They were shown by Luxton to have the following forms \cite[Section
4.2.4]{luxtonLogCanonicalCompactification2008}.

\begin{figure}[h]
  \centering \includegraphics[scale=1]{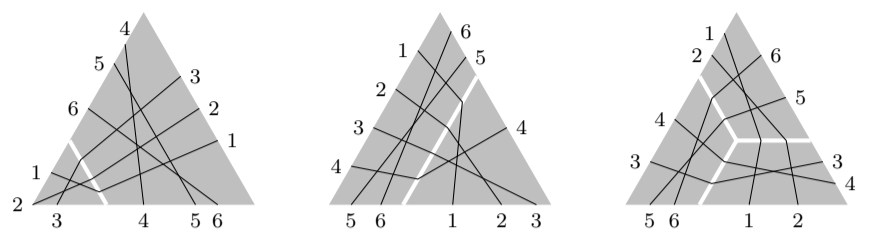}
  \caption{{\small From left to right: $D_{456,123}$, $D_{56,1234}$,
      $D_{12,34,56}$. Pictures taken from \cite[Figure
      5.12]{alexeevModuliWeightedHyperplane2015}.}}
  \label{fig:bdrydivspicture}
\end{figure}

\begin{enumerate}
\item 20 divisors $D_{ijk,lmn} \cong \oM_{0,6}$, corresponding to matroid
  tilings $\{x_{ijk} \leq 1\}, \{x_{lmn} \leq 2\}$ of $\Delta(3,6)$.
\item 15 divisors $D_{ij,klmn} \cong \oM(3,5) \times_{\oM_{0,4}} \oM_{0,5}$,
  corresponding to matroid tilings $\{x_{ij} \leq 1\}, \{x_{klmn} \leq 2\}$.
\item 30 divisors $D_{ij,kl,mn} \cong (\oM_{0,4})^3$, corresponding to matroid
  tilings $\{x_{ij} \leq 1, x_{ijkl} \leq 2\}, \{x_{kl} \leq 1, x_{klmn} \leq
  2\}, \{x_{mn} \leq 1, x_{ijmn} \leq 2\}$. The indices of these divisors are
  defined up to cyclic permutation, $D_{ij,kl,mn} = D_{kl,mn,ij} =
  D_{mn,ij,kl}$.
\end{enumerate}

\subsubsection{Non-normal crossing points}
The boundary is not a normal crossings divisor at 15 points
\[
  P_{ij,kl,mn} = D_{ij,kl,mn} \cap D_{ij,mn,kl} = D_{ij,klmn} \cap D_{kl,ijmn}
  \cap D_{mn,ijkl}.
\]

\subsubsection{Boundary complex} \label{BoundaryComplex} By
\cite{luxtonLogCanonicalCompactification2008}, the boundary complex of
$\oM(3,6)$ (i.e. the dual complex of the boundary) is a flag complex $\Delta$
constructed \cite[Section 5]{speyerTropicalGrassmannian2004} in connection with
the tropical Grassmannian $TG(3,6)$.

The vertices of $\Delta$ are labeled $e_{ijk}$ (corresponding to $D_{ijk,lmn}$),
$f_{ij}$ (corresponding to $D_{ij,klmn}$), and $g_{ij,kl,mn}$ (corresponding to
$D_{ij,kl,mn}$). Then $\Delta$ is the flag complex on the graph whose edges are
as follows.
\begin{enumerate}
\item 90 edges like $\{e_{123},e_{145}\}$ and 10 edges like
  $\{e_{123},e_{456}\}$.
\item 45 edges like $\{f_{12},f_{34}\}$.
\item 15 edges like $\{g_{12,34,56},g_{12,56,34}\}$.
\item 60 edges like $\{e_{123},f_{45}\}$ and 60 edges like $\{e_{123},f_{12}\}$.
\item 180 edges like $\{e_{123},g_{12,34,56}\}$.
\item 90 edges like $\{f_{12},g_{12,34,56}\}$.
\end{enumerate}
There are fifteen four-dimensional simplices like
\[
  \{f_{12},f_{34},f_{56},g_{12,34,56},g_{12,56,34}\}
\]
corresponding to the 15 points where the boundary is not a normal crossings
divisor.

The vertices of $\Delta$ naturally lie in $\bR^{14}$, but the abstract
simplicial complex $\Delta$ is not embedded as a simplicial complex in
$\bR^{14}$. This is because the vertices of the four-dimensional simplices only
span a three-dimensional space, the triangular bipyramid. See \cite[Section
5]{speyerTropicalGrassmannian2004} or
\cite{luxtonLogCanonicalCompactification2008} for more details.

\begin{proposition}
  The simplicial complex $\Delta$ is simply connected, and its (reduced)
  homology is concentrated in degree 3, $H_3(\Delta) = \bZ^{126}$.
\end{proposition}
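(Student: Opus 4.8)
The plan is to compute the homology of $\Delta$ directly from its explicit combinatorial description, using the fact that $\Delta$ is a flag complex on the graph with $65$ vertices and the listed edges. The key structural input is the decomposition of $\Delta$ according to the $15$ four-dimensional simplices, each of the form $\{f_{ab},f_{cd},f_{ef},g_{ab,cd,ef},g_{ab,ef,cd}\}$, whose geometric realization is a triangular bipyramid (the five vertices span only a $3$-dimensional affine subspace). First I would observe that $\Delta$ is pure of dimension $4$ except that these bipyramidal pieces are genuinely $3$-dimensional, so the top homology $H_4(\Delta)$ must be analyzed carefully: a bipyramid has the homology of a point, so the degeneracy does not contribute free rank in degree $4$, and I expect $H_4(\Delta) = 0$. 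Simple connectedness follows because the $1$-skeleton is highly connected (any two vertices of the same combinatorial type are joined by an edge or a short path through the $e$-vertices by item (1)) and every cycle bounds: one checks the fundamental group is killed by the $2$-simplices of the flag complex, using that triangles abound among the $e_{ijk}$'s and between the three vertex types.

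The main computation is organizing the chain complex $C_\bullet(\Delta)$ and computing ranks of the boundary maps. I would split the work by the $S_6$-symmetry: the group acts on $\Delta$, the $15$ bipyramids form a single orbit (indexed by unordered partitions of $[6]$ into three pairs), and the simplices of each dimension fall into a manageable number of $S_6$-orbits. Counting: the bipyramids contribute their faces, and the remaining part of $\Delta$ — the full subcomplex on the $e$-vertices together with the cones over it — is a flag complex whose Euler characteristic and low homology I would compute from the edge data in (1), (4), (5). Combining, $\chi(\Delta) = 1 + \operatorname{rk} H_3(\Delta)$ once degrees $1,2,4$ are known to vanish; so if I verify $\chi(\Delta) = 127$ from the face counts I get $\operatorname{rk} H_3 = 126$. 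To pin down that $H_1 = H_2 = H_4 = 0$ and that $H_3$ is free of exactly that rank, I would either exhibit an explicit collapsing/discrete-Morse matching on $\Delta$ down to a wedge of $126$ three-spheres, or invoke the known identification of $\Delta$ with the link structure of the tropical Grassmannian $TG(3,6)$ from \cite{speyerTropicalGrassmannian2004}, where Speyer and Sturmfels already computed that the reduced homology is $\bZ^{126}$ concentrated in degree $3$ and that the complex is simply connected; citing that result is the cleanest route and I would present the direct Euler-characteristic check as confirmation.

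The hard part will be the degeneracy of the bipyramidal simplices: because $\Delta$ is an \emph{abstract} simplicial complex that is not a simplicial complex embedded in $\bR^{14}$ (as the excerpt emphasizes), one must be careful that homology is computed from the abstract flag complex, where each $\{f,f,f,g,g\}$ is a genuine $4$-simplex (hence contractible on its own), rather than from any geometric realization that collapses it to a bipyramid. I would handle this by working entirely with the simplicial chain complex of the abstract complex and noting that the fifteen $4$-simplices, being the only cells in degree $4$, have boundary map whose kernel I must show is zero: since each $4$-simplex's boundary $3$-sphere is not filled by any other $3$-cells (the only $3$-cells lie inside these same $4$-simplices), $\partial_4$ is injective, so $H_4 = 0$, and this injectivity also forces the contribution to $H_3$ to come entirely from $3$-cycles outside the bipyramids. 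With $H_4 = 0$ and $H_1 = H_2 = 0$ established, the rank of $H_3$ is forced by the Euler characteristic, and freeness follows since $H_4 = 0$ implies $H_3$ has no torsion obstruction from above and the complex is simply connected.
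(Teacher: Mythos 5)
The paper's own ``proof'' is a one-line appeal to a computer calculation, with a ``cf.''\ to \cite[Theorem 5.4]{speyerTropicalGrassmannian2004}, so your overall strategy (organize the chain complex of the explicit flag complex, compute ranks of boundary maps, cross-check against Speyer--Sturmfels) is the same in spirit. However, two of your concrete claims are wrong. First, the Euler characteristic check has the wrong sign: if the reduced homology is concentrated in the \emph{odd} degree $3$ with rank $126$, then $\chi(\Delta) = 1 - \rk H_3(\Delta) = -125$, not $1 + \rk H_3(\Delta) = 127$. As stated, your consistency check would fail against the correct face counts and you would conclude something is broken. Second, the shortcut ``invoke the known identification of $\Delta$ with $TG(3,6)$ and cite Speyer--Sturmfels'' does not work: Theorem 5.4 of that paper computes the homology of the tropical Grassmannian $TG(3,6)$, which in the notation of this paper is $\widetilde{\Delta}_2$, the complex obtained from $\Delta$ by \emph{deleting} the $15$ triangles $\{f_{ij},f_{kl},f_{mn}\}$ and all simplices containing them. $\Delta$ is the flag complex on the same $1$-skeleton and strictly contains $TG(3,6)$ (it is exactly the $15$ four-simplices, and the $3$-faces $\{f,f,f\}$ and $\{f,f,f,g\}$, that differ). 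So the citation proves the statement for a different complex; to transfer it to $\Delta$ you would need to show the two complexes are homotopy equivalent (e.g.\ that each local replacement of the three-tetrahedron triangulation of the bipyramid by the full $4$-simplex is a homotopy equivalence rel the rest of the complex), which requires control of the links of these pieces and is not free. The paper sidesteps this by running the computation for $\Delta$ itself (and separately for every $\widetilde{\Delta}_{S_1,S_2}$ in Proposition \ref{SmallResolutions}).

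Two smaller points. Your justification of $H_4(\Delta)=0$ contains a false statement --- the $3$-simplices of $\Delta$ are certainly not all faces of the fifteen $4$-simplices (there are many tetrahedra among the $e$- and $f$-vertices alone) --- though the conclusion is correct for the reason you half-state: no two of the fifteen $4$-simplices share a $3$-face (two distinct partitions of $[6]$ into pairs share at most one pair, hence the corresponding $4$-simplices share at most one vertex), so $\partial_4$ is injective. Finally, the vanishing of $H_1$ and $H_2$ and the simple connectedness, which are the actual content of the proposition beyond the rank count, are only promised (``one checks\ldots''); that is precisely the part that neither you nor the paper proves by hand, and absent it the Euler-characteristic argument cannot isolate $\rk H_3$ at all.
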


\begin{proof}
  This can be calculated using a computer (cf. also \cite[Theorem
  5.4]{speyerTropicalGrassmannian2004}).
\end{proof}

We note that the boundary complex can be computed by hand via an examination of
all the stable hyperplane arrangements appearing in $\oM(3,6)$ \cite[Section
5.7.4]{alexeevModuliWeightedHyperplane2015}. In particular one can verify
independent of Luxton's methods that $\Delta$ is the boundary complex of
$\oM(3,6)$.

\subsubsection{Singularities} \label{singularities}

\begin{proposition} \label{singularitiesDescription} The moduli space $\oM(3,6)$
  has 15 singular points at the 15 points $P_{ij,kl,mn}$ where the boundary is
  not a normal crossings divisor. Each singular point looks like the origin in
  the cone over the Segre embedding of $\bP^1 \times \bP^2$.
\end{proposition}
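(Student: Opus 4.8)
The plan is to reduce to a local analysis at one point and then identify the local model with an explicit affine toric variety. Since $S_6$ acts on $\oM(3,6)$ permuting the fifteen points $P_{ij,kl,mn}$ transitively (they are indexed by the fifteen partitions of $[6]$ into three pairs), it suffices to work \'etale-locally near $P := P_{12,34,56}$. The five boundary divisors through $P$ are $D_{12,3456}$, $D_{34,1256}$, $D_{56,1234}$ (type II) together with $D_{12,34,56}$, $D_{12,56,34}$ (type III); by \ref{BoundaryComplex}, following \cite{speyerTropicalGrassmannian2004} and \cite{luxtonLogCanonicalCompactification2008}, the corresponding five rays in the tropical Grassmannian $TG(3,6)$ span only a three-dimensional subspace, forming a triangular bipyramid whose two poles are the rays of $D_{12,34,56}$ and $D_{12,56,34}$ — consistent with $D_{12,34,56} \cap D_{12,56,34} = P$ being a single point — and whose equatorial triangle is spanned by the rays of $D_{12,3456}$, $D_{34,1256}$, $D_{56,1234}$.

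The key step is to prove that, \'etale-locally at $P$, $\oM(3,6)$ is isomorphic to the affine toric variety $X_\sigma$ of the four-dimensional cone $\sigma$ over this bipyramid. Here one uses the identification $\oM(3,6) = \oM^m(3,6)$ with Kapranov's Chow quotient of $G(3,6)$ \cite{kapranovChowQuotientsGrassmannians1993}, \cite{hackingCompactificationModuliSpace2006}, the fact that every matroid subdivision of $\Delta(3,6)$ is regular \cite[5.7.4]{alexeevModuliWeightedHyperplane2015}, and the sch\"onness of $G(3,6)$ (all initial degenerations of the Pl\"ucker ideal are smooth): by the local structure theory of such tropical/Chow-quotient compactifications, near the torus-fixed point of the boundary stratum attached to a maximal cone $\tau$ of $TG(3,6)$ the space $\oM(3,6)$ is \'etale-locally the affine toric variety $X_\tau$, and there is no extra smooth factor because $P$ is a point and $\dim \oM(3,6) = 4$ equals the dimension of a maximal cone. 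This is the heart of the matter, and where I expect the real difficulty to lie: a clean proof of the local toric description requires either invoking sch\"onness together with a local structure theorem for tropical compactifications, or else running a direct deformation-theoretic computation — writing down the stable hyperplane arrangement parameterized by $P$ (listed explicitly in \cite[5.7.4]{alexeevModuliWeightedHyperplane2015}) and computing the base of its versal log deformation — which is considerably more delicate, since that stable pair is reducible with non-normal-crossing singularities and one must track the log cotangent complex and the gluing data with care.

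Granting the local toric description, everything else is a finite computation. First, $\oM(3,6)$ is smooth away from the fifteen points: away from them the boundary is a normal crossings divisor and $M(3,6)$ is smooth, so $\oM(3,6)$ is smooth there (equivalently, every maximal cone of $TG(3,6)$ other than the fifteen bipyramid cones is unimodular, with local model affine $4$-space, and every deeper stratum lies in the closure of a top-dimensional one). Second, at a bipyramid point one writes out semigroup generators for $X_\sigma$ and checks, using the explicit description of $TG(3,6)$, that $X_\sigma$ is the affine cone over the Segre embedding $\bP^1 \times \bP^2 \hookrightarrow \bP^5$: in suitable coordinates the dual cone $\sigma^{\vee}$ is generated by the six characters $z_0 = ab_1$, $z_1 = ab_2$, $z_2 = ab_3$, $z_3 = b_1$, $z_4 = b_2$, $z_5 = b_3$, so $X_\sigma$ is the closure of the image of the monomial map $(a,b_1,b_2,b_3) \mapsto (z_0, \dots, z_5)$, namely the subvariety of affine $6$-space cut out by the $2 \times 2$ minors of $\left(\begin{smallmatrix} z_0 & z_1 & z_2 \\ z_3 & z_4 & z_5 \end{smallmatrix}\right)$, a four-dimensional variety with a unique singular point, the vertex. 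The matching of divisors is then forced: the two ``row'' divisors $\{z_0 = z_1 = z_2 = 0\}$ and $\{z_3 = z_4 = z_5 = 0\}$, each locally a three-dimensional affine space and meeting only at the vertex, are the type III divisors $D_{12,34,56}$, $D_{12,56,34}$, while the three ``column'' divisors, each locally a threefold ordinary double point, are the type II divisors $D_{12,3456}$, $D_{34,1256}$, $D_{56,1234}$ — in agreement with the fact that $D_{12,3456} \cong \oM_{0,5} \times_{\oM_{0,4}} \oM_{0,5}$ acquires exactly three ordinary double points, at the three points $P_{12,kl,mn}$. Finally, normality of $\oM(3,6)$ guarantees that its \'etale-local (equivalently, formal) isomorphism type near $P$ is pinned down by this model, so the fifteen $P_{ij,kl,mn}$ are precisely the singular points and each is analytically the vertex of the cone over $\bP^1 \times \bP^2$, as claimed.
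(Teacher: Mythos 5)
Your proposal follows Luxton's original tropical route, and as written it has a genuine gap exactly where you say you expect the difficulty to lie: the \'etale-local identification of $\oM(3,6)$ near $P_{12,34,56}$ with the affine toric variety of the cone over the triangular bipyramid is asserted, not proven. The ``local structure theory of tropical/Chow-quotient compactifications'' you invoke requires two nontrivial inputs that you do not establish: (i) that $M(3,6)$ is sch\"on in its intrinsic torus (your phrasing ``all initial degenerations of the Pl\"ucker ideal are smooth'' is about the wrong object --- what is needed is sch\"onness of the torus quotient $G^0(3,6)/H$, not of $G(3,6)$ itself), and (ii) that the normalized Chow quotient coincides with the tropical compactification associated to the fan structure on $TG(3,6)$. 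Both of these are precisely the main theorems of Luxton's thesis, so ``granting the local toric description'' amounts to granting the result you are trying to prove. The alternative you mention (a versal log deformation computation for the reducible stable pair at $P$) is not carried out either. The downstream toric computation --- that the cone over the bipyramid yields the $2\times 2$ minors of a $2\times 3$ matrix, hence the cone over the Segre $\bP^1\times\bP^2$, with the row divisors matching $D_{12,34,56},D_{12,56,34}$ and the column divisors matching the three $D_{ij,klmn}$ --- is fine, as is the symmetry reduction; but it all rests on the unproven local model.

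For comparison, the paper's proof deliberately avoids this route (precisely so that its later log canonicity argument is not circular with Luxton's): it uses the explicit birational morphism $q = q_{12}\times q_{34}\times q_{56} : \oM(3,6) \to (\bP^2\times\bP^2)^3$, writes down equations for the image $Z$, computes that $Z$ has $31$ singular points each analytically isomorphic to $0 \in C(\bP^1\times\bP^2)$, observes that $q$ contracts nothing in a neighborhood of $P_{12,34,56}$ and hence is a local isomorphism there, and checks that over the remaining $16$ singular points of $Z$ the map $q$ is a small resolution, so $\oM(3,6)$ is smooth along those fibers. If you want to salvage your approach, you must either supply a proof of the sch\"onness and the tropical-compactification identification (i.e.\ reprove Luxton), or honestly cite Luxton for the local model --- in which case the argument is a valid but non-independent reorganization of the known proof rather than a new one.
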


\begin{proof}
  This was shown by Luxton using tropical methods
  \cite{luxtonLogCanonicalCompactification2008}. We will give an independent
  proof in Section \ref{IndependentLuxton} below.
\end{proof}

Locally, we can write $P_{ij,kl,mn}$ as the origin in
\[
  U = \Spec
  k[z_1,z_2,z_3,z_4,z_5,z_6]/(z_1z_5-z_2z_4,z_1z_6-z_3z_4,z_2z_6-z_3z_5),
\]
and in this neighborhood we can write
\begin{align*}
  D_{ij,klmn} = V(z_1,z_4), \;\; D_{kl,ijmn} = V(z_2,z_5), \;\; D_{mn,ijkl} = V(z_3,z_6),\\
  D_{ij,kl,mn} = V(z_1,z_2,z_3),\;\;  D_{kl,ij,mn} = V(z_4,z_5,z_6).
\end{align*}

\subsubsection{Boundary Cartier divisors} \label{Cartierdivs}

The divisors of the form $D_{ijk,lmn}$ are disjoint from the singularities of
$\oM(3,6)$, hence are Cartier.

The divisors of the forms $D_{ij,klmn}$ and $D_{ij,kl,mn}$ are not Cartier; by
the previous section they cannot be written as the vanishing locus of a single
equation in a neighborhood $U$ of $P_{ij,kl,mn}$. On the other hand, in $U$ we
compute
\[
  D_{ij,klmn} + D_{ij,kl,mn} = V(z_4),
\]
so the divisor $D_{ij,klmn} + D_{kl,ij,mn}$ is Cartier near $P_{ij,kl,mn}$.
However, it is still not Cartier on $\oM(3,6)$, because near the singular points
$P_{ij,km,ln}$ and $P_{ij,kn,lm}$, it still just looks like $D_{ij,klmn}$. It
follows that the divisors of the form
\[
  D_{ij,klmn} + D_{kl,ij,mn} + D_{km,ij,ln} + D_{kn,ij,lm}
\]
are Cartier on $\oM(3,6)$.

Also, we can write
\[
  D_{ij,kl,mn} - D_{kl,ij,mn} = (D_{ij,klmn} + D_{ij,kl,mn}) - (D_{ij,klmn} +
  D_{kl,ij,mn}),
\]
and this right-hand-side is Cartier near $P_{ij,kl,mn}$ (by the above remarks)
and does not pass through the other singular points of $\oM(3,6)$, hence
$D_{ij,kl,mn} - D_{ij,mn,kl}$ is Cartier.

Summarizing, we have the following result.

\begin{proposition} \label{BoundaryDivs}\
  \begin{enumerate}
  \item All divisors of the forms
    \begin{itemize}
    \item $D_{ijk,lmn}$,
    \item $D_{ij,klmn} + D_{kl,ij,mn} + D_{km,ij,n} + D_{kn,ij,lm}$,
    \item $D_{ij,kl,mn} - D_{ij,mn,kl}$.
    \end{itemize}
    are Cartier on $\oM(3,6)$.
  \item $D_{ij,kl,mn}$ is not Cartier only at the point $P_{ij,kl,mn}$.
  \item $D_{ij,klmn}$ is not Cartier only at the points
    $P_{ij,kl,mn},P_{ij,km,ln},P_{ij,kn,lm}$.
  \item The divisors of the forms $D_{ij,klmn} + D_{ij,kl,mn}$ are Cartier near
    $P_{ij,kl,mn}$, but not Cartier near the other singular points which they
    pass through.
  \end{enumerate}
\end{proposition}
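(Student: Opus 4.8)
The plan is to reduce the whole proposition to a local computation at each of the $15$ singular points $P=P_{ij,kl,mn}$. By Proposition~\ref{singularitiesDescription}, $\oM(3,6)$ is smooth away from these points, so a Weil divisor is Cartier if and only if it is principal in the local ring at every $P_{ij,kl,mn}$ lying in its support. Since the divisors $D_{ijk,lmn}$ avoid all the $P$'s — their vertices $e_{ijk}$ lie in no maximal simplex of the boundary complex~\ref{BoundaryComplex} — they are Cartier everywhere, which is the first bullet of~(1); so it remains only to analyze $D_{ij,klmn}$, $D_{ij,kl,mn}$, and the listed combinations near the singular points they meet.

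The first ingredient is combinatorial. From the boundary complex~\ref{BoundaryComplex}, the unique $4$-simplex at $P_{ij,kl,mn}$ is $\{f_{ij},f_{kl},f_{mn},g_{ij,kl,mn},g_{ij,mn,kl}\}$, so the boundary divisors through $P_{ij,kl,mn}$ are precisely $D_{ij,klmn}$, $D_{kl,ijmn}$, $D_{mn,ijkl}$, $D_{ij,kl,mn}$, $D_{ij,mn,kl}$. Dually, listing the $4$-simplices through a fixed vertex shows that $D_{ij,kl,mn}$ (vertex $g_{ij,kl,mn}$) meets the singular locus only at $P_{ij,kl,mn}$, while $D_{ij,klmn}$ (vertex $f_{ij}$) meets it only at the three points $P_{ij,kl,mn}$, $P_{ij,km,ln}$, $P_{ij,kn,lm}$ coming from the three ways of pairing up $\{k,l,m,n\}$. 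This determines, for each boundary divisor and each combination appearing in~(1) and~(4), the finite set of singular points at which Cartierness must be decided.

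The local core is the computation in the cone $U$ of~\ref{singularities}, in which the five boundary divisors through $P$ are the three columns $D_{ij,klmn}=V(z_1,z_4),\dots$ and the two rows $D_{ij,kl,mn}=V(z_1,z_2,z_3),\ D_{ij,mn,kl}=V(z_4,z_5,z_6)$ of the $2\times 3$ matrix $(z_i)$. For the non-Cartier direction I would argue that the defining prime of a column (resp.\ a row) is minimally generated by the two (resp.\ three) coordinates $z_i$ it contains: these remain linearly independent in the prime modulo the product of the prime with the maximal ideal (which is spanned by elements of degree $\ge 2$), so by Nakayama the prime needs at least two (resp.\ three) generators and hence is not principal at the vertex. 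Thus each of $D_{ij,kl,mn}$ and $D_{ij,klmn}$ is non-Cartier at each singular point it meets, which together with the previous paragraph yields~(2) and~(3). For the Cartier direction I would instead write down explicit local equations: near $P_{ij,kl,mn}$ one has $\operatorname{div}(z_4)=D_{ij,klmn}+D_{ij,mn,kl}$ and $\operatorname{div}(z_1)=D_{ij,klmn}+D_{ij,kl,mn}$, hence $\operatorname{div}(z_1/z_4)=D_{ij,kl,mn}-D_{ij,mn,kl}$. The support of $D_{ij,kl,mn}-D_{ij,mn,kl}$ meets the singular locus only at $P_{ij,kl,mn}$, so this divisor is Cartier on all of $\oM(3,6)$; and $D_{ij,klmn}+D_{kl,ij,mn}+D_{km,ij,ln}+D_{kn,ij,lm}$ meets the singular locus only at $P_{ij,kl,mn}$, $P_{ij,km,ln}$, $P_{ij,kn,lm}$, near each of which — after discarding the two summands that miss it — it reduces to a single column plus a single row, i.e.\ to the divisor of a coordinate. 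This establishes~(1). Finally, $D_{ij,klmn}+D_{ij,kl,mn}$ equals $\operatorname{div}(z_1)$ near $P_{ij,kl,mn}$, so it is Cartier there, but near $P_{ij,km,ln}$ and $P_{ij,kn,lm}$ the row summand $D_{ij,kl,mn}$ does not pass through, so the combination reduces to the single non-principal column $D_{ij,klmn}$ and is not Cartier there; this is~(4).

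I expect the main difficulty to be organizational rather than substantive: one must track the cyclic relabelings of the type-$3$ indices (so that, e.g., $D_{kl,ij,mn}=D_{ij,mn,kl}$ and $P_{ij,kl,mn}=P_{ij,mn,kl}$) and check in each case that the summands discarded near a given singular point really do not pass through it. The only genuinely non-formal input, the non-principality of the column and row primes in the Segre cone, is elementary once Proposition~\ref{singularitiesDescription} and the explicit equations of~\ref{singularities} are in hand; equivalently, one can phrase it through $\mathrm{Cl}(U)\cong\bZ$, with the columns and rows generating the two opposite primitive classes.
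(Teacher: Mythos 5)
Your proposal follows essentially the same route as the paper: reduce to the local model $U=\Spec k[z_1,\ldots,z_6]/(z_1z_5-z_2z_4,\ldots)$ at each $P_{ij,kl,mn}$, use the boundary complex to decide which divisors pass through which singular points, and exhibit the Cartier combinations as $\operatorname{div}(z_1)$, $\operatorname{div}(z_4)$, and their difference. The only addition is that you justify the non-principality of the row and column primes (via Nakayama, or $\mathrm{Cl}(U)\cong\bZ$), a point the paper asserts without proof; your bookkeeping of which summands pass through which $P_{ab,cd,ef}$ is correct.
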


We will later see (Theorem \ref{ChowM}) that all Cartier divisors on $\oM(3,6)$
are linear combinations of the Cartier divisors listed in part (1) of the above
proposition.

\subsection{Resolutions} \label{resolutions}

\begin{lemma} \label{LocalSmallResolutions} Let $U$ be a neighborhood of
  $P_{ij,kl,mn}$ as in Section \ref{singularities}.
  \begin{enumerate}
  \item The blowup of $U$ along $D_{ij,klmn} + D_{kl,ijmn} + D_{mn,ijkl}$ is the
    same as the blowup of $U$ along any one of these divisors. It gives the
    small resolution of $U$ with fiber $\bP^1$ over $P_{ij,kl,mn}$.
  \item The blowup of $U$ along $D_{ij,kl,mn} + D_{ij,mn,kl}$ is the same as the
    blowup of $U$ along any one of these divisors. It gives the small resolution
    of $U$ with fiber $\bP^2$ over $P_{ij,kl,mn}$.
  \end{enumerate}
\end{lemma}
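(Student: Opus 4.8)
The plan is to work entirely inside the explicit local model $U=\Spec k[z_1,\dots,z_6]/(z_1z_5-z_2z_4,\;z_1z_6-z_3z_4,\;z_2z_6-z_3z_5)$ of Section~\ref{singularities}. The three defining relations are exactly the $2\times 2$ minors of $M=\bigl(\begin{smallmatrix}z_1&z_2&z_3\\ z_4&z_5&z_6\end{smallmatrix}\bigr)$, so $U$ is the affine cone over the Segre embedding $\bP^1\times\bP^2\hookrightarrow\bP^5$; in particular $U$ is a normal integral fourfold, smooth away from the vertex $0=P_{ij,kl,mn}$. Under this identification $D_{ij,klmn}$, $D_{kl,ijmn}$, $D_{mn,ijkl}$ are the loci where the three columns of $M$ vanish, while $D_{ij,kl,mn}$, $D_{ij,mn,kl}$ are the loci where the two rows of $M$ vanish; each is cut out by one of the prime ideals $(z_1,z_4),(z_2,z_5),(z_3,z_6)$ or $(z_1,z_2,z_3),(z_4,z_5,z_6)$ of $A$, so the relevant blowups are unambiguous.

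I would first exhibit the two expected resolutions by hand. For (1), let $Y_{\mathrm c}\subseteq \bP^1_{[v_1:v_2]}\times\mathbf A^6$ be cut out by $z_av_2=z_{a+3}v_1$ for $a=1,2,3$ (so every column of $M$ is proportional to $(v_1,v_2)$, which already forces $M$ into $U$). Covering $\bP^1$ by its two standard charts, one checks at once that $Y_{\mathrm c}$ is covered by two copies of $\mathbf A^4$ --- on $\{v_1\neq 0\}$, with coordinates $z_1,z_2,z_3$ and $\lambda=v_2/v_1$, one has $z_{a+3}=\lambda z_a$ and the relations of $U$ become identities --- so $Y_{\mathrm c}$ is smooth of dimension $4$ (it is the total space of $\mathcal O_{\bP^1}(-1)^{\oplus 3}$). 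The projection $Y_{\mathrm c}\to U$ is proper, is an isomorphism over $U\setminus 0$ (a nonzero column of $M$ determines $[v_1:v_2]$), and its fibre over $0$ is the zero section $\bP^1$, of codimension $3$ in $Y_{\mathrm c}$; hence $Y_{\mathrm c}\to U$ is a small resolution with exceptional fibre $\bP^1$ over $P_{ij,kl,mn}$. For (2) I would do the same with the incidence variety $Y_{\mathrm r}\subseteq \bP^2_{[w_0:w_1:w_2]}\times\mathbf A^6$ on which every row of $M$ is proportional to $(w_0,w_1,w_2)$: it is the total space of $\mathcal O_{\bP^2}(-1)^{\oplus 2}$, smooth of dimension $4$, and $Y_{\mathrm r}\to U$ is a small resolution with exceptional fibre $\bP^2$ over $P_{ij,kl,mn}$.

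The key step is to match each of the stated blowups with $Y_{\mathrm c}$, resp.\ $Y_{\mathrm r}$. In one direction this is the universal property of the blowup: on each of the two charts of $Y_{\mathrm c}$ the pullback of $(z_1,z_4)$ is principal --- it is $(z_1)$ on $\{v_1\neq 0\}$ and $(z_4)$ on $\{v_2\neq 0\}$ --- hence invertible, so there is a canonical $U$-morphism $Y_{\mathrm c}\to\mathrm{Bl}_{(z_1,z_4)}U$; likewise for $(z_2,z_5)$, $(z_3,z_6)$, and for $(z_1,z_2,z_3),(z_4,z_5,z_6)$ on $Y_{\mathrm r}$. In the other direction I would compute the blowup directly in its two affine charts: for instance the chart $\Spec A[z_4/z_1]$ of $\mathrm{Bl}_{(z_1,z_4)}U$ is a quotient of $k[z_1,z_2,z_3,z_4/z_1]$, since the minor relations force $z_5=(z_4/z_1)z_2$ and $z_6=(z_4/z_1)z_3$ after cancelling $z_1$ (legitimate as $A$ is a domain), and a comparison of Krull dimensions shows this quotient is all of $\mathbf A^4$. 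The two charts of $\mathrm{Bl}_{(z_1,z_4)}U$ glue exactly as those of $Y_{\mathrm c}$, giving $\mathrm{Bl}_{(z_1,z_4)}U\cong Y_{\mathrm c}$ over $U$; permuting the columns of $M$ (a symmetry of $U$) gives the analogous statements for the other two column divisors, and the row version of the computation gives $\mathrm{Bl}_{(z_1,z_2,z_3)}U\cong\mathrm{Bl}_{(z_4,z_5,z_6)}U\cong Y_{\mathrm r}$. Finally, for the ``sum'' blowups I would note that once one column divisor has been blown up (producing $Y_{\mathrm c}$) the strict transforms of the remaining two are already Cartier there --- $(z_2)$ and $(z_3)$ in the chart above --- so blowing them up further does nothing; hence blowing up $U$ along $D_{ij,klmn}+D_{kl,ijmn}+D_{mn,ijkl}$ coincides with blowing up along any single one, and similarly in case (2).

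The step I expect to be the main obstacle is the converse chart computation --- verifying that the Rees-algebra charts of $\mathrm{Bl}_{(z_1,z_4)}U$ and $\mathrm{Bl}_{(z_1,z_2,z_3)}U$ really do collapse to $\mathbf A^4$ and glue correctly, with no spurious components or embedded structure. This is precisely where integrality of $U$ is used, both to cancel the $z_1$'s appearing in the minor relations and, through the dimension count, to see that the collapsed chart is the entire affine space; it is also what makes rigorous the assertion that blowing up the ``sum'' of the divisors adds nothing beyond blowing up any one of them. Everything else is formal once $U$ has been recognized as the cone over the Segre threefold.
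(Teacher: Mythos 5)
Your proof is correct, and it is precisely the ``local computation'' that the paper declines to write out: the paper's entire proof of this lemma is the sentence ``This is a local computation which we omit.'' Your identification of $U$ as the rank-$\leq 1$ locus of the $2\times 3$ matrix $M$, the two small resolutions as the total spaces of $\cO_{\bP^1}(-1)^{\oplus 3}$ and $\cO_{\bP^2}(-1)^{\oplus 2}$, the chart-by-chart matching with the Rees-algebra charts of the blowups (using integrality and the dimension count to see each chart is $\mathbf{A}^4$), and the observation that the remaining strict transforms become Cartier after the first blowup so that the ``sum'' blowup adds nothing, together constitute a complete and correct account of exactly the argument the author intends.
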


\begin{proof}
  This is a local computation which we omit.
\end{proof}

For any singular point $P_{ij,kl,mn} \in \oM(3,6)$, let $I_{ij,kl,mn}^{\alpha}$,
$\alpha=1,2$ be an ideal sheaf with support $P_{ij,kl,mn}$ such that
$Bl_{I_{ij,kl,mn}^{\alpha}}U$ is the small resolution of $P_{ij,kl,mn}$ with
fiber $\bP^{\alpha}$. (This is possible by the above lemma.)

\begin{proposition} \label{SmallResolutions} Let $S_1,S_2$ be any partition of
  the set of singular points of $\oM(3,6)$. with its reduced scheme structure.
  Let $\wM_{S_1,S_2}(3,6)$ be the blowup of $\oM(3,6)$ along the ideal sheaf
  \[
    I_{S_1,S_2} = \left( \prod_{P_{ij,kl,mn} \in S_1} I_{ij,kl,mn}^1
    \right)\left( \prod_{P_{ij,kl,mn} \in S_2} I_{ij,kl,mn}^2\right).
  \]
  \begin{enumerate}
  \item $\wM_{S_1,S_2}(3,6)$ is a small resolution of $\oM(3,6)$ with fibers
    \begin{align*}
      L_{ij,kl,mn} &\cong \bP^1 \text{ over } P_{ij,kl,mn} \in S_1,\\
      \Pi_{ij,kl,mn} &\cong \bP^2 \text{ over } P_{ij,kl,mn} \in S_2.
    \end{align*}
  \item The boundary divisors in $\wM_{S_1,S_2}(3,6)$ look like
    \begin{align*}
      D_{ijk,lmn} &\cong \oM_{0,6}, \\
      D_{ij,klmn} &\cong \text{ a small resolution of } \oM(3,5) \times_{\oM_{0,4}} \oM_{0,5} \\
      D_{ij,kl,mn} &\cong
                     \begin{cases}
                       (\bP^1)^3, & P_{ij,kl,mn} \in S_1,\\
                       Bl_1(\bP^1)^3, & P_{ij,kl,mn} \in S_2.
                     \end{cases}
    \end{align*}
  \item The exceptional subvarieties are complete intersections:
    \begin{align*}
      L_{ij,kl,mn} &= D_{ij,klmn} \cap D_{kl,ijmn} \cap D_{mn,ijkl},\\
      \Pi_{ij,kl,mn} &= D_{ij,kl,mn} \cap D_{ij,mn,kl}.
    \end{align*}
  \item The boundary of $\wM_{S_1,S_2}(3,6)$ is a normal crossings divisor.
  \item
    \begin{enumerate}
    \item The boundary complex $\widetilde{\Delta}_{S_1,S_2}$ of
      $\wM_{S_1,S_2}(3,6)$ is obtained from the boundary complex $\Delta$ of
      $\oM(3,6)$ by removing the edges $\{g_{ij,kl,mn},g_{ij,mn,kl}\}$ and all
      corresponding higher-dimensional simplices, for $P_{ij,kl,mn} \in S_1$,
      and removing the triangles $\{f_{ij},f_{kl},f_{mn}\}$ and all
      corresponding higher-dimensional simplices, for $P_{ij,kl,mn} \in S_2$.
    \item $\widetilde{\Delta}_{S_1,S_2}$ is simply connected.
    \item The (reduced) homology of $\widetilde{\Delta}_{S_1,S_2}$ is
      concentrated in degree $3$; $H_3(\widetilde{\Delta}_{S_1,S_2}) =
      \bZ^{126}$.
    \end{enumerate}
  \end{enumerate}
\end{proposition}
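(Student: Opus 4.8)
The plan is to reduce everything to the local statement of Lemma~\ref{LocalSmallResolutions} together with the explicit description of the boundary of $\oM(3,6)$ near each singular point given in Section~\ref{singularities}. Since the ideal sheaf $I_{S_1,S_2}$ is supported on the finite set of singular points, and the $15$ singular points are pairwise disjoint (and each has a neighborhood $U$ of the form in Section~\ref{singularities} containing no other singular point), the blowup $\wM_{S_1,S_2}(3,6) \to \oM(3,6)$ is an isomorphism away from these points and, over each $U$, agrees with one of the two local blowups in Lemma~\ref{LocalSmallResolutions}. This immediately gives part~(1): over $P_{ij,kl,mn}\in S_1$ the fiber is $\bP^1$ and over $P_{ij,kl,mn}\in S_2$ the fiber is $\bP^2$, and in either case the resolution is small because the exceptional locus has codimension $\geq 2$ (the fiber $\bP^1$ or $\bP^2$ sits inside a $4$-fold, and the total exceptional locus is a finite disjoint union of these). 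Part~(3) is read off the same local computation: in the notation of Section~\ref{singularities}, blowing up $V(z_1,z_4)$ (the local equation of $D_{ij,klmn}$) on the cone $U$ over the Segre $\bP^1\times\bP^2$ produces an exceptional $\bP^1$ which is exactly $D_{ij,klmn}\cap D_{kl,ijmn}\cap D_{mn,ijkl}$, and blowing up $V(z_1,z_2,z_3)$ (the local equation of $D_{ij,kl,mn}$) produces an exceptional $\bP^2$ equal to $D_{ij,kl,mn}\cap D_{ij,mn,kl}$; away from the singular points these intersections are unchanged.

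For part~(2), I would track each boundary divisor through the blowup. The divisors $D_{ijk,lmn}\cong\oM_{0,6}$ are disjoint from all singular points (Section~\ref{Cartierdivs}), so they are unaffected. For $D_{ij,klmn}$, which meets exactly the three singular points $P_{ij,kl,mn}, P_{ij,km,ln}, P_{ij,kn,lm}$ (Proposition~\ref{BoundaryDivs}(3)), the strict transform is obtained by performing, at each of these three points, one of the two local small resolutions; since $D_{ij,klmn}\cong \oM(3,5)\times_{\oM_{0,4}}\oM_{0,5}$ is itself singular precisely at these points (its singularities being inherited from those of $\oM(3,6)$), its strict transform is a small resolution of $\oM(3,5)\times_{\oM_{0,4}}\oM_{0,5}$, as claimed. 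For $D_{ij,kl,mn}\cong(\oM_{0,4})^3 = (\bP^1)^3$, which is not Cartier only at $P_{ij,kl,mn}$ (Proposition~\ref{BoundaryDivs}(2)), the local picture shows that if $P_{ij,kl,mn}\in S_1$ (we resolve via $D_{ij,klmn}$-type ideal) then $D_{ij,kl,mn}$ is already Cartier-compatible and passes through the blowup unchanged, giving $(\bP^1)^3$; if $P_{ij,kl,mn}\in S_2$ (we resolve via the $D_{ij,kl,mn}+D_{ij,mn,kl}$-type ideal) then we are blowing up $(\bP^1)^3$ at a point, giving $\mathrm{Bl}_1(\bP^1)^3$.

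Part~(4) follows from part~(3) together with the fact that away from the $15$ points the boundary of $\oM(3,6)$ is already normal crossings (Section~\ref{BoundaryComplex}): the only failure of normal crossings was at the $P_{ij,kl,mn}$, and the local computations in Lemma~\ref{LocalSmallResolutions} show that after either small resolution the boundary becomes normal crossings there (one checks the strict transforms of the five local boundary branches $V(z_1,z_4),V(z_2,z_5),V(z_3,z_6),V(z_1,z_2,z_3),V(z_4,z_5,z_6)$ meet transversally in the resolved $4$-fold). For part~(5)(a), the effect on the dual complex is purely combinatorial: a small resolution with $\bP^1$ fiber replaces a non-NC intersection by deleting the edge $\{g_{ij,kl,mn},g_{ij,mn,kl}\}$ (the two divisors $D_{ij,kl,mn}, D_{ij,mn,kl}$ no longer meet, by part~(3) one now has $\Pi$-type intersection only in the $S_2$ case) and all simplices containing it, while a small resolution with $\bP^2$ fiber deletes the triangle $\{f_{ij},f_{kl},f_{mn}\}$ (the three divisors $D_{ij,klmn},D_{kl,ijmn},D_{mn,ijkl}$ no longer have a common point) and all simplices containing it; this is a direct comparison of the incidence data before and after, using parts~(2)--(4). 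Finally, for part~(5)(b)--(c): each such deletion removes from $\Delta$ a sub-poset that is a cone (the deleted face together with everything above it, which is a cone over the link), and removing an open star of this type does not change simple-connectedness and removes exactly one $3$-cycle's worth of top homology per... — more carefully, I would argue that $\widetilde{\Delta}_{S_1,S_2}$ is obtained from $\Delta$ by a sequence of elementary collapses/anticollapses that preserve the homotopy type in low degrees, or simply invoke a computer calculation as in the proof of the Proposition preceding Proposition~\ref{singularitiesDescription} (and in \cite{speyerTropicalGrassmannian2004}); the point is that $H_3$ stays $\bZ^{126}$ and all other reduced homology vanishes, which is most cleanly verified by the same computer check, noting that the Euler characteristic and the homology are unchanged because the deleted faces in each of the $15$ local modifications form an acyclic subcomplex relative to its boundary.

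The main obstacle I anticipate is part~(2), specifically verifying that the strict transform of $D_{ij,klmn}$ is genuinely a \emph{small} resolution of $\oM(3,5)\times_{\oM_{0,4}}\oM_{0,5}$ — this requires knowing the local structure of that fiber product at its three singular points and checking it is compatible with (i.e.\ cut out by the same local equations as) the ambient resolution, rather than being merely a partial or non-small modification. The combinatorial claim~(5)(a) is also somewhat delicate to state precisely because the two resolution types delete different faces, and one must be careful that in a mixed partition $(S_1,S_2)$ the deletions at different points do not interfere (they don't, since the relevant faces at distinct $P_{ij,kl,mn}$ share no edges — the $4$-simplices are pairwise disjoint except possibly along lower-dimensional faces that survive).
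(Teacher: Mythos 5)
Your proposal is correct and follows essentially the same route as the paper, which likewise reduces every claim to the local computations of Lemma~\ref{LocalSmallResolutions} in the neighborhoods $U$ of the points $P_{ij,kl,mn}$ and verifies the homology and simple connectivity of $\widetilde{\Delta}_{S_1,S_2}$ by computer. Your write-up simply makes explicit the boundary-divisor tracking and dual-complex bookkeeping that the paper leaves as ``immediately verified from the local computations.''
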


\begin{proof}
  Part (1) is immediate from the remarks preceding the proposition.

  The remaining statements in the proposition are immediately verified from the
  local computations as in the previous lemma. The calculation of the homology
  and fundamental group of the boundary complex is performed on a computer.
\end{proof}

The two main small resolutions are $\wM_1(3,6) =
\wM_{\{P_{ij,kl,mn}\},\emptyset}(3,6)$, with fibers $\bP^1$ over all singular
points, and $\wM_2(3,6) = \wM_{\emptyset,\{P_{ij,kl,mn}\}}(3,6)$, with fibers
$\bP^2$ over all singular points. These resolutions were studied by Luxton
\cite{luxtonLogCanonicalCompactification2008}.

The boundary complex $\widetilde{\Delta}_1$ of $\wM_1(3,6)$ is the flag complex
on the graph obtained from the 1-skeleton of $\Delta$ by removing the 15 edges
$\{g_{ij,kl,mn},g_{ij,mn,kl}\}$. This is the only small resolution whose
boundary complex is a flag complex, which perhaps explains why (as we will see
below), $\wM_1(3,6)$ is easier to understand than the other small resolutions.

The boundary complex $\widetilde{\Delta}_2$ of $\wM_2(3,6)$ is the simplicial
complex obtained from $\Delta$ by removing the triangles
$\{f_{ij},f_{kl},f_{mn}\}$ and corresponding higher-dimensional simplices.
Observe that $\widetilde{\Delta}_2$ is the tropical Grassmannian $TG(3,6)$
\cite[Section 5]{speyerTropicalGrassmannian2004}.

Observe that if we blowup all the exceptional lines and planes of any small
resolution, we obtain the resolution of $\oM(3,6)$ with fibers $\bP^1 \times
\bP^2$ over all singular points. In practice we will use Lemma
\ref{LocalSmallResolutions} to obtain $\wM_1(3,6)$, and use this big resolution
just defined (or a partial version) to concretely move from one small resolution
to another.

\subsubsection{Restriction to exceptional locus}

We will use the following proposition repeatedly.
\begin{proposition} \label{RestrictionMaps}\
  \begin{enumerate}
  \item
    \begin{enumerate}
    \item $A^*(L_{ij,kl,mn}) = \bZ[p]/(p^2)$.
    \item The restriction map $A^*(\wM_{S_1,S_2}(3,6)) \to A^*(L_{ij,kl,mn})$ is
      given by
      \begin{align*}
        D_{abc,def}\vert_{L_{ij,kl,mn}} &= 0,\\
        D_{ab,cdef}\vert_{L_{ij,kl,mn}} &=
                                          \begin{cases}
                                            -p, & \text{ if } ab=ij,kl \text{ or } mn,\\
                                            0, & \text{ otherwise,}
                                          \end{cases}\\
        D_{ab,cd,ef}\vert_{L_{ij,kl,mn}} &=
                                           \begin{cases}
                                             p, & \text{ if } \{ab,cd,ef\} = \{ij,kl,mn\},\\
                                             0, & \text{ otherwise.}
                                           \end{cases}
      \end{align*}
    \end{enumerate}
  \item
    \begin{enumerate}
    \item $A^*(\Pi_{ij,kl,mn}) = \bZ[h]/(h^3)$.
    \item The restriction map $A^*(\wM_{S_1,S_2}(3,6)) \to A^*(\Pi_{ij,kl,mn})$
      is given by
      \begin{align*}
        D_{abc,def}\vert_{\Pi_{ij,kl,mn}} &= 0,\\
        D_{ab,cdef}\vert_{\Pi_{ij,kl,mn}} &=
                                            \begin{cases}
                                              h, & \text{ if } ab=ij,kl \text{ or } mn,\\
                                              0, & \text{ otherwise,}
                                            \end{cases}\\
        D_{ab,cd,ef}\vert_{\Pi_{ij,kl,mn}} &=
                                             \begin{cases}
                                               -h, & \text{ if } \{ab,cd,ef\} = \{ij,kl,mn\},\\
                                               0, & \text{ otherwise.}
                                             \end{cases}
      \end{align*}
    \end{enumerate}
  \end{enumerate}
\end{proposition}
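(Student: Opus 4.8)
The plan is to reduce everything to the explicit local computation recorded in Section~\ref{singularities}, together with the global geometry of the blowup $\wM_{S_1,S_2}(3,6) \to \oM(3,6)$. First I would establish the Chow rings of the exceptional loci. Since $L_{ij,kl,mn} \cong \bP^1$ by Proposition~\ref{SmallResolutions}(1), we have $A^*(L_{ij,kl,mn}) = \bZ[p]/(p^2)$ with $p$ the point class; similarly $\Pi_{ij,kl,mn} \cong \bP^2$ gives $A^*(\Pi_{ij,kl,mn}) = \bZ[h]/(h^3)$ with $h$ the hyperplane class. This is immediate and requires no work.

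Next I would compute the restrictions divisor by divisor, working in the local neighborhood $U$ of $P_{ij,kl,mn}$. Any boundary divisor not passing through $P_{ij,kl,mn}$ restricts to zero on the exceptional fiber over that point, since the exceptional fiber is disjoint from its strict transform; this kills all $D_{abc,def}$ (which avoid the singularities entirely by Section~\ref{Cartierdivs}), all $D_{ab,cdef}$ with $ab \notin \{ij,kl,mn\}$, and all $D_{ab,cd,ef}$ with $\{ab,cd,ef\}\neq\{ij,kl,mn\}$. For the divisors that do meet $P_{ij,kl,mn}$, I would use the local description: in $U = \Spec k[z_1,\dots,z_6]/(z_1z_5 - z_2z_4, z_1 z_6 - z_3 z_4, z_2 z_6 - z_3 z_5)$ we have $D_{ij,klmn} = V(z_1,z_4)$, $D_{kl,ijmn} = V(z_2,z_5)$, $D_{mn,ijkl} = V(z_3,z_6)$, $D_{ij,kl,mn} = V(z_1,z_2,z_3)$, $D_{kl,ij,mn} = V(z_4,z_5,z_6)$. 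Over the cone point of the cone over the Segre $\bP^1\times\bP^2$, the small resolution $\wM_1$ (with $\bP^1$ fiber) is the blowup at $D_{ij,klmn}$ (equivalently any one of the three $D_{ab,cdef}$, by Lemma~\ref{LocalSmallResolutions}(1)); one then reads off from the resolved picture that each $D_{ab,cdef}$ restricts to the class of a point $-p$ on $L_{ij,kl,mn}$ and $D_{ij,kl,mn}$ restricts to $+p$ — the signs being forced by the normal crossings structure of Proposition~\ref{SmallResolutions}(4) and the relation $D_{ij,klmn} + D_{ij,kl,mn} = V(z_4)$ which must restrict to $0$ on $L_{ij,kl,mn}$ since $V(z_4)$ is a principal divisor not containing the exceptional line. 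The same bookkeeping on $\wM_2$ (blowup at $D_{ij,kl,mn}$, fiber $\bP^2$) gives the second table, with the opposite signs relative to the $\bP^1$ case because the roles of the two families of divisors are swapped by the small-resolution flop.

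The main obstacle is pinning down the signs and the precise coefficients $\pm p$ and $\pm h$ (rather than arbitrary integer multiples), and verifying that the same formulas hold on an \emph{arbitrary} small resolution $\wM_{S_1,S_2}(3,6)$, not just on the two extremal ones $\wM_1$ and $\wM_2$. For the coefficients I would either carry out the blowup computation explicitly in affine charts of $\Bl_{V(z_1,z_4)} U$ — where $L_{ij,kl,mn}$ is a line in a $\bP^1$-bundle and one sees directly that the exceptional divisor of the blowup, which equals $D_{ij,kl,mn}$ on the resolution, meets $L_{ij,kl,mn}$ transversally in one point — or use the consistency constraints: the three linear relations forced by the principal divisors $V(z_4) = D_{ij,klmn}+D_{ij,kl,mn}$, $V(z_5) = D_{kl,ijmn}+D_{kl,ij,mn}$, $V(z_6) = D_{mn,ijkl}+D_{mn,ij,kl}$ restricting to $0$, together with one normalization (e.g.\ $D_{ij,klmn}$ restricts to the class of an actual point, hence $\pm p$), determine every entry. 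Since the construction of $\wM_{S_1,S_2}(3,6)$ is \'etale-locally identical near each $P_{ij,kl,mn}$ to one of $\wM_1$ or $\wM_2$ (depending on whether the point lies in $S_1$ or $S_2$), and the restriction of a boundary divisor to an exceptional fiber is a local question near that fiber, the formulas for $\wM_{S_1,S_2}$ follow verbatim from those for $\wM_1$ and $\wM_2$. I expect the whole argument to be a page of careful but routine local algebra, with the only subtlety being the sign conventions.
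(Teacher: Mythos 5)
Your computation is correct and follows essentially the same route as the paper, which simply asserts that the restriction formulas ``follow from Proposition~\ref{SmallResolutions}'' and leaves the local analysis near $P_{ij,kl,mn}$ (which you carry out explicitly, with the right signs) to the reader. The one ingredient the paper's proof includes that you omit: for the displayed formulas to actually \emph{determine} the restriction map $A^*(\wM_{S_1,S_2}(3,6)) \to A^*(L_{ij,kl,mn})$ (resp.\ to $A^*(\Pi_{ij,kl,mn})$) as a ring homomorphism, one must know that $A^*(\wM_{S_1,S_2}(3,6))$ is generated by the boundary divisor classes, a fact the paper explicitly defers to Section~\ref{IntersectionTheorySmall}; you should add this remark (or note that you are only computing the images of the boundary classes).
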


\begin{proof}
  The formulas for the Chow rings of the exceptional lines and planes are
  immediate. The formulas for the restrictions of the boundary divisors also
  follow from Proposition \ref{SmallResolutions}. The result follows once we
  show that $A^*(\wM_{S_1,S_2}(3,6))$ is generated by the classes of the
  boundary divisors, which will be done in Section \ref{IntersectionTheorySmall}
  below.
\end{proof}

\section{Blowup constructions} \label{BlowupConstructions}

\subsection{Recursive structure}

\subsubsection{Forgetful and restriction morphisms} \label{forgetandrestrict}

In the remainder of this paper we will make frequent use of the forgetful maps
$f_k : \oM(3,6) \to \oM(3,5)$ and $r_k : \oM(3,6) \to \oM_{0,5}$. Recall that
these maps commute with duality, i.e. $\varphi_{2,5} \circ r_k = f_k \circ
\varphi_{3,6}$. The duality automorphism $\varphi_{3,6} : \oM(3,6) \to \oM(3,6)$
is nontrivial here: we have
\[
  \varphi_{3,6}(D_{ijk,lmn}) = D_{lmn,ijk},\;\; \varphi_{3,6}(D_{ij,kl,mn}) =
  D_{kl,ij,mn}.
\]

Any divisor on $\oM(3,5) \cong \oM_{0,5}$ can be written as $D_{ij,klm}$; for
notational convenience throughout the rest of this paper we will denote such a
divisor by simply $D_{ij}$.

\begin{proposition} \label{PullbackFormulas}\ We have
  \begin{align*}
    f_k^*(D_{ij}) &= D_{ijk,lmn} + D_{ij,klmn} + D_{ij,kl,mn} + D_{ij,km,ln} + D_{ij,kn,lm}, \\
    r_k^*(D_{ij}) &= D_{lmn,ijk} + D_{ij,klmn} + D_{kl,ij,mn} + D_{km,ij,ln} + D_{kn,ij,lm}.
  \end{align*}
\end{proposition}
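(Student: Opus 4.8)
The plan is to compute each pullback by pulling back a single explicit boundary divisor on $\oM(3,5)\cong\oM_{0,5}$ and identifying which boundary divisors of $\oM(3,6)$ appear, with which multiplicities. By the duality relation $\varphi_{2,5}\circ r_k = f_k\circ\varphi_{3,6}$ recorded in \ref{forgetandrestrict}, together with the known action of $\varphi_{3,6}$ on boundary divisors, the formula for $r_k^*$ follows from the formula for $f_k^*$; so it suffices to handle $f_k^*(D_{ij})$. The strategy for this is the standard one for forgetful maps on moduli of stable objects: a boundary divisor $D$ of the target pulls back to the sum, with multiplicity one, of those boundary divisors of the source whose generic point maps into $D$ under $f_k$ (the map being flat on the relevant locus, or at worst after passing to the normalization where all the divisors in question are Cartier in codimension one), so the content is purely the combinatorics of degenerations.

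Concretely, I would argue as follows. The divisor $D_{ij}=D_{ij,klm}$ on $\oM(3,5)$ parameterizes those arrangements of five lines in $\bP^2$ that have degenerated so that lines $i,j$ come together in the appropriate tropical/matroid sense; equivalently, on the level of matroid tilings of $\Delta(3,5)$ it is the locus $\{x_{ij}\le 1\}$. A point of $\oM(3,6)$ lies over $D_{ij}$ under $f_k$ precisely when, after forgetting the $k$th line and stabilizing, the resulting arrangement of five lines sits in $D_{ij}$ — i.e. when the boundary stratum of $\oM(3,6)$ it lies on restricts, under the face map $(x_k=0)$ of the hypersimplex (\ref{recursivestructure}), to the locus $\{x_{ij}\le 1\}$ of $\Delta(3,5)$. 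Running through the three families of boundary divisors of $\oM(3,6)$ listed in \ref{BoundaryDivisors} and their matroid-tiling descriptions, the divisors whose face $(x_k=0)$ meets $\{x_{ij}\le1\}$ are exactly: $D_{ijk,lmn}$ (tiling $\{x_{ijk}\le1\}$, whose facet $x_k=0$ is $\{x_{ij}\le1\}$), $D_{ij,klmn}$ (tiling $\{x_{ij}\le1\}$, stable under the face map), and the three divisors $D_{ij,kl,mn}$, $D_{ij,km,ln}$, $D_{ij,kn,lm}$ among the thirty of type $D_{ab,cd,ef}$ — namely those whose first index pair is $ij$, since their tiling includes $\{x_{ij}\le1,\dots\}$. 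No other boundary divisor contributes: divisors $D_{abc,def}$ with $\{a,b,c\}\ne\{i,j,k\}$, divisors $D_{ab,cdef}$ with $ab\ne ij$, and the remaining divisors $D_{ab,cd,ef}$ all restrict under $(x_k=0)$ to a face disjoint from $\{x_{ij}\le1\}$ (or collapse under stabilization). Checking that each contributing divisor appears with multiplicity exactly $1$ is a local computation: near the generic point of each such divisor, $f_k$ is a (flat) family of stable pairs degenerating transversally, so the pullback of the reduced divisor $D_{ij}$ is reduced. This yields
\[
  f_k^*(D_{ij}) = D_{ijk,lmn} + D_{ij,klmn} + D_{ij,kl,mn} + D_{ij,km,ln} + D_{ij,kn,lm}.
\]
Applying $\varphi_{3,6}$, using $\varphi_{3,6}(D_{ijk,lmn})=D_{lmn,ijk}$, $\varphi_{3,6}(D_{ij,klmn})=D_{ij,klmn}$ (the type-2 divisors are indexed by the partition $\{ij\}\sqcup\{klmn\}$, which duality fixes), and $\varphi_{3,6}(D_{ij,kl,mn})=D_{kl,ij,mn}$, transports this to the stated formula for $r_k^*(D_{ij})$.

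The main obstacle is the multiplicity bookkeeping near the non-normal-crossing / singular points $P_{ij,kl,mn}$: several of the divisors in question are not Cartier there (Proposition \ref{BoundaryDivs}), and the three divisors $D_{ij,kl,mn}, D_{ij,km,ln}, D_{ij,kn,lm}$ all meet near such points, so one must be careful that the pullback $f_k^*(D_{ij})$ — which is a priori only a Weil-divisor-class statement — really is the honest sum with no hidden multiplicities or missing components supported over the singular locus. The clean way around this is to check the identity on a small resolution: pull back further to $\wM_1(3,6)$ (Proposition \ref{SmallResolutions}), where the boundary is normal crossings and all these divisors are the proper transforms of the originals, verify the formula there by the transversality argument above, and then push down, noting that both sides are already defined upstairs as sums of boundary divisors. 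Alternatively, one can verify the formula by restricting both sides to each exceptional curve $L_{ij,kl,mn}$ and to a general point of each boundary divisor using Proposition \ref{RestrictionMaps}, which pins down the coefficients uniquely; either route reduces the claim to the combinatorial match already described, plus a routine local triviality check for the forgetful family.
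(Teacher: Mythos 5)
Your combinatorial identification of which boundary divisors lie over $D_{ij}$ agrees with the paper's (the paper phrases it as a pointwise/modular description of $r_k$ on each boundary divisor rather than via the face map of the hypersimplex, but the content is the same), and your duality reduction is the paper's reduction run in the opposite direction ($f_k^*$ from $r_k^*$ versus $r_k^*$ from $f_k^*$), which is equivalent. The real content of the proposition, however, is the reducedness of the pullback, and this is where your proposal is thin. Your primary argument --- ``near the generic point of each such divisor, $f_k$ is a flat family degenerating transversally, so the pullback is reduced'' --- is a valid strategy in principle (multiplicities are computed at generic points of the components, which avoid the $15$ singular points, so the Cartier issues you worry about are actually irrelevant to the coefficients), but you assert the local triviality rather than exhibit it, and producing explicit local coordinates for $r_k$ at the generic point of, say, $D_{ij,kl,mn}$ is precisely the computation one would like to avoid.

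The paper avoids it by a different and cleaner device: write $r_6^*(D_{12}) = a_1 D_{345,126} + a_2 D_{12,3456} + a_3\sum D_{\bullet,12,\bullet}$ (symmetry forces a common coefficient $a_3$), pull back to $\wM_1(3,6)$, and pair against three explicit test curves using Proposition \ref{RestrictionMaps} and the projection formula: the exceptional line $L_{12,34,56}$ (contracted by $r_6$, giving $a_2=a_3$), the complete intersection $C = D_{123,456}\cap D_{345,126}\cap D_{246,135}$ (also contracted, giving $a_1=a_3$), and the curve $C' = D_{12,3456}\cap D_{56,1234}\cap D_{56,12,34}$, which pushes forward to $D_{12}$ itself and hence gives the normalization $a_3 = 1$ from $D_{12}^2 = -1$. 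Your fallback suggestion of ``restricting to each exceptional curve $L_{ij,kl,mn}$'' captures only the first of these and yields only the single relation $a_2 = a_3$; it does not see $a_1$ at all (the $D_{abc,def}$ restrict to $0$ on every exceptional line) and, more importantly, provides no curve with nonzero pushforward, so it cannot fix the overall scale. As written, neither of your two routes actually pins down the coefficients; you would need either to carry out the local computation honestly or to supply the additional test curves.
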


\begin{proof}
  By duality, the claim for $f_k$ follows from the claim for $r_k$.

  Since $r_k(M(3,6)) = M_{0,5}$, we can restrict our attention to the boundary.
  Using the modular interpretation of $r_k : \oM(3,6) \to \oM_{0,5}$, one can
  give a pointwise description of $r_k$ on each boundary divisor. We find that
  $r_k(D) \subset D_{ij} \iff D$ is one of the boundary divisors in the desired
  expression for $r_k^*(D_{ij})$, so set-theoretically the desired equality
  holds. It remains to show that $r_k^*(D_{ij})$ is reduced.

  For concreteness let us work with $r_6^*(D_{12})$, which we write as
  \begin{align*}
    r_6^*(D_{12}) &= a_1D_{345,126} + a_2D_{12,3456} + a_3(D_{36,12,45} + D_{46,12,35} + D_{56,12,34}).
  \end{align*}
  (By symmetry, the coefficient on any divisor $D_{ij,kl,mn}$ is the same.)

  Our goal is to show that all $a_i = 1$.

  Observe that if $\pi: \wM_{S_1,S_2}(3,6) \to \oM(3,6)$ is any small
  resolution, and $D$ denotes the strict transform in $\wM_{S_1,S_2}(3,6)$ of a
  divisor $D$ on $\oM(3,6)$, then
  \[
    \pi^*r_6^*(D_{12}) = a_1D_{345,126} + a_2D_{12,3456} + a_3(D_{36,12,45} +
    D_{46,12,35} + D_{56,12,34}),
  \]
  so instead of working on $\oM(3,6)$ we can work on a small resolution, say
  $\wM_1(3,6)$.

  Now if $L = L_{12,34,56} = D_{12,3456} \cap D_{34,1256} \cap D_{56,1234}$ in
  $\wM_1(3,6)$, then by Proposition \ref{RestrictionMaps},
  \[
    L \cdot D_{12,3456} = -1, \;\; L \cdot D_{56,12,34} = 1,
  \]
  and $L \cdot D = 0$ for all other $D$ appearing in $r_6^*(D_{12})$, so
  \[
    L \cdot r_6^*(D_{12}) = -a_2 + a_3.
  \]
  By the projection formula, $L \cdot r_6^*(D_{12}) = 0$, so $-a_2 + a_3 = 0$,
  hence $a_2=a_3$.

  Next, if $C$ is the complete intersection $D_{123,456} \cap D_{345,126} \cap
  D_{246,135} \cong \bP^1$, then
  \begin{align*}
    C \cdot D_{345,126} &= -1, \;\; C \cdot D_{46,12,35} = 1,
  \end{align*}
  and $C \cdot D = 0$ for the other divisors in $r_6^*(D_{12})$, so
  \[
    C \cdot r_6^*(D_{12}) = -a_1 + a_3.
  \]
  Furthermore, $r_6(C)$ is a point, so by the projection formula $C \cdot
  r_6^*(D_{12}) = 0$, so $a_1 = a_3$, hence $a_1=a_2=a_3$.

  Finally, let $C' = D_{12,3456} \cap D_{56,1234} \cap D_{56,12,34}$. Then
  \begin{align*}
    C' \cdot D_{12,3456} &= 0, \text{ because } (D_{12,3456})^2\vert_{D_{56,12,34}} = 0 \text{ by Proposition \ref{RestrictionMaps}},\\
    C' \cdot D_{56,12,34} &= -1, \text{ and } C' \cdot D = 0 \text{ for all other } D \text{ in } r_6^*(D_{12}).
  \end{align*}
  We conclude that
  \[
    C' \cdot r_6^*(D_{12}) = -a_3.
  \]
  On the other hand,
  \[
    (r_6)_*(C') = D_{12},
  \]
  so by the projection formula,
  \[
    C' \cdot r_6^*(D_{12}) = D_{12}^2 = -1,
  \]
  hence $a_3=1$. Since $a_1=a_2=a_3$, this gives the result.
\end{proof}

\subsubsection{Birational morphisms} \label{birmaps}

Recall that Kapranov defines birational morphisms $q_i : \oM_{0,n} \to
\bP^{n-3}$ associated to the $i$th $\psi$-class $\psi_i$ on $\oM_{0,n}$
\cite[Chapter 4]{kapranovChowQuotientsGrassmannians1993}. On the interior these
morphisms are defined by setting the $i$th marked point to $\infty \in \bP^1$.
Kapranov factors $q_i$ as a sequence of blowups along $n-1$ points in general
position in $\bP^{n-3}$, and all the subspaces spanned by these points. For
$\oM_{0,5}$ this amounts to blowing up four points in $\bP^2$.

Let $q_{ij} : \oM(3,6) \xrightarrow{r_j \times r_i} \oM_{0,5} \times \oM_{0,5}
\xrightarrow{q_i \times q_j} \bP^2 \times \bP^2$. The morphism $q_{ij}$ is an
isomorphism on the interior. Indeed, any point in $M(3,6)$ can be represented by
a $3 \times 6$ matrix of full rank with nonzero columns; the entries of the
columns give the coefficients of the lines in $\bP^2$. The morphism $q_{ij}$ is
determined by setting the $i$th and $j$th columns of the matrix to $[0,1,0]^T$
and $[0,0,1]^T$. The remaining columns can be scaled, so that for instance
$q_{56}$ is defined on the interior by the matrix
\[
  \begin{bmatrix}
    1 & 1 & 1 & 1 & 0 & 0 \\
    x_1 & x_2 & x_3 & x_4 & 1 & 0 \\
    y_1 & y_2 & y_3 & y_4 & 0 & 1
  \end{bmatrix}
\]
giving a point in $\bP^2 \times \bP^2$ with coordinates $(x_1,x_2,x_3,x_4)
\times (y_1,y_2,y_3,y_4)$, $\sum x_i = \sum y_i = 0$. With these coordinates
$q_{56}$ is just the identity on $M(3,6) \subset \bP^2 \times \bP^2$, and
equations for any other $q_{ij}$ can be found by changing coordinates so that
the $i$th and $j$th columns have the appropriate form.

The behavior of $q_{ij}$ on the boundary strata can be determined as follows.
Kapranov's morphism $q_j : \oM_{0,n} \to \bP^{n-3}$ can be interpreted as a
weight-reduction morphism $\oM_{0,n} \to \oM_{0,\beta_j}$, where $\beta_j =
(b_1,\ldots,b_n)$ with $b_j=1$, $\sum_{i \neq j} b_i > 1$, $\sum_{i \neq j,k}
b_i < 1$ \cite[Section 6]{hassettModuliSpacesWeighted2003}. This implies that
$q_{ij} : \oM(3,6) \to \bP^{n-3}$ can be interpreted as a weight-reduction
morphism $\oM(3,6) \to \oM_{\beta_{ij}}(3,6)$ for $\beta_{ij} =
(b_1,\ldots,b_6)$ with $b_i=b_j=1$, $\sum_{k \neq i,j} b_k > 1$, $\sum_{k \neq
  i,j,l} b_k < 1$, see \cite{gallardoWonderfulCompactificationsModuli2017a}. A
boundary stratum of $\oM(3,6)$ corresponds to a matroid tiling of $\Delta(3,6)$,
while a boundary stratum of $\oM(3,6)$ corresponds to a matroid tiling of the
cut hypersimplex $\Delta_{\beta_{ij}}(3,6)$. The morphism $q_{ij}$ acts on this
boundary stratum by forgetting all matroid polytopes in the tiling except for
those intersecting $\Delta_{\beta_{ij}}(3,6)$. (See
\cite{alexeevModuliWeightedHyperplane2015} for more details.)

\begin{example}
  The boundary divisor $D_{12,34,56}$ corresponds to the following matroid
  tiling of $\Delta(3,6)$.
  \[
    \{x_{12} \leq 1, x_{1234} \leq 2\}, \{x_{34} \leq 1, x_{3456} \leq 2\},
    \{x_{56} \leq 1, x_{1256} \leq 2\}.
  \]
  The second two polytopes do not intersect $\Delta_{\beta_{56}}(3,6)$, so the
  only polytope which survives under the morphism $q_{56} : \oM(3,6) \to
  \oM_{\beta_{56}}(3,6)$ is the first one. We see that the divisor
  $D_{12,34,56}$ is contracted to a line in $\bP^2 \times \bP^2$.
\end{example}
Interpretations of the images as subvarieties of $\bP^2 \times \bP^2$ will be
given below.

\subsection{Blowup construction of $\wM_1(3,6)$}

\subsubsection{Notation} \label{blowupNotation} In any $\bP^2$ with coordinates
$(z_i)$, $\sum z_i = 0$, define the points $P_{ijk} = \{z_i=z_j=z_k\}$ and the
lines $L_{ij} = \{z_i=z_j\}$. In $\bP^2 \times \bP^2$ with coordinates as above,
also define the hypersurfaces
\[
  Q_{ijk} = V\left(\det
    \begin{bmatrix}
      1 & 1 & 1 \\
      x_i & x_j & x_k \\
      y_i & y_j & y_k
    \end{bmatrix}
  \right).
\]
Each $Q_{ijk}$ a divisor of type $(1,1)$ with a unique singular point at
$P_{ijk} \times P_{ijk}$.

Let $M_0 = \oM(3,6)$, $X_0 = \bP^2 \times \bP^2$ and let $\pi_0 = q_{56} : M_0
\to X_0$. By the remarks above, $\pi_0$ is the identity on the interor $M(3,6)$.
For a boundary divisor $D$ in $M_0$, we write $D^0 = \pi_0(D)$, and by the
remarks above we compute the following images.
\begin{enumerate}
\item (Points)
  \[
    D_{ijk,l56}^0 = P_{ijk} \times P_{ijk}.
  \]
\item (Lines)
  \[
    D_{ij,k5,l6}^0 = L_{ij} \times P_{ijk}, \;\;\; D_{ij,k6,l5}^0 = P_{ijk}
    \times L_{ij}, \;\;\; D_{ij,kl,56}^0 = \Delta(L_{ij}).
  \]
\item (Surfaces)
  \begin{align*}
    D_{i5,jkl6}^0 = P_{jkl} \times \bP^2, & \;\;\; D_{i6,jkl5}^0 = \bP^2 \times P_{jkl}, \\
    D_{ij,kl56}^0 = L_{ij} \times L_{ij}, & \;\;\; D_{56,1234}^0 = \Delta(\bP^2).
  \end{align*}
\item (Divisors)
  \[
    D_{ij5,kl6}^0 = L_{kl} \times \bP^2, \;\;\; D_{ij6,kl5}^0 = \bP^2 \times
    L_{kl}, \;\;\; D_{i56,jkl}^0 = Q_{jkl}.
  \]
\end{enumerate}
We will define a sequence of blowups $f_{k+1} : X_{k+1} \to X_{k}$. We
inductively define $D^{k+1}$ to be the dominant transform of $D^k$ in $X_{k+1}$.
(This is the inverse image if $D^k$ is contained in the center; otherwise it is
the strict transform, cf. \cite{liWonderfulCompactificationArrangement2009}.)

\subsubsection{Statement}

\begin{theorem} \label{BlowupConstruction} Let $X_5 \xrightarrow{f_5} \cdots X_1
  \xrightarrow{f_1} X_0$ be the following sequence of blowups.
  \begin{enumerate}
  \item Blowup the eight surfaces $D_{i5,jkl6}^0$, $D_{i6,jkl5}^0$.
  \item Blowup the four surfaces $D_{ijk,l56}^1$.
  \item Blowup the six surfaces $D_{ij,kl56}^2$.
  \item Blowup the surface $D_{56,1234}^3$.
  \item Blowup the 30 surfaces $D_{ij,kl,mn}^4$.
  \end{enumerate}
  Then $X_5 \cong \wM_1(3,6)$ is the small resolution of $\oM(3,6)$ with fiber
  $\bP^1$ over all singular points of $\oM(3,6)$.
\end{theorem}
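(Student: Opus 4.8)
The plan is to run the five blowups in parallel with the birational model $q_{56}\colon \oM(3,6)\to\bP^2\times\bP^2$ of \S\ref{birmaps}, identifying $X_k$ and the dominant transforms $D^k$ of the boundary divisors at each stage, and then matching $X_5$ with $\wM_1(3,6)$ via the local models of \S\ref{singularities}. Step $(1)$ is essentially a consequence of Kapranov's construction: $\oM_{0,5}$ is the blowup of $\bP^2$ at the four points $P_{jkl}$, so --- since blowing up commutes with the flat morphism $(-)\times\bP^2$ --- the blowup of $\bP^2\times\bP^2$ along the eight pairwise disjoint or transverse surfaces $D^0_{i5,jkl6}=P_{jkl}\times\bP^2$, $D^0_{i6,jkl5}=\bP^2\times P_{jkl}$ is $\oM_{0,5}\times\oM_{0,5}$, with blowdown $q_5\times q_6$. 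The factorization $q_{56}=(q_5\times q_6)\circ(r_6\times r_5)$ of \S\ref{birmaps} then shows $q_{56}$ lifts to the morphism $r_6\times r_5$, so $X_1\cong\oM_{0,5}\times\oM_{0,5}$ with $\oM(3,6)\to X_1$ equal to $r_6\times r_5$; one reads off that this contracts exactly the $41$ boundary divisors $D_{ijk,l56}$, $D_{ij,kl56}$, $D_{56,1234}$, $D_{ij,kl,mn}$ (of counts $4,6,1,30$), which become the exceptional divisors of $f_2,\dots,f_5$.

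The remaining blowups are then handled by induction, tracking dominant transforms by the standard rules \cite{liWonderfulCompactificationArrangement2009} and verifying that each centre is smooth. After $f_1$ the four points $D^0_{ijk,l56}=P_{ijk}\times P_{ijk}$ acquire two-dimensional dominant transforms $D^1_{ijk,l56}\cong\bP^1\times\bP^1$ (each the product of the two exceptional $\bP^1$'s over $P_{ijk}$), which $f_2$ blows up; similarly $f_3$ and $f_4$ blow up the dominant transforms of the six $L_{ij}\times L_{ij}$ and of $\Delta(\bP^2)$, each smooth because the earlier blowups have separated it from the centres already used, and one matches the resulting strict transforms against the models in Proposition \ref{SmallResolutions}(2). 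Within each block the order of the individual blowups is immaterial: distinct components of a centre are disjoint except over the images of the singular points, where they interact only through the harmless local picture of Lemma \ref{LocalSmallResolutions}. Finally $f_5$ blows up the thirty surfaces $D^4_{ij,kl,mn}$, the dominant transforms of $L_{ij}\times P_{ijk}$, $P_{ijk}\times L_{ij}$, $\Delta(L_{ij})$, after which all $65$ boundary divisors of $\oM(3,6)$ have appeared.

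To finish, note that since the centres $f_1,\dots,f_5$ are exactly the images under $q_{56}$ of the contracted boundary divisors, the ideal presenting $\oM(3,6)$ as a blowup of $\bP^2\times\bP^2$ pulls back to an invertible sheaf on $X_5$, so the birational map $X_5\dashrightarrow\oM(3,6)$ is a morphism $\pi\colon X_5\to\oM(3,6)$, an isomorphism over $\oM(3,6)\setminus\{P_{ij,kl,mn}\}$. Near a point $P_{ij,kl,mn}$, in the local model $U=\Spec k[z_1,\dots,z_6]/(z_1z_5-z_2z_4,\,z_1z_6-z_3z_4,\,z_2z_6-z_3z_5)$ one checks that the \emph{first} centre meeting $P_{ij,kl,mn}$ is the dominant transform of one of the order-two divisors $D_{ij,klmn},D_{kl,ijmn},D_{mn,ijkl}$ (whose images are among the surfaces blown up in steps $(1)$ and $(3)$); so by Lemma \ref{LocalSmallResolutions}(1) the singularity is resolved with fibre $\bP^1$, and all later blowups through it are blowups of smooth divisors. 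Hence $\pi$ is a small resolution of $\oM(3,6)$ with $\bP^1$-fibre over every singular point; as such a resolution is unique, $X_5\cong\wM_1(3,6)$ (it is the blowup of $\oM(3,6)$ along the ideal $I_{\{P_{ij,kl,mn}\},\emptyset}$ of Proposition \ref{SmallResolutions}).

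The main obstacle is the inductive heart of steps $(2)$--$(5)$: verifying smoothness of each successive centre, which demands a precise understanding of how the loci blown up earlier meet the dominant transform being blown up, together with the local computation at the fifteen singular points that distinguishes the $\bP^1$- from the $\bP^2$-resolution. Step $(1)$ reduces to Kapranov's construction plus the factorization of $q_{56}$, and once the local models are in hand the identification $X_5\cong\wM_1(3,6)$ is formal.
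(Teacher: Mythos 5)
Your identification of $X_1$ with $\oM_{0,5}\times\oM_{0,5}$ via Kapranov's construction, and of the centers of $f_2,\dots,f_5$ as the dominant transforms of the images of the $41$ contracted boundary divisors, matches the paper. The gap is in how you get from $X_5$ back to $\oM(3,6)$. You assert that because the centers are the images of the contracted divisors, the ideal presenting $\oM(3,6)$ as a blowup of $\bP^2\times\bP^2$ pulls back to an invertible sheaf on $X_5$; this is a non sequitur. Knowing the supports of the centers says nothing about that (unknown) ideal, and the claim cannot be purely formal: the remark following Theorem~\ref{BlowupConstruction} notes that most other orderings of these same blowups fail to produce a small resolution, so any argument insensitive to the order must be wrong. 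The paper instead runs the blowups upstairs: it blows up $M_k$ along the scheme-theoretic preimage $\pi_k^{-1}(Z_k)$, which yields the comparison morphism $\pi_{k+1}:M_{k+1}\to X_{k+1}$ for free by the universal property of blowups, and then the entire content is the explicit computation of each $\pi_k^{-1}(Z_k)$ as a sum of boundary divisors (Lemmas~\ref{Preimages1}--\ref{Preimages4}, via the complete-intersection descriptions of Lemma~\ref{CompleteIntersections}), showing it is Cartier away from the singular points and reduced near them, so that $M_{k+1}\to M_k$ is an isomorphism except where Lemma~\ref{LocalSmallResolutions}(1) applies. At the end $\pi_5$ is an isomorphism because it is a projective birational morphism of smooth varieties with no exceptional divisor. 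None of this computation appears in your proposal.

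The local analysis you do sketch is also off. ``The first centre meeting $P_{ij,kl,mn}$'' is not the right criterion: the step-$(1)$ centers do meet the images of the twelve points $P_{ij,k5,l6}$, but their preimages are the Cartier pullbacks $r_6^*(D_{l5})$, $r_5^*(D_{l6})$, so blowing them up does nothing at those points; and at the three points $P_{ij,kl,56}$ the step-$(3)$ preimage is $D_{ij,kl56}+D_{ij,kl,56}+D_{kl,ij56}+D_{kl,ij,56}$, which is Cartier there by Proposition~\ref{BoundaryDivs}, so those points are only resolved at step~$(4)$ by $D_{56,1234}$. What must be checked at each singular point is whether the \emph{scheme-theoretic} preimage of the center is Cartier, and if not, that it equals one of the order-two divisors (giving the $\bP^1$ resolution) rather than an order-three divisor (which would give the $\bP^2$ resolution of Lemma~\ref{LocalSmallResolutions}(2)) or something non-reduced; this requires both the divisor-by-divisor preimage formulas and the reducedness statement of Section~\ref{preimage_structure}, neither of which your proposal supplies.
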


\begin{remark} \label{generalizedconstructions} By Kapranov's factorization of
  $q_i : \oM_{0,5} \to \bP^2$ as the blowup of $\bP^2$ at four points in general
  position, we see that the first blowup $X_1 \to X_0$ is just the map
  \[
    q_5 \times q_6 : \oM_{0,5} \times \oM_{0,5} \to \bP^2 \times \bP^2.
  \]
  The blowups $X_5 \xrightarrow{f_5} \cdots \xrightarrow{f_2} X_1$ should be
  thought of as the higher-dimensional generalization of Keel's construction of
  $\oM_{0,n}$, and the blowups $X_5 \to \cdots \to X_1 \to X_0$ should be
  thought of as a higher-dimensional generalization of Kapranov's
  construction. \label{generalizedKeel}
\end{remark}

\begin{remark}[Other blowup orders]
  Swapping the order of the first two blowups, one still obtains $\wM_1(3,6)$.
  In this case, the first blowup is along four points $D_{ijk,l56}^0$ in $\bP^2
  \times \bP^2$, while the remaining blowups are along surfaces. We can also
  blowup the $D_{ij,k5,l6}^3$ and $D_{ij,k6,l5}^3$ before blowing up
  $D_{56,1234}^3$, as it turns out these are all disjoint. None of the other
  obvious blowup orders appear to give a small resolution of $\oM(3,6)$; they
  lead to subvarieties being disjoint which should still intersect in any small
  resolution.
\end{remark}

\subsection{Proof of Theorem \ref{BlowupConstruction}}

Let $Z_k$ be the center of the blowup $f_{k+1} : X_{k+1} \to X_k$. Starting with
the morphism $\pi_0 : M_0 \to X_0$, the sequence of blowups induces for each $k$
a unique morphism $\pi_{k+1} : M_{k+1} \to X_{k+1}$ such that the following
diagram commutes, where $g_{k+1}:M_{k+1} \to M_k$ is the blowup of $M_k$ along
the (scheme-theoretic) inverse image $\pi_{k}^{-1}(Z_k)$ \cite[Corollary
7.15]{hartshorneAlgebraicGeometry1977}.
\[
  \begin{tikzcd}
    M_{k+1} \ar[r, "\pi_{k+1}"] \ar[d, "g_{k+1}"] & X_{k+1} \ar[d, "f_{k+1}"] \\
    M_{k} \ar[r, "\pi_{k}"] & X_{k}
  \end{tikzcd}
\]
For a divisor $D$ in $\oM(3,6)$, we continue to denote by $D$ its dominant
transform in $M_k$.

We will show that $M_5 \cong \wM_1(3,6)$ and the morphism $\pi_5 : M_5 \to X_5$
is an isomorphism.

By the discussion of Remark \ref{generalizedconstructions}, it is enough to
start with the morphism $\pi_1 = r_6 \times r_5 : M_1 = \oM(3,6) \to \oM_{0,5}
\times \oM_{0,5} = X_1$.

\subsubsection{Description of the centers} \label{BlowupCenters}

If the irreducible components of the center of a blowup $f : \wX \to X$
intersect transversally (or are disjoint), then by
\cite{liWonderfulCompactificationArrangement2009}, the blowup $f$ is the same as
the iterated blowup along the irreducible centers in any order.

\begin{enumerate}
\item The blowup $f_1 : X_1 \to X_0$ is along eight irreducible centers
  $D_{i5,jkl6}^0 = P_{jkl} \times \bP^2,D_{i6,jkl5}^0 = \bP^2 \times P_{jkl}$.
  Observe these centers intersect transversally, so we can factor $X_1 \to X_0$
  as the blowup first along all $D_{i5,jkl6}^0$, then along the strict
  transforms of all $D_{i6,jkl5}^0$. Then the first blowup is along a disjoint
  union of four smooth irreducible centers $\cong \bP^2$, and the second is
  along a disjoint union of four smooth irreducible centers $\cong Bl_4 \bP^2
  \cong \oM_{0,5}$.
\item The blowup $f_2 : X_2 \to X_1$ is along four disjoint irreducible centers
  $D_{ijk,l56}^1 = D_{l5} \times D_{l6} \subset \oM_{0,5} \times \oM_{0,5}$.
\item The blowup $f_3 : X_3 \to X_2$ is along six irreducible centers
  $D_{ij,kl56}^2$. In $X_1$ we have $D_{ij,kl56}^1 = D_{ij} \times D_{ij}$,
  which intersects two irreducible centers of $f_2 : X_2 \to X_1$ transversally
  in a point each. Thus $D_{ij,kl56}^2 \cong Bl_2(\bP^1 \times \bP^1)$. Observe
  that $D_{ij,kl56}^2$ intersects $D_{kl,ij56}^2$ transversally in a point and
  is disjoint from the other centers. We can therefore factor the blowup $X_3
  \to X_2$ as the blowup first along the $D_{ij,kl56}^2$ for $ij=12,13,23$, and
  then along the strict transforms of the $D_{ij,kl56}^3$ for $ij=14,24,34$.
  Then this first blowup is along a disjoint union of three smooth irreducible
  centers $\cong Bl_2(\bP^1 \times \bP^1)$, while the second is along a disjoint
  union of three smooth irreducible centers $\cong Bl_3(\bP^1 \times \bP^1)
  \cong \oM_{0,5}$.
\item The blowup $f_4 : X_4 \to X_3$ is along the single irreducible center
  $D_{56,1234}^3 \cong \oM_{0,5}$; this is the strict transform of
  $\Delta(\oM_{0,5}) \subset X_1$.
\item The blowup $f_5 : X_5 \to X_4$ is along the disjoint union of 30
  irreducible centers $D_{ij,kl,mn}^4$, each isomorphic to $\bP^1 \times \bP^1$.
  (In $X_1$, we have $D_{ij,k5,l6}^1 \cong D_{ij} \times D_{l6}$,
  $D_{ij,k6,l5}^1 \cong D_{l5} \times D_{ij}$, and $D_{ij,kl,56}^1 \cong
  \Delta(D_{ij})$. The dominant transform of $D_{ij,kl,56}^1$ after blowing up
  $D_{ij,kl56}^2$ looks like $\bP^1 \times \bP^1$. Local computations show that
  the intersections of the $D_{ij,kl,mn}^1$ are removed by the previous
  blowups.)
\end{enumerate}

\begin{corollary}
  $X_5$ is a smooth variety.
\end{corollary}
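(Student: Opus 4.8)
The plan is to prove this by induction on $k$, using the standard fact that the blowup of a smooth variety along a smooth closed subscheme is again smooth (see e.g. \cite[II.8]{hartshorneAlgebraicGeometry1977}). The base case is clear: $X_0 = \bP^2 \times \bP^2$ is smooth. For the inductive step I would assume $X_k$ is smooth and show that the center $Z_k$ of $f_{k+1} : X_{k+1} \to X_k$ is a smooth closed subscheme of $X_k$, from which it follows that $X_{k+1}$ is smooth.

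The verification that each $Z_k$ is smooth is essentially contained in the description of the centers in Section \ref{BlowupCenters}, so I would simply assemble those observations. In each of the five cases, the irreducible components of $Z_k$ are identified explicitly and are shown to be pairwise disjoint or to meet transversally. When they are pairwise disjoint, $Z_k$ is smooth as soon as each component is, and every component occurring is one of $\bP^2$, $\oM_{0,5} \cong Bl_4\bP^2$, $\bP^1 \times \bP^1$, $Bl_2(\bP^1 \times \bP^1)$, or $Bl_3(\bP^1 \times \bP^1) \cong \oM_{0,5}$, all of which are smooth. When the components merely meet transversally, I would invoke \cite{liWonderfulCompactificationArrangement2009}, as in Section \ref{BlowupCenters}: the blowup of $X_k$ along $Z_k$ agrees with the iterated blowup along the (successive strict transforms of the) irreducible components of $Z_k$, taken in any order, and after such a reordering each individual center is a disjoint union of one of the smooth varieties listed above — for instance $f_1$ is factored as a blowup along four copies of $\bP^2$ followed by a blowup along four copies of $\oM_{0,5}$, and $f_3$ as a blowup along three copies of $Bl_2(\bP^1 \times \bP^1)$ followed by a blowup along three copies of $\oM_{0,5}$. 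Thus $X_{k+1}$ is obtained from the smooth variety $X_k$ by a finite sequence of blowups along smooth centers, hence is smooth. Applying this for $k = 0, \dots, 4$ gives that $X_5$ is smooth.

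The only real content here is the identification and transversality of the centers, which is exactly what Section \ref{BlowupCenters} supplies; the point most in need of care is that at stages $3$ and $5$ one must already know that the dominant transforms of $D_{ij,kl56}$ and of $D_{ij,kl,56}$ have become smooth (namely $Bl_2(\bP^1 \times \bP^1)$ and $\bP^1 \times \bP^1$ respectively) by the time those centers are blown up, which rests on the local computations recorded there. Granting those, the corollary is a formal consequence of the inductive argument above.
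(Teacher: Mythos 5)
Your proof is correct and is essentially the same as the paper's: the paper's proof likewise consists of observing that, by the description of the centers in Section \ref{BlowupCenters}, the sequence $X_5 \to \cdots \to X_0$ factors as a sequence of blowups of smooth varieties along smooth irreducible centers. Your added care about the order of the factorizations at the transversally intersecting stages is exactly the content the paper delegates to those "remarks preceding the corollary."
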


\begin{proof}
  By the remarks preceding the corollary, we can write $X_5 \xrightarrow{f_5}
  \cdots \xrightarrow{f_2} X_1$ as a sequence of blowups of smooth varieties
  along smooth irreducible centers.
\end{proof}

\begin{lemma} \label{CompleteIntersections}\
  \begin{enumerate}
  \item $D_{ijk,l56}^1$ is the complete intersection of $D_{l5,ijk6}^1$ and
    $D_{l6,ijk5}^1$.
  \item $D_{ij,kl56}^2$ is the complete intersection of $D_{kl5,ij6}^2$ and
    $D_{kl6,ij5}^2$.
  \item $D_{56,1234}^3$ is the intersection of any three $D_{i56,jkl}^3$. It is
    not a complete intersection, but it is a locally complete intersection.
  \item $D_{ij,k5,l6}^4$ is the complete intersection of $D_{kl5,ij6}^4$ and
    $D_{l6,ijk5}^4$.
  \item $D_{ij,k6,l5}^4$ is the complete intersection of $D_{kl6,ij5}^4$ and
    $D_{l5,ijk6}^4$.
  \item $D_{ij,kl,56}^4$ is the complete intersection of $D_{ij,kl56}^4$ and
    $D_{i56,jkl}^4$.
  \end{enumerate}
\end{lemma}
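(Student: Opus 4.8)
The plan is to verify each assertion by carrying out the relevant blowup in explicit local (or, where convenient, global) coordinates, tracking how the dominant transforms of the divisors involved behave, exactly as in the description of the centers in Section \ref{BlowupCenters}. For items (1) and (2) this is immediate: in $X_1 = \oM_{0,5} \times \oM_{0,5}$ we have $D_{ijk,l56}^1 = D_{l5} \times D_{l6}$, which is visibly the scheme-theoretic intersection of $D_{l5,ijk6}^1 = D_{l5} \times \oM_{0,5}$ and $D_{l6,ijk5}^1 = \oM_{0,5} \times D_{l6}$, two smooth divisors meeting transversally; since the center $Z_1$ of $f_1$ (the eight products $P_{jkl}\times\bP^2$, $\bP^2\times P_{jkl}$) does not affect this picture, the claim descends to $X_1$ verbatim, and similarly in $X_2$ for (2), where $D_{ij,kl56}^2$ is the strict transform of $D_{ij}\times D_{ij}$ and the two $D_{kl\ast,ij\ast}$ are strict transforms of $D_{ij}\times\oM_{0,5}$ and $\oM_{0,5}\times D_{ij}$; I would check that blowing up the $D_{ijk,l56}^1$ (which meet $D_{ij}\times D_{ij}$ transversally in points) does not destroy the complete-intersection property — a one-line local computation.

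For (4), (5), (6) the strategy is the same but requires care because the divisors $D_{ij,k5,l6}^4$ etc.\ are dominant transforms that have been modified by several of the blowups. I would start from the descriptions in Section \ref{BlowupCenters}: in $X_1$, $D_{ij,k5,l6}^1 \cong D_{ij}\times D_{l6}$, $D_{ij,k6,l5}^1 \cong D_{l5}\times D_{ij}$, $D_{ij,kl,56}^1 \cong \Delta(D_{ij})$. Then I would trace each through the blowups $f_2,\dots,f_4$, at each stage writing down local coordinates near a generic point of the stratum in question and near the (codimension-two) locus where the two ambient divisors of the claimed complete intersection meet. The point to verify in each case is that after all prior blowups the two ambient divisors are smooth and meet transversally along the stratum, so that their scheme-theoretic intersection is reduced and equals the dominant transform. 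For (6), for instance, $D_{ij,kl56}^4$ and $D_{i56,jkl}^4$ are the dominant transforms of $L_{ij}\times L_{ij}$ and $Q_{jkl}$ in $X_0$; after the blowups of Section \ref{BlowupCenters} their intersection is the strict transform of $\Delta(L_{ij})$, and I would check transversality in a chart of the relevant blowup.

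Item (3) is the exceptional case and, as stated, $D_{56,1234}^3$ is \emph{not} a complete intersection — it is the common intersection of the three (or more) divisors $D_{i56,jkl}^3$, which in $X_1$ are the strict transforms $Q_{jkl}$, passing through $\Delta(\oM_{0,5})$. The claim to prove is that this intersection is reduced of the expected dimension (so $\Delta$-scheme-theoretically cut out) and that $D_{56,1234}^3$, while globally requiring three equations, is \emph{locally} a complete intersection, e.g.\ cut out by two of the three $D_{i56,jkl}^3$ near any of its points. I expect this to be the main obstacle: it requires understanding the scheme structure of $\bigcap_{jkl} Q_{jkl}$ (equivalently, after the earlier blowups, a local analysis near the diagonal) and checking that, although no two of the $Q_{jkl}$ meet transversally everywhere along the diagonal, they do so locally after rearranging which pair one uses. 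I would handle this by covering $D_{56,1234}^3$ by charts and exhibiting in each chart a choice of two of the $D_{i56,jkl}^3$ that form a regular sequence; the diagonal and the symmetry of the configuration make this bookkeeping manageable, and the computation is of the same local nature as Lemma \ref{LocalSmallResolutions}, which we have already invoked without detailed proof.
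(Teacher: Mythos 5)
Your proposal is correct and follows essentially the same route as the paper: the paper's proof simply observes that the claims are evident on $X_0$ (or $X_1$) and then tracks them through the blowups ``by direct calculation or \cite[Lemma 2.9]{liWonderfulCompactificationArrangement2009}''; your plan is a fleshed-out version of that direct calculation, including the correct identification of item (3) as the delicate case (where the lci part is in fact automatic from smoothness of $D_{56,1234}^3 \cong \oM_{0,5}$ in the smooth $X_3$, the real content being that the residual components of pairwise intersections of the $Q_{jkl}$ are removed by the earlier blowups).
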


\begin{proof}
  The analogous claims are obvious on $X_0$ (or $X_1$), and follow on $X_k$ by
  direct calculation or \cite[Lemma
  2.9]{liWonderfulCompactificationArrangement2009}.
\end{proof}

\subsubsection{Preimages of centers} \label{preimage_structure}

\begin{lemma} \label{preimageformulas} Let $\pi : M \to X$ be any morphism of
  schemes, let $Z \subset X$ be a regularly embedded closed subscheme, and
  consider the diagram
  \[
    \begin{tikzcd}
      \wM \ar[d,"g"] \ar[r, "\widetilde{\pi}"] & \wX \ar[d, "f"] \\
      M \ar[r, "\pi"] & X
    \end{tikzcd}
  \]
  where $f : \wX \to X$ is the blowup of $X$ along $Z$ and $g : \wM \to M$ is
  the blowup of $M$ along the scheme-theoretic preimage $\pi^{-1}Z$.

  Let $Y$ be a closed subscheme of $X$. Let $\wY$ be its dominant transform in
  $\wX$.
  \begin{enumerate}
  \item If $Y \subset Z$ or $Y$ intersects $Z$ transversally, then
    $\widetilde{\pi}^{-1}\wY = g^{-1}\pi^{-1}Y$.
  \item If $Y$ contains $Z$, then $\widetilde{\pi}^{-1}Y$ is the residual scheme
    to the exceptional divisor of $\wM$ in $g^{-1}\pi^{-1}Y$.
  \end{enumerate}
\end{lemma}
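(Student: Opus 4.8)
The plan is to reduce everything to two soft inputs: the commutativity $f\circ\widetilde\pi = \pi\circ g$ of the square, and the fact that the two exceptional ideal sheaves $\mathcal{I}_{E_{\wX}} = f^*\mathcal{I}_Z\cdot\mathcal{O}_{\wX}$ and $\mathcal{I}_{E_{\wM}} = g^*\mathcal{I}_{\pi^{-1}Z}\cdot\mathcal{O}_{\wM}$ are invertible. First I would recall why $\widetilde\pi$ exists at all: the composite $\wM \xrightarrow{g} M \xrightarrow{\pi} X$ satisfies $(\pi g)^*\mathcal{I}_Z\cdot\mathcal{O}_{\wM} = g^*(\pi^*\mathcal{I}_Z\cdot\mathcal{O}_M)\cdot\mathcal{O}_{\wM} = \mathcal{I}_{E_{\wM}}$, which is invertible, so the universal property of the blowup $f$ produces a unique $\widetilde\pi:\wM\to\wX$ over $X$, fitting in the square. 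Running the same computation with an arbitrary ideal sheaf $\mathcal{I}$ on $X$ in place of $\mathcal{I}_Z$ gives the master formula $\widetilde\pi^*(f^*\mathcal{I}\cdot\mathcal{O}_{\wX})\cdot\mathcal{O}_{\wM} = g^*(\pi^*\mathcal{I}\cdot\mathcal{O}_M)\cdot\mathcal{O}_{\wM}$; specializing to $\mathcal{I}=\mathcal{I}_Z$ yields in particular $\widetilde\pi^*\mathcal{I}_{E_{\wX}}\cdot\mathcal{O}_{\wM} = \mathcal{I}_{E_{\wM}}$.

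For part (1) the crux is the claim that in \emph{both} listed cases the dominant transform $\wY$ is just the scheme-theoretic total transform $f^{-1}Y$, i.e.\ $\mathcal{I}_{\wY} = f^*\mathcal{I}_Y\cdot\mathcal{O}_{\wX}$. When $Y\subset Z$ this is the definition of the dominant transform. When $Y$ meets $Z$ transversally, I would pick local coordinates in which $\mathcal{I}_Z$ and $\mathcal{I}_Y$ are generated by disjoint subsets of a regular system of parameters; then a one-chart computation on $\wX$ shows $f^*\mathcal{I}_Y\cdot\mathcal{O}_{\wX}$ is not divisible by the local equation of $E_{\wX}$, so it already equals its own saturation, i.e.\ the strict transform (alternatively cite \cite[Lemma 2.9]{liWonderfulCompactificationArrangement2009}). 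Granting this, $\mathcal{I}_{\widetilde\pi^{-1}\wY} = \widetilde\pi^*(f^*\mathcal{I}_Y\cdot\mathcal{O}_{\wX})\cdot\mathcal{O}_{\wM}$, which by the master formula equals $g^*(\pi^*\mathcal{I}_Y\cdot\mathcal{O}_M)\cdot\mathcal{O}_{\wM} = \mathcal{I}_{g^{-1}\pi^{-1}Y}$, and (1) follows.

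For part (2), with $Z\subset Y$ we have $\mathcal{I}_Y\subseteq\mathcal{I}_Z$, hence $f^*\mathcal{I}_Y\cdot\mathcal{O}_{\wX}\subseteq\mathcal{I}_{E_{\wX}}$, and the standard ``total transform $=$ strict transform plus exceptional divisor'' identity for a blowup whose center lies in $Y$ reads $f^*\mathcal{I}_Y\cdot\mathcal{O}_{\wX} = \mathcal{I}_{\wY}\cdot\mathcal{I}_{E_{\wX}}$ (again a local computation, using that $Z$ is regularly embedded, or \cite[Lemma 2.9]{liWonderfulCompactificationArrangement2009}). Pulling this identity back by $\widetilde\pi$ and invoking the master formula together with $\widetilde\pi^*\mathcal{I}_{E_{\wX}}\cdot\mathcal{O}_{\wM} = \mathcal{I}_{E_{\wM}}$ gives $\mathcal{I}_{g^{-1}\pi^{-1}Y} = \mathcal{I}_{\widetilde\pi^{-1}\wY}\cdot\mathcal{I}_{E_{\wM}}$. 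Since $\mathcal{I}_{E_{\wM}}$ is invertible and $Z\subset Y$ forces $\mathcal{I}_{g^{-1}\pi^{-1}Y}\subseteq\mathcal{I}_{E_{\wM}}$, dividing through by $\mathcal{I}_{E_{\wM}}$ identifies $\mathcal{I}_{\widetilde\pi^{-1}\wY}$ with $\mathcal{I}_{g^{-1}\pi^{-1}Y}:\mathcal{I}_{E_{\wM}}$, which is precisely the ideal of the residual scheme of $E_{\wM}$ in $g^{-1}\pi^{-1}Y$.

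The argument is essentially formal once the two local ``transform'' facts are in hand — that the dominant transform is the total transform in the transversal case of (1), and that the total transform is the strict transform plus one copy of the exceptional divisor in (2) — and both are elementary statements about blowups along regularly embedded centers, so the real work is light. I expect the only genuine hazard to be bookkeeping: keeping straight which flavour of transform $\wY$ denotes in each clause, being careful that the scheme-theoretic preimage $\pi^{-1}Z$ (not just its reduction) is the center of $g$, and pinning down the residual-scheme operation $\mathcal{I}\mapsto\mathcal{I}:\mathcal{I}_{E_{\wM}}$ as a genuine colon ideal against an invertible sheaf rather than a set-theoretic ``remove $E_{\wM}$''.
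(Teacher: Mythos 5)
Your argument is correct and follows essentially the same route as the paper: both proofs reduce to (a) identifying the dominant transform with the scheme-theoretic total transform $f^{-1}Y$ in the two cases of part (1), respectively with the residual scheme to $E_{\wX}$ in $f^{-1}Y$ in part (2), and then (b) pushing everything through the commutative square, which at the level of ideal sheaves is exactly your ``master formula.'' The extra care you take with the existence of $\widetilde{\pi}$, the identity $\widetilde{\pi}^{-1}E_{\wX}=E_{\wM}$, and the colon-ideal description of the residual scheme is consistent with, and slightly more explicit than, the paper's citations of Fulton B.6 and B.6.10.
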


\begin{proof}
  \begin{enumerate}
  \item If $Y \subset Z$, then $\wY = f^{-1}Y$ by definition, while if $Y$
    intersects $Z$ transversally, then the result $\wY = f^{-1}Y$ is standard
    \cite[B.6]{fultonIntersectionTheory1998}. By commutativity,
    \[
      \widetilde{\pi}^{-1}\wY = \widetilde{\pi}^{-1}f^{-1}Y = g^{-1}\pi^{-1}Y.
    \]
  \item If $Y$ contains $Z$, then $\wY$ is the residual scheme to $E$ in
    $f^{-1}Y$ \cite[B.6.10]{fultonIntersectionTheory1998}. Then
    $\widetilde{\pi}^{-1}\wY$ is the residual scheme to $\widetilde{\pi}^{-1}E$
    in $\widetilde{\pi}^{-1}f^{-1}Y$. The result follows by commutativity.
  \end{enumerate}
\end{proof}

In the computations below we only consider the set-theoretic inverse images
$\pi_k^{-1}(Z_k)$ of the centers of the blowups. This is enough because, as we
will see, $\pi_k^{-1}(Z_k)$ is Cartier except possibly near the singular points.
We can describe $\pi_k$ explicitly near these points (coordinates as in Section
\ref{singularities}):
\begin{enumerate}
\item near a point like $P_{ij,k5,l6}$ or $P_{ij,k6,l5}$, $\pi_1$ is given by
  $(z_1,z_2,z_3,z_4,z_5,z_6) \mapsto (z_1,z_2,z_5,z_6)$;
\item near a point like $P_{ij,kl,56}$, $\pi_1$ is given by
  $(z_1,z_2,z_3,z_4,z_5,z_6) \mapsto (z_1,z_2z_4,z_3z_5,z_6)$.
\end{enumerate}
If $M_k \to M_{k-1}$ is not a small resolution at a singular point, then $\pi_k$
and $\pi_{k-1}$ have the same description at this point. It follows from this
description that $\pi_k^{-1}(Z_k)$ is reduced near the singular points of $M_k$.

\subsubsection{Step 1}

\begin{lemma} \label{Preimages1} (Recall the pullback formulas from Proposition
  \ref{PullbackFormulas}.)
  \begin{enumerate}
  \item $\pi_1^{-1}(D_{l5,ijk6}^1) = r_6^*(D_{l5})$.
  \item $\pi_1^{-1}(D_{l6,ijk5}^1) = r_5^*(D_{l6})$.
  \item $\pi_1^{-1}(D_{kl5,ij6}^1) = r_6^*(D_{ij})$.
  \item $\pi_1^{-1}(D_{kl6,ij5}^1) = r_5^*(D_{ij})$.
  \item $\pi_1^{-1}(D_{i56,jkl}^1) = f_i^*(D_{56} + D_{jk} + D_{jl} + D_{kl})$.
  \end{enumerate}
\end{lemma}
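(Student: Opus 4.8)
The plan is to identify each scheme-theoretic preimage $\pi_1^{-1}(D^1_{\bullet})$ by combining two inputs: the set-theoretic description coming from the modular interpretation of the restriction maps $r_5, r_6$ on boundary strata (as used in the proof of Proposition \ref{PullbackFormulas}), and the local coordinate description of $\pi_1$ near the singular points $P_{ij,kl,mn}$ given in Section \ref{preimage_structure}. First I would observe that each of the centers in question is, in $X_1 = \oM_{0,5}\times\oM_{0,5}$, a divisor of the form $D_{ab}\times\oM_{0,5}$, $\oM_{0,5}\times D_{ab}$, or $\Delta(\oM_{0,5})$ (recall $D^1_{i56,jkl}$ is the strict transform of the diagonal-type locus, cf. Section \ref{BlowupCenters}(4)), and that $\pi_1 = r_6\times r_5$. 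So $\pi_1^{-1}$ of a divisor $D_{ab}\times\oM_{0,5}$ is simply $r_6^*(D_{ab})$ \emph{as a Cartier divisor}, and similarly $r_5^*$ for the second factor, while $\pi_1^{-1}(\Delta(\oM_{0,5}))$ pulls back to the divisor cut out by pulling back the defining equations of the diagonal — which, under $r_5, r_6$, becomes $f_i^*$ of the corresponding sum of boundary divisors on $\oM_{0,5}$.

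The main point to check is therefore the \emph{scheme structure}, i.e.\ that these preimages are reduced and equal to the stated pullbacks rather than some thickening. For parts (1)--(4) this is essentially formal: a divisor $D_{ab}\times\oM_{0,5}$ is a Cartier divisor on the smooth variety $X_1$, its preimage under any morphism is Cartier, and $\pi_1^*$ of its defining equation is by definition the defining equation of $r_6^*(D_{ab})$ (resp.\ $r_5^*$). The identification of the right-hand side with the explicit sum of boundary divisors of $\oM(3,6)$ is then exactly Proposition \ref{PullbackFormulas} (after relabeling indices). For part (5) one must also recognize that the strict transform $D^1_{i56,jkl}$ of the diagonal locus, pulled back, gives $f_i^*$ of the divisor $D_{56}+D_{jk}+D_{jl}+D_{kl}$ on $\oM_{0,5}$; here one uses the modular description of $r_j\times r_i$ together with the fact noted in Section \ref{birmaps} that Kapranov's map $q_i$ is a weight-reduction morphism, so that the composite $q_{56}$ on the appropriate strata is the forgetful map $f_i$, and then reads off which boundary divisors of $\oM_{0,5}$ appear.

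The genuine obstacle — and the reason the lemma is not purely formal — is the behavior near the $15$ non-normal-crossing / singular points $P_{ij,kl,mn}$, where $\oM(3,6)$ is not smooth and the preimage could a priori fail to be Cartier or could pick up embedded components. This is handled exactly as flagged in Section \ref{preimage_structure}: using the local models $U = \Spec k[z_1,\dots,z_6]/(z_1z_5-z_2z_4,\ z_1z_6-z_3z_4,\ z_2z_6-z_3z_5)$ and the explicit formulas for $\pi_1$ near such points ($(z_1,\dots,z_6)\mapsto(z_1,z_2,z_5,z_6)$ near $P_{ij,k5,l6}$-type points, and $(z_1,\dots,z_6)\mapsto(z_1,z_2z_4,z_3z_5,z_6)$ near $P_{ij,kl,56}$-type points), I would compute the ideal generated by $\pi_1^*$ of the local equation of each center and verify directly that it is a reduced ideal in $\mathcal O_U$ cutting out precisely the claimed union of boundary divisors with multiplicity one. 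Since every center here is one of the Cartier-near-$P$ combinations analyzed in Section \ref{Cartierdivs} (a single coordinate $z_i$ or a product $z_iz_j$), these are short monomial-ideal computations, and they confirm that $\pi_1^{-1}(Z_0)$ is reduced near the singular points, hence globally the stated equalities hold on the nose. This verifies all five parts.
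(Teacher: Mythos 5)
For parts (1)--(4) your argument is exactly the paper's: each of these centers is $p_1^*$ or $p_2^*$ of a boundary divisor of $\oM_{0,5}$, so its scheme-theoretic preimage under $\pi_1 = r_6\times r_5$ is $r_6^*$ resp.\ $r_5^*$ of that divisor, and Proposition \ref{PullbackFormulas} identifies the result. The difficulty is concentrated in part (5), and there your proposal has two problems. The first is a misidentification: $D_{i56,jkl}^1$ is not (the strict transform of) the diagonal --- the diagonal $\Delta(\bP^2)=D_{56,1234}^0$ is a codimension-two surface and is not among the centers of $f_1$ at all. Rather, $D_{i56,jkl}^1$ is the strict transform of the $(1,1)$-hypersurface $Q_{jkl}$, and precisely because its class is $H_1+H_2$ it is \emph{not} a pullback from either factor; this is why case (5) cannot be dispatched by the formal argument that handles (1)--(4), and why the paper singles it out for an explicit verification.

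The second and more serious problem is your mechanism for controlling multiplicities. Checking that the pulled-back ideal is reduced in the local models at the fifteen points $P_{ab,cd,ef}$ only constrains the components of $\pi_1^{-1}(D_{i56,jkl}^1)$ that pass through those points. But $f_i^*(D_{56}+D_{jk}+D_{jl}+D_{kl})$ contains components of type $D_{abc,def}$ --- for instance $D_{ijk,l56}$, which $\pi_1$ contracts onto the surface $D_{ijk,l56}^1 = D_{l5}\times D_{l6}$ --- and as noted in Section \ref{Cartierdivs} these divisors are disjoint from the singular locus of $\oM(3,6)$. Their multiplicity in $\pi_1^{-1}(D_{i56,jkl}^1)$ is read off at their generic points, which your local computations never see, so ``reduced near the singular points, hence globally the stated equality holds'' is a non sequitur. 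The paper closes exactly this gap by the method of the proof of Proposition \ref{PullbackFormulas}: write $\pi_1^{-1}(D_{i56,jkl}^1)=\sum a_m D_m$ with unknown coefficients, intersect with explicit test curves (complete intersections of boundary divisors, computed on a small resolution via Proposition \ref{RestrictionMaps}), and use the projection formula to force all $a_m=1$. You would need to supply such an argument, or some other computation at the generic points of the contracted components, to complete part (5).
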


\begin{proof}
  If $p_1,p_2 : X_1 \to \oM_{0,5}$ denote the two natural projections, note that
  $D_{l5,ijk6}^1 = p_1^*(D_{l5})$, $D_{l6,ijk5}^1 = p_2^*(D_{l6})$,
  $D_{kl5,ij6}^1 = p_1^*(D_{ij})$, and $D_{kl6,ij5}^1 = p_2^*(D_{ij})$, so the
  result is clear in these cases from Proposition \ref{PullbackFormulas}. The
  remaining case $\pi_1^{-1}(D_{i56,jkl}^1)$ is verified explicitly, similar to
  the proof of Proposition \ref{PullbackFormulas}.
\end{proof}

\begin{corollary}
  $\pi_1^{-1}(D_{ijk,l56}^1) = D_{ijk,l56}$.
\end{corollary}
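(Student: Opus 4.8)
The plan is to use Lemma \ref{CompleteIntersections}(1), which identifies $D_{ijk,l56}^1$ as the complete intersection $D_{l5,ijk6}^1 \cap D_{l6,ijk5}^1$ inside $X_1$, and then push this through the morphism $\pi_1$. Since scheme-theoretic preimages commute with intersections, we have
\[
  \pi_1^{-1}(D_{ijk,l56}^1) = \pi_1^{-1}(D_{l5,ijk6}^1) \cap \pi_1^{-1}(D_{l6,ijk5}^1).
\]
By Lemma \ref{Preimages1}(1) and (2) this equals $r_6^*(D_{l5}) \cap r_5^*(D_{l6})$, so the task reduces to computing this intersection of divisors using the explicit pullback formulas from Proposition \ref{PullbackFormulas}.

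First I would write out both pullbacks explicitly. With the convention that $\{i,j,k,l,5,6\}=\{1,\dots,6\}$, Proposition \ref{PullbackFormulas} gives
\[
  r_6^*(D_{l5}) = D_{l5j',l5k'\dots}\ \text{(the five boundary divisors whose $r_6$-image lies in $D_{l5}$)},
\]
and similarly for $r_5^*(D_{l6})$; each is a sum of five boundary divisors. Then I would intersect the two sums term by term, using the combinatorial description of which boundary divisors of $\oM(3,6)$ meet which (the forms $D_{abc,def}$, $D_{ab,cdef}$, $D_{ab,cd,ef}$ together with the incidence data recorded in Section \ref{BoundaryDivisors} and Section \ref{singularities}). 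The claim is that the only common component — equivalently, the only pair of terms, one from each sum, whose corresponding divisors actually meet along a divisor in the intersection — is $D_{ijk,l56}$ itself, and that all other cross-terms contribute strictly lower-dimensional pieces (or vanish) and so do not appear in the scheme-theoretic intersection once one checks it is reduced of pure codimension two. Reducedness near the singular points $P_{ij,kl,mn}$ follows from the explicit local description of $\pi_1$ recorded at the end of Section \ref{preimage_structure}, exactly as in the proof of Proposition \ref{PullbackFormulas}; away from those points everything is a transverse intersection of smooth divisors and reducedness is automatic.

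The main obstacle is the bookkeeping in the term-by-term intersection: one must rule out that any of the sixteen "unwanted" cross-terms survives. The cleanest way to organize this is to observe that $D_{ijk,l56}$ is disjoint from the singularities of $\oM(3,6)$ (Section \ref{Cartierdivs}), hence Cartier, hence its dominant transform in $M_1=\oM(3,6)$ is just its strict transform and behaves well; and to use the incidence data of Section \ref{BoundaryDivisors} to see that a divisor appearing in $r_6^*(D_{l5})$ and a divisor appearing in $r_5^*(D_{l6})$ meet in codimension two precisely when the two are $D_{ijk,l56}$ (as the unique divisor lying in both pullbacks), while the remaining pairs either are disjoint or meet in the exceptional loci $L_{ij,kl,mn}$, which are codimension-four strata, not divisorial components of the intersection. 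Since $\pi_1$ is an isomorphism on the interior and the interior intersection $r_6^{-1}(D_{l5}) \cap r_5^{-1}(D_{l6})$ is exactly the locus whose closure is $D_{ijk,l56}$, this pins down the unique divisorial component; combined with the reducedness check this yields $\pi_1^{-1}(D_{ijk,l56}^1) = D_{ijk,l56}$.
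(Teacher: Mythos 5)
Your proposal follows exactly the paper's route: write $D_{ijk,l56}^1$ as the complete intersection $D_{l5,ijk6}^1 \cap D_{l6,ijk5}^1$ via Lemma \ref{CompleteIntersections}, pull back to get $r_6^*(D_{l5}) \cap r_5^*(D_{l6})$ via Lemma \ref{Preimages1}, and identify $D_{ijk,l56}$ as the unique common component, with reducedness handled by the local description of $\pi_1$ near the singular points as in Section \ref{preimage_structure}. The paper's proof is just a terser version of this (it leaves the term-by-term incidence check and the scheme-structure remarks implicit), so your argument is correct and essentially identical.
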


\begin{proof}
  By Lemma \ref{CompleteIntersections}, $D_{ijk,l56}^1 = D_{l5,ijk6}^1 \cap
  D_{l6,ijk5}^1$, so (set-theoretically)
  \begin{align*}
    \pi_1^{-1}(D_{ijk,l56}^1) &= \pi_1^{-1}(D_{l5,ijk6}^1) \cap \pi_1^{-1}(D_{l6,ijk5}^1) \\
                              &= D_{ijk,l56} \text{ by Lemma \ref{Preimages1}}.
  \end{align*}
\end{proof}

\subsubsection{Step 2}

By the corollary, $\pi_1^{-1}(Z_1) = \bigcup D_{ijk,l56}$, which is disjoint
from all the singular points of $M_1$ and is Cartier, so $g_2 : M_2 \to M_1$ is
an isomorphism.

\begin{lemma} \label{Preimages2}
  \begin{enumerate}
  \item $\pi_2^{-1}(D_{l5,ijk6}^2) = r_6^*(D_{l5}) - D_{ijk,l56}$.
  \item $\pi_2^{-1}(D_{l6,ijk5}^2) = r_5^*(D_{l6}) - D_{ijk,l56}$.
  \item $\pi_2^{-1}(D_{kl5,ij6}^2) = r_6^*(D_{ij})$.
  \item $\pi_2^{-1}(D_{kl6,ij5}^2) = r_5^*(D_{ij})$.
  \item $\pi_2^{-1}(D_{i56,jkl}^2) = f_i^*(D_{56} + D_{jk} + D_{jl} + D_{kl}) -
    D_{ijk,l56} - D_{ijl,k56} - D_{ikl,j56}$.
  \end{enumerate}
\end{lemma}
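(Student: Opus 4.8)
The plan is to compute $\pi_2^{-1}(D^2)$ for each center $D^2$ by applying Lemma \ref{preimageformulas} to the blowup $f_2 : X_2 \to X_1$, whose center $Z_1 = \bigcup D_{ijk,l56}^1$ we understand via Lemma \ref{CompleteIntersections} as complete intersections of the divisors appearing in Step 1. Recall that $g_2 : M_2 \to M_1$ is an isomorphism (as noted just before the lemma, since $\pi_1^{-1}(Z_1) = \bigcup D_{ijk,l56}$ is Cartier and disjoint from the singularities), so $\pi_2^{-1}(D^2)$ is obtained from $\pi_1^{-1}(D^1)$ by a purely divisor-theoretic bookkeeping on $M_1 = \oM(3,6)$: it is either the strict transform (if $D^1$ meets or is contained in $Z_1$ transversally), or the residual to the exceptional divisor (if $D^1$ contains a component of $Z_1$), exactly as in the two cases of Lemma \ref{preimageformulas}.

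First I would handle items (3) and (4). The divisors $D_{kl5,ij6}^1 = p_1^*(D_{ij})$ and $D_{kl6,ij5}^1 = p_2^*(D_{ij})$ are pulled back from the two $\oM_{0,5}$ factors; in $\oM_{0,5}$ the divisor $D_{ij}$ corresponds to a point of $\oM_{0,4}$, and the centers $D_{ijk,l56}^1 = D_{l5} \times D_{l6}$ meet $p_1^*(D_{ij})$ either transversally or not at all (since $D_{ij}$ and $D_{l5}$ are distinct boundary divisors of $\oM_{0,5}$ meeting transversally when they meet). So by Lemma \ref{preimageformulas}(1), the dominant transform $D_{kl5,ij6}^2$ is the strict transform and $\pi_2^{-1}(D_{kl5,ij6}^2) = g_2^{-1}\pi_1^{-1}(D_{kl5,ij6}^1) = r_6^*(D_{ij})$ via the isomorphism $g_2$ and Lemma \ref{Preimages1}(3); likewise for (4).

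Next, items (1) and (2): here $D_{l5,ijk6}^1 = p_1^*(D_{l5})$ \emph{contains} the center $D_{ijk,l56}^1 = D_{l5} \times D_{l6}$ (for this particular triple $ijk$), so we are in case (2) of Lemma \ref{preimageformulas}: $\pi_2^{-1}(D_{l5,ijk6}^2)$ is the residual scheme to the exceptional divisor in $g_2^{-1}\pi_1^{-1}(D_{l5,ijk6}^1)$. Since $g_2$ is an isomorphism, this residual is just $r_6^*(D_{l5}) - E$, where $E = D_{ijk,l56}$ is the component of $\pi_1^{-1}(Z_1)$ lying over $D_{ijk,l56}^1 \subset D_{l5,ijk6}^1$ (one checks that among the four centers in $Z_1$, only $D_{ijk,l56}^1$ is contained in $p_1^*(D_{l5})$, because $D_{l5}$ determines $l$ but the remaining three indices are arbitrary — wait, $D_{l5,ijk6}^1$ has $l$ fixed and $ijk$ the complement, so there is exactly one such center). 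This gives (1), and dually (2). Finally for (5), $D_{i56,jkl}^1$ contains three of the four centers — namely $D_{ijk,l56}^1$, $D_{ijl,k56}^1$, $D_{ikl,j56}^1$, all of which have $i$ in the first block (one verifies this from the complete intersection descriptions, or equivalently from the fact that $Q_{jkl}$ in $X_0$ passes through the relevant points) — so by Lemma \ref{preimageformulas}(2) the residual is $f_i^*(D_{56}+D_{jk}+D_{jl}+D_{kl})$ minus those three exceptional divisors.

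The main obstacle will be (5): verifying precisely which components of $Z_1$ are contained in $D_{i56,jkl}^1$, and checking the residual computation is clean (i.e.\ that $D_{i56,jkl}^1$ contains these three centers each with multiplicity one and meets $Z_1$ in no other components, possibly with a careful look near the singular points using the explicit local description of $\pi_1$ from Section \ref{preimage_structure}). The reduced-ness of $\pi_2^{-1}(Z_2)$ near the singularities, as remarked in Section \ref{preimage_structure}, is what makes the set-theoretic computation suffice, so I would invoke that rather than tracking scheme structure. Items (1)--(4) are routine transversality checks on $\oM_{0,5} \times \oM_{0,5}$; only (5) requires genuine care.
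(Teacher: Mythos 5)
Your proposal is correct and takes essentially the same route as the paper, whose entire proof of this lemma is the citation of Lemmas \ref{Preimages1} and \ref{preimageformulas}; your case split (transversal intersection with the centers for (3)--(4), residual to the relevant exceptional components for (1)--(2) and (5)) is exactly the intended expansion, and the bookkeeping of which single center is contained in $p_1^*(D_{l5})$ is right.

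One correction to the parenthetical justification in (5), which you rightly flag as the delicate step: the criterion ``$Q_{jkl}$ passes through the relevant points'' does not select the three centers, because $Q_{jkl}$ passes through \emph{all four} points $P_{abc} \times P_{abc}$ --- for $abc = jkl$ this is precisely its unique singular point, where it is locally the cone $ad = bc$. Taken literally, your criterion would force you to subtract all four exceptional divisors, including $D_{jkl,i56}$, which is wrong. The actual distinction appears only after the Step 1 blowups: at the three smooth points $P_{abc}\times P_{abc}$ with $i \in abc$, a local computation shows the strict transform of $Q_{jkl}$ contains $D_{d5}\times D_{d6} = D_{abc,d56}^1$, whereas over the singular point the strict transform meets $D_{i5}\times D_{i6} = D_{jkl,i56}^1$ only transversally, in a curve. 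Alternatively, and more in the spirit of the paper's two-lemma proof, read the containments off Lemma \ref{Preimages1}(5) directly: by Proposition \ref{PullbackFormulas}, $D_{abc,d56}$ occurs as a component of $\pi_1^{-1}(D_{i56,jkl}^1) = f_i^*(D_{56}+D_{jk}+D_{jl}+D_{kl})$ exactly when $d \neq i$, and since $\pi_1$ maps each boundary divisor of $M_1$ onto its image $D^1$ in $X_1$, the component $D_{abc,d56}$ appears in $\pi_1^{-1}(Y)$ if and only if $D_{abc,d56}^1 \subset Y$. Either way your stated formula for (5), subtracting exactly $D_{ijk,l56}$, $D_{ijl,k56}$, $D_{ikl,j56}$, is the correct one.
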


\begin{proof}
  This follows from Lemma \ref{Preimages1} and Lemma \ref{preimageformulas}.
\end{proof}

\begin{corollary}
  $\pi_2^{-1}(D_{ij,kl56}^2) = D_{ij,kl56} \cup D_{ij,kl,56}$.
\end{corollary}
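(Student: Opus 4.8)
The plan is to mimic the structure of the preceding corollary (computing $\pi_1^{-1}(D_{ijk,l56}^1)$ via Lemma \ref{CompleteIntersections}), now one blowup further along. By Lemma \ref{CompleteIntersections}(2), $D_{ij,kl56}^2$ is the complete intersection of $D_{kl5,ij6}^2$ and $D_{kl6,ij5}^2$ inside $X_2$. Since $g_2 : M_2 \to M_1$ is an isomorphism (remarked above, because $\pi_1^{-1}(Z_1)$ is Cartier and misses the singular points), we may compute set-theoretically
\[
  \pi_2^{-1}(D_{ij,kl56}^2) = \pi_2^{-1}(D_{kl5,ij6}^2) \cap \pi_2^{-1}(D_{kl6,ij5}^2),
\]
and by Lemma \ref{Preimages2}(3),(4) the right-hand side is $r_6^*(D_{ij}) \cap r_5^*(D_{ij})$, computed on $M_2 \cong M_1 = \oM(3,6)$.

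So the heart of the argument is to intersect the two divisor classes from Proposition \ref{PullbackFormulas},
\[
  r_6^*(D_{ij}) = D_{ijk,l56} + D_{ij,kl56} + D_{kl,ij,56} + D_{k5,ij,l6} + D_{k6,ij,l5},
\]
\[
  r_5^*(D_{ij}) = D_{ijl,k56} + D_{ij,kl56} + D_{kl,ij,56} + D_{l6,ij,k5} + D_{l5,ij,k6},
\]
set-theoretically, i.e. to determine which boundary strata lie on both. The two divisors share exactly $D_{ij,kl56}$ and $D_{kl,ij,56}$ as common irreducible components, and one checks directly from the intersection combinatorics of the boundary (Section \ref{BoundaryDivisors}, and the local equations in Section \ref{singularities}) that no other component of one divisor meets a distinct component of the other in codimension exceeding the generic expectation — every other pairwise intersection is either empty or already contained in $D_{ij,kl56} \cup D_{kl,ij,56}$. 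Noting $D_{kl,ij,56} = D_{ij,kl,56}$ under the cyclic relabeling convention for triple-index divisors, this yields $\pi_2^{-1}(D_{ij,kl56}^2) = D_{ij,kl56} \cup D_{ij,kl,56}$ as sets.

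The main obstacle is the bookkeeping of the pairwise stratum intersections: one must rule out spurious components of the scheme-theoretic intersection, in particular near the singular points $P_{ij,kl,mn}$ where the boundary is not normal crossings. Here the explicit local description of $\pi_2$ from Section \ref{preimage_structure} — near $P_{ij,kl,56}$, $\pi_1$ (and hence $\pi_2$, since $g_2$ is an isomorphism) is $(z_1,\dots,z_6) \mapsto (z_1, z_2 z_4, z_3 z_5, z_6)$ — lets one verify that $D_{kl5,ij6}^2$ and $D_{kl6,ij5}^2$ pull back to $V(z_2 z_4)$-type and $V(z_3 z_5)$-type divisors, whose common locus is cut out by exactly the two strata claimed, with reduced structure (as already noted at the end of Section \ref{preimage_structure}). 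Away from the singular points everything is transverse and the computation is the routine complete-intersection argument of the previous corollary.
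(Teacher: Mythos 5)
Your overall strategy is exactly the paper's: use Lemma \ref{CompleteIntersections}(2) to write $D_{ij,kl56}^2$ as the complete intersection of $D_{kl5,ij6}^2$ and $D_{kl6,ij5}^2$, reduce (via $g_2$ being an isomorphism) to the set-theoretic intersection $r_6^*(D_{ij}) \cap r_5^*(D_{ij})$ on $\oM(3,6)$, and identify the common components using Lemma \ref{Preimages2}. However, the execution contains genuine errors. First, you have misquoted Proposition \ref{PullbackFormulas}: the first term of $r_k^*(D_{ij})$ is $D_{lmn,ijk}$, not $D_{ijk,lmn}$ (the latter is the pattern for $f_k^*$), and several of your triple-pair terms are cyclically permuted incorrectly; the correct expansion is, e.g., $r_6^*(D_{12}) = D_{345,126} + D_{12,3456} + D_{36,12,45} + D_{46,12,35} + D_{56,12,34}$.

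Second, and more seriously, your assertion that $D_{kl,ij,56} = D_{ij,kl,56}$ ``under the cyclic relabeling convention'' is false. The convention identifies only cyclic permutations, $D_{ij,kl,mn} = D_{kl,mn,ij} = D_{mn,ij,kl}$, so $D_{kl,ij,56} = D_{ij,56,kl}$ is the \emph{other} divisor attached to the partition $\{ij,kl,56\}$; the two are distinct, and their intersection is precisely the singular point $P_{ij,kl,56}$. With the correct pullback formulas the two common components of $r_6^*(D_{ij})$ and $r_5^*(D_{ij})$ are $D_{ij,kl56}$ and $D_{56,ij,kl}$, and the latter genuinely equals $D_{ij,kl,56}$ by a cyclic permutation, so the stated conclusion is right — but your derivation reaches it only through compensating index errors. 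One also still needs to check, as you gesture at, that the pairwise intersections of the remaining non-common components (for instance $D_{36,12,45} \cap D_{45,12,36} = P_{12,36,45} \subset D_{12,3456}$) are empty or already contained in $D_{ij,kl56} \cup D_{ij,kl,56}$; this follows from the boundary complex, but with your garbled component lists that verification cannot be carried out as written.
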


\begin{proof}
  By Lemma \ref{CompleteIntersections}, $D_{ij,kl56}^2 = D_{kl5,ij6}^2 \cap
  D_{kl6,ij5}^2$, so (set-theoretically)
  \begin{align*}
    \pi_2^{-1}(D_{ij,kl56}^2) &= \pi_2^{-1}(D_{kl5,ij6}^2) \cap \pi_2^{-1}(D_{kl6,ij5}^2) \\
                              &= D_{ij,kl56} \cup D_{ij,kl,56} \text{ by Lemma \ref{Preimages2}.}
  \end{align*}
\end{proof}

\subsubsection{Step 3}

By the corollary, $\pi_2^{-1}(Z_2) = \bigcup (D_{ij,kl56} \cup D_{ij,kl,56})$.
Since this is Cartier away from the singular points, we only need to look at
$\pi_2^{-1}(Z_2)$ near the singular points. By Section \ref{preimage_structure},
$\pi_2^{-1}(Z_2)$ is reduced near these points.

Up to symmetry, there are two cases.
\begin{enumerate}
\item Near $P_{ij,kl,56}$, the divisor $\pi_2^{-1}(Z_2)$ looks like
  \[
    D_{ij,kl56} + D_{ij,kl,56} + D_{kl,ij56} + D_{kl,ij,56}.
  \]
  By Proposition \ref{BoundaryDivs}, this is Cartier at $P_{ij,kl,56}$, so the
  blowup does nothing near this point.
\item Near $P_{ij,k5,l6}$, the divisor $\pi_2^{-1}(Z_2)$ looks like
  $D_{ij,kl56}$. By Lemma \ref{LocalSmallResolutions}, the blowup gives the
  small resolution with fiber $\bP^1$ over this point.
\end{enumerate}

We see that $M_3$ is a partial desingularization of $M_2 = \oM(3,6)$: near the
points of the form $P_{ij,k5,l6}$ it is the small resolution with fiber $\bP^1$,
and near the remaining singular points $P_{ij,kl,56}$ it does not change.

\begin{lemma} \label{Preimages3}
  \begin{enumerate}
  \item $\pi_3^{-1}(D_{l5,ijk6}^3) = r_6^*(D_{l5}) - D_{ijk,l56}$.
  \item $\pi_3^{-1}(D_{l6,ijk5}^3) = r_5^*(D_{l6}) - D_{ijk,l56}$.
  \item $\pi_3^{-1}(D_{kl5,ij6}^3) = D_{kl5,ij6} + D_{ij,k5,l6} + D_{ij,l5,k6}$.
  \item $\pi_3^{-1}(D_{kl6,ij5}^3) = D_{kl6,ij5} + D_{ij,k6,l5} + D_{ij,l6,5}$.
  \item $\pi_3^{-1}(D_{i56,jkl}^3) = D_{i56,jkl} + D_{56,1234} + D_{ij,kl,56} +
    D_{ik,jl,56} + D_{il,jk,56} + D_{jk,i5,l6} + D_{jk,i6,l5} + D_{jl,i5,k6} +
    D_{jl,i6,k5} + D_{kl,i5,j6} + D_{kl,i6,j5}$.
  \item $\pi_3^{-1}(D_{ij,kl56}^3) = D_{ij,kl56} + D_{ij,kl,56}$.
  \end{enumerate}
\end{lemma}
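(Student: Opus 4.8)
The goal is to compute the set-theoretic preimages $\pi_3^{-1}$ of the six divisors $D_{l5,ijk6}^3$, $D_{l6,ijk5}^3$, $D_{kl5,ij6}^3$, $D_{kl6,ij5}^3$, $D_{i56,jkl}^3$, $D_{ij,kl56}^3$. The plan is to proceed exactly as in the proofs of Lemmas~\ref{Preimages1} and~\ref{Preimages2}: combine the known formulas for $\pi_2^{-1}$ from Lemma~\ref{Preimages2} with the transformation rules of Lemma~\ref{preimageformulas}, applied to the blowup $X_3 \to X_2$ along $Z_2 = \bigcup D_{ij,kl56}^2$. For each divisor $D^2$ on $X_2$, I determine its position relative to the center $Z_2$: whether it contains a component of $Z_2$, is contained in one, or meets it transversally (or is disjoint). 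Then Lemma~\ref{preimageformulas} tells me whether $\pi_3^{-1}(D^3) = g_3^{-1}\pi_2^{-1}(D^2)$ (the transversal/contained-in case) or whether $\pi_3^{-1}(D^3)$ is the residual to the exceptional divisor of $g_3$ in $g_3^{-1}\pi_2^{-1}(D^2)$ (the contains case). Since, as noted in Section~\ref{preimage_structure}, all the relevant preimages are reduced near the singular points, it suffices to track the underlying sets, i.e. which boundary divisors of $M_3$ appear.

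Concretely: the divisors $D_{l5,ijk6}^2$ and $D_{l6,ijk5}^2$ are disjoint from the centers $D_{ij,kl56}^2$ (their indices are incompatible), so cases (1) and (2) follow immediately from Lemma~\ref{Preimages2}(1),(2) and the fact that $g_3$ is an isomorphism away from the $P_{ij,k5,l6}$ points; the strict transforms pick up no new components, giving $r_6^*(D_{l5}) - D_{ijk,l56}$ and $r_5^*(D_{l6}) - D_{ijk,l56}$. For (3) and (4): by Lemma~\ref{CompleteIntersections}(2), $D_{ij,kl56}^2$ is a component of $Z_2$ contained in $D_{kl5,ij6}^2$, so $D_{kl5,ij6}^2$ contains this center; Lemma~\ref{preimageformulas}(2) then says $\pi_3^{-1}(D_{kl5,ij6}^3)$ is the residual to the $g_3$-exceptional divisor in $g_3^{-1}\pi_2^{-1}(D_{kl5,ij6}^2) = g_3^{-1}(r_6^*(D_{ij}))$. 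Writing out $r_6^*(D_{ij})$ via Proposition~\ref{PullbackFormulas} and subtracting off the exceptional contribution (which, by the local picture in Section~\ref{preimage_structure}, corresponds to $D_{ij,kl56}$) leaves $D_{kl5,ij6} + D_{ij,k5,l6} + D_{ij,l5,k6}$, and symmetrically for (4). Case (6), $\pi_3^{-1}(D_{ij,kl56}^3)$, is the dominant transform of a component of the center itself, hence is the full preimage $g_3^{-1}\pi_2^{-1}(D_{ij,kl56}^2) = g_3^{-1}(D_{ij,kl56} \cup D_{ij,kl,56})$, which the local blowup computation of Step~3 shows is still $D_{ij,kl56} + D_{ij,kl,56}$.

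The main work — and the main obstacle — is case (5), $\pi_3^{-1}(D_{i56,jkl}^3)$, where eleven boundary divisors appear. Here $D_{i56,jkl}^2$ meets several of the centers $D_{ij,kl56}^2$, so I must carefully determine, for each of the six centers, whether $D_{i56,jkl}^2$ contains it, is disjoint from it, or meets it transversally, and then apply the appropriate part of Lemma~\ref{preimageformulas} component by component. Starting from $\pi_2^{-1}(D_{i56,jkl}^2) = f_i^*(D_{56} + D_{jk} + D_{jl} + D_{kl}) - D_{ijk,l56} - D_{ijl,k56} - D_{ikl,j56}$ (Lemma~\ref{Preimages2}(5)), expanding each $f_i^*$ via Proposition~\ref{PullbackFormulas} produces a long sum of boundary divisors of $M_2$; blowing up $Z_2$ then resolves the singular points $P_{jk,il,56}$, $P_{jl,ik,56}$, $P_{kl,ij,56}$ lying on $D_{i56,jkl}$, introducing the $L$-type exceptional divisors, and the dominant transforms of the various $D_{ab,cd,56}$ and $D_{ab,c5,d6}$ pieces must be tracked. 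I expect this to reduce, after cancellation of the terms subtracted in Step~2 and careful bookkeeping of which strict transforms survive, to the claimed eleven-term expression; the verification that the coefficients are all $1$ again rests on the reducedness statement from Section~\ref{preimage_structure} together with, if needed, a projection-formula intersection argument of the type used in Proposition~\ref{PullbackFormulas}.
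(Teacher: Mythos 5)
Your overall strategy is exactly the paper's: the lemma is deduced by feeding the formulas of Lemma~\ref{Preimages2} into Lemma~\ref{preimageformulas} for the blowup along $Z_2 = \bigcup D_{ab,cd56}^2$, and all six of your final formulas agree with the statement. However, two of your intermediate claims are false, and the second one undermines your sketch of the hardest case. First, $D_{l5,ijk6}^2$ is \emph{not} disjoint from the centers: in $X_1$ one has $D_{l5,ijk6}^1 = D_{l5}\times\oM_{0,5}$ and $D_{ab,cd56}^1 = D_{ab}\times D_{ab}$, and $D_{l5}\cap D_{ab}\neq\emptyset$ whenever $l\notin ab$, so these meet in a curve. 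The conclusions of parts (1)--(2) survive because the intersection is transversal, which is still covered by Lemma~\ref{preimageformulas}(1), but the justification ``their indices are incompatible'' is wrong.

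Second, and more seriously, in part (5) you assert that blowing up $Z_2$ ``resolves the singular points $P_{jk,il,56}$, $P_{jl,ik,56}$, $P_{kl,ij,56}$ lying on $D_{i56,jkl}$.'' Both halves of this are incorrect: divisors of type $D_{abc,def}$ are disjoint from all singular points (Section~\ref{Cartierdivs}), and Step~3 of the paper shows that $\pi_2^{-1}(Z_2)$ is already Cartier near the points $P_{ab,cd,56}$, so the blowup does nothing there --- the points it resolves are those of the form $P_{ab,c5,d6}$. The mechanism that actually cuts the seventeen components of $\pi_2^{-1}(D_{i56,jkl}^2)$ down to the eleven in the statement has nothing to do with resolving singularities: it is Lemma~\ref{preimageformulas}(2). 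The divisor $D_{i56,jkl}^2$ \emph{contains} the three centers $D_{jk,il56}^2$, $D_{jl,ik56}^2$, $D_{kl,ij56}^2$, and passing to the residual of the exceptional divisor removes exactly the corresponding six components $D_{ab,cd56}+D_{ab,cd,56}$ of $\pi_2^{-1}(Z_2)$. (The same correction applies to your parts (3)--(4): the residual removes $D_{ij,kl56}+D_{ij,kl,56}$, not just $D_{ij,kl56}$; subtracting only the latter from $r_6^*(D_{ij})$ leaves a spurious term $D_{ij,kl,56}$.) With these repairs the bookkeeping closes up and gives the stated formulas, but as written your account of part (5) would not produce the answer.
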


\begin{proof}
  This follows from Lemma \ref{Preimages2} and Lemma \ref{preimageformulas}.
\end{proof}

\begin{corollary}
  $\pi_3^{-1}(D_{56,1234}^3) = D_{56,1234}$.
\end{corollary}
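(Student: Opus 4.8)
The plan is to imitate the proofs of the corollaries in Steps 1 and 2: realize $D_{56,1234}^3$ as an intersection of hypersurfaces whose preimages were already computed in Lemma \ref{Preimages3}, and then read off the common component.

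First I would invoke Lemma \ref{CompleteIntersections}(3): $D_{56,1234}^3$ is the intersection of any three of the four divisors $D_{i56,jkl}^3$; take the three with $i \in \{1,2,3\}$. Since set-theoretic preimage commutes with intersection,
\[
  \pi_3^{-1}(D_{56,1234}^3) = \pi_3^{-1}(D_{156,234}^3) \cap \pi_3^{-1}(D_{256,134}^3) \cap \pi_3^{-1}(D_{356,124}^3)
\]
as sets, and each factor is the explicit union of eleven boundary divisors of $M_3$ given by Lemma \ref{Preimages3}(5). (Note moreover that each $D_{i56,jkl}^3$ is Cartier on the smooth variety $X_3$, so each factor is an honest effective Cartier divisor on $M_3$.)

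Next comes the bookkeeping. The inclusion $\supseteq$ is immediate: $D_{56,1234}$ occurs in the list of Lemma \ref{Preimages3}(5) for every $i$, so $D_{56,1234} \subseteq \pi_3^{-1}(D_{i56,jkl}^3)$ for all $i$, hence lies in the triple intersection. For $\subseteq$ one compares the three eleven-term lists and checks that $D_{56,1234}$ is the unique divisor common to all three. Here one must keep track of the fact that the divisors $D_{ij,kl,56}$ and the $D_{ab,c5,d6}$ are defined only up to \emph{cyclic} (not dihedral) permutation of their index triples, so that e.g. $D_{12,34,56}$ appears in the lists for $i=1$ and $i=2$, but its reverse cyclic order $D_{34,12,56} \neq D_{12,34,56}$ is what shows up in the list for $i=3$; this is precisely why three of the $D_{i56,jkl}^3$ are needed and two do not suffice. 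Granting this, the divisorial part of the triple intersection is exactly $D_{56,1234}$.

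The step requiring the most care is that the set-theoretic intersection of unions of divisors is not merely the union of their common components: it also contains all pairwise and triple intersections of the remaining components, which a priori are lower-dimensional and not obviously inside $D_{56,1234}$. To finish I would show every such stratum is contained in $D_{56,1234}$. This can be done with the combinatorics of the boundary of $\oM(3,6)$ — two boundary divisors meet only along a codimension-two boundary stratum, and one checks from the boundary complex of Section \ref{BoundaryComplex} (equivalently from Proposition \ref{RestrictionMaps}) that each such stratum coming from one divisor in list $i$ and one in list $j$ lies on $D_{56,1234}$ — or, near the singular points $P_{ij,kl,56}$, by the local analysis already used in Section \ref{preimage_structure}, which also gives that $\pi_3^{-1}(D_{56,1234}^3)$ is reduced there. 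Combining the two inclusions yields $\pi_3^{-1}(D_{56,1234}^3) = D_{56,1234}$, with reduced structure, which is what the subsequent blowup $g_4 : M_4 \to M_3$ requires.
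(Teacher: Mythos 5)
Your proposal follows exactly the paper's argument: apply Lemma \ref{CompleteIntersections}(3) to write $D_{56,1234}^3$ as the intersection of three of the $D_{i56,jkl}^3$, pull back each factor via Lemma \ref{Preimages3}(5), and identify $D_{56,1234}$ as the unique common component. The paper states this in two lines; your additional bookkeeping (the cyclic-permutation convention explaining why three factors are needed, and the check that the lower-dimensional cross-terms such as $D_{12,34,56}\cap D_{12,56,34} = P_{12,34,56}$ already lie in $D_{56,1234}$) is correct and merely makes explicit what the paper leaves implicit.
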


\begin{proof}
  By Lemma \ref{CompleteIntersections}, we can write (set-theoretically)
  \begin{align*}
    \pi_3^{-1}(D_{56,1234}^3) &= \pi_3^{-1}(D_{i56,jkl}^3) \cap \pi_3^{-1}(D_{j56,ikl}^3) \cap \pi_3^{-1}(D_{k56,ijl}^3) \\
                              &= D_{56,1234} \text{ by Lemma \ref{Preimages3}.}
  \end{align*}
\end{proof}

\subsubsection{Step 4}

By the corollary, $\pi_3^{-1}(Z_3) = D_{56,1234}$, which fails to be Cartier at
the three remaining singular points $P_{ij,kl,56}$ of $M_3$. By Section
\ref{preimage_structure}, $\pi_3^{-1}(Z_3)$ is reduced near these points, and by
Lemma \ref{LocalSmallResolutions} the blowup gives the small resolution with
fiber $\bP^1$ over these points. Combined with our description of $M_3$, we see
that $M_4 \cong \wM_1(3,6)$.

\begin{lemma} \label{Preimages4}
  \begin{enumerate}
  \item $\pi_4^{-1}(D_{l5,ijk6}^4) = r_6^*(D_{l5}) - D_{ijk,l56}$.
  \item $\pi_4^{-1}(D_{l6,ijk5}^4) = r_5^*(D_{l6}) - D_{ijk,l56}$.
  \item $\pi_4^{-1}(D_{kl5,ij6}^4) = D_{kl5,ij6} + D_{ij,k5,l6} + D_{ij,l5,k6}$.
  \item $\pi_4^{-1}(D_{kl6,ij5}^4) = D_{kl6,ij5} + D_{ij,k6,l5} + D_{ij,l6,5}$.
  \item $\pi_4^{-1}(D_{i56,jkl}^4) = D_{i56,jkl} + D_{ij,kl,56} + D_{ik,jl,56} +
    D_{il,jk,56} + D_{jk,i5,l6} + D_{jk,i6,l5} + D_{jl,i5,k6} + D_{jl,i6,k5} +
    D_{kl,i5,j6} + D_{kl,i6,j5}$.
  \item $\pi_4^{-1}(D_{ij,kl56}^4) = D_{ij,kl56} + D_{ij,kl,56}$.
  \end{enumerate}
\end{lemma}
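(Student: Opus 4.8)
The plan is to proceed exactly as in the analogous Lemmas \ref{Preimages1}--\ref{Preimages3}: compute each dominant transform $D^4$ from the known $D^3$ by feeding the formulas of Lemma \ref{Preimages3} into Lemma \ref{preimageformulas}, now applied to the blowup $f_4 : X_4 \to X_3$ along the single center $Z_3 = D_{56,1234}^3$. The key input is the incidence of each divisor $D^3$ with $Z_3$: by Lemma \ref{CompleteIntersections}(3), $Z_3$ is contained in each $D_{i56,jkl}^3$, it is \emph{disjoint} from the transforms $D_{l5,ijk6}^3$, $D_{l6,ijk5}^3$, $D_{kl5,ij6}^3$, $D_{kl6,ij5}^3$ (none of these boundary strata on $\oM(3,6)$ meets $D_{56,1234}$ — e.g.\ $D_{l5,ijk6}$ and $D_{56,1234}$ carry incompatible matroid tilings), and $D_{ij,kl56}^3$ meets $Z_3$ transversally along a surface. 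Once these incidences are verified, Lemma \ref{preimageformulas}(1) immediately gives parts (1)--(4) and (6): for the divisors disjoint from or transverse to $Z_3$ the preimage is unchanged (since $g_4 : M_4 \to M_3$ is, away from the three points $P_{ij,kl,56}$, an isomorphism, and at those points the residual/inverse-image distinction does not affect the listed strata because none of them contains $D_{56,1234}$).

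The only case requiring part (2) of Lemma \ref{preimageformulas} is (5): since $D_{56,1234}^3 \subset D_{i56,jkl}^3$, the transform $D_{i56,jkl}^4$ is the residual scheme to the exceptional divisor $D_{56,1234}$ in $g_4^{-1}\pi_3^{-1}(D_{i56,jkl}^3)$. Concretely this means starting from the eleven-term formula of Lemma \ref{Preimages3}(5) for $\pi_3^{-1}(D_{i56,jkl}^3)$ and subtracting the copies of $D_{56,1234}$ that the blowup $g_4$ introduces; since $\pi_3^{-1}(Z_3) = D_{56,1234}$ is reduced near the three singular points (Section \ref{preimage_structure}) and the blowup is the small resolution with $\mathbf{P}^1$-fiber there, exactly one copy of $D_{56,1234}$ is removed, yielding the ten-term expression in part (5). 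I would also record that the various $D_{ij,kl,56}$ already appearing in Lemma \ref{Preimages3}(5) are the strict transforms and are unchanged by $g_4$ since the small resolution $M_4 \to M_3$ at $P_{ij,kl,56}$ does not alter the divisors $D_{ij,kl,56}$ themselves (it only separates them from $D_{ij,mn,kl}$), as in Proposition \ref{SmallResolutions}(2).

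The main obstacle is the bookkeeping in part (5): one must check that \emph{no other} component besides the single $D_{56,1234}$ is lost when passing to the residual scheme — i.e.\ that none of the other ten listed divisors $D_{jk,i5,l6}, \ldots, D_{kl,i6,j5}$ or the $D_{ij,kl,56}$-type divisors contains $D_{56,1234}^3$ or is otherwise contracted by $g_4$. This is a local check at the three points $P_{ij,kl,56}$ in the coordinates of Section \ref{singularities}, exactly analogous to the computations justifying Lemma \ref{Preimages3}, and I expect it to go through by the same methods: the explicit description of $\pi_3$ near these points and the fact that $g_4$ is (étale-)locally the standard small resolution $\mathrm{Bl}_{V(z_1,z_4)}$ of the $\mathbf{P}^1\times\mathbf{P}^2$-cone singularity, so the residual computation reduces to the toric/monomial identity $D_{ij,klmn} + D_{ij,kl,mn} = V(z_4)$ from Section \ref{Cartierdivs}. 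With that local picture in hand, parts (1)--(6) all follow from Lemma \ref{preimageformulas} as in the previous steps.
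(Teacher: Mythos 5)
Your proposal matches the paper's proof, which obtains these formulas exactly as you do: by feeding Lemma \ref{Preimages3} into Lemma \ref{preimageformulas} for the blowup of $X_3$ along $D_{56,1234}^3$, with the only change being the removal of one copy of $D_{56,1234}$ in part (5) via the residual-scheme case, since $D_{56,1234}^3$ is contained in each $D_{i56,jkl}^3$ by Lemma \ref{CompleteIntersections}(3), while the remaining divisors have preimages disjoint from $D_{56,1234}$ in $M_3$ (or, for $D_{ij,kl56}$, do not contain the center) and are therefore unaffected. (A minor slip: $D_{ij,kl56}^3$ meets the center in a curve, not a surface, and the paper describes this intersection as nontransversal; neither point affects your argument, which only needs that $D_{ij,kl56}^3$ does not contain $D_{56,1234}^3$.)
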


\begin{proof}
  This follows from Lemma \ref{Preimages3} and Lemma \ref{preimageformulas}.
\end{proof}

\begin{corollary}
  $\pi_4^{-1}(D_{ij,kl,mn}^4) = D_{ij,kl,mn}$.
\end{corollary}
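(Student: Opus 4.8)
The plan is to mimic the three previous corollaries in this section, using Lemma~\ref{CompleteIntersections}(4)--(6) to write $D_{ij,kl,mn}^4$ as a complete intersection of two divisors whose preimages under $\pi_4$ are computed in Lemma~\ref{Preimages4}, and then intersect those preimages. Concretely, by duality and the cyclic symmetry of the indices $D_{ij,kl,mn}$ there are two cases to consider: a divisor of the form $D_{ij,k5,l6}$ (equivalently $D_{ij,k6,l5}$ by duality), and a divisor of the form $D_{ij,kl,56}$. In the first case, Lemma~\ref{CompleteIntersections}(4) gives $D_{ij,k5,l6}^4 = D_{kl5,ij6}^4 \cap D_{l6,ijk5}^4$, so set-theoretically $\pi_4^{-1}(D_{ij,k5,l6}^4) = \pi_4^{-1}(D_{kl5,ij6}^4) \cap \pi_4^{-1}(D_{l6,ijk5}^4)$, and one plugs in Lemma~\ref{Preimages4}(3) and (2). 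In the second case, Lemma~\ref{CompleteIntersections}(6) gives $D_{ij,kl,56}^4 = D_{ij,kl56}^4 \cap D_{i56,jkl}^4$, and one plugs in Lemma~\ref{Preimages4}(6) and (5).

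The heart of the argument is the bookkeeping: one must check that the intersection of the two (reducible) preimage divisors is exactly the single dominant transform $D_{ij,kl,mn}$, with no extra components. For the first case, $\pi_4^{-1}(D_{kl5,ij6}^4) = D_{kl5,ij6} + D_{ij,k5,l6} + D_{ij,l5,k6}$ and $\pi_4^{-1}(D_{l6,ijk5}^4) = D_{l6,ijk5} + D_{ij,k6,l5} + D_{ij,l6,k5}$; one verifies using the incidence relations of the boundary divisors of $\oM(3,6)$ (Section~\ref{BoundaryDivisors}, and the explicit strata structure carried over to $\wM_1(3,6)$) that the only boundary divisor appearing in both sums, and whose actual intersection is nonempty, is $D_{ij,k5,l6}$ — the other pairwise intersections are empty in $\wM_1(3,6)$ because the blowups of Steps~1--4 have already separated them, or because the corresponding strata of $\oM(3,6)$ were disjoint to begin with. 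The second case is analogous: among $D_{ij,kl56} + D_{ij,kl,56}$ and the ten summands of $\pi_4^{-1}(D_{i56,jkl}^4)$, the only common term with nonempty intersection is $D_{ij,kl,56}$.

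I would also need a brief word on reducedness / why the set-theoretic computation suffices: as noted in Section~\ref{preimage_structure}, $\pi_4^{-1}(Z_4)$ is Cartier away from the singular points of $M_4$, but $M_4 \cong \wM_1(3,6)$ is already smooth, so $Z_4 = \bigcup D_{ij,kl,mn}^4$ has Cartier (indeed smooth, by Lemma~\ref{CompleteIntersections}) preimage everywhere; hence it is enough to identify $\pi_4^{-1}(D_{ij,kl,mn}^4)$ as a set, and the result is the reduced divisor $D_{ij,kl,mn}$. The main obstacle is purely combinatorial: correctly ruling out the spurious components of the two-fold intersection, which requires knowing that the specific pairs of strata (e.g. $D_{kl5,ij6}$ and $D_{ij,k6,l5}$, or $D_{ij,kl56}$ and $D_{jk,i5,l6}$) have been made disjoint by the preceding sequence of blowups — a fact that follows from the local analysis of Steps~1--4 but must be invoked carefully for each pair. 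Once that is in hand, the proof is a two-line computation in each of the two cases, exactly parallel to the preceding corollaries.
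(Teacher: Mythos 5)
Your proposal follows the paper's proof exactly: both cases reduce $D^4_{ij,kl,mn}$ to a complete intersection via Lemma~\ref{CompleteIntersections}(4)--(6) and intersect the preimages from Lemma~\ref{Preimages4}, with the set-theoretic computation justified as in Section~\ref{preimage_structure}. One small transcription slip: your explicit expansion of $\pi_4^{-1}(D_{l6,ijk5}^4)$ should, per Lemma~\ref{Preimages4}(2) and Proposition~\ref{PullbackFormulas}, read $D_{l6,ijk5} + D_{ij,k5,l6} + D_{ik,j5,l6} + D_{jk,i5,l6}$ (four terms, containing the surviving component $D_{ij,k5,l6}$), but since you defer to the lemma for the actual formula this does not affect the argument.
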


\begin{proof}
  Up to symmetry, there are two cases.
  \begin{enumerate}
  \item Since $D_{ij,k5,l6}^4 = D_{kl5,ij6}^4 \cap D_{l6,ijk5}^4$ by Lemma
    \ref{CompleteIntersections}, we have (set-theoretically)
    \begin{align*}
      \pi_4^{-1}(D_{ij,k5,l6}^4) &= \pi_4^{-1}(D_{kl5,ij6}^4) \cap \pi_4^{-1}(D_{l6,ijk5}^4) \\
                                 &= D_{ij,k5,l6} \text{ by Lemma \ref{Preimages4}.}
    \end{align*}
  \item Since $D_{ij,kl,56}^4 = D_{ij,kl56}^4 \cap D_{i56,jkl}^4$ by Lemma
    \ref{CompleteIntersections}, we have (set-theoretically)
    \begin{align*}
      \pi_4^{-1}(D_{ij,kl,56}^4) &= \pi_4^{-1}(D_{ij,kl56}^4) \cap \pi_4^{-1}(D_{i56,jkl}^4) \\
                                 &= D_{ij,kl,56} \text{ by Lemma \ref{Preimages4}.}
    \end{align*}
  \end{enumerate}
\end{proof}

\subsubsection{Step 5}

By the corollary, $\pi_4^{-1}(Z_4) = \bigcup D_{ij,kl,mn}$, which is a Cartier
divisor on $M_4$, so $M_5 \cong M_4 \cong \wM_1(3,6)$.

\begin{claim}
  The morphism $\pi_5 : M_5 \to X_5$ is an isomorphism.
\end{claim}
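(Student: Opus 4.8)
The plan is to show that $\pi_5 : M_5 \to X_5$ is a birational projective morphism between smooth varieties which is an isomorphism away from a codimension $\geq 2$ locus, and then rule out the possibility that $\pi_5$ contracts anything. First I would recall what has already been established: $\pi_1 = r_6 \times r_5 : M_1 \to X_1$ is birational (an isomorphism on the interior $M(3,6)$, by the discussion of $q_{56}$), and each subsequent square
\[
  \begin{tikzcd}
    M_{k+1} \ar[r, "\pi_{k+1}"] \ar[d, "g_{k+1}"] & X_{k+1} \ar[d, "f_{k+1}"] \\
    M_{k} \ar[r, "\pi_{k}"] & X_{k}
  \end{tikzcd}
\]
is the one from \cite[Corollary 7.15]{hartshorneAlgebraicGeometry1977}, where $g_{k+1}$ is the blowup of $M_k$ along $\pi_k^{-1}(Z_k)$. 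Since $f_{k+1}$ is projective and $g_{k+1}$ is projective, and $\pi_1$ is projective, $\pi_5$ is projective; since all the maps are birational, $\pi_5$ is birational. Moreover $M_5$ is smooth (it is $\wM_1(3,6)$, a small resolution, so smooth) and $X_5$ is smooth by the Corollary following Section~\ref{BlowupCenters}.

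Next I would pin down the exceptional locus of $\pi_5$. Over the interior $M(3,6) \subset X_5$, $\pi_5$ is an isomorphism, so $\pi_5$ is an isomorphism outside the preimage of the boundary. I would then work boundary divisor by boundary divisor: the content of Steps~1--5 is precisely the computation of $\pi_k^{-1}(Z_k)$ for each $k$, and in each step one sees that $g_{k+1} : M_{k+1} \to M_k$ is either an isomorphism (Steps 2 and 5, where $\pi_k^{-1}(Z_k)$ is already Cartier) or a small resolution of some of the singular points $P_{ij,kl,mn}$ (Steps 3 and 4, via Lemma~\ref{LocalSmallResolutions}). In particular $g_{k+1}$ never has an exceptional divisor, so no boundary divisor of $\oM(3,6)$ is created or destroyed, and the dominant transform $D$ in $M_5$ of each boundary divisor $D$ of $\oM(3,6)$ maps birationally onto the corresponding dominant transform $D^5$ in $X_5$. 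Using Proposition~\ref{SmallResolutions}(2) together with the explicit description of the centers $Z_k$ and the known structure of the boundary divisors of $X_5$ (each $D^5$ is obtained from the corresponding $D^0 \subset \bP^2 \times \bP^2$ listed in Section~\ref{blowupNotation} by the relevant blowups), I would check that $D \to D^5$ is in fact an isomorphism for every boundary divisor. Since the boundary divisors of $M_5$ cover the whole exceptional locus of $\pi_5$ and $\pi_5$ restricts to an isomorphism on each of them as well as on the interior, $\pi_5$ is quasi-finite, hence (being proper and birational onto a normal variety) an isomorphism by Zariski's main theorem.

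Concretely, the cleanest way to run the last step is: $\pi_5$ is a proper birational morphism and $X_5$ is smooth, hence normal, so by Zariski's main theorem it suffices to show $\pi_5$ has finite fibers, i.e.\ contracts no positive-dimensional subvariety. A positive-dimensional fiber of $\pi_5$ would have to lie in the preimage of the boundary of $X_5$, hence in some boundary divisor $D$ of $M_5$; but by the previous paragraph $\pi_5|_D : D \to D^5$ is an isomorphism, a contradiction. Alternatively — and this is the version I expect the authors to use — one tracks the exceptional loci directly: each $g_{k+1}$ is either an isomorphism or an isomorphism outside a codimension-$2$ locus over which it has fiber $\bP^1$, and each $f_{k+1}$ is a blowup along a smooth center of codimension $\geq 2$; comparing the exceptional divisor of $f_{k+1}$ with $\pi_k^{-1}(Z_k)$ computed in Steps 1--5 shows that $\pi_{k+1}$ is an isomorphism over the generic point of every exceptional divisor of $f_{k+1}$, and an induction on $k$ (the base case $\pi_1 = r_6 \times r_5$ being an isomorphism because $q_5, q_6$ are Kapranov's isomorphisms $\oM_{0,5} \cong Bl_4\bP^2$) upgrades this to $\pi_{k+1}$ being an isomorphism.

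The main obstacle is the bookkeeping in Steps~3 and~4, where $g_{k+1}$ is a nontrivial small resolution: there one must be careful that the small resolution of $M_k$ produced by blowing up $\pi_k^{-1}(Z_k)$ (which, near a singular point, is just a single non-Cartier boundary divisor, so Lemma~\ref{LocalSmallResolutions} applies) matches the small resolution one expects from the $X$-side, i.e.\ that $M_4 \cong \wM_1(3,6)$ rather than some other small resolution $\wM_{S_1,S_2}(3,6)$. This is exactly the local computation flagged in Section~\ref{preimage_structure} — near $P_{ij,k5,l6}$ and $P_{ij,kl,56}$ one has the explicit coordinate form of $\pi_k$, and one reads off that the relevant exceptional line is $L_{ij,kl,mn}$ rather than a plane $\Pi_{ij,kl,mn}$ — so once that is in hand the isomorphism $\pi_5 : M_5 \xrightarrow{\sim} X_5$ follows formally.
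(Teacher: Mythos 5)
Your overall frame (a projective birational morphism between smooth varieties; rule out contraction) matches the paper's, but both mechanisms you propose for ruling out contraction have problems. The inductive version rests on a false premise: $\pi_1 = r_6 \times r_5 : \oM(3,6) \to \oM_{0,5} \times \oM_{0,5}$ is \emph{not} an isomorphism --- it is only birational, contracting the divisors $D_{ijk,l56}$, $D_{ij,kl56}$, $D_{56,1234}$ and $D_{ij,kl,mn}$ to the surfaces $D^1$ listed in Section \ref{blowupNotation}; this is the entire reason the further blowups $f_2,\ldots,f_5$ are needed. Kapranov's identification $\oM_{0,5} \cong Bl_4\bP^2$ concerns $f_1 : X_1 \to X_0$, not $\pi_1$, so the base case of your induction fails and no intermediate $\pi_k$ is an isomorphism. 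Your first route (show $\pi_5$ restricts to an isomorphism $D \to D^5$ on every boundary divisor, deduce quasi-finiteness, apply Zariski's main theorem) is logically sound, but the step you defer --- ``I would check that $D \to D^5$ is in fact an isomorphism for every boundary divisor'' --- is essentially the whole content of the claim for the contracted divisors: for $D_{ijk,l56}$, $D_{ij,kl56}$, $D_{56,1234}$, $D_{ij,kl,mn}$ one would have to identify each boundary divisor of $\wM_1(3,6)$ with the corresponding exceptional divisor (a projectivized normal cone) of the blowup sequence, which is at least as much work as the original statement.

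The paper avoids all of this with a divisor count. The exceptional divisors of the composite $M_5 \to M_1 \xrightarrow{\pi_1} X_1$ are exactly $D_{ijk,l56}$, $D_{ij,kl56}$, $D_{56,1234}$, $D_{ij,kl,mn}$ (read off from the images $D^1$, using that $M_5 \to M_1$ is small), and these are by construction precisely the exceptional divisors of $X_5 \to X_1$; hence $\pi_5$ itself contracts no divisor. Since $X_5$ is smooth, purity of the exceptional locus forces the exceptional locus of the projective birational morphism $\pi_5$ to be of pure codimension one, so it is empty and $\pi_5$ is an isomorphism. This is the idea missing from your write-up: you never need $\pi_5$ to be an isomorphism on each boundary divisor, only that it contracts no divisor, and that follows from bookkeeping already done in Steps 1--5.
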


\begin{proof}
  Since the original morphism $\pi_1 : \oM(3,6) \to X_1$ is birational, and each
  intermediate morphism is a blowup, it follows that $\pi_5: M_5 \to X_5$ is
  birational.

  The exceptional divisors of the composition $M_5 \to M_1 \xrightarrow{\pi_1}
  X_1$ are exactly $D_{ijk,l56},D_{ij,kl56},D_{56,1234}$, and $D_{ij,kl,mn}$,
  and by construction these are also divisors in $X_5$. Thus $\pi_5: M_5 \to
  X_5$ has no exceptional divisors. Since $\pi_5$ is a projective birational
  morphism between smooth varieties, it follows that $\pi_5$ is an isomorphism.
\end{proof}

This completes the proof of Theorem \ref{BlowupConstruction}.

\subsection{Independence from Luxton's methods} \label{IndependentLuxton}

We show here that our construction is independent from Luxton's tropical methods
on the structure of $\oM(3,6)$. This is important to ensure that our proof that
$(\oM(3,6),B)$ is log canonical (Section \ref{BirationalGeometry}) is not
circular. The facts about $\oM(3,6)$ used in the proof of Theorem
\ref{BlowupConstruction} are as follows.
\begin{enumerate}
\item The description of the boundary divisors (Section \ref{BoundaryDivisors}).
  Luxton's proof of these descriptions is independent of his tropical arguments;
  he uses only the explicit descriptions of the stable hyperplane arrangements
  parameterized by these boundary divisors \cite[Section
  4.2.4]{luxtonLogCanonicalCompactification2008}.
\item The description of the boundary complex (Section \ref{BoundaryComplex}).
  As remarked previously, the boundary complex of $\oM(3,6)$ can be determined
  explicitly by examining the stable hyperplane arrangements parameterized by
  $\oM(3,6)$.
\item The description of the singularities of $\oM(3,6)$ (Proposition
  \ref{singularitiesDescription}). We give an independent proof below.
\end{enumerate}

\begin{proof}[Proof of Proposition \ref{singularitiesDescription}]
  Let
  \[
    q = q_{12} \times q_{34} \times q_{56} : \oM(3,6) \to (\bP^2 \times
    \bP^2)^3,
  \]
  and let $Z = q(\oM(3,6))$. Then $q$ defines a birational morphism $\oM(3,6)
  \to Z$ which is an isomorphism on the interior. As in Section \ref{birmaps}
  and the setup preceding the statement of Theorem \ref{BlowupConstructions}, we
  can explicitly compute the behavior of $q$ on each boundary stratum. We find
  that the exceptional locus of $q$ consists of the following.
  \begin{enumerate}
  \item Each of the twelve divisors $D_{ij,kl,mn}$ with exactly one of
    $ij,kl,mn$ equal to $12,34$, or $56$ gets contracted to a surface $\cong
    \bP^1 \times \bP^1$ in $Z$.
  \item Each of the six surfaces $D_{i5,jkl6} \cap D_{j6,ikl5}$ ($i \neq j$)
    gets contracted to a singular point $q(P_{i5,j6,kl}) \in Z$.
  \item Sixteen lines of the form $D_{ijk,lmn} \cap D_{ilm,jkn} \cap
    D_{jmn,ikl}$ are contracted to points. These lines connect the divisors
    $D_{12,34,56}$ and $D_{12,56,34}$ to the divisors $D_{ij,kl,mn}$ where none
    of $ij,kl,mn$ are $12,34$ or $56$.
  \end{enumerate}

  Similarly, as in Section \ref{birmaps} we determine explicit equations for $q$
  on the interior, hence equations for $Z \subset (\bP^2 \times \bP^2)^3$. From
  this we find that $Z$ is singular at a total of 31 points:
  \begin{enumerate}
  \item 15 points $q(P_{ij,kl,mn})$, and
  \item 16 additional points, the images of the contracted lines.
  \end{enumerate}
  Furthermore, we compute that each singular point of $Z$ looks like $0 \in
  C(\bP^1 \times \bP^2)$.

  Since $q$ does not contract anything in a neighborhood $U$ of the point
  $P_{12,34,56}$, it follows that $q$ is an isomorphism on $U$, hence by the
  description of the singularites of $Z$ we find that $P_{12,34,56} \in
  \oM(3,6)$ looks like $0 \in C(\bP^1 \times \bP^2)$. By symmetry we deduce that
  all 15 points $P_{ij,kl,mn} \in \oM(3,6)$ have the same form.

  Near the extra 16 singular points of $Z$, the morphism $q : \oM(3,6) \to Z$
  looks like the resolution of $0 \in C(\bP^1 \times \bP^2)$ with fiber $\bP^1$.
  We conclude that $\oM(3,6)$ is smooth along these lines. It follows that
  $\oM(3,6)$ is smooth everywhere except for the 15 points $P_{ij,kl,mn}$, each
  of which looks like $0 \in C(\bP^1 \times \bP^2)$.
\end{proof}

\section{Intersection theory of small
  resolutions} \label{IntersectionTheorySmall}

\begin{theorem} \label{ChowResolutions} Let $\wM_{S_1,S_2}(3,6)$ be any small
  resolution of $\oM(3,6)$.
  \begin{enumerate}
  \item
    \[
      A^*(\widetilde{M}_{S_1,S_2}(3,6)) =
      \frac{\bZ[D_{ijk,lmn},D_{ij,klmn},D_{ij,kl,mn}]}{\text{the following
          relations}}
    \]
    \begin{enumerate}
    \item (Linear relations)
      \begin{enumerate}
      \item $D_{ij,kl,mn}=D_{mn,ij,kl}=D_{kl,mn,ij}$.
      \item $f^*(0) = f^*(1) = f^*(\infty)$, where $f$ is any composition of
        restriction and forgetful maps $\wM_{S_1,S_2}(3,6) \to \oM(3,6)
        \xrightarrow{r_i} \oM_{0,5} \xrightarrow{f_j} \oM_{0,4} = \bP^1$.
      \end{enumerate}
    \item (Multiplicative relations) $\prod D_i = 0$ if $\bigcap D_i =
      \emptyset$ in $\widetilde{M}_{S_1,S_2}(3,6)$ (see Remark
      \ref{BoundaryIntersections} below).
    \end{enumerate}
  \item The nontrivial (i.e. $\neq 0,1$) ranks of the Chow groups are
    \begin{align*}
      \rk A^1(\widetilde{M}_{S_1,S_2}(3,6)) &= 51, \\
      \rk A^2(\widetilde{M}_{S_1,S_2}(3,6)) &= 127 + \lvert S_2 \rvert, \\
      \rk A^3(\widetilde{M}_{S_1,S_2}(3,6)) &= 51.
    \end{align*}
  \item
    \begin{enumerate}
    \item $\Pic \wM_{S_1,S_2}(3,6)$ is generated by the boundary divisors,
      modulo the linear relations.
    \item A basis for $\Pic \wM_{S_1,S_2}(3,6)$ is given by
      \begin{enumerate}
      \item
        $D_{156,234},D_{256,134},D_{345,126},D_{346,125},D_{356,124},D_{456,123}$,
      \item all 15 $D_{ij,klmn}$,
      \item all 30 $D_{ij,kl,mn}$.
      \end{enumerate}
    \end{enumerate}
  \item (Over $\bC$) The map $cl : A_*(\widetilde{M}_{S_1,S_2}(3,6)) \to
    H_*(\widetilde{M}_{S_1,S_2}(3,6))$ is an isomorphism.
  \end{enumerate}
\end{theorem}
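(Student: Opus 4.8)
The plan is to realize $\widetilde{M}_{S_1,S_2}(3,6)$ — or at least the canonical resolution $\wM_1(3,6)$ — as an explicit iterated blowup of a variety whose intersection theory is completely understood, and then to propagate that understanding through each blowup. Concretely, for $\wM_1(3,6)$ I would use the factorization $X_5 \xrightarrow{f_5}\cdots\xrightarrow{f_2} X_1$ of Theorem \ref{BlowupConstruction}, with $X_1 = \oM_{0,5}\times\oM_{0,5}$, whose Chow ring is known by Keel's theorem (it is the tensor square $A^*(\oM_{0,5})\otimes A^*(\oM_{0,5})$, generated by boundary divisors with the Keel relations). Since the corollary in Section \ref{BlowupCenters} shows each $f_{k+1}$ is the blowup of a \emph{smooth} variety along a \emph{smooth} irreducible center (after refactoring into the disjoint/transverse pieces), I can apply Keel's blowup formula \cite[Theorem A.1]{keelIntersectionTheoryModuli1992} at each step. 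This gives an inductive presentation of $A^*(X_k)$; the centers $Z_k$ are themselves products of copies of $\oM_{0,5}$, $\bP^1$, $\bP^2$, or small blowups thereof (see Lemma \ref{CompleteIntersections} and the descriptions in Section \ref{BlowupCenters}), so their Chow rings are known and the restriction maps $A^*(X_k)\to A^*(Z_k)$ are explicit. One then checks that after all five blowups the resulting ring is generated by the strict transforms of the boundary divisors, and that every relation in Keel's blowup formula can be rewritten as a linear or multiplicative relation of the stated form — the exceptional divisors of the $f_k$ are precisely the strict transforms of $D_{ijk,l56}$, $D_{ij,kl56}$, $D_{56,1234}$, $D_{ij,kl,mn}$, and their defining relations in the blowup formula become the corresponding $\prod D_i = 0$ relations or get absorbed into the linear (forgetful/restriction pullback) relations via Proposition \ref{PullbackFormulas} and the complete-intersection identities of Lemma \ref{CompleteIntersections}.

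Once the presentation of $A^*(\wM_1(3,6))$ is in hand, I would deduce the general case $\widetilde{M}_{S_1,S_2}(3,6)$ by relating any two small resolutions through the common ``big'' resolution blowing up all exceptional lines and planes (described right after Proposition \ref{SmallResolutions}). Moving between $\wM_1$ and $\wM_{S_1,S_2}$ amounts to a single flop at each point of $S_2$: blow up the exceptional $\bP^1$ (replacing it by a $\bP^1\times\bP^2$-type exceptional divisor in the big resolution) and blow down in the other direction. Each such local modification is a blowup of a smooth center (an exceptional $\bP^1$ in $\wM_1$, which is a complete intersection of three boundary divisors by Proposition \ref{SmallResolutions}(3)) followed by a blowdown, so Keel's blowup formula again controls how the Chow ring changes, and one verifies directly that the presentation in terms of boundary divisors, linear relations, and multiplicative relations is preserved — only which intersections $\bigcap D_i$ are empty changes, consistently with the change in the boundary complex recorded in Proposition \ref{SmallResolutions}(5). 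The rank computations in part (2) then follow by counting monomials in the presentation (or Poincaré polynomial bookkeeping through the blowups: each blowup of a smooth center of codimension $c$ and dimension $d$ adds $c-1$ to a range of Betti numbers determined by $A^*(Z_k)$), with the single extra class in $A^2$ per element of $S_2$ reflecting the $\bP^2$ versus $\bP^1$ fiber; part (3) is then extracted as the degree-one part of the presentation, and the explicit basis is checked to be triangular against the boundary divisors modulo the linear relations. For part (4), the cycle class map is an isomorphism for $\oM_{0,5}\times\oM_{0,5}$ (it has a cellular decomposition / its cohomology is algebraic, being a product of $\oM_{0,5}$'s), and the property ``$cl: A_* \to H_*$ is an isomorphism'' is preserved under blowup along a smooth center with the same property \cite[Appendix]{keelIntersectionTheoryModuli1992} — since all centers $Z_k$ are products of projective spaces and $\oM_{0,5}$'s and small blowups of such, they have algebraic cohomology, so the claim propagates to $X_5=\wM_1(3,6)$ and then, via the flop description, to every $\widetilde{M}_{S_1,S_2}(3,6)$.

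The main obstacle I anticipate is \emph{bookkeeping}: verifying that the abstract relations produced by iterating Keel's blowup formula — which involve Segre classes of the normal bundles of the centers $Z_k$ and the restriction maps $A^*(X_k)\to A^*(Z_k)$ — genuinely collapse to exactly the two families of relations stated (linear pullbacks from $\oM_{0,4}=\bP^1$, and $\prod D_i = 0$ for empty intersections). This requires knowing the normal bundle of each center precisely (e.g. that $D_{56,1234}^3$, which is \emph{not} a complete intersection but is l.c.i. by Lemma \ref{CompleteIntersections}(3), still has controllable normal bundle) and tracking how the dominant transforms $D^{k}$ of the boundary divisors pull back at each stage — essentially the content of the Preimages Lemmas \ref{Preimages1}--\ref{Preimages4}, which compute $\pi_k^{-1}(Z_k)$ on the $M$-side and must be mirrored on the $X$-side. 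A secondary subtlety is making sure the presentation obtained is \emph{independent of the blowup order} (so that it manifestly carries the symmetry of the problem and matches the stated symmetric form); this should follow from the wonderful-compactification formalism of \cite{liWonderfulCompactificationArrangement2009} invoked in Section \ref{BlowupCenters}, but it needs to be checked that the centers form a building set so that reordering is legitimate. I would organize the proof so that these verifications are reduced to a finite, symmetry-reduced list of cases, and defer the most mechanical identities (such as the explicit Segre class computations for the six types of centers) to a lemma or to direct appeal to Lemma \ref{CompleteIntersections} and the known geometry of $\oM_{0,5}$.
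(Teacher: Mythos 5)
Your proposal follows essentially the same route as the paper: prove the presentation for $\wM_1(3,6)$ by iterating Keel's blowup formula along the sequence $X_5\to\cdots\to X_1=\oM_{0,5}\times\oM_{0,5}$ of Theorem \ref{BlowupConstruction} and checking that all Chern-polynomial and restriction relations collapse into the obvious ones, then transfer the result to an arbitrary $\wM_{S_1,S_2}(3,6)$ by passing through the common resolution with $\bP^1\times\bP^2$ fiber (blowing up the exceptional line on one side and the exceptional plane on the other, comparing the two Keel presentations), with ranks from the blowup decomposition of Chow groups and the HI property propagated through blowups in both directions. The obstacles you flag — the non-complete-intersection center $D_{56,1234}^3$ and the relation bookkeeping — are exactly where the paper spends its effort (Lemma \ref{Chern4} and the restriction-relation lemmas), so the plan is sound.
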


\begin{remark}
  The relations $D_{ij,kl,mn}=D_{mn,ij,kl}=D_{kl,mn,ij}$ reflect that these
  divisors are all the same. We will assume these relations implicitly for the
  remainder of this paper.
\end{remark}

\begin{remark} \label{BoundaryIntersections} Recall that the intersections of
  boundary divisors on $\oM(3,6)$ or any of its small resolutions are described
  by the boundary complex, see Sections \ref{BoundaryComplex},\ref{resolutions}.
  Explicitly, the multiplicative relations on any $A^*(\wM_{S_1,S_2}(3,6))$ are
  as follows.
  \begin{enumerate}
  \item (Relations from $\oM(3,6)$)
    \begin{enumerate}
    \item $D_{ijk,lmn}D_{abc,def} = 0$ if $\lvert ijk \cap abc \rvert = 2$.
    \item $D_{ij,klmn}D_{ab,cdef} = 0$ if $\lvert ij \cap ab \rvert = 1$.
    \item $D_{ij,kl,mn}D_{ab,cd,ef} = 0$ unless $\{ij,kl,mn\} = \{ab,cd,ef\}$.
    \item $D_{ijk,lmn}D_{ab,cdef} = 0$ if $\lvert ijk \cap ab \rvert = 1$.
    \item $D_{ijk,lmn}D_{ab,cd,ef} = 0$ unless $ijk=abc$ or $ijk=abd$ (after
      sufficient cyclic permutation).
    \item $D_{ij,klmn}D_{ab,cd,ef} = 0$ unless $ij=ab$ or $cd$ or $ef$.
    \end{enumerate}
  \item (Relations from $S_1$) $D_{ij,kl,mn}D_{ij,mn,kl} = 0$ for $P_{ij,kl,mn}
    \in S_1$.
  \item (Relations from $S_2$) $D_{ij,klmn}D_{kl,ijmn}D_{mn,ijkl} = 0$ for
    $P_{ij,kl,mn} \in S_2$.
  \end{enumerate}
\end{remark}

\subsection{General results} \label{GeneralResults}

We begin by recalling some general results on Chow rings and blowups.

\subsubsection{Setup}
Let $X$ be a nonsingular variety, let $Z \subset X$ be a regularly embedded
closed subscheme of codimension $d > 1$, and let $f:\wX \to X$ be the blowup of
$X$ along $Z$, with exceptional divisor $E$. By \cite[Example
17.5.1(c)]{fultonIntersectionTheory1998}, there is a split exact sequence
\begin{equation} \label{ChowRingSequence} 0 \to A^{k-d}Z \to A^{k-1}E \oplus
  A^kX \to A^k\wX \to 0.
\end{equation}

\subsubsection{Decomposition of Chow groups}
\begin{lemma} \label{ChowGroupsofBlowup} $A^k\wX \cong A^kX \oplus
  \bigoplus_{i=1}^{d-1} A^{k-i}Z$.
\end{lemma}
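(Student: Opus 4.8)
The plan is to derive the group-theoretic splitting directly from the split exact sequence \eqref{ChowRingSequence} together with the structure of $E$ as a projective bundle over $Z$. First I would recall that since $Z \subset X$ is regularly embedded of codimension $d$, the exceptional divisor $E = \mathbf{P}(N)$ is a projective bundle $p : E \to Z$ over $Z$, where $N = N_{Z/X}$ is the normal bundle (a vector bundle of rank $d$ on $Z$). By the projective bundle formula \cite[Theorem 3.3(b)]{fultonIntersectionTheory1998}, the pullback $p^* : A^*Z \to A^*E$ makes $A^*E$ a free $A^*Z$-module with basis $1, \xi, \dots, \xi^{d-1}$, where $\xi = c_1(\mathcal{O}_E(1))$; concretely,
\[
  A^{k-1}E \cong \bigoplus_{i=0}^{d-1} A^{k-1-i}Z.
\]
Thus $A^{k-1}E$ already contains a copy of $A^{k-d}Z$ as its top graded piece (the summand $i = d-1$), and the remaining summands give $\bigoplus_{i=1}^{d-1} A^{k-i}Z$.

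Next I would identify the map $A^{k-d}Z \to A^{k-1}E \oplus A^kX$ in \eqref{ChowRingSequence} and check that its composition with the projection to $A^{k-1}E$ is, up to sign and lower-order corrections, the inclusion of the top summand $A^{k-d}Z \hookrightarrow A^{k-1}E$ coming from multiplication by $\xi^{d-1}$. This is standard: the first map in the sequence sends $\alpha \mapsto (p^*\alpha \cdot (\text{polynomial in }\xi\text{ with leading term }\xi^{d-1}),\, -(\text{Gysin pushforward}))$, and in particular its $A^{k-1}E$-component is injective with image a direct summand complementary to $\bigoplus_{i=1}^{d-1} A^{k-i}Z$ inside the projective bundle decomposition. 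Therefore, quotienting $A^{k-1}E \oplus A^kX$ by the image of $A^{k-d}Z$ kills exactly that top summand, and what survives is
\[
  A^k\wX \;\cong\; A^kX \;\oplus\; \bigoplus_{i=1}^{d-1} A^{k-i}Z.
\]

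The step I expect to require the most care is verifying that the $A^{k-1}E$-component of the first map in \eqref{ChowRingSequence} really does hit a direct summand of the projective bundle decomposition — i.e. that the leading coefficient in $\xi$ is a unit (it is $1$, the top Segre/Chern class contribution being normalized away), so that no torsion or index issues arise and the quotient splits cleanly as graded abelian groups. Since the sequence \eqref{ChowRingSequence} is asserted to be \emph{split} exact by \cite[Example 17.5.1(c)]{fultonIntersectionTheory1998}, this is automatic once one records the explicit form of the maps; the content of the lemma is simply combining that splitting with the projective bundle formula for $A^*E$. No genuine obstacle arises here — the lemma is a bookkeeping consequence of two standard facts — so the proof is short, and the only thing to be careful about is not double-counting the $A^{k-d}Z$ that appears both as the kernel and as the top piece of $A^{k-1}E$.
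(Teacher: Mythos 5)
Your proposal is correct and follows essentially the same route as the paper: both combine the split exact sequence \eqref{ChowRingSequence} with the projective bundle decomposition $A^{k-1}E \cong \bigoplus_{i=1}^{d}A^{k-i}Z$ (the paper cites \cite[Example 17.5.1(b)]{fultonIntersectionTheory1998} for the latter) and cancel the $A^{k-d}Z$ summand against the kernel. Your additional verification that the first map hits a direct summand with unit leading coefficient is a more careful spelling-out of what the paper leaves implicit, but it is not a different argument.
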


\begin{proof}
  By \cite[Example 17.5.1(b)]{fultonIntersectionTheory1998}, $A^{k-1}E \cong
  \bigoplus_{i=1}^{d}A^{k-i}Z$, so the exact sequence \eqref{ChowRingSequence}
  becomes
  \[
    0 \to A^{k-d}Z \to A^{k-d}Z \oplus \bigoplus_{i=1}^{d-1}A^{k-i}Z \oplus A^kX
    \to A^k\wX \to 0.
  \]
\end{proof}

\subsubsection{Ranks of Chow groups}
\begin{corollary} \label{ChowGroupRanks} $\rk A^k\wX = \rk A^kX +
  \sum_{i=1}^{d-1} \rk A^{k-i}Z$.
\end{corollary}

\begin{proof}
  Immediate from Lemma \ref{ChowGroupsofBlowup}.
\end{proof}

\subsubsection{Homological results}
The next result is over $\bC$. Recall from
\cite{keelIntersectionTheoryModuli1992} that a scheme $X$ is called an
\textit{HI scheme} if $cl : A_*X \to H_*X$ is an isomorphism.

\begin{lemma} \label{HIschemes}\
  \begin{enumerate}
  \item If $X$ and $Z$ are HI schemes, then so is $\wX$.
  \item If $\wX$ and $Z$ are HI schemes, then so is $X$.
  \end{enumerate}
\end{lemma}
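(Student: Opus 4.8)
The plan is to compare the split exact sequence \eqref{ChowRingSequence} for Chow groups with its exact analogue in (Borel--Moore) homology, via the cycle class map, and then apply the short five lemma. Write $E$ for the exceptional divisor, $g : E \to Z$ for its structure map, $i : Z \hookrightarrow X$ and $j : E \hookrightarrow \wX$ for the inclusions, and $cl_Y : A_*Y \to H_*Y$ for the cycle class map of a complex variety $Y$, where $H_*$ denotes Borel--Moore homology (agreeing with singular homology on the proper spaces of interest); recall that $Y$ is an HI scheme precisely when $cl_Y$ is an isomorphism.

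The first reduction I would make is that $E$ is an HI scheme if and only if $Z$ is. Since $Z$ is regularly embedded in $X$ of codimension $d$, the exceptional divisor is the projectivized normal bundle $E = \bP(N_{Z/X})$, a Zariski-locally trivial $\bP^{d-1}$-bundle over $Z$. The projective bundle formula then gives compatible direct sum decompositions $A_*E \cong \bigoplus_{k=0}^{d-1} A_{*-k}Z$ and $H_*E \cong \bigoplus_{k=0}^{d-1} H_{*-2k}Z$, the isomorphisms in both cases being given by capping pullbacks from $Z$ with powers of $c_1(\mathcal{O}_E(1))$; since $cl$ commutes with flat pullback and with Chern class operations \cite[Ch.\ 19]{fultonIntersectionTheory1998}, the square relating these decompositions under $cl_E$ and $cl_Z$ commutes. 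Hence $cl_E$ is an isomorphism exactly when $cl_Z$ is, and the odd homology of $E$ vanishes exactly when that of $Z$ does.

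Next I would write down the homological analogue of \eqref{ChowRingSequence}. Because $X$ and $\wX$ are nonsingular, the blowup formula together with the projective bundle formula yields, in each degree, a split short exact sequence
\[
0 \to H_* Z \to H_* E \oplus H_* X \to H_* \wX \to 0,
\]
with maps induced by the same operations as in \eqref{ChowRingSequence} (the pushforwards $i_*$ and $j_*$, the pullback $f^*$, and the map $H_*Z \to H_*E$ built from $g^*$ and Chern classes of the universal quotient bundle on $E$). As $cl$ is compatible with all of these, \eqref{ChowRingSequence} and this sequence assemble into a commutative ladder with exact rows and vertical maps $cl_Z$, $cl_E \oplus cl_X$, $cl_{\wX}$. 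I would emphasize that one genuinely wants the Borel--Moore homology versions of the bundle and blowup formulas for $Z$ and $E$, since $Z$, though regularly embedded, need not be smooth as an abstract scheme; $X$ and $\wX$ are smooth, so there is no subtlety there.

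The two assertions then drop out of the short five lemma applied to this ladder. For (1): if $X$ and $Z$ are HI then $cl_X$ and $cl_Z$ are isomorphisms, and $cl_E$ is an isomorphism by the reduction above, so the left and middle verticals are isomorphisms and therefore so is $cl_{\wX}$; hence $\wX$ is HI. For (2): if $\wX$ and $Z$ are HI then $cl_{\wX}$ and $cl_Z$ are isomorphisms, and $cl_E$ is again an isomorphism, so the outer verticals are isomorphisms, forcing the middle vertical $cl_E \oplus cl_X$ to be an isomorphism; since the $cl_E$ summand is already an isomorphism, so is $cl_X$, and $X$ is HI. (The requisite vanishing of odd-degree homology is handled the same way: the homological sequence above holds in every degree, its odd part for $Z$ --- hence for $E$ --- vanishes by hypothesis, and exactness forces the odd homology of the remaining space to vanish.) The one step that requires care, and which I regard as the main point beyond diagram-chasing, is the input of the middle two paragraphs: the validity in Borel--Moore homology of the projective bundle and blowup decompositions and their compatibility with the Chow-theoretic ones under the cycle map. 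These are standard and exactly parallel to the algebraic statements recalled in \cite{fultonIntersectionTheory1998}, but they are what the argument really rests on.
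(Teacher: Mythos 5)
Your proposal is correct and follows essentially the same route as the paper: the paper cites Keel's Theorem A.2 for part (1) and proves part (2) by exactly your argument, namely the commutative ladder comparing the split exact sequence \eqref{ChowRingSequence} with its homological analogue, the short five lemma, and the observation that $cl_E$ is already an isomorphism because $E$ is a projective bundle over the HI scheme $Z$. Your additional care about the Borel--Moore projective bundle and blowup formulas is the content of Keel's Theorem A.2 and its proof, so nothing genuinely new or different is being done.
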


\begin{proof}
  \begin{enumerate}
  \item The first part is \cite[Theorem A.2]{keelIntersectionTheoryModuli1992}.
   
  \item Consider the following diagram with exact rows \cite[Proof of Theorem
    A.2]{keelIntersectionTheoryModuli1992}.
    \[
      \begin{tikzcd}
        0 \ar[r] & A_*Z \ar[r]\ar[d,"cl"] & A_*E \oplus A_*X \ar[r] \ar[d,
        "cl\oplus cl"] & A_*\wX \ar[r] \ar[d,"cl"] & 0 \\
        0 \ar[r] & H_*Z \ar[r] & H_*E \oplus H_*X \ar[r] & H_*\wX \ar[r] & 0
      \end{tikzcd}
    \]
    Since $cl : A_*Z \to H_*Z$ and $cl : A_*X \to H_*\wX$ are isomorphisms, it
    follows by the short five lemma that
    \[
      cl \oplus cl : A_*E \oplus A_*X \to H_*E \oplus H_*X
    \]
    is an isomorphism. The first direct summand $cl : A_*E \to H_*E$ is already
    an isomorphism because $E$ is a projective bundle over $Z$. It follows that
    $cl : A_*X \to H_*X$ is an isomorphism.
  \end{enumerate}
\end{proof}

\subsubsection{Generators of Chow rings}
\begin{lemma} \label{ChowGenerators}\ Suppose $A^*X$ is generated by divisor
  classes $D_1,\ldots,D_n$. Also assume that $A^*Z$ is generated by $A^1Z$.
  \begin{enumerate}
  \item $A^*\wX$ is generated by $f^*D_1,\ldots,f^*D_n,E$.
  \item $A^*\wX$ is generated by $\wD_1,\ldots,\wD_n,E$, where $\wD_i$ is the
    strict transform of $D_i$.
  \item Suppose $\wX$ is also the blowup of another nonsingular variety $Y$,
    with the same exceptional divisor $E$. Let $g : \wX \to Y$ denote this
    blowup. Then $A^*Y$ is generated by divisor classes $D_1',\ldots,D_n'$ such
    that $\wD_i' = \wD_i$.
  \end{enumerate}
\end{lemma}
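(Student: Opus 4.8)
The plan is to prove the three parts in sequence, deducing each from the exact sequence \eqref{ChowRingSequence} together with the projection formula and the description of $A^*E$ as a module over $A^*Z$. For part (1), the split exact sequence shows $A^k\wX$ is spanned by the image of $f^*A^kX$ together with the image of $A^{k-1}E$ under pushforward. Since $A^*X$ is generated by the divisor classes $D_i$, the first piece is generated by monomials in the $f^*D_i$. For the second piece, recall (from \cite[Example 17.5.1(b)]{fultonIntersectionTheory1998}, as already used in Lemma \ref{ChowGroupsofBlowup}) that $A^*E$ is generated over $A^*Z$ by the class $\zeta = c_1(\mathcal{O}_E(1))$, which equals the restriction $-E|_E$ up to sign; and by hypothesis $A^*Z$ is generated by $A^1Z$. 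Each class in $A^1Z$ is the restriction of a divisor class on $X$ (since $\wX \to X$ is a blowup and $Z \subset X$ is regularly embedded, $A^1Z$ is spanned by restrictions of Cartier divisors on $X$, which are combinations of the $D_i$), hence is $f^*(D)|_E$ for a combination $D$ of the $D_i$. So every class pushed forward from $E$ is, by the projection formula, a polynomial in the $f^*D_i$ and $E$ — indeed one divisible by $E$. This gives (1).

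For part (2), I would simply observe that the strict transform satisfies $\wD_i = f^*D_i - m_i E$ for the appropriate multiplicity $m_i \geq 0$ (with $m_i$ equal to the order of vanishing of $D_i$ along $Z$, which is $0$ or $1$ in the cases that arise, but this does not matter). Hence $f^*D_i \in \langle \wD_i, E\rangle$ as a ring, so the generating set $\{f^*D_i, E\}$ from part (1) can be replaced by $\{\wD_i, E\}$.

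For part (3), the key input is that $\wX \to Y$ is a blowup \emph{with the same exceptional divisor $E$}, and that $E$ is a projective bundle over its image in $Y$ — which must again be a regularly embedded center $W \subset Y$ with $E \to W$ the projectivized normal bundle. The hypothesis that $A^*Z$ is generated by $A^1Z$ was used above only through the fact that $A^*E$ is generated over the image of $A^*Y$ (equivalently $A^*X$) by $\zeta$; since $E$ is the same divisor in $\wX$ from either side, $\zeta$ (hence $E|_E$) is the same class, and $A^1W$ consists of restrictions of divisors on $Y$. So $A^*\wX$ is generated by the strict transforms $\wD_i'$ of a set of divisor generators $D_i'$ of $A^*Y$, together with $E$. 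Now I need to match these up: both $\{\wD_i, E\}$ and $\{\wD_i', E\}$ generate the same ring $A^*\wX$, and $A^1\wX = g^*A^1Y \oplus \mathbb{Z}E = f^*A^1X \oplus \mathbb{Z}E$. Writing each $\wD_i$ in terms of the $\wD_j'$ and $E$ in $A^1\wX$ and projecting away the $E$-component gives divisor classes $D_i'$ on $Y$ (combinations of the original divisor generators of $A^*Y$) with $\wD_i' = \wD_i$; these new $D_i'$ still generate $A^*Y$ because modulo $E$ the subring they generate maps onto all of $A^*Y$ under $g_*$ applied to the appropriate powers, or more directly because $g^* D_i' = \wD_i + m_i' E$ recovers a generating set of $g^* A^*Y$.

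The main obstacle I expect is part (3): one must be careful that a set of divisor \emph{generators} of $A^*Y$ really can be chosen whose strict transforms coincide with the $\wD_i$, rather than merely that the $\wD_i$ lie in the subring generated by $A^1Y$-classes and $E$. The resolution is that the decomposition $A^1\wX = g^*A^1Y \oplus \mathbb{Z}E$ lets one cancel the $E$-component of $\wD_i$ functorially, and the resulting classes on $Y$ automatically generate because $g^*$ is injective on $A^1$ and the original divisor generators of $A^*Y$ pull back into the subring generated by the adjusted classes and $E$. Everything else is a direct consequence of the split exact sequence \eqref{ChowRingSequence} and the projection formula, as in the proof of Lemma \ref{ChowGroupsofBlowup}.
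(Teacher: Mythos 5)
Your parts (1) and (2) follow essentially the paper's route: the split exact sequence \eqref{ChowRingSequence}, the description of $A^*E$ over $A^*Z$, the projection formula, and $f^*D_i = \wD_i + m_iE$. One warning about your justification in (1): the claim that ``$A^1Z$ is spanned by restrictions of Cartier divisors on $X$ because $Z$ is regularly embedded'' is false in general (take $Z$ an elliptic curve in $\bP^3$: the stated hypotheses of the lemma hold, yet $A^2\wX$ contains a copy of $\operatorname{Pic}(Z)$ and conclusion (1) fails). Surjectivity of $A^1X \to A^1Z$ is an implicit hypothesis that both your argument and the paper's use --- it holds for every center appearing in the paper --- but it does not follow from regular embeddedness, so it should be stated, not derived.

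The genuine gap is in part (3), where your argument is circular. You begin from ``a set of divisor generators $D_i'$ of $A^*Y$'' and then adjust these so that $\wD_i' = \wD_i$; but the existence of divisor generators of $A^*Y$ is precisely the content of part (3), and precisely how the lemma is used in Section \ref{ChowRemaining}, where generation by boundary divisors is known for one small resolution and must be \emph{deduced} for the other through the common blowup $\wM(3,6)$. Your fallback --- that the adjusted classes generate because $g^*D_i' = \wD_i + m_i'E$ ``recovers a generating set of $g^*A^*Y$'' --- only shows that $g^*D_1',\ldots,g^*D_n',E$ generate $A^*\wX$, which contains $g^*A^*Y$; it does not show that $g^*A^*Y$ lies in the subring generated by the $g^*D_i'$ alone, i.e.\ it does not eliminate $E$. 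The paper supplies exactly this missing step: take $D_i'$ to be divisors on $Y$ with strict transform $\wD_i$ (e.g.\ the images $g(\wD_i)$), note that $g^*D_1',\ldots,g^*D_n',E$ generate $A^*\wX$ and hence, via the splitting $A^1\wX \cong A^1Y \oplus A^0E$, that $D_1',\ldots,D_n'$ generate $A^1Y$; then descend a class $\alpha \in A^kY$ by writing $g^*\alpha$ in terms of degree-one classes and applying $g_*$, using $g_*g^*\alpha = \alpha$ and the fact that $g_*$ kills $E$. Some such pushforward (or module-splitting) argument is unavoidable, and without it part (3) is not proven.
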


\begin{proof}
  Since $A^*X$ and $A^*Z$ are both generated in degree 1, it follows by Lemma
  \ref{ChowGroupsofBlowup} that $A^*\wX$ is also generated in degree 1. The
  short exact sequence \eqref{ChowRingSequence} implies
  \[
    A^1\wX \cong A^1X \oplus A^0E,
  \]
  so the first part follows.

  The second part follows from the first because by \cite[Theorem
  6.7]{fultonIntersectionTheory1998}, we can write $f^*D_i$ as a sum of $\wD_i$
  and some multiple of the exceptional divisor.

  Let $D_1',\ldots,D_n' \in A^1Y$ be divisor classes such that $\wD_i' = \wD_i$.
  By \cite[Theorem 6.7]{fultonIntersectionTheory1998}, write
  \[
    \wD_i = g^*D_i' - m'E.
  \]
  Then since $\wD_1,\ldots,\wD_n,E$ generate $A^*\wX$, it follows that
  $g^*D_1',\ldots,g^*D_n',E$ generate $A^*\wX$. From the isomorphism
  \[
    A^1\wX \cong A^1Y \oplus A^0E,
  \]
  we find that $A^1Y$ is generated by $D_1',\ldots,D_n'$. It remains to show
  that $A^*Y$ is generated by $A^1Y$.

  Let $\alpha \in A^kY$ be nonzero. Then $g^*\alpha \in A^k\wX$, so we can write
  \[
    g^*\alpha = \beta_1 \cdots \beta_k,
  \]
  where $\beta_i \in A^1\wX$ is a linear combination of $\wD_1,\ldots,\wD_n,E$.
  By \cite[Proposition 6.7(b)]{fultonIntersectionTheory1998}, $g_*g^*\alpha =
  \alpha$, hence
  \[
    \alpha = g_*\beta_1 \cdots g_*\beta_k.
  \]
  By definition each $g_*\beta_i$ is either zero or in $A^1Y$. Since $\alpha
  \neq 0$, we conclude that all $g_*\beta_i \in A^1Y$, so $A^*Y$ is generated by
  $A^1Y$.
\end{proof}

\subsubsection{Keel's formula}

Our main technical tool for computing the Chow ring of a blowup is Keel's
formula.

\begin{lemma}[Keel's formula {\cite[Theorem A.1]{keelIntersectionTheoryModuli1992}}]
  Suppose $A^*X \to A^*Z$ is surjective with kernel $J_{Z/X}$. Let $P_{Z/X}(t)
  \in A^*(X)[t]$ be a Chern polynomial for $Z$ in $X$, i.e. any polynomial in
  $A^*(X)[t]$ which restricts to the Chern polynomial for $N_{Z/X}$ in
  $A^*(Z)[t]$. Then
  \[
    A^*(\wX) = \frac{A^*(X)[E]}{J_{Z/X} \cdot E, P_{Z/X}(-E)}.
  \]
\end{lemma}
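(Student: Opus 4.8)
The plan is to exhibit a surjective ring homomorphism $\phi$ from the right-hand side onto $A^*(\wX)$, and then deduce injectivity by comparing with the additive decomposition of Lemma \ref{ChowGroupsofBlowup}. Write $j\colon E \hookrightarrow \wX$, $g\colon E \to Z$ and $i\colon Z \hookrightarrow X$ for the evident maps, so that $f\circ j = i\circ g$, and let $\xi = c_1(\mathcal{O}_E(1))$ be the relative hyperplane class on the projective bundle $g\colon E = \bP(N_{Z/X}) \to Z$. Set $R = A^*(X)[E]/(J_{Z/X}\cdot E,\ P_{Z/X}(-E))$.

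First I would define $\phi\colon A^*(X)[E] \to A^*(\wX)$ by $\phi|_{A^*(X)} = f^*$ and $\phi(E) = [E]$, and check both families of relations map to $0$. For $\gamma \in J_{Z/X}$ the projection formula gives $f^*\gamma \cdot [E] = j_*(j^*f^*\gamma) = j_*(g^*(\gamma|_Z)) = 0$, so $J_{Z/X}\cdot E$ dies. For $P_{Z/X}(-E)$ I would take the constant term of $P_{Z/X}$ to be the class $[Z]$ (a legitimate lift of $c_d(N_{Z/X})$, since $i^*[Z] = c_d(N_{Z/X})$ by the self-intersection formula), and use the standard facts $\mathcal{O}_E(E) \cong \mathcal{O}_E(-1)$, hence $j^*[E] = -\xi$ and $[E]^k = (-1)^{k-1}j_*(\xi^{k-1})$ for $k \ge 1$. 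A short expansion with the projection formula then rewrites $\phi(P_{Z/X}(-E))$ as $f^*[Z] - j_*\big(\sum_{a=0}^{d-1} g^*c_a(N_{Z/X})\,\xi^{d-1-a}\big) = f^*[Z] - j_*(c_{d-1}(\mathcal{Q}))$, where $\mathcal{Q} = g^*N_{Z/X}/\mathcal{O}_E(-1)$ has rank $d-1$; and this vanishes by Fulton's blow-up formula $f^*[Z] = j_*(c_{d-1}(\mathcal{Q}))$ \cite[Theorem 6.7]{fultonIntersectionTheory1998}. So $\phi$ descends to a ring map $\phi\colon R \to A^*(\wX)$.

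Next I would show $\phi$ is surjective. By the split exact sequence \eqref{ChowRingSequence}, $A^k\wX$ is generated by $f^*(A^kX)$ together with the image of $A^{k-1}E$, and $A^{k-1}E = \bigoplus_{a=0}^{d-1} g^*(A^{k-1-a}Z)\cdot \xi^{a}$ by the projective bundle formula. For a generator $g^*(\bar z)\,\xi^{a}$, the hypothesis that $A^*X \to A^*Z$ is surjective lets me pick $z \in A^*X$ with $z|_Z = \bar z$; then $g^*\bar z = j^*f^*z$ and the image of the generator in $A^k\wX$ is $j_*(j^*f^*z \cdot \xi^{a}) = \pm f^*z \cdot [E]^{a+1}$, which lies in the image of $\phi$. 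Hence $A^*(\wX)$ is generated by $f^*A^*(X)$ and $[E]$, so $\phi$ is onto.

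The crux will be injectivity. Here I would first observe that $R$ is, as a graded abelian group, a quotient of $M := A^*(X) \oplus \bigoplus_{a=1}^{d-1} A^*(Z)$: since $P_{Z/X}(-E)$ is monic of degree $d$ in $E$, the relation $P_{Z/X}(-E) = 0$ expresses $E^d$, and hence every higher power, in terms of $1, E, \dots, E^{d-1}$ with coefficients in $A^*X$; and $J_{Z/X}\cdot E = 0$ forces the coefficient of $E^a$ for $1 \le a \le d-1$ to matter only modulo $J_{Z/X}$, i.e.\ to lie in $A^*(X)/J_{Z/X} = A^*(Z)$. Composing the resulting surjection $M \to R$ with $\phi$ gives a surjection $M \to A^*(\wX)$. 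But $A^*(\wX) \cong M$ as graded abelian groups by Lemma \ref{ChowGroupsofBlowup}, and in all the situations where this lemma is applied below the Chow groups involved are finitely generated in each degree; since a surjective endomorphism of a finitely generated abelian group is an isomorphism, the composite $M \to A^*(\wX)$ is an isomorphism, forcing both $M \to R$ and $\phi$ to be isomorphisms. (To dispense with the finiteness hypothesis one checks instead that the explicit classes $f^*x$ and $j_*(g^*\bar z \cdot \xi^{a-1})$ for $1 \le a \le d-1$ realize the splitting of \eqref{ChowRingSequence}, so that $M \to A^*(\wX)$ is literally the isomorphism of Lemma \ref{ChowGroupsofBlowup}.) I expect the only genuine work to be this last identification, together with keeping the sign conventions for $\xi$, $\mathcal{O}_E(\pm 1)$ and the projective-bundle relation consistent in the second paragraph; everything else is formal.
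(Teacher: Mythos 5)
The paper offers no proof of this lemma---it is quoted from Keel's appendix---so there is nothing in-house to compare against; your argument is a correct reconstruction of the standard one. The relation checks (projection formula for $J_{Z/X}\cdot E$, and the key formula $f^*[Z]=j_*(c_{d-1}(\mathcal{Q}))$ for $P_{Z/X}(-E)$), the surjectivity via the split sequence \eqref{ChowRingSequence} and the projective bundle formula, and the injectivity obtained by squeezing $R$ between two copies of $M=A^*(X)\oplus\bigoplus_{a=1}^{d-1}A^*(Z)$ are all sound; the Noetherian trick is legitimate here since every Chow group to which the lemma is applied in this paper is finitely generated in each degree, and your parenthetical fallback (identifying your generators with the splitting of Lemma \ref{ChowGroupsofBlowup}) is the honest general argument. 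One point in your proof deserves to be promoted from an aside to a correction of the statement: taking the constant term of $P_{Z/X}$ to be $[Z]$ is not merely a convenient choice, it is \emph{required}. As stated above, ``any polynomial which restricts to the Chern polynomial of $N_{Z/X}$'' is too weak: altering the constant term by an element of $J_{Z/X}\cap A^d(X)$ changes the quotient ring (already for $Z$ a point in $X=\bP^2$, constant term $0$ forces $E^2=0$ while the correct answer is $E^2=-h^2$, obtained from constant term $[Z]=h^2$). Keel's and Fulton--MacPherson's original formulations demand constant term $[Z]$; your proof silently restores that hypothesis, and it is only the constant term that is constrained, since perturbing the lower-degree coefficients by elements of $J_{Z/X}$ changes $P_{Z/X}(-E)$ by elements of the ideal $J_{Z/X}\cdot E$.
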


\subsubsection{Chern polynomial relations}

The next two sections gather some results which will be used for applying Keel's
formula to determine the Chow ring of an iterated blowup. Most of these results
originate in \cite[Section 5]{fultonCompactificationConfigurationSpaces1994}.

\begin{lemma}\label{Chernformulas}\
  \begin{enumerate}
  \item A Chern polynomial for a complete intersection of divisors with classes
    $D_i$ is $\prod (t + D_i)$.
  \item Suppose $Y \subset X$ is a regularly embedded closed subscheme with
    Chern polynomial $P_{Y/X}(t)$.
    \begin{enumerate}
    \item If $Y$ intersects $Z$ transversally then $P_{Y/X}(t)$ is a Chern
      polynomial for the strict transform $\wY$ of $Y$ in $\wX$.
    \item If $Y$ contains $Z$ then $P_{Y/X}(t-E)$ is a Chern polynomial for
      $\wY$ in $\wX$.
    \end{enumerate}
  \end{enumerate}
\end{lemma}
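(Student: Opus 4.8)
The plan is to prove each item of Lemma~\ref{Chernformulas} by tracking how the normal bundle, hence the Chern polynomial, transforms under blowup, using the standard excess intersection and residual intersection formulas from \cite[Appendix~B]{fultonIntersectionTheory1998}. For part~(1), if $Y = \bigcap_i D_i$ is a complete intersection of Cartier divisors with classes $D_i$, then the conormal bundle of $Y$ in $X$ is $\bigoplus_i \mathcal{O}_Y(-D_i)$, so $N_{Y/X} = \bigoplus_i \mathcal{O}_Y(D_i)$ and its total Chern class is $\prod_i(1 + D_i)$; homogenizing, the Chern polynomial is $\prod_i(t + D_i)$, and any lift of the $D_i$ to $A^*(X)$ (they are already divisor classes on $X$) gives the required polynomial in $A^*(X)[t]$. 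This is essentially immediate from the definitions.

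For part~(2a), suppose $Y$ meets $Z$ transversally in $X$. Then (as recalled in Lemma~\ref{preimageformulas}(1) and \cite[B.6]{fultonIntersectionTheory1998}) the strict transform $\wY$ equals $f^{-1}Y$, and moreover $f$ restricts to the blowup $\wY \to Y$ along $Y \cap Z$, which is regularly embedded in $Y$ of the same codimension $d$. Transversality gives $N_{\wY/\wX} = (f|_{\wY})^* N_{Y/X}$, so a Chern polynomial for $\wY$ in $\wX$ is obtained by pulling back that of $Y$ in $X$; since $f^*$ is a ring homomorphism and $P_{Y/X}(t) \in A^*(X)[t]$ already has coefficients that restrict correctly, $f^*P_{Y/X}(t) = P_{Y/X}(t)$ viewed in $A^*(\wX)[t]$ works (abusing notation to identify $P_{Y/X}$ with its pullback). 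For part~(2b), suppose $Z \subset Y$. Here one uses the residual/blowup formula: if $E$ is the exceptional divisor, then the strict transform $\wY$ sits in $\wX$ with $[\wY] = f^*[Y] - c\cdot[E]$-type corrections, and more precisely the normal bundle fits into the exact sequence relating $N_{\wY/\wX}$ to $f^*N_{Y/X}$ twisted by $\mathcal{O}(-E)$. The clean statement is \cite[B.6.10]{fultonIntersectionTheory1998} (or the computation in \cite[Section~5]{fultonCompactificationConfigurationSpaces1994}): restricting everything appropriately, one gets that $P_{Y/X}(t - E)$ restricts to the Chern polynomial of $N_{\wY/\wX}$ on $\wY$, because the twist by $\mathcal{O}(E)$ shifts each Chern root by $-E$ (equivalently, shifts the variable $t$ by $-E$).

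The main obstacle — really the only subtle point — is part~(2b): one must verify that the normal bundle of the strict transform is indeed $f^*N_{Y/X} \otimes \mathcal{O}(-E)|_{\wY}$ (up to the caveat that $Y$ need only be regularly embedded, not a complete intersection, and the roles of $E$ and the exceptional divisor of $\wY \to Y$ must be matched). The cleanest way to handle this is to reduce to the case where $Z$ is a divisor in $Y$ via the factorization of the blowup, or to cite directly \cite[Lemma~2.9]{liWonderfulCompactificationArrangement2009} / \cite[Section~5]{fultonCompactificationConfigurationSpaces1994}, where exactly this normal-bundle computation is carried out in the iterated-blowup setting we need. Given those references, the lemma is a bookkeeping exercise: expand $P_{Y/X}(t-E)$ and check degrees. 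I would present (2b) by first recalling the normal bundle sequence for a strict transform under blowup, then observing that twisting by $\mathcal{O}(-E)$ is precisely the substitution $t \mapsto t - E$ on the level of Chern polynomials.
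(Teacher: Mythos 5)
Your proposal is correct and follows the same route as the paper, which simply cites \cite[Section 5]{fultonCompactificationConfigurationSpaces1994} for exactly the facts you spell out: the splitting $N_{Y/X}=\bigoplus\mathcal{O}_Y(D_i)$ for a complete intersection, the pullback of the normal bundle under transversal intersection with the center, and the twist $N_{\wY/\wX}\cong g^*N_{Y/X}\otimes\mathcal{O}(-E)\vert_{\wY}$ when $Z\subset Y$, which effects the substitution $t\mapsto t-E$. Your write-up supplies the details the paper delegates to that reference, and the one subtle point you flag (the normal-bundle formula in case (2b)) is indeed where the content lies.
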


\begin{proof}
  See \cite[Section 5]{fultonCompactificationConfigurationSpaces1994}.
\end{proof}

The following formula for the Chern class of the normal bundle of a strict
transform will also be useful. More general formulas can be found in
\cite{aluffiChernClassesBlowups2010a}.

\begin{lemma} \label{Chernbad}\ Let $Y$ be a regularly embedded closed subscheme
  of $X$ which intersects $Z$ transversally inside of some regularly embedded
  closed subscheme $W \subset X$. Let $\wY$ and $\wW$ denote the strict
  transforms of $Y$ and $W$, respectively. Then
  \[
    c(N_{\wY/\wX}) = \frac{\pi^*c(N_{Y/X})c((\pi^*N_{W/X} \otimes
      \cO(-E))\vert_{\wY})}{\pi^*c(N_{W/X}\vert_{\wY})}.
  \]
\end{lemma}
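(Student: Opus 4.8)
The plan is to reduce the computation to an excess‑intersection formula, following the strategy of \cite{fultonCompactificationConfigurationSpaces1994} and \cite{aluffiChernClassesBlowups2010a}. The key point is that, since $Y$ meets $Z$ transversally \emph{inside} $W$, the blowup $\pi : \wX \to X$ restricts on $\wY$ to the blowup of $Y$ along $Y \cap Z$, and this subscheme equals the complete intersection $Y \cap Z$ inside $W \cap Y$. So first I would set up the relevant diagram: write $E_Y = E|_{\wY}$ for the exceptional divisor of $\pi|_{\wY} : \wY \to Y$, and observe that $\wY \to Y$ is the blowup along $Z_Y := Y \cap Z$, which is regularly embedded in $Y$ of the same codimension $d' = \operatorname{codim}(Z,W)$ that $Z$ has in $W$. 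The normal bundle of $Z_Y$ in $Y$ is $N_{Z/X}|_{Z_Y}$ modulo the contribution of $N_{W/X}$, i.e. there is an exact sequence relating $N_{Z_Y/Y}$, $N_{Z/X}|_{Z_Y}$, and $N_{W/X}|_{Z_Y}$ coming from transversality of $Y$ and $Z$ along $W$.

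Next I would invoke the standard formula for the normal bundle of a strict transform under a single blowup (this is essentially \cite[Section~5]{fultonCompactificationConfigurationSpaces1994}, or can be extracted from \cite{aluffiChernClassesBlowups2010a}): if $g : \wP \to P$ is the blowup of $P$ along a regularly embedded $R \subset P$ with exceptional divisor $E_R$, and $V \subset P$ is regularly embedded with strict transform $\wV$, then $c(N_{\wV/\wP})$ is computed from $c(N_{V/P})$, the data of $R$, and $\cO(-E_R)$. Applying this with $P = W$ (or rather working $\pi$‑locally near $W$) and using that $N_{W/X}$ pulls back to give the ``ambient'' correction, the Chern class of $N_{\wY/\wX}$ splits as the pullback of $c(N_{Y/X})$ — accounting for the normal directions of $Y$ transverse to $W$, which are untouched by the blowup — times the correction coming from the normal directions inside $W$, where the blowup genuinely acts. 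That correction is exactly $c((\pi^*N_{W/X}\otimes\cO(-E))|_{\wY}) / \pi^* c(N_{W/X}|_{\wY})$, since on $\wY$ the bundle $\pi^* N_{W/X}$ twisted down by $\cO(-E)$ records the effect of the blowup on the $W$‑normal directions while the denominator removes the original $W$‑normal contribution that was already included in $\pi^* c(N_{Y/X})$.

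To make this precise I would argue via the exact sequences of normal bundles. On $X$ we have $0 \to N_{Y\cap W/Y} \to N_{W/X}|_{Y\cap W} \to N_{?}\to 0$ type relations; combined with the transversality hypothesis these give $N_{Y/X}|_{Y\cap Z}$, and hence $c(N_{Y/X})$, a description in which the ``$W$‑part'' is isolated. Pulling back to $\wY$ and replacing the $W$‑part's Chern class $\pi^* c(N_{W/X})$ by $c(\pi^*N_{W/X}\otimes\cO(-E))$ — which is the correct normal bundle contribution after blowing up, by the single‑blowup formula applied to the regular embedding $Z \hookrightarrow W$ and the transversal $Y$ — yields the claimed ratio. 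The computation is bookkeeping with Whitney sums once the geometric picture (strict transform of $Y$ = blowup of $Y$ along the complete intersection $Y\cap Z$ in $W$) is in place.

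The main obstacle is the second paragraph: justifying cleanly that the only normal directions affected by the blowup are those lying in $W$, and that the correction factor takes precisely the stated ratio form rather than some more complicated expression. This requires care with the excess normal bundle and with the fact that $\wY \to Y$ is a blowup along a subscheme that is \emph{not} a complete intersection in $Y$ but rather inherits its structure from $W$; the cleanest route is probably to reduce everything, via \cite[Lemma~2.9]{liWonderfulCompactificationArrangement2009} or a direct local calculation, to the model case where $X$, $W$, $Z$, $Y$ are all coordinate subspaces, verify the formula there by an explicit Chern‑class computation, and then note both sides are compatible with the local‑to‑global patching. I expect the rest to be formal.
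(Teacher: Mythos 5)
Your heuristic is the right one---the $W$-normal directions are the only ones the blowup touches, and they get twisted by $\cO(-E)$---but the step you flag as ``the main obstacle'' is in fact the entire proof, and the fallback you propose for it does not work. The missing observation is that the hypotheses place you in the chain $Y \subset W \subset X$ with $Z \subset W$, hence $\wY \subset \wW \subset \wX$ upstairs. (Note $W \cap Y = Y$ here; your sequence ``$0 \to N_{Y\cap W/Y} \to N_{W/X}|_{Y\cap W} \to \cdots$'' treats $Y$ and $W$ as merely meeting, which would make the term $N_{W/X}|_{\wY}$ in the statement ill-defined.) Given that, one simply compares the two standard normal bundle exact sequences
\[
0 \to N_{Y/W} \to N_{Y/X} \to N_{W/X}|_Y \to 0, \qquad 0 \to N_{\wY/\wW} \to N_{\wY/\wX} \to N_{\wW/\wX}|_{\wY} \to 0,
\]
and substitutes the two single-blowup formulas you already quote: $N_{\wW/\wX} \cong \pi^*N_{W/X}\otimes\cO(-E)$ because $W$ contains the center $Z$, and $N_{\wY/\wW} \cong \pi^*N_{Y/W}$ because $\wW = Bl_Z W$ and $Y$ meets the center transversally inside $W$. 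The Whitney formula then yields the stated ratio with no further work; this is exactly the paper's argument (following Aluffi). All the ingredients appear in your write-up, but the assembly---the upstairs filtration $\wY \subset \wW \subset \wX$, which is what legitimizes ``replacing the $W$-part''---is never stated.

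By contrast, ``verify the formula in the model case of coordinate subspaces and patch'' cannot establish an identity of Chern classes: on an affine local model every class in sight vanishes, and Chern classes are not local invariants that glue. What \emph{is} local is the exactness of the normal-bundle sequences above; once those are in hand globally the identity is formal, and without them the proposed replacement of $\pi^*c(N_{W/X})$ by $c(\pi^*N_{W/X}\otimes\cO(-E))$ has no justification.
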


\begin{proof}
  See \cite[Section 4]{aluffiChernClassesBlowups2010a}. This follows from the
  standard normal bundle exact sequences for the inclusions $Y \subset W \subset
  X$ and $\wY \subset \wW \subset \wX$, together with the usual formulas for the
  normal bundle of a strict transform of a subvariety which contains the center
  or intersects it transversally.
\end{proof}

\subsubsection{Restriction relations}

Let $Y \subset X$ be a regularly embedded closed subscheme. Assume $A^*(X) \to
A^*(Y)$ is surjective.

\begin{lemma}\label{restrictionformulas}\
  \begin{enumerate}
  \item If $Y$ intersects $Z$ transversally, then $A^*(\wX) \to A^*(\wY)$ is
    surjective with kernel $\langle J_{Y/X}, J_{Y \cap Z/X} \cdot E \rangle$.
  \item If $Y$ intersects $Z$ in a Cartier divisor $V$ on $Y$, then $A^*(\wX)
    \to A^*(\wY)$ is surjective with kernel $\langle J_{Y/X}, E - \alpha
    \rangle$, where $\alpha \in A^1(X)$ is any class which restricts to the
    class of $V$ on $A^*(Y)$.
  \item If $Z$ is the transversal intersection of $Y$ and another regularly
    embedded closed subscheme $W \subset X$ with $A^*(X) \to A^*(W)$ surjective,
    then $A^*(\wX) \to A^*(\wY)$ is surjective with kernel $\langle
    J_{Y/X},P_{W/Y}(-E) \rangle$.
  \item Suppose $Y \subset Z$ and $A^*(Z) \to A^*(Y)$ is surjective with kernel
    $J_YZ$. Then $A^*(\wX) \to A^*(\wY)$ is surjective with kernel
    $(i^*)^{-1}(J_{Y/Z})$, where $i^* : A^*(\wX) \to A^*(E)$ is the restriction
    to the exceptional divisor.
  \end{enumerate}
\end{lemma}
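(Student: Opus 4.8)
The plan is to deduce all four parts from a single source: Keel's formula applied \emph{simultaneously} to $f:\wX\to X$ and to the blowup it induces over $Y$, together with two general observations. First, by the proof of Lemma~\ref{ChowGroupsofBlowup} (which uses that $A^*(X)\to A^*(Z)$ is surjective), $A^*(\wX)$ is generated as a ring by $f^*A^*(X)$ and the exceptional class $E$. Second, I identify the transform $\wY$ in each case: in (1) and (3) it is the blowup of $Y$ along $Y\cap Z$, resp.\ along $Z=Y\cap W$, with exceptional divisor $E|_{\wY}$; in (2), since $Y\cap Z$ is a Cartier divisor $V$ on $Y$, one has $\wY\cong Y$ with $E|_{\wY}=V$; and in (4), since $Y\subset Z$, the dominant transform is $\wY=f^{-1}(Y)=E\times_Z Y$, a $\bP^{d-1}$-subbundle of $E$.

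Granting this, surjectivity of $A^*(\wX)\to A^*(\wY)$ is formal. In (1)--(3) the image contains the image of $A^*(Y)$ (because $A^*(X)\to A^*(Y)$ is surjective) and the class $E|_{\wY}$, and these generate $A^*(\wY)$ by the first observation applied to $\wY\to Y$. In (4) one instead factors $A^*(\wX)\xrightarrow{i^*}A^*(E)\to A^*(\wY)$ through the restriction to $E$, which is surjective, followed by the base-change surjection $A^*(E)=A^*(\bP(N_{Z/X}))\to A^*(\bP(N_{Z/X}|_Y))=A^*(\wY)$ induced by $A^*(Z)\to A^*(Y)$. The inclusion $\supseteq$ of kernels is likewise immediate, as the listed generators manifestly restrict to $0$ on $\wY$.

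For the reverse inclusion in (1)--(3), present $A^*(\wY)$ by Keel's formula for $\wY\to Y$ and account for its two families of defining relations. By transversality and Lemma~\ref{Chernformulas}(2), a Chern polynomial for the center of $\wY\to Y$ may be taken to be the restriction of $P_{Z/X}$ in (1) and of $P_{W/X}$ in (3); hence in (1) the Chern-polynomial relation downstairs is just the image of $P_{Z/X}(-E)$ and contributes nothing new, while in (3) it contributes the new relation $P_{W/Y}(-E)$. The ideal relation (the $J\cdot E$-term of Keel's formula for $\wY\to Y$) lifts along $A^*(X)\to A^*(Y)$: in (1) to $J_{Y\cap Z/X}\cdot E$ (and the upstairs relation $J_{Z/X}\cdot E$ is absorbed, since $J_{Z/X}\subseteq J_{Y\cap Z/X}$), which together with $J_{Y/X}$ from the coefficient-ring map yields exactly the claimed kernel; in (3), because $Z\subset Y$ one has $J_{Z/X}|_Y=\ker(A^*(Y)\to A^*(Z))$, so this relation is already the image of $J_{Z/X}\cdot E$ and no $J\cdot E$-term is needed. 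Case (2) is the degenerate version: killing $E-\alpha$ substitutes $E=\alpha$, and one checks that $J_{Z/X}\cdot\alpha$ and $P_{Z/X}(-\alpha)$ already lie in $J_{Y/X}$ --- the first because elements of $J_{Z/X}$ vanish on $V\subset Z$, the second by writing $P_{Z/X}(t)|_V=(t+[V]|_V)\,Q(t)$ via the conormal sequence $0\to K\to N_{Z/X}|_V\to N_{V/Y}\to 0$ and then pushing the vanishing of $P_{Z/X}(-[V])|_V$ from $A^*(V)$ to $A^*(Y)$ by the projection formula. For (4), the factorization through $A^*(E)$ already exhibits $\ker(A^*(\wX)\to A^*(\wY))$ as $(i^*)^{-1}$ of the kernel of $A^*(\bP(N_{Z/X}))\to A^*(\bP(N_{Z/X}|_Y))$, which is the ideal generated by the pullback of $J_{Y/Z}=\ker(A^*(Z)\to A^*(Y))$.

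The one genuinely delicate point is the Chern-class bookkeeping underlying (1)--(3): confirming that $P_{Z/X}$ (resp.\ $P_{W/X}$) really restricts to a valid Chern polynomial for the center of $\wY\to Y$ --- which is precisely where transversality is essential, via Lemma~\ref{Chernformulas} (or Lemma~\ref{Chernbad} if one prefers to compute the normal bundle directly) --- and, relatedly, that $A^*(X)$ surjects onto the Chow ring of that center, so that Keel's formula is even available for $\wY\to Y$; the latter follows from the surjectivity hypotheses built into each part (e.g.\ $A^*(X)\to A^*(W)$ in (3)). Everything else reduces to a routine diagram chase with the exact sequence~\eqref{ChowRingSequence}.
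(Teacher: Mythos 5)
Your argument is correct in substance, and it is worth noting how it relates to what the paper actually does: for parts (1)--(3) the paper simply cites Petersen's corrected version of Fulton--MacPherson's Lemma 5.4, Keel's remark on p.~566, and Fulton--MacPherson's Lemma 5.5, whereas you reconstruct these statements by writing down Keel's presentation for both $\wX\to X$ and the induced blowup $\wY\to Y$ and computing the kernel of the induced map of presented rings; this is essentially the method of the cited sources, so what you gain is a self-contained proof, and you correctly recover the Petersen-corrected generator $J_{Y\cap Z/X}\cdot E$ rather than $J_{Z/X}\cdot E$. For part (4) your factorization through $A^*(E)=A^*(\bP(N_{Z/X}))$ is identical to the paper's proof. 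Two caveats. First, in part (1) Keel's formula for $\wY\to Y$ requires $A^*(Y)\to A^*(Y\cap Z)$ to be surjective; this does \emph{not} follow from the surjectivity of $A^*(X)\to A^*(Y)$ and $A^*(X)\to A^*(Z)$ alone (it does follow in part (3), where the center is $Z\subset Y$ itself), so it is an additional hypothesis that the lemma, as stated, leaves implicit --- you should flag it as such rather than claim it is a consequence. Second, in part (2) your check that $J_{Z/X}\cdot\alpha$ and $P_{Z/X}(-\alpha)$ lie in $J_{Y/X}$ is redundant: once you know $E\vert_{\wY}=[V]$, the evaluation map $A^*(X)[E]\to A^*(Y)$, $E\mapsto\alpha\vert_Y$, factors as the restriction composed with Keel's presentation, so the Keel relations automatically map to zero and the kernel identification is immediate. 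As actually written, that check is also slightly off: $P_{Z/X}(-\alpha)\vert_Y$ has a term $c_d(N_{Z/X})\vert_Y$ carrying no factor of $[V]$, so the vanishing in $A^*(V)$ cannot be pushed to $A^*(Y)$ termwise by the projection formula. Neither point invalidates the proof, but both should be tightened.
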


\begin{proof}
  \begin{enumerate}
  \item This is \cite[Lemma 2.3]{petersenChowRingFultonMacPherson2017}, which is
    a corrected version of \cite[Lemma
    5.4]{fultonCompactificationConfigurationSpaces1994}.
  \item Surjectivity is immediate, since $\wY \cong Y$, the restriction map
    $A^*(X) \to A^*(Y)$ is surjective, and $A^*(X)$ is a subring of $A^*(\wX)$.
    The claim about the kernel is \cite[Remark, page
    566]{keelIntersectionTheoryModuli1992}.
  \item \cite[Lemma 5.5]{fultonCompactificationConfigurationSpaces1994}.
  \item Note that $E = \bP(N_{Z/X})$ and $\wY = \bP(N_{Z/X}\vert_Y)$. The
    restriction of the Chern polynomial for $N_{Z/X}$ (in $A^*(Z)[t]$) is
    precisely the Chern polynomial for $N_{Z/X}\vert_Y$ (in $A^*(Y)[t]$). It
    follows by the standard formulas for the Chow ring of a projective bundle
    that $A^*(E) \to A^*(\wY)$ is surjective with kernel $J_{Y/Z}$. The
    restriction map $i^*:A^*(\wX) \to A^*(E)$ is also surjective (e.g. by Keel's
    formula), hence $A^*(\wX) \to A^*(\wY)$ is surjective with kernel
    $(i^*)^{-1}(J_{Y/Z})$.
  \end{enumerate}
\end{proof}

% \subsubsection{A general result}
%
% The following lemma will be used for showing that the Chow rings of the small
% resolutions have the desired presentations.
%
% \begin{lemma} \label{ChowRingLemma} Let $\varphi : R \to S$ be a surjective
%   homomorphism of graded rings. Suppose that for all $k$,
%   \begin{enumerate}
%   \item both $R^k$ and $S^k$ are torsion-free.
%   \item $\rk R^k = \rk S^k < \infty$.
%   \end{enumerate}
%   Then $\varphi$ is an isomorphism.
% \end{lemma}
%
%\begin{proof}
%  The hypotheses imply that $\varphi^k : R^k \to S^k$ is a surjective morphism
%  between finitely generated free abelian groups of the same rank, hence
%  $\varphi^k$ is an isomorphism for all $k$, thus $\varphi : R \to S$ is itself
%  an isomorphism.
%\end{proof}

\subsection{Reduction to a single small resolution} \label{ChowRemaining}

\begin{proposition} \label{IterativeProposition} The results of Theorem
  \ref{ChowResolutions} hold for a given small resolution of $\oM(3,6)$ if and
  only if they hold for all small resolutions of $\oM(3,6)$.
\end{proposition}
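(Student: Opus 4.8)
The strategy is to relate any two small resolutions by a common blowup and then transfer all four statements of Theorem \ref{ChowResolutions} back and forth using the general results of Section \ref{GeneralResults}. Concretely, fix two small resolutions $\wM_{S_1,S_2}(3,6)$ and $\wM_{S_1',S_2'}(3,6)$. By the discussion at the end of Section \ref{resolutions}, blowing up all the exceptional lines $L_{ij,kl,mn}$ (for $P_{ij,kl,mn}$ resolved by a $\bP^1$) and exceptional planes $\Pi_{ij,kl,mn}$ (for those resolved by a $\bP^2$) in either resolution produces the \emph{same} big resolution $\wN$ of $\oM(3,6)$ with fibers $\bP^1 \times \bP^2$ over all singular points, since over each singular point $0 \in C(\bP^1\times\bP^2)$ the two small resolutions followed by blowing up the exceptional locus both yield the blowup at the vertex, whose fiber is $\bP^1\times\bP^2$. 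Thus it suffices to prove: if the theorem holds for a small resolution $\wM$, then it holds for $\wN$, and conversely if it holds for $\wN$ then it holds for each small resolution $\wM$.

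First I would analyze the blowup $\wN \to \wM$. Over each singular point the center is a disjoint union of the exceptional $\bP^1$'s and $\bP^2$'s from $\wM$ (by Proposition \ref{SmallResolutions}(3) these are the complete intersections of the relevant boundary divisors); each such center is smooth, and by Proposition \ref{RestrictionMaps} its Chow ring is $\bZ[p]/(p^2)$ or $\bZ[h]/(h^3)$, generated in degree one, and the restriction map from $A^*(\wM)$ is surjective. Hence Lemma \ref{ChowGenerators}(1)--(2) gives that $A^*(\wN)$ is generated by the strict transforms of the boundary divisors of $\wM$ together with the new exceptional divisors $E$; but the new exceptional divisors are themselves boundary divisors of $\wN$ (they are the strict transforms of the $D_{ij,kl,mn}$, $D_{ij,klmn}$ that change type), so $A^*(\wN)$ is generated by boundary divisors. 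Applying Keel's formula to each blowup in the sequence $\wN \to \wM$, using Lemma \ref{Chernformulas}(1) for the Chern polynomials of these complete-intersection centers, produces exactly the linear and multiplicative relations claimed: the linear relations are pullbacks of those on $\wM$ (hence still pullbacks of the $\oM_{0,4}$-relations via the same maps $r_i, f_j$), and the $J_{Z/X}\cdot E$ and $P_{Z/X}(-E)$ ideals give precisely the multiplicative relations $\prod D_i = 0$ dictated by the (now different) boundary complex $\widetilde\Delta$ of $\wN$ — one checks these agree with the combinatorial list in Remark \ref{BoundaryIntersections} modified by Proposition \ref{SmallResolutions}(5). Part (2), the ranks, follows from Corollary \ref{ChowGroupRanks} applied to each blowup, knowing $A^*$ of the $\bP^1$'s and $\bP^2$'s; part (3) follows from Lemma \ref{ChowGenerators} together with the exact sequence \eqref{ChowRingSequence} in degree one; part (4) follows from Lemma \ref{HIschemes}(1), since $\bP^1$ and $\bP^2$ are HI schemes.

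For the converse — deducing the theorem for a small resolution $\wM$ from the theorem for $\wN$ — I would run the same machinery in reverse. Here $\wN \to \wM$ is a blowup with the same exceptional-type data, so Lemma \ref{ChowGenerators}(3) shows $A^*(\wM)$ is generated by divisor classes whose strict transforms in $\wN$ are the strict transforms of the boundary divisors; since those are boundary divisors of $\wN$, the corresponding classes on $\wM$ are exactly the boundary divisors of $\wM$, giving generation in part (1). The relations on $\wM$ are then recovered from those on $\wN$: any relation among boundary divisors of $\wM$ pulls back to a relation on $\wN$, hence lies in the ideal generated by the linear and multiplicative relations there, and pushing forward via $g_*$ (as in the last paragraph of the proof of Lemma \ref{ChowGenerators}) shows it lies in the ideal generated by the linear and multiplicative relations on $\wM$; conversely those relations clearly hold on $\wM$. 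The linear relations are again literal pullbacks of $\oM_{0,4}$-relations since the maps $r_i, f_j$ factor through $\wM$. Part (2) comes from Corollary \ref{ChowGroupRanks} run backwards (the rank of $A^*(\wN)$ is known, as are the ranks contributed by the blown-up $\bP^1$'s and $\bP^2$'s, so $\rk A^*(\wM)$ is forced — and one verifies the arithmetic matches the stated $127 + |S_2|$), part (3) from \eqref{ChowRingSequence} and Lemma \ref{ChowGenerators}, and part (4) from Lemma \ref{HIschemes}(2).

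The main obstacle I anticipate is the bookkeeping in part (1): verifying that the ideal of relations produced by the iterated application of Keel's formula, when passing between $\wM$ and $\wN$, is \emph{exactly} the ideal generated by the two families of relations described in the theorem (no more, no fewer), and in particular matching the multiplicative relations against the modified boundary complex of Proposition \ref{SmallResolutions}(5). This requires carefully tracking, for each blowup in the tower, how the centers $L_{ij,kl,mn}$ and $\Pi_{ij,kl,mn}$ sit inside the boundary divisors (using the complete-intersection descriptions) and confirming that the new relations introduced by $J_{Z/X}\cdot E$ coincide with the disjointness relations read off from $\widetilde\Delta$. The rank computation in part (2), while ultimately a finite check via Corollary \ref{ChowGroupRanks}, is the other place where a careless count would give the wrong dependence on $|S_2|$; but these are mechanical once the geometry of the exceptional loci is pinned down.
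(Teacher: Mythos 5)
Your proposal is correct in outline and uses the same essential mechanism as the paper: relate two small resolutions by a common bigger blowup, then transfer generation (Lemma \ref{ChowGenerators}), ranks (Corollary \ref{ChowGroupRanks}), the presentation (Keel's formula with the complete-intersection Chern polynomials), and the HI property (Lemma \ref{HIschemes}) across the two blowup structures. The one real difference is the decomposition: you pass through the full big resolution $\wN$ with all 15 singular points blown up, whereas the paper compares two small resolutions that differ at a \emph{single} point $P_{12,34,56}$, using the intermediate resolution $\wM(3,6)$ that has fiber $\bP^1\times\bP^2$ over that one point only, and then iterates. The incremental version keeps the bookkeeping minimal — only three divisors $D_{ij,klmn}$ and two divisors $D_{ij,kl,mn}$ change under the key transformation rule (the paper's Lemma \ref{bigMdivs}), and only one new class $E_{12,34,56}$ appears — while your all-at-once version must track 15 new exceptional classes simultaneously and match a larger ideal of relations against the modified boundary complex; both work, and you correctly identify this matching as the main bookkeeping burden.

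Two small cautions. First, your parenthetical claim that the new exceptional divisors of $\wN\to\wM$ ``are themselves boundary divisors of $\wN$ (they are the strict transforms of the $D_{ij,kl,mn}$, $D_{ij,klmn}$ that change type)'' is not right: each $E_{ij,kl,mn}\cong\bP^1\times\bP^2$ is a genuinely new irreducible divisor, distinct from the strict transforms of all the $D$'s (which survive as separate divisors and merely acquire correction terms $\pm E$ in their classes, as in Lemma \ref{bigMdivs}). The conclusion that $A^*(\wN)$ is generated by components of the preimage of the boundary still stands, but for the stated reason rather than yours. Second, the ``converse'' direction cannot literally run Keel's formula backwards; the clean way (which the paper uses) is to rewrite the presentation of $A^*(\wN)$ obtained from $\wM$ in terms of the strict transforms relative to $\wM'$, observe that it decomposes as the desired relations on the $D'$ plus exactly the relations Keel's formula attaches to the blowup $\wN\to\wM'$, and conclude from the split exact sequence \eqref{ChowRingSequence} that $A^*(\wM')$ has the desired presentation. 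Your $g_*$ argument can be made to work but needs this comparison to be carried out explicitly.
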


The proof of this proposition will take the remainder of this subsection.

To prove the proposition, it is enough to show that the results of the theorem
hold for $\wM_{S_1,S_2}(3,6)$ if and only if they hold for
$\wM_{S_1',S_2'}(3,6)$, where $S_1' = S_1 \setminus \{P_{ij,kl,mn}\}$, $S_2' =
S_2 \cup \{P_{ij,kl,mn}\}$ (or vice-versa). Also, by symmetry, we can assume for
concreteness that $P_{ij,kl,mn} = P_{12,34,56}$. To that end we will fix for the
remainder of this section a small resolution $\wM_{S_1,S_2}(3,6)$ with
$P_{12,34,56} \in S_1$, and $S_1',S_2'$ as described.

Let $\wM(3,6)$ be the resolution of $\oM(3,6)$ with fibers
\begin{align*}
  E_{12,34,56} &\cong \bP^1 \times \bP^2 \text{ over } P_{12,34,56},\\
  L_{ij,kl,mn} &\cong \bP^1 \text{ for } P_{ij,kl,mn} \in S_1',\\
  \Pi_{ij,kl,mn} &\cong \bP^2 \text{ for } P_{ij,kl,mn} \in S_2.
\end{align*}
The resolution $\wM(3,6)$ is obtained from $\wM_{S_1,S_2}(3,6)$ by blowing up
the line $L_{12,34,56}$, and from $\wM_{S_1',S_2'}(3,6)$ by blowing up the plane
$\Pi_{12,34,56}$. Denote these blowups by $f,g$ respectively.

For a boundary divisor $D$ of $\oM(3,6)$, write $D \in A^*(\wM_{S_1,S_2}(3,6))$
for the class of its strict transform in $\wM_{S_1,S_2}(3,6)$, $D' \in
A^*(\wM_{S_1',S_2'}(3,6))$ for the class of its strict transform in
$\wM_{S_1',S_2'}(3,6)$, and $\wD \in A^*(\wM(3,6))$ for the class of its strict
transform in $\wM(3,6)$. Also write $D = f^*D$ and $D' = g^*D$.

\begin{lemma} \label{bigMdivs} In $A^*(\wM(3,6))$, we have
  \begin{align*}
    D_{ij,klmn} &= D_{ij,klmn}' + E_{12,34,56} \text{ for } ij=12,34,56,\\
    D_{ij,kl,mm} &= D_{ij,kl,mn}' - E_{12,34,56} \text{ for } \{ij,kl,mn\} = \{12,34,56\},\\
    D' &= D \text{ for all other boundary divisors.}
  \end{align*}
\end{lemma}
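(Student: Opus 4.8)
The plan is to do everything on $\wM(3,6)$ and to compare, divisor by divisor, the two pullbacks $f^{*}D$ and $g^{*}D$, where $D$ on the left means the strict transform of the boundary divisor $D$ of $\oM(3,6)$ in $\wM_{S_1,S_2}(3,6)$ and on the right its strict transform in $\wM_{S_1',S_2'}(3,6)$ (so the asserted identities are between the two classes of $A^{*}(\wM(3,6))$ that the lemma abbreviates to $D$ and $D'$). Note first that $\wD$, the strict transform of $D$ in $\wM(3,6)$, is simultaneously the $f$-strict transform of the first class and the $g$-strict transform of the second, since strict transforms compose along the birational morphisms $\wM(3,6)\to\wM_{S_1,S_2}(3,6)\to\oM(3,6)$ and $\wM(3,6)\to\wM_{S_1',S_2'}(3,6)\to\oM(3,6)$. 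Since $f$ is an isomorphism away from $L_{12,34,56}$, which is the whole fibre over $P_{12,34,56}$ by Proposition \ref{SmallResolutions}(1), and $g$ is an isomorphism away from $\Pi_{12,34,56}$, likewise the whole fibre over $P_{12,34,56}$, the statement is entirely local near $P_{12,34,56}$. In particular, if $D$ does not pass through $P_{12,34,56}$ then its strict transform in either small resolution is disjoint from the corresponding centre, so $f^{*}D=\wD=g^{*}D$ and $D=D'$. By the description of the boundary complex (Section \ref{BoundaryComplex}), the link of $P_{12,34,56}$ in $\Delta$ is the $4$-simplex $\{f_{12},f_{34},f_{56},g_{12,34,56},g_{12,56,34}\}$, so the only boundary divisors through $P_{12,34,56}$ are $D_{12,3456}$, $D_{34,1256}$, $D_{56,1234}$, $D_{12,34,56}$, $D_{12,56,34}$; only these five remain to handle.

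For each of these five divisors $D$ I would invoke the standard formula for the pullback of a divisor under the blowup of a smooth variety along a smooth centre: $f^{*}D=\wD+m_f(D)\,E_{12,34,56}$ and $g^{*}D=\wD+m_g(D)\,E_{12,34,56}$, where $m_f(D)$ (resp. $m_g(D)$) is the multiplicity of the relevant strict transform along $L_{12,34,56}$ (resp. along $\Pi_{12,34,56}$). Because the boundaries of $\wM_{S_1,S_2}(3,6)$ and $\wM_{S_1',S_2'}(3,6)$ are normal crossings divisors (Proposition \ref{SmallResolutions}(4)) and the centres are smooth, this multiplicity is $1$ when the centre is contained in the divisor and $0$ otherwise. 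By Proposition \ref{SmallResolutions}(3), $L_{12,34,56}=D_{12,3456}\cap D_{34,1256}\cap D_{56,1234}$, so $L_{12,34,56}$ lies in each of $D_{12,3456}$, $D_{34,1256}$, $D_{56,1234}$ in $\wM_{S_1,S_2}(3,6)$; dually $\Pi_{12,34,56}=D_{12,34,56}\cap D_{12,56,34}$ lies in $D_{12,34,56}$ and $D_{12,56,34}$ in $\wM_{S_1',S_2'}(3,6)$. Thus $m_f=1$ for the first three and $m_g=1$ for the last two, and it remains only to establish the complementary non-containments $m_f(D_{12,34,56})=m_f(D_{12,56,34})=0$ and $m_g(D_{12,3456})=m_g(D_{34,1256})=m_g(D_{56,1234})=0$.

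This last point is the only real work, and it is a dimension count against the boundary complex. In $\wM_{S_1,S_2}(3,6)$ the intersection $L_{12,34,56}\cap D_{12,34,56}=D_{12,3456}\cap D_{34,1256}\cap D_{56,1234}\cap D_{12,34,56}$ is, by the normal crossings property, pure of codimension $4$ and hence $0$-dimensional, while $L_{12,34,56}$ is a curve; it is nonempty because $\{f_{12},f_{34},f_{56},g_{12,34,56}\}$, a face of the $4$-simplex of $\Delta$ that does not contain the removed edge $\{g_{12,34,56},g_{12,56,34}\}$, is a simplex of $\widetilde{\Delta}_{S_1,S_2}$ (Proposition \ref{SmallResolutions}(5a)). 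Hence $L_{12,34,56}\not\subset D_{12,34,56}$, and symmetrically $L_{12,34,56}\not\subset D_{12,56,34}$. In the same way, in $\wM_{S_1',S_2'}(3,6)$ the intersection $\Pi_{12,34,56}\cap D_{12,3456}=D_{12,34,56}\cap D_{12,56,34}\cap D_{12,3456}$ is pure of codimension $3$, i.e. a curve, while $\Pi_{12,34,56}$ is a surface, and it is nonempty since $\{g_{12,34,56},g_{12,56,34},f_{12}\}$ does not contain the removed triangle $\{f_{12},f_{34},f_{56}\}$ and so survives in $\widetilde{\Delta}_{S_1',S_2'}$; hence $\Pi_{12,34,56}\not\subset D_{12,3456}$, and likewise for $D_{34,1256}$ and $D_{56,1234}$. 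Plugging these values into the pullback formulas gives $f^{*}D=\wD+E_{12,34,56}=g^{*}D+E_{12,34,56}$ for $D=D_{12,3456},D_{34,1256},D_{56,1234}$, i.e. $D=D'+E_{12,34,56}$, and $f^{*}D=\wD=g^{*}D-E_{12,34,56}$ for $D=D_{12,34,56},D_{12,56,34}$, i.e. $D=D'-E_{12,34,56}$; together with $D=D'$ for every other boundary divisor this is the assertion. An alternative to the dimension count, if one prefers to avoid the combinatorics, is to carry out the same computation directly in the local model of $P_{12,34,56}$ as the cone over the Segre embedding of $\bP^1\times\bP^2$ (Section \ref{singularities}), in which $\wM_{S_1,S_2}(3,6)$, $\wM_{S_1',S_2'}(3,6)$ and $\wM(3,6)$ are the two small resolutions and the blowup of the vertex, and the strict transforms of the five Weil divisors through the vertex, together with their intersections with $L_{12,34,56}$ or $\Pi_{12,34,56}$, can be written down explicitly.
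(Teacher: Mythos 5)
Your proof is correct and is essentially the paper's argument spelled out in full: the paper's proof consists of the single remark that the identities follow from Proposition \ref{SmallResolutions}, and what you have written — the blowup pullback formula $f^*D=\wD+m\,E$ with $m\in\{0,1\}$, the containments $L_{12,34,56}\subset D_{ij,klmn}$ and $\Pi_{12,34,56}\subset D_{ij,kl,mn}'$ from part (3), and the complementary non-containments forced by the normal crossings property and the surviving simplices of the boundary complex — is precisely the content that citation encapsulates. The only cosmetic point is that the nonemptiness of the residual intersections is not actually needed for the non-containment (a $0$-dimensional or empty intersection rules out containment either way), but this does no harm.
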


\begin{proof}
  Proposition \ref{SmallResolutions} implies
  \begin{align*}
    \wD_{ij,klmn}  &= D_{ij,klmn} - E_{12,34,56} = D_{ij,klmn}' \text{ for } ij=12,34,56,\\
    \wD_{ij,kl,mn} &= D_{ij,kl,mn} = D_{ij,kl,mn}' - E_{12,34,56} \text{ for } \{ij,kl,mn\} = \{12,34,56\}, \\
    \wD &= D = D' \text{ for all other boundary divisors.}
  \end{align*}
\end{proof}

\subsubsection{Ranks of Chow groups}

\begin{claim} \label{IterativeRanks}
  \begin{align*}
    \rk A^1(\wM_{S_1',S_2'}(3,6)) &= \rk A^1(\wM_{S_1,S_2}(3,6)), \\
    \rk A^2(\wM_{S_1',S_2'}(3,6)) &= \rk A^2(\wM_{S_1,S_2}(3,6)) + 1, \\
    \rk A^3(\wM_{S_1',S_2'}(3,6)) &= \rk A^3(\wM_{S_1,S_2}(3,6)).
  \end{align*}
\end{claim}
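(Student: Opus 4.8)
The plan is to compute $\rk A^k(\wM(3,6))$ in two ways --- once via the blowup $f : \wM(3,6) \to \wM_{S_1,S_2}(3,6)$ along the line $L_{12,34,56}$, and once via the blowup $g : \wM(3,6) \to \wM_{S_1',S_2'}(3,6)$ along the plane $\Pi_{12,34,56}$ --- and then to eliminate the ranks of $A^*(\wM(3,6))$ between the two answers.

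First I would record the relevant dimensions: $\wM(3,6)$ is a resolution of $\oM(3,6)$, which has dimension $(3-1)(6-3-1)=4$, so $\dim \wM(3,6) = 4$; both centers are smooth, hence regularly embedded, with $L_{12,34,56} \cong \bP^1$ of codimension $3$ and $\Pi_{12,34,56} \cong \bP^2$ of codimension $2$ (this is part of the description of $\wM(3,6)$ preceding Lemma \ref{bigMdivs}, together with Proposition \ref{SmallResolutions}).

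Next I would apply Corollary \ref{ChowGroupRanks} to each of $f$ and $g$. For $f$ the correction terms are $\rk A^{k-1}(\bP^1) + \rk A^{k-2}(\bP^1)$, and since $\rk A^j(\bP^1) = 1$ for $j = 0,1$ and $0$ otherwise, this gives
\[
  \rk A^1(\wM(3,6)) = \rk A^1(\wM_{S_1,S_2}(3,6)) + 1, \qquad \rk A^2(\wM(3,6)) = \rk A^2(\wM_{S_1,S_2}(3,6)) + 2,
\]
together with $\rk A^3(\wM(3,6)) = \rk A^3(\wM_{S_1,S_2}(3,6)) + 1$. For $g$ the correction term is $\rk A^{k-1}(\bP^2)$, and since $\rk A^j(\bP^2) = 1$ for $j = 0,1,2$ and $0$ otherwise, this gives $\rk A^k(\wM(3,6)) = \rk A^k(\wM_{S_1',S_2'}(3,6)) + 1$ for $k = 1,2,3$. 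Equating the two expressions for $\rk A^k(\wM(3,6))$ degree by degree and cancelling the common terms yields the three asserted identities.

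There is no genuine obstacle here: the argument is purely formal once the two blowup presentations of $\wM(3,6)$ from the preceding paragraphs are in hand. The only point requiring care is the bookkeeping --- the differing codimensions ($3$ versus $2$) of the two centers and the values of $\rk A^j(\bP^1)$ versus $\rk A^j(\bP^2)$ --- which is precisely what produces the discrepancy of exactly $1$ in degree $2$ and none in degrees $1$ and $3$.
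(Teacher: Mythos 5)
Your proposal is correct and is exactly the paper's argument: the paper's proof simply says to apply Corollary \ref{ChowGroupRanks} to the two blowups $f$ and $g$, and your write-up supplies the bookkeeping (codimension $3$ center $\cong \bP^1$ versus codimension $2$ center $\cong \bP^2$) that makes the degree-by-degree comparison work. Nothing further is needed.
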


\begin{proof}
  Apply Corollary \ref{ChowGroupRanks} to the blowups $f : \wM(3,6) \to
  \wM_{S_1,S_2}(3,6)$ and $g : \wM(3,6) \to \wM_{S_1',S_2'}(3,6)$.
\end{proof}

% \subsubsection{Torsion}
%
%\begin{claim}
%  $A^k(\wM_{S_1,S_2}(3,6))$ is torsion-free $\iff A^k(\wM_{S_1',S_2'}(3,6))$ is
%  torsion-free.
%\end{claim}
%
%\begin{proof}
%  Apply Corollary \ref{TorsionFree} to the blowups $f : \wM(3,6) \to
%  \wM_{S_1,S_2}(3,6)$ and $g : \wM(3,6) \to \wM_{S_1',S_2'}(3,6)$.
%\end{proof}

\subsubsection{Generators of Chow rings}

\begin{claim}
  $A^*(\wM_{S_1,S_2}(3,6))$ is generated by the classes of the boundary divisors
  $\iff A^*(\wM_{S_1',S_2'}(3,6))$ is generated by the classes of the boundary
  divisors.
\end{claim}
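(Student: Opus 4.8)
The plan is to deduce this from the abstract comparison Lemma \ref{ChowGenerators}(3), using the two descriptions of $\wM(3,6)$ as a blowup. By symmetry it suffices to prove the forward implication, so I would assume $A^*(\wM_{S_1,S_2}(3,6))$ is generated by the classes of its boundary divisors $D_1,\dots,D_n$. Since $L_{12,34,56}\cong\bP^1$, Proposition \ref{RestrictionMaps} gives $A^*(L_{12,34,56})=\bZ[p]/(p^2)$, which is generated in degree one; and $\wM(3,6)$ is simultaneously the blowup of the smooth variety $\wM_{S_1,S_2}(3,6)$ along the smooth center $L_{12,34,56}$ and the blowup of the smooth variety $\wM_{S_1',S_2'}(3,6)$ along the smooth center $\Pi_{12,34,56}$, with the \emph{same} exceptional divisor $E_{12,34,56}$. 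Thus all the hypotheses of Lemma \ref{ChowGenerators} are in force, and part (3) produces divisor classes $D_1',\dots,D_n'\in A^1(\wM_{S_1',S_2'}(3,6))$ generating $A^*(\wM_{S_1',S_2'}(3,6))$ and satisfying $\widetilde{D_i'}=\widetilde{D_i}$ in $A^*(\wM(3,6))$; inspecting the proof of that lemma, one may take $D_i'=g_*\widetilde{D_i}$, where $g:\wM(3,6)\to\wM_{S_1',S_2'}(3,6)$ is the blowup map.

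The remaining point -- and really the only place any geometry of $\oM(3,6)$ enters -- is to check that each $g_*\widetilde{D_i}$ is again a boundary divisor. This is exactly what Lemma \ref{bigMdivs} records: writing $D_i^Y$ for the corresponding boundary divisor on $\wM_{S_1',S_2'}(3,6)$, one has $\widetilde{D_i}=g^*D_i^Y$ except when $D_i=D_{ij,kl,mn}$ with $\{ij,kl,mn\}=\{12,34,56\}$, in which case $\widetilde{D_i}=g^*D_i^Y-E_{12,34,56}$. Since $g$ contracts $E_{12,34,56}$ onto the surface $\Pi_{12,34,56}$ we have $g_*E_{12,34,56}=0$, while $g_*g^*D_i^Y=D_i^Y$ by the projection formula, so in every case $g_*\widetilde{D_i}=D_i^Y$. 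Hence $A^*(\wM_{S_1',S_2'}(3,6))$ is generated by (a subset of) the classes of its boundary divisors.

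The reverse implication would be obtained by running the identical argument with the roles of $f$ and $g$ (equivalently, of $(S_1,S_2)$ and $(S_1',S_2')$) interchanged; here the relevant center is $\Pi_{12,34,56}\cong\bP^2$, whose Chow ring $\bZ[h]/(h^3)$ is again generated in degree one, and $f_*E_{12,34,56}=0$ because $E_{12,34,56}$ is contracted onto the curve $L_{12,34,56}$. I do not anticipate a genuine obstacle: once Lemma \ref{ChowGenerators} and Lemma \ref{bigMdivs} are available, the argument is essentially bookkeeping of which strict transforms differ from pullbacks by a copy of $E_{12,34,56}$, and the only thing to be careful about is that the classes $D_i'$ handed back by Lemma \ref{ChowGenerators}(3) really are \emph{boundary} divisors rather than some other divisor classes -- which is precisely why the vanishing $g_*E_{12,34,56}=0$ is needed.
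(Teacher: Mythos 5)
Your proposal is correct and follows exactly the paper's route: the paper's entire proof is ``Apply Lemma \ref{ChowGenerators} to the blowups $f : \wM(3,6) \to \wM_{S_1,S_2}(3,6)$ and $g : \wM(3,6) \to \wM_{S_1',S_2'}(3,6)$,'' and your verification of the hypotheses and your use of Lemma \ref{bigMdivs} together with $g_*E_{12,34,56}=0$ to confirm that the generators handed back by part (3) are genuinely the boundary divisors is just a careful unpacking of the same argument.
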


\begin{proof}
  Apply Lemma \ref{ChowGenerators} to the blowups $f : \wM(3,6) \to
  \wM_{S_1,S_2}(3,6)$ and $g : \wM(3,6) \to \wM_{S_1',S_2'}(3,6)$.
\end{proof}

\subsubsection{Restriction to exceptional locus}

\begin{lemma} \label{KernelRestrictions} Let $J = \ker(A^*(\wM_{S_1,S_2}(3,6))
  \to A^*(L_{12,34,56}))$ and $J' = \ker(A^*(\wM_{S_1',S_2'}(3,6)) \to
  A^*(\Pi_{12,34,56}))$. Then $f^*J = g^*J'$.
\end{lemma}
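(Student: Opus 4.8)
The claim is that the ideals $J = \ker(A^*(\wM_{S_1,S_2}(3,6)) \to A^*(L_{12,34,56}))$ and $J' = \ker(A^*(\wM_{S_1',S_2'}(3,6)) \to A^*(\Pi_{12,34,56}))$ have the same pullback to $A^*(\wM(3,6))$, where $f$ blows up the line $L_{12,34,56}$ and $g$ blows up the plane $\Pi_{12,34,56}$, with common exceptional divisor $E = E_{12,34,56} \cong \bP^1 \times \bP^2$. The plan is to compute both $f^*J$ and $g^*J'$ explicitly as ideals inside $A^*(\wM(3,6))$ using the restriction formulas of Lemma \ref{restrictionformulas}, and check they coincide. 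The key point is that $f$ and $g$ are both blowups whose centers are contained in $E$: indeed $L_{12,34,56}$ and $\Pi_{12,34,56}$ are the images of $E \cong \bP^1 \times \bP^2$ under the two projections (this follows from Proposition \ref{SmallResolutions}(3) together with the description of $\wM(3,6)$ as the blowup of either small resolution along the exceptional line/plane). So both centers sit inside $E$, and part (4) of Lemma \ref{restrictionformulas} applies directly.

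**Main computation.** First I would apply Lemma \ref{restrictionformulas}(4) to the blowup $f : \wM(3,6) \to \wM_{S_1,S_2}(3,6)$ with $Y = L_{12,34,56}$ sitting inside the center — wait, more precisely $L_{12,34,56}$ \emph{is} the center of $f$, so $E$ itself is the projective bundle $\bP(N_{L/\wM_{S_1,S_2}})$ over $L$. Here one uses that $A^*(\wM_{S_1,S_2}(3,6)) \to A^*(L_{12,34,56}) = \bZ[p]/(p^2)$ is surjective (which holds once we know, as will be established in this section, that the Chow ring is generated by boundary divisors — or we may take this as a running hypothesis in the "if and only if"). Then $f^*J$ is described via the exact sequence \eqref{ChowRingSequence}: since $J$ is the kernel of restriction to $L$ and $f^*$ is injective on $A^*(\wM_{S_1,S_2})$, the ideal $f^*J \subset A^*(\wM(3,6))$ is generated by $f^*$ of boundary-divisor combinations vanishing on $L$, together with relations coming from $E$. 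Using Proposition \ref{RestrictionMaps}(1) I would write down generators of $J$: these are $D_{abc,def}$ for all $a,b,c$; $D_{ab,cdef}$ for $ab \notin \{12,34,56\}$; $D_{ab,cd,ef}$ for $\{ab,cd,ef\} \neq \{12,34,56\}$; and $D_{12,3456} - D_{34,1256}$, $D_{34,1256} - D_{56,1234}$, $D_{12,34,56} + D_{12,3456}$ (the elements restricting to $0$ once we impose $p^2 = 0$). Symmetrically, using Proposition \ref{RestrictionMaps}(2) with $h$ in place of $p$ and the sign flips, I would write down generators of $J'$. Now translate everything to $\wM(3,6)$ via Lemma \ref{bigMdivs}: the only boundary divisors that differ between the two small resolutions are $D_{ij,klmn}$ (for $ij = 12,34,56$) and $D_{ij,kl,mn}$ (for $\{ij,kl,mn\}=\{12,34,56\}$), and they differ precisely by $\pm E_{12,34,56}$. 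Since both $f^*J$ and $g^*J'$ contain (or can be shown to contain) the class $E_{12,34,56}$ times appropriate generators — or more cleanly, since the generators that involve the differing divisors match up after adding/subtracting $E$, which lies in both ideals — the two ideals become visibly equal.

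**Key step / main obstacle.** The crux is showing that $E = E_{12,34,56}$, suitably interpreted, lies in \emph{both} $f^*J$ and $g^*J'$, or alternatively that the generator sets of $f^*J$ and $g^*J'$ differ only by elements already in the common ideal. One clean way: by Lemma \ref{restrictionformulas}(4), $J = (i_L^*)^{-1}(J_{L/L}) $ pulled back appropriately — but $J_{L/L} = 0$, so this needs care; the real content is that $J$ already pulls back to the kernel of $A^*(\wM(3,6)) \to A^*(E)$ composed with $A^*(E) \to A^*(L)$, and similarly for $J'$ with $A^*(E) \to A^*(\Pi)$. Since $A^*(E) = A^*(\bP^1 \times \bP^2)$ surjects onto both $A^*(L) = A^*(\bP^1)$ and $A^*(\Pi) = A^*(\bP^2)$ via the two projections, and since the restriction of a boundary divisor to $E$ determines (by Proposition \ref{RestrictionMaps}) its restrictions to both $L$ and $\Pi$ compatibly, one checks that the preimage of $\ker(A^*(E)\to A^*(L))$ and of $\ker(A^*(E)\to A^*(\Pi))$ under $A^*(\wM(3,6)) \to A^*(E)$ generate the same ideal once we remember that $E$ itself is an element of $A^1(\wM(3,6))$ expressible (via Lemma \ref{bigMdivs}) in boundary divisors. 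I expect the genuinely fiddly part to be bookkeeping the signs and the exact list of generators through Lemma \ref{bigMdivs}; the conceptual content — that $L$ and $\Pi$ are the two factors of the common blowdown $E \cong \bP^1 \times \bP^2$, so "vanishing on $L$" and "vanishing on $\Pi$" pull back to the same thing modulo $E$ — is straightforward once set up correctly.
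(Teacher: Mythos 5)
Your proposal is correct and follows essentially the same route as the paper, whose entire proof is ``this follows from Proposition \ref{RestrictionMaps} together with Lemma \ref{bigMdivs}'': one lists generators of $J$ and $J'$ from the explicit restriction formulas and observes via Lemma \ref{bigMdivs} that the copies of $E_{12,34,56}$ cancel in each generator, so the two generating sets literally coincide in $A^*(\wM(3,6))$. Your side remarks (invoking Lemma \ref{restrictionformulas}(4), and the claim that $E_{12,34,56}$ ``lies in both ideals''---it does not, being outside the images of $f^*$ and $g^*$) are unnecessary and the latter is inaccurate, but they do not affect the argument since the cancellation of $E$ already makes the generators equal on the nose.
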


\begin{proof}
  This follows from Proposition \ref{RestrictionMaps} together with Lemma
  \ref{bigMdivs}.
\end{proof}

\subsubsection{Presentations of Chow rings}
\begin{claim} \label{IterativeChow} $A^*(\wM_{S_1,S_2}(3,6))$ has the
  presentation of Theorem \ref{ChowResolutions} $\iff A^*(\wM_{S_1',S_2'}(3,6))$
  does.
\end{claim}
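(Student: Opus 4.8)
The plan is to compute $A^*(\wM(3,6))$ in two ways via Keel's formula — as the blowup of $\wM_{S_1,S_2}(3,6)$ along $L = L_{12,34,56} \cong \bP^1$ (the map $f$) and as the blowup of $\wM_{S_1',S_2'}(3,6)$ along $\Pi = \Pi_{12,34,56} \cong \bP^2$ (the map $g$) — and then to match the two presentations using the linear change of coordinates of Lemma~\ref{bigMdivs}. I prove the forward implication; the reverse is symmetric under interchanging $S_1 \leftrightarrow S_2$, $L \leftrightarrow \Pi$, $f \leftrightarrow g$. So assume $A^*(\wM_{S_1,S_2}(3,6)) = R$, the presentation of Theorem~\ref{ChowResolutions}, let $R'$ denote the candidate presentation for $A^*(\wM_{S_1',S_2'}(3,6))$, and let $\phi' \colon R' \twoheadrightarrow A^*(\wM_{S_1',S_2'}(3,6))$ be the canonical surjection (the boundary divisors generate by the claim above, and the linear relations — being pullbacks — and the multiplicative relations of Remark~\ref{BoundaryIntersections} all hold). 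It suffices to show $\phi'$ is injective.

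By Propositions~\ref{SmallResolutions}(3) and~\ref{RestrictionMaps}, $L$ is the regular codimension-three intersection $D_{12,3456} \cap D_{34,1256} \cap D_{56,1234}$ with normal bundle $\cO(-1)^{\oplus 3}$, so $P_L(t) = (t+D_{12,3456})(t+D_{34,1256})(t+D_{56,1234})$ is a Chern polynomial by Lemma~\ref{Chernformulas}(1), and likewise $\Pi = D_{12,34,56} \cap D_{12,56,34}$ is regular of codimension two with Chern polynomial $P_\Pi(t) = (t+D_{12,34,56})(t+D_{12,56,34})$. Keel's formula applied to $f$, using the hypothesis $A^*(\wM_{S_1,S_2}(3,6)) = R$, gives $A^*(\wM(3,6)) = R[E] / (J \cdot E,\, P_L(-E))$, where $E = E_{12,34,56}$ and $J = \ker(R \to A^*(L))$ is read off from Proposition~\ref{RestrictionMaps}(1); applied to $g$ it gives — unconditionally — $A^*(\wM(3,6)) = A^*(\wM_{S_1',S_2'}(3,6))[E] / (\overline{J'} \cdot E,\, P_\Pi(-E))$ with $\overline{J'} = \ker(A^*(\wM_{S_1',S_2'}(3,6)) \to A^*(\Pi))$.

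Next I would substitute the change of variables of Lemma~\ref{bigMdivs} into the first presentation: $D_{ij,klmn} \mapsto D_{ij,klmn}' + E$ for $ij \in \{12,34,56\}$, $D_{ij,kl,mn} \mapsto D_{ij,kl,mn}' - E$ for $\{ij,kl,mn\} = \{12,34,56\}$, and $D \mapsto D'$ otherwise. This is an invertible linear change of generators, and it rewrites the presentation so as to match Keel's formula for $g$ applied to $R'$: indeed $P_L(-E)$ becomes $D_{12,3456}'D_{34,1256}'D_{56,1234}'$, which is exactly the $S_2'$-multiplicative relation in $R'$ (Remark~\ref{BoundaryIntersections}(3)), while the $S_1$-relation $D_{12,34,56}D_{12,56,34} = 0$ of $R$ becomes precisely $P_\Pi(-E) = 0$; the transformed linear relations of $R$ coincide with the linear relations of $R'$ (the spurious $E$-terms cancel, since $nE = 0$ in the torsion-free ring $A^*(\wM(3,6))$ forces $n = 0$); and by Lemma~\ref{KernelRestrictions} ($f^*J = g^*J'$, for $J' = \ker(R' \to A^*(\Pi))$) the ideal $J \cdot E$ maps onto $J' \cdot E$, the remaining transformed multiplicative relations of $R$ agreeing with those of $R'$ modulo $J' \cdot E$ (each $E$-correction term is $E$ times a boundary divisor that restricts to $0$ on $\Pi$). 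Hence $A^*(\wM(3,6)) = R'[E]/(J' \cdot E,\, P_\Pi(-E))$, and the map $\Phi'$ from this ring to $A^*(\wM_{S_1',S_2'}(3,6))[E]/(\overline{J'} \cdot E,\, P_\Pi(-E))$ induced by $\phi'$ (and by $E \mapsto E$) sends generators to generators exactly as does the abstract isomorphism just produced; thus $\Phi'$ is a surjective endomorphism of the Noetherian ring $A^*(\wM(3,6))$, hence an isomorphism. Since $P_\Pi(-E)$ has constant term $D_{12,34,56}'D_{12,56,34}'$, one computes $(J' \cdot E,\, P_\Pi(-E)) \cap R' = J' \cdot D_{12,34,56}'D_{12,56,34}'$, and this ideal vanishes in $R'$: for a single boundary divisor $D$ with $D|_\Pi = 0$ the triple intersection $D \cap D_{12,34,56} \cap D_{12,56,34}$ is empty by the description of the boundary complex (Remark~\ref{BoundaryIntersections}), so $D \cdot D_{12,34,56}'D_{12,56,34}' = 0$ is a multiplicative relation, and the remaining degree-one generators of $J'$ are rewritten, using the linear relations of Proposition~\ref{PullbackFormulas}, as $\bZ$-combinations of boundary divisors disjoint from $\Pi$. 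Therefore $R' \hookrightarrow R'[E]/(J' \cdot E,\, P_\Pi(-E))$ is injective, and since $\Phi'$ restricts on $R'$ to $g^* \circ \phi'$ with $g^*$ injective, we conclude that $\phi'$ is injective, as desired.

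The step I expect to be the main obstacle is matching the two presentations \emph{at the level of the abstract rings $R$ and $R'$} rather than of their images in the Chow rings: this is a finite but fiddly verification, the delicate point being that the multiplicative relations genuinely differ between the two small resolutions — the $S_1$-relation $D_{12,34,56}D_{12,56,34} = 0$ versus the $S_2$-relation $D_{12,3456}D_{34,1256}D_{56,1234} = 0$ — and one must check that the two Chern-polynomial relations $P_L(-E) = 0$ and $P_\Pi(-E) = 0$ precisely account for this difference (and, relatedly, that $J' \cdot D_{12,34,56}'D_{12,56,34}' = 0$ already holds in $R'$).
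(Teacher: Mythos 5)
Your proposal follows essentially the same route as the paper: present $A^*(\wM(3,6))$ by Keel's formula for the blowup $f$ along $L_{12,34,56}$ using the assumed presentation $R$, apply the change of variables of Lemma~\ref{bigMdivs}, and observe that the transformed relations are exactly the candidate relations $R'$ together with the Keel relations $J'\cdot E$ and $P_\Pi(-E)$ for the blowup $g$ along $\Pi_{12,34,56}$ (with $P_L(-E)$ turning into the new $S_2'$-multiplicative relation and the old $S_1$-relation for $P_{12,34,56}$ turning into $P_\Pi(-E)$). The paper stops there with ``it follows,'' whereas you go on to justify the deduction $R' \cong A^*(\wM_{S_1',S_2'}(3,6))$ via the surjective-endomorphism-of-a-Noetherian-ring argument plus the injectivity of $R' \to R'[E]/(J'E, P_\Pi(-E))$; this extra care is the genuinely new content of your write-up and is the right thing to check. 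The one incomplete point is your verification that $(J'E, P_\Pi(-E)) \cap R' = 0$, which you correctly reduce to $J'\cdot D_{12,34,56}'D_{12,56,34}' = 0$ in $R'$ but then check only for degree-one generators of $J'$: since $A^*(\Pi) = \bZ[h]/(h^3)$, the ideal $J' = \ker(R' \to A^*(\Pi))$ is \emph{not} generated in degree one (for instance all of $(R')^{3}$ lies in $J'$, with $D_{12,3456}'^{\,3}$ an essential generator), so you still owe products such as $D_{12,3456}'^{\,3}D_{12,34,56}'D_{12,56,34}'$ and the degree-two kernel generators. These do vanish, by the same mechanism you use in degree one (rewrite one factor via the linear relations of Proposition~\ref{PullbackFormulas} into divisors whose product with $D_{12,34,56}'D_{12,56,34}'$ is killed by the multiplicative relations, since the only boundary divisors meeting $\Pi$ are $D_{12,3456}',D_{34,1256}',D_{56,1234}'$), but the check as written does not cover them.
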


\begin{proof}
  We show only the forward direction; the backward direction is identical.

  Viewing $\wM(3,6)$ as the blowup of $\wM_{S_1,S_2}(3,6)$ along $L_{12,34,56}$,
  Keel's formula implies that
  \[
    A^*(\wM(3,6)) = \frac{A^*(\wM_{S_1,S_2}(3,6))[E_{12,34,56}]}{\text{the
        following relations}}
  \]
  \begin{enumerate}
  \item $(D_{12,3456} - E_{12,34,56})(D_{34,1256} - E_{12,34,56})(D_{56,1234} -
    E_{12,34,56}) = 0$.
  \item $J \cdot E_{12,34,56} = 0$, where $J = \ker(A^*(\wM_{S_1,S_2}(3,6)) \to
    A^*(L_{ij,kl,mn}))$.
  \end{enumerate}
  Given the assumed description of $A^*(\wM_{S_1,S_2}(3,6))$, this expands to
  \[
    A^*(\wM(3,6)) =
    \frac{\bZ[D_{ijk,lmn},D_{ij,klmn},D_{ij,kl,mn},E_{12,34,56}]}{\text{the
        following relations}}
  \]
  \begin{enumerate}
  \item (Linear relations) $f^*(0) = f^*(1) = f^*(\infty)$ for any composition
    $f: \wM(3,6) \to \oM(3,6) \to \oM_{0,4}$.
  \item (Multiplicative relations)
    \begin{enumerate}
    \item (Relations from $\wM_{S_1,S_2}(3,6)$) See Remark
      \ref{BoundaryIntersections}.
    \item (Relations from the blowup)
      \begin{enumerate}
      \item $(D_{12,3456} - E_{12,34,56})(D_{34,1256} -
        E_{12,34,56})(D_{56,1234} - E_{12,34,56}) = 0$.
      \item $J \cdot E_{12,34,56} = 0$.
      \end{enumerate}
    \end{enumerate}
  \end{enumerate}
  By Lemma \ref{bigMdivs}, this can be rewritten as
  \[
    A^*(\wM(3,6)) =
    \frac{\bZ[D_{ijk,lmn}',D_{ij,klmn}',D_{ij,kl,mn}',E_{12,34,56}]}{\text{the
        following relations}}
  \]
  \begin{enumerate}
  \item (Linear relations) $f^*(0) = f^*(1) = f^*(\infty)$ for any composition
    $f: \wM(3,6) \to \oM(3,6) \to \oM_{0,4}$.
  \item (Multiplicative relations)
    \begin{enumerate}
    \item (Relations from $\wM_{S_1,S_2}(3,6)$)
      \begin{enumerate}
      \item (Relations from $\oM(3,6)$) From the relations $J \cdot E_{12,34,56}
        = 0$ coming from the blowup, one sees that $\prod D_i = 0 \iff \prod
        D_i' = 0$ for all relations coming from $\oM(3,6)$.
      \item (Relations from $S_1$)
        \begin{enumerate}
        \item $D_{ij,kl,mn}'D_{ij,mn,kl}' = 0$ for $P_{ij,kl,mn} \in S_1'$.
        \item $(D_{12,34,56}' - E_{12,34,56})(D_{12,56,34}' - E_{12,34,56}) =
          0$.
        \end{enumerate}
      \item (Relations from $S_2$) $D_{ij,klmn}'D_{kl,ijmn}'D_{mn,ijkl}' = 0$
        for $P_{ij,kl,mn} \in S_2$.
      \end{enumerate}
    \item (Relations from the blowup)
      \begin{enumerate}
      \item $D_{12,3456}'D_{34,1256}'D_{56,1234}' = 0$,
      \item $J' \cdot E_{12,34,56} = 0$, where $J' = \ker
        (A^*(\wM_{S_1',S_2'}(3,6)) \to A^*(\Pi_{ij,kl,mn}))$. This follows by
        Lemma \ref{KernelRestrictions}.
      \end{enumerate}
    \end{enumerate}
  \end{enumerate}
  Notice that these relations consist exactly of the desired relations on
  $A^*(\wM_{S_1',S_2'}(3,6))$, plus the extra relations
  \begin{align*}
    (D_{12,34,56}' - E_{12,34,56})(D_{12,56,34}' - E_{12,34,56}) = 0 \;\; \text{ and } J \cdot E_{12,34,56} = 0.
  \end{align*}
  These extra relations are exactly the relations occuring from Keel's formula
  for the Chow ring of the blowup $g : \wM(3,6) \to \wM_{S_1',S_2'}(3,6)$ It
  follows that $A^*(\wM_{S_1',S_2'}(3,6))$ has the desired presentation.
\end{proof}

\subsubsection{Picard groups}

\begin{claim}
  $\Pic \wM_{S_1,S_2}(3,6)$ has the desired description of Theorem
  \ref{ChowResolutions} $\iff \Pic \wM_{S_1',S_2'}(3,6)$ does.
\end{claim}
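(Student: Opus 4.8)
The plan is to exhibit a single explicit isomorphism $\phi\colon\Pic\wM_{S_1,S_2}(3,6)\to\Pic\wM_{S_1',S_2'}(3,6)$ which carries the (labelled) boundary divisor classes, the two families of linear relations, and the $51$ listed basis elements on one side to those on the other. Since the description of $\Pic$ in Theorem \ref{ChowResolutions}(3) is phrased purely in terms of these data, it then transfers in both directions, which is exactly the claim.

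To build $\phi$ I would use the common roof $\wM(3,6)$ of this subsection, with $f\colon\wM(3,6)\to\wM_{S_1,S_2}(3,6)$ the blowup of $L_{12,34,56}\cong\bP^1$ (codimension $3$) and $g\colon\wM(3,6)\to\wM_{S_1',S_2'}(3,6)$ the blowup of $\Pi_{12,34,56}\cong\bP^2$ (codimension $2$), both having the same exceptional divisor $E_{12,34,56}\cong\bP^1\times\bP^2$, the fiber over $P_{12,34,56}$ (the local Atiyah-flop picture for the cone over $\bP^1\times\bP^2$, from Section \ref{singularities} and Lemma \ref{LocalSmallResolutions}). By Lemma \ref{ChowGroupsofBlowup} with $k=1$ (and $\Pic=A^1$ for smooth varieties),
\[
  \Pic\wM(3,6)=f^*\Pic\wM_{S_1,S_2}(3,6)\oplus\bZ E_{12,34,56}=g^*\Pic\wM_{S_1',S_2'}(3,6)\oplus\bZ E_{12,34,56}.
\]
Since $g$ contracts $E_{12,34,56}$ onto the surface $\Pi_{12,34,56}$ we have $g_*E_{12,34,56}=0$, while $g_*g^*=\mathrm{id}$; hence $\ker g_*=\bZ E_{12,34,56}$, and $\phi:=g_*\circ f^*$ is an isomorphism, because $f^*\Pic\wM_{S_1,S_2}(3,6)$ is a complement to $\ker g_*$ in $\Pic\wM(3,6)$.

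Next I would read off the action of $\phi$ on boundary divisors from Lemma \ref{bigMdivs}: in $A^*(\wM(3,6))$ one has $f^*D_{ij,klmn}=g^*D_{ij,klmn}'+E_{12,34,56}$ for $ij=12,34,56$, $f^*D_{ij,kl,mn}=g^*D_{ij,kl,mn}'-E_{12,34,56}$ for $\{ij,kl,mn\}=\{12,34,56\}$, and $f^*D=g^*D'$ for every other boundary divisor $D$. Applying $g_*$ and using $g_*g^*=\mathrm{id}$, $g_*E_{12,34,56}=0$ yields $\phi(D)=D'$ for \emph{every} boundary divisor $D$ of $\oM(3,6)$; in particular $\phi$ sends the $51$ classes of Theorem \ref{ChowResolutions}(3)(b) bijectively to the corresponding primed classes, and identifies the relations $D_{ij,kl,mn}=D_{mn,ij,kl}=D_{kl,mn,ij}$ with their primed analogues. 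For the remaining linear relations, if $p\colon\wM_{S_1,S_2}(3,6)\to\oM(3,6)\xrightarrow{r_i}\oM_{0,5}\xrightarrow{f_j}\oM_{0,4}=\bP^1$ and $p'$ is the corresponding composition on $\wM_{S_1',S_2'}(3,6)$, then $p\circ f=p'\circ g$ as morphisms $\wM(3,6)\to\bP^1$, so $f^*p^*(t)=g^*(p')^*(t)$ for $t\in\{0,1,\infty\}$, and applying $g_*$ gives $\phi(p^*(t))=(p')^*(t)$. Thus $\phi$ matches both families of linear relations, so $\Pic\wM_{S_1,S_2}(3,6)$ has the description of Theorem \ref{ChowResolutions}(3) if and only if $\Pic\wM_{S_1',S_2'}(3,6)$ does.

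The only genuinely geometric input is the identification of the common exceptional divisor $E_{12,34,56}$ and the vanishing $g_*E_{12,34,56}=0$, i.e.\ the local structure of the flop at $P_{12,34,56}$, which is precisely Lemma \ref{LocalSmallResolutions}; everything else is formal. One could also argue more briefly: part (3)(a) is just the degree-one truncation of the ring presentation, which transfers by Claim \ref{IterativeChow} since all multiplicative relations have degree $\geq 2$, and part (3)(b) then follows because the proof of that claim already identifies the two sets of generators compatibly — the map realizing that identification being $\phi$ above.
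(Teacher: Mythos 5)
Your proof is correct, and your main argument takes a more self-contained route than the paper, whose entire proof of this claim is one sentence: it deduces the statement from Claim \ref{IterativeChow} (the transfer of the full ring presentation, whose degree-one truncation is exactly part (3)(a), since all multiplicative relations live in degree $\geq 2$) together with the observation that the linear relations are literally the same formal expressions on every small resolution. Your explicit isomorphism $\phi = g_*\circ f^*$ through the roof $\wM(3,6)$ is sound: the decomposition $\Pic\wM(3,6) = g^*\Pic\wM_{S_1',S_2'}(3,6)\oplus\bZ E_{12,34,56}$ from Lemma \ref{ChowGroupsofBlowup}, the vanishing $g_*E_{12,34,56}=0$ (the image $\Pi_{12,34,56}$ has codimension $2$), and $g_*g^*=\mathrm{id}$ together show $\phi$ is an isomorphism; Lemma \ref{bigMdivs} then gives $\phi(D)=D'$ on the nose for every boundary divisor, and the commutativity $p\circ f = p'\circ g$ matches the linear relations. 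What your version buys is independence from the (much heavier) Claim \ref{IterativeChow}: you only need the degree-one data of the flop, so the Picard-group statement is established without first proving the whole Chow-ring presentation transfers. What the paper's version buys is brevity, since Claim \ref{IterativeChow} has already been proved at that point in the argument. Your closing remark correctly identifies the paper's actual proof as the shorter alternative.
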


\begin{proof}
  This is immediate from Claim \ref{IterativeChow} and the observation that the
  linear relations are the same on the Chow ring of any small resolution.
\end{proof}

\subsubsection{Homological results}

This section is over $\bC$.

\begin{claim} \label{IterativeHI} $\wM_{S_1,S_2}(3,6)$ is an HI scheme $\iff
  \wM_{S_1',S_2'}(3,6)$ is an HI scheme.
\end{claim}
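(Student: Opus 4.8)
The key observation is that both small resolutions are related to the single ``big'' resolution $\wM(3,6)$ by a blowup along a projective space: $\wM(3,6)$ is the blowup of $\wM_{S_1,S_2}(3,6)$ along the line $L_{12,34,56}\cong\bP^1$ (the map $f$), and it is simultaneously the blowup of $\wM_{S_1',S_2'}(3,6)$ along the plane $\Pi_{12,34,56}\cong\bP^2$ (the map $g$). Since $\bP^1$ and $\bP^2$ are HI schemes (for a projective space the cycle map $cl:A_*\to H_*$ is an isomorphism), the statement follows purely formally from Lemma \ref{HIschemes}, applied in both parts and in both directions.

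\textbf{Forward direction.} Assume $\wM_{S_1,S_2}(3,6)$ is an HI scheme. Apply Lemma \ref{HIschemes}(1) to the blowup $f:\wM(3,6)\to\wM_{S_1,S_2}(3,6)$: both the base and the center $L_{12,34,56}\cong\bP^1$ are HI schemes, so $\wM(3,6)$ is an HI scheme. Now view $\wM(3,6)$ as the blowup $g:\wM(3,6)\to\wM_{S_1',S_2'}(3,6)$ along $\Pi_{12,34,56}\cong\bP^2$. Since $\wM(3,6)$ and the center $\Pi_{12,34,56}$ are HI schemes, Lemma \ref{HIschemes}(2) gives that $\wM_{S_1',S_2'}(3,6)$ is an HI scheme. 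The backward direction is the mirror image: starting from $\wM_{S_1',S_2'}(3,6)$ HI, Lemma \ref{HIschemes}(1) applied to $g$ (center $\bP^2$) gives $\wM(3,6)$ HI, and then Lemma \ref{HIschemes}(2) applied to $f$ (center $\bP^1$) gives $\wM_{S_1,S_2}(3,6)$ HI.

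\textbf{Remarks on obstacles.} There is essentially no obstacle: the content is entirely contained in the description of $\wM(3,6)$ and its exceptional loci, which was set up at the start of this subsection (and rests on Proposition \ref{SmallResolutions}), and in Lemma \ref{HIschemes}. The only things one needs to be sure of are that the centers of $f$ and $g$ are regularly embedded (they are smooth subvarieties of smooth varieties, so Lemma \ref{HIschemes} applies) and that $E=\bP(N_{Z/X})$ over such a center is again an HI scheme — but that is recorded inside the proof of Lemma \ref{HIschemes}(2) (a projective bundle over an HI scheme is HI), so nothing further is required here.
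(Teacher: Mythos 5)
Your proof is correct and is exactly the argument the paper gives: pass through the common resolution $\wM(3,6)$, applying Lemma \ref{HIschemes}(1) to the blowup with center $\bP^1$ (resp.\ $\bP^2$) to go up and Lemma \ref{HIschemes}(2) with the other center to come back down, using that lines and planes are HI schemes. No differences worth noting.
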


\begin{proof}
  Apply Lemma \ref{HIschemes} to the blowups $f : \wM'(3,6) \to
  \wM_{S_1,S_2}(3,6)$ and $g : \wM'(3,6) \to \wM_{S_1',S_2'}(3,6)$, noting that
  any line or plane is an HI scheme so the centers of these blowups are HI
  schemes.
\end{proof}

\subsubsection{Outline of proof of Theorem \ref{ChowResolutions}}

The above claims prove Proposition \ref{IterativeProposition}. By this
proposition it is now enough to prove Theorem \ref{ChowResolutions} holds for
the small resolution $\wM_1(3,6)$. We will show this in the next two sections.
In Section \ref{PresentationM1}, we will show the first part of Theorem
\ref{ChowResolutions}, i.e. the presentation of $A^*(\wM_1(3,6))$. This is the
most involved part of the proof. In Section \ref{RemainingM1}, we will establish
the remaining parts of Theorem \ref{ChowResolutions}.

\subsection{Presentation of $A^*(\wM_1(3,6))$} \label{PresentationM1}

\subsubsection{Obvious relations}

We call the desired relations on $A^*(\wM_1(3,6))$ from Theorem
\ref{ChowResolutions} the ``obvious relations'' on $A^*(\wM_1(3,6))$. This is
because each such relation obviously has to hold: it is either the pullback of a
relation on $\oM_{0,4}$, or reflects the fact that two boundary divisors are
disjoint.

\subsubsection{Outline of proof}

The first part of Theorem \ref{ChowResolutions} asserts that $A^*(\wM_1(3,6))$
is generated by boundary divisors, and the only relations are the obvious ones.
The Chow ring of $\wM_1(3,6)$ can be determined by repeated applications of
Keel's formula to the sequence of blowups of Theorem \ref{BlowupConstructions}.
The presentation obtained in this way does not immediately look like the desired
presentation. Instead, we show that each relation obtained from this procedure
is contained in the ideal generated by the obvious relations.

Computing the relations obtained by Keel's formula can be quite involved; we
separate the computations by the Chern polynomial relations and restriction
relations.

Recall from Section \ref{forgetandrestrict} that any boundary divisor on
$\oM_{0,5}$ can be written as $D_{ij} = D_{ij,klm}$. Let $p_n : X_1 \to
\oM_{0,5}$, $n=1,2$ denote the two natural projections.

\subsubsection{Chow ring of $A^*(X_1)$}

By \cite[Theorem 2.2]{keelIntersectionTheoryModuli1992}, $A^*(X_1) \cong
A^*(\oM_{0,5}) \otimes A^*(\oM_{0,5})$. Recall from Section \ref{blowupNotation}
that
\begin{align*}
  D_{ij5,kl6}^1 = p_1^*(D_{kl}), \;\; D_{ij6,kl5}^1 = p_2^*(D_{kl}), \\
  D_{i5,jkl6}^1 = p_1^*(D_{i5}), \;\; D_{i6,jkl5}^1 = p_2^*(D_{i6}).
\end{align*}
The additional divisors $D_{i56,jkl}^1$ on $X_1$ are the strict transforms of
the divisors $Q_{jkl} \subset X_0$ under the blowup $X_1 \to X_0$, from which we
compute the linear equivalence.
\[
  D_{i56,jkl}^1 = D_{ij5,kl6}^1 + D_{j5,ikl6}^1 + D_{ij6,kl5}^1 + D_{j6,ikl5}^1.
\]
Interpreting Keel's presentation of $A^*(\oM_{0,5})$ in this notation, we
conclude that
\begin{align*}
  A^*(X_1) &= \frac{\bZ[D_{i6,jkl5}^1,D_{i5,jkl6}^1,D_{ij6,kl5}^1,D_{ij5,kl6}^1,D_{i56,jkl}^1]}{\text{the following relations}}.
\end{align*}
\begin{enumerate}
\item (Linear relations)
  \begin{enumerate}
  \item $p_n^*(D_{ij} + D_{kl}) = p_n^*(D_{ik} + D_{jl}) = p_n^*(D_{il} +
    D_{jk})$ for $n=1,2$, $i,j,k,l$ distinct.
  \item $D_{i56,jkl}^1 = D_{ij5,kl6}^1 + D_{j5,ikl6}^1 + D_{ij6,kl5}^1 +
    D_{j6,ikl5}^1$.
  \end{enumerate}
\item (Multiplicative relations) $p_n^*(D_{ij})p_n^*(D_{ab}) = 0$ for $n=1,2$
  and $\lvert ij \cap ab \rvert = 1$.
\end{enumerate}

\subsubsection{Relations on $A^*(\wM_1(3,6))$ coming from $A^*(X_1)$}

Applying Lemma \ref{ChowGenerators} and Keel's formula to our sequence of
blowups, we see that $A^*(\wM_1(3,6))$ is generated over $A^*(X_1)$ by
\[
  D_{ijk,lmn},D_{ij,klmn},D_{ij,kl,mn},
\]
with the linear relations
\begin{align*}
  D_{ij5,kl6}^1 = r_6^*(D_{kl}), \;\; D_{ij6,kl5}^1 = r_5^*(D_{kl}), \\
  D_{i5,jkl6}^1 = r_6^*(D_{i5}), \;\; D_{i6,jkl5}^1 = r_5^*(D_{i6}), \\
  D_{i56,jkl}^1 = f_i^*(D_{56} + D_{jk} + D_{jl} + D_{kl}).
\end{align*}
(Recall the pullback formulas from Proposition \ref{PullbackFormulas}.) It
follows that $A^*(\wM_1(3,6))$ is generated over $\bZ$ by
\[
  D_{ijk,lmn},D_{ij,klmn},D_{ij,kl,mn},
\]
and the relations on $A^*(X_1)$ from the previous section give the following
relations on $A^*(\wM_1(3,6))$.
\begin{enumerate}
\item (Linear relations)
  \begin{enumerate}
  \item $r_n^*(D_{ij} + D_{kl}) = r_n^*(D_{ik} + D_{jl}) = r_n^*(D_{il} +
    D_{jk})$ for $n=5,6$,
  \item $f_i^*(D_{56} + D_{jk} + D_{jl} + D_{kl}) = r_6^*(D_{kl} + D_{j5}) +
    r_5^*(D_{kl} + D_{j6})$ for $i=1,2,3,4$.
  \end{enumerate}
  We leave it to the reader to verify that the second class of relations are
  contained in the obvious linear relations on $A^*(\wM_1(3,6))$.
\item Multiplicative relations: $r_n^*(D_{ij})r_n^*(D_{ab}) = 0$ for $n=5,6$ and
  $\lvert ij \cap ab \rvert = 1$. We again leave it to the reader to verify that
  all these relations are contained in the obvious relations on
  $A^*(\wM_1(3,6))$.
\end{enumerate}

\subsubsection{Chern polynomial relations}
By Lemmas \ref{CompleteIntersections} and \ref{Chernformulas}, we obtain the
following relations from the Chern polynomials of the blown up centers $f_k :
X_k \to X_{k-1}$, $k \neq 4$.
\begin{enumerate}
\item $(D_{l5,ijk6}^1 - D_{ijk,l56}^2)(D_{l6,ijk5}^1 - D_{ijk,l56}^2) = 0$. This
  becomes
  \[
    (r_6^*(D_{l5}) - D_{ijk,l56})(r_5^*(D_{l6}) - D_{ijk,l56}) = 0.
  \]
\item $(D_{kl5,ij6}^2 - D_{ij,kl56}^3)(D_{kl6,ij5}^2 - D_{ij,kl56}^3) = 0$.
  These become
  \[
    (r_6^*(D_{ij}) - D_{ij,kl56} - D_{ij,kl,56})(r_5^*(D_{ij}) - D_{ij,kl56} -
    D_{ij,kl,56}) = 0.
  \]
\item $(D_{klm,ijn}^4 - D_{ij,kl,mn}^5)(D_{mn,ijkl}^4 - D_{ij,kl,mn}^5) = 0$.
  These become
  \[
    D_{klm,ijn}D_{mn,ijkl} = 0.
  \]
\end{enumerate}
In any case, these relations are all contained in the obvious relations on
$A^*(\wM_1(3,6))$.

The only blowup whose Chern polynomial relation we have not described is the
blowup $f_4: X_4 \to X_3$ along $D_{56,1234}^3$. Unlike the other cases,
$D_{56,1234}^3$ is not a complete intersection, so its Chern polynomial is more
difficult to describe.

\begin{lemma} \label{Chern4} The Chern polynomial relation coming from the
  blowup of $D_{56,1234}^3$ in $X_3$ is contained in the obvious relations on
  $A^*(\wM_1(3,6))$.
\end{lemma}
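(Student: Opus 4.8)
The plan is to apply Keel's formula to the blowup $f_4 : X_4 \to X_3$ along $Z := D_{56,1234}^3$ and show that the resulting \emph{Chern polynomial} relation $P_{Z/X_3}(-E)=0$ (as opposed to the restriction relations $J_{Z/X_3}\cdot E = 0$, which are treated separately) lies in the ideal of obvious relations on $A^*(\wM_1(3,6))$. Here $E$ is the exceptional divisor of $f_4$; since by the proof of Theorem \ref{BlowupConstruction} the dominant transform of $D_{56,1234}$ is precisely this exceptional divisor and $\pi_5 : M_5 \to X_5$ is an isomorphism, the image of $E$ in $A^*(\wM_1(3,6))$ is the boundary class $D_{56,1234}$. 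Thus, writing a Chern polynomial for $N_{Z/X_3}$ (a rank-$2$ bundle) as $P_{Z/X_3}(t) = t^2 + c_1 t + c_2$ with $c_i \in A^*(X_3)$, the relation in question becomes, after pullback to $\wM_1(3,6)$, an identity of the schematic form $D_{56,1234}^2 - c_1\vert_{\cdot}\, D_{56,1234} + c_2\vert_{\cdot} = 0$.

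The first step is to produce such a Chern polynomial. Recall $Z = D_{56,1234}^3$ is the strict transform of the diagonal $\Delta(\oM_{0,5}) \subset \oM_{0,5}\times\oM_{0,5} = X_1$, whose normal bundle is the tangent bundle $T_{\oM_{0,5}}$; a Chern polynomial is therefore any lift of $c(T_{\oM_{0,5}})$, which via Keel's presentation of $A^*(\oM_{0,5})$ is an explicit $\bZ$-combination of the boundary divisors $D_{ij}$ (with $c_1 = -K_{\oM_{0,5}}$). The diagonal meets each of the successive centers $D_{ijk,l56}^1$ and $D_{ij,kl56}^2$ in a \emph{curve}, i.e. a divisor of $\Delta(\oM_{0,5})$, so each strict transform is again $\cong \oM_{0,5}$ and its normal bundle changes only by the twists described in Lemmas \ref{Chernformulas}(2) and \ref{Chernbad}. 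Carrying out these twists, and then rewriting the pulled-back generators of $A^*(X_3)$ via the pullback formulas of Proposition \ref{PullbackFormulas}, yields a Chern polynomial for $Z$ in $X_3$ whose coefficients are $\bZ$-combinations of the boundary divisors of $\wM_1(3,6)$. (Alternatively one may use the determinantal description $D_{56,1234}^0 = \Delta(\bP^2)\subset \bP^2\times\bP^2$, the rank-$\le 1$ locus of a $2\times 3$ matrix of $(1,1)$-forms, whose conormal sheaf has an Eagon--Northcott resolution, together with Lemma \ref{CompleteIntersections}(3) exhibiting $Z$ as the non-transverse intersection of the three divisors $D_{i56,jkl}^3 = f_i^*(D_{56}+D_{jk}+D_{jl}+D_{kl})$; this computes $c(N_{Z/X_3})$ directly in terms of these classes.)

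The second step is to substitute $E = D_{56,1234}$ into $P_{Z/X_3}(-E)$ and expand everything as a polynomial in the boundary divisors, reducing modulo the obvious relations. The term $D_{56,1234}^2$ is handled by first replacing one factor of $D_{56,1234}$ using a linear relation (a pullback of a relation on $\oM_{0,4}$), turning it into a sum of products $D_{56,1234}\cdot D'$ with $D'$ another boundary divisor, most of which vanish by the multiplicative relations of Remark \ref{BoundaryIntersections}; equivalently one uses $D_{56,1234}\vert_{D_{56,1234}} = c_1(N_{D_{56,1234}/\wM_1(3,6)})$. After this reduction the claimed relation becomes a finite identity among products of boundary classes, each term of which is either zero by a multiplicative relation or cancels another by a linear relation, and by construction it then lies in the ideal generated by the obvious relations. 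As in the complete-intersection cases treated just above, this last verification is a direct (if lengthy) calculation, which we leave to the reader.

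The main obstacle is the first step: because $D_{56,1234}^3$ is \emph{not} a complete intersection in $X_3$ (Lemma \ref{CompleteIntersections}(3)), its Chern polynomial cannot be read off as a product $\prod(t+D_i)$, and one must either track the normal bundle of the diagonal carefully through the non-transverse blowups of $X_3 \to X_1$ with Lemma \ref{Chernbad}, with precise accounting of the twisting divisors, or extract $c(N_{Z/X_3})$ from the determinantal/Eagon--Northcott structure of the diagonal and reconcile it with the blowup tower. Once a correct Chern polynomial is in hand expressed in boundary classes, the remaining work is bookkeeping of the same flavor as the earlier cases.
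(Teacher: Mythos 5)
Your proposal takes essentially the same route as the paper: both obtain a Chern polynomial for the non--complete-intersection center $D_{56,1234}^3$ by tracking the normal bundle of the (strict transform of the) diagonal through the blowup tower via Lemmas \ref{Chernformulas} and \ref{Chernbad}, substitute $-E = -D_{56,1234}$, and leave the final reduction modulo the obvious relations to a direct computation. Two remarks on the comparison: your starting value $c(N)=c(T_{\oM_{0,5}})$ for $D_{56,1234}^1=\Delta(\oM_{0,5})\subset X_1$, with $c_1=-K_{\oM_{0,5}}=3h-\sum e_i$, is the one consistent (after the twists by $-\sum e_l$ and $-\sum(h-e_k-e_l)$) with the paper's final answer $c_1(N_{D_{56,1234}^3/X_3})=-3h+\sum e_i$, whereas the paper's intermediate assertion that the Chern class is still $(1+h)^3$ after the first blowup does not agree with adjunction for the honest diagonal and appears to be a slip; and the one ingredient you gloss over is the explicit degree-two lift required to make $P_{Z/X_3}(t)$ a genuine Chern polynomial in $A^*(X_3)[t]$, which the paper supplies by taking the constant term to be the class $[D_{56,1234}^3]\in A^2(X_3)$ and computing it by pushing $H_1^2+H_1H_2+H_2^2$ through the blowups via \cite[Theorem 6.7]{fultonIntersectionTheory1998}.
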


\begin{proof}
  Notice that $D_{56,1234}^0 = \Delta(\bP^2) \subset \bP^2 \times \bP^2$. The
  Chern class of its normal bundle is therefore $(1 + h)^3$, where $h$ is the
  generator of $A^*(\bP^2)$. Since $D_{56,1234}^0$ intersects each center of
  $X_1 \to X_0$ transversally in a point, we find that $D_{56,1234}^1 =
  \Delta(\oM_{0,5}) \subset \oM_{0,5} \times \oM_{0,5}$, and the Chern class of
  its normal bundle is still $(1 + h)^3$. View $D_{56,1234}^1$ as $Bl_4(\bP^2)
  \cong \oM_{0,5}$, and write $e_i$ for the classes of the exceptional divisors
  in $A^*(D_{56,1234}^1)$.

  We split the blowups $X_3 \to X_2 \to X_1$ into
  \[
    X_3 \to X_2' \to X_2 \to X_1,
  \]
  where $X_2' \to X_2$ is the blowup along the $D_{ij,kl56}^2$ with
  $ij=12,13,23$, and $X_3 \to X_2'$ is the blowup along the strict transforms of
  the $D_{ij,kl56}^{2}$ for $ij=14,24,34$. This makes each blowup along a
  disjoint union of smooth irreducible centers. A given center $D_{ijk,l56}^1$
  intersects $D_{56,1234}^1$ in a Cartier divisor of class $e_l$ on
  $D_{56,1234}^1$; the intersection is transversal inside of $D_{i56,jkl}^1$.
  Likewise, a given center $D_{ij,kl56}^2$ intersects $D_{56,1234}^2$ in a
  Cartier divisor of class $h - e_k - e_l$ on $D_{56,1234}^2$, and the
  intersection is transversal inside of $D_{k56,ijl}^2$. It follows that
  $D_{56,1234}^3 \cong \oM_{0,5}$, and repeated applications of Lemma
  \ref{Chernbad} show that the Chern class of its normal bundle is
  \[
    1 - 3h + \sum e_i - 3h^2 + \sum e_i^2.
  \]

  It remains to determine the class of $D_{56,1234}^3$ in $X_3$.
  
  Since $D_{56,1234}^0$ is the diagonal in $\bP^2 \times \bP^2$, it has class
  \[
    (H_1)^2 + H_1H_2 + (H_2)^2,
  \]
  where $H_i$ are the generators of $\bP^2 \times \bP^2$. Since $D_{56,1234}^0$
  intersects the centers of the blowup $X_1 \to X_0$ transversally in a point
  each, it follows by \cite[Corollary 6.7.2]{fultonIntersectionTheory1998} that
  this is also the class of $D_{56,1234}^1$ in $X_1$. In our presentation of
  $A^*(X_1)$ we can write
  \[
    H_1 = p_1^*(D_{12} + D_{35} + D_{45}) \;\; \text{ and } \;\; H_2 =
    p_2^*(D_{12} + D_{36} + D_{46}).
  \]
  Now let $D_2 = \sum D_{ijk,l56}^2$ be the exceptional divisor of $X_2 \to
  X_1$, let $D_3^1 = \sum_{ij=12,13,23} D_{ij,kl56}^{2'}$ be the exceptional
  divisor of $X_2' \to X_2$, and let $D_3^2 = \sum_{ij=14,24,34} D_{ij,kl56}^3$
  be the exceptional divisor of $X_3 \to X_2$. Let $D_3 = D_3^1 + D_3^2$.
  Repeated applications of \cite[Theorem 6.7]{fultonIntersectionTheory1998} show
  that
  \[
    D_{56,1234}^3 = (H_1)^2 + H_1H_2 + (H_2)^2 + (D_2)^2 + D_3^1D_3^2 - H_1(D_2
    + D_3) - H_2(D_2 + D_3) + D_2D_3.
  \]

  Examining the restriction map $A^*(X_3) \to A^*(D_{56,1234}^3)$, we see that a
  Chern polynomial for $D_{56,1234}^3$ in $X_3$ is
  \[
    t^2 + (2H_1 + H_2 - 2D_2 - D_3)t + D_{56,1234}^3,
  \]
  so that the Chern polynomial relation in $A^*(\wM_1(3,6))$ becomes
  \[
    (D_{56,1234})^2 - (2H_1 + H_2 - 2D_2 - D_3)D_{56,1234} + D_{56,1234}^3 = 0,
  \]
  where
  \[
    H_1 = r_6^*(D_{12} + D_{35} + D_{45}) \;\; \text{ and } \;\; H_2 =
    r_5^*(D_{12} + D_{36} + D_{46}),
  \]
  and
  \begin{align*}
    D_2 = \sum D_{ijk,l56}, \;\; D_3^1 = \sum_{ij=12,13,23} (D_{ij,kl56} + D_{ij,kl,56}), \;\; D_3^2 = \sum_{ij=14,24,34} (D_{ij,kl56} + D_{ij,kl,56}),
  \end{align*}
  and $D_3$, $D_{56,1234}^3$ are as above.

  It is a direct verification that this relation is already zero on
  $A^*(\wM_1(3,6))$.
\end{proof}

\subsubsection{Restriction relations}

For a stratum $D^k$ of $X_k$, we write $J_D^k$ for the kernel of the restriction
map $A^*(X_k) \to A^*(D^k)$.

\begin{lemma}
  Let $D^1$ be any 2-stratum in $X_1$. Then $A^*(X_1) \to A^*(D^1)$ is
  surjective, and the relations on $A^*(\wM_1(3,6))$ coming from $J_D^1$ are
  contained in the obvious relations.
\end{lemma}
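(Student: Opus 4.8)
The plan is to enumerate the $2$-strata $D^1$ of $X_1$ and, for each isomorphism type, to identify $D^1$ explicitly as a product of boundary divisors in $\oM_{0,5}\times\oM_{0,5}$ (or a strict transform thereof) and to compute the restriction map $A^*(X_1)\to A^*(D^1)$ directly from the presentation of $A^*(X_1)$ given above. Recalling the images $D^0$ listed in Section \ref{blowupNotation}, the $2$-strata of $X_1$ come in a few shapes: diagonal-type loci $\Delta(L_{ij})$, products $L_{ij}\times L_{ij}$, products $P_{jkl}\times\bP^2$ and $\bP^2\times P_{jkl}$ (whose strict transforms in $X_1$ are $\oM_{0,5}$ and the $D_{ijk,l56}^1$-type loci which are complete intersections by Lemma \ref{CompleteIntersections}), and intersections of two pulled-back divisors $p_1^*(D_{ij})\cap p_2^*(D_{kl})$, which are products $\bP^1\times\bP^1$. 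In each case $D^1$ is a product of $\bP^1$'s and $\bP^2$'s (or a small blowup of such), so $A^*(D^1)$ is generated in degree $1$ and the restriction map is surjective: the hyperplane classes on the factors are hit by the boundary divisor classes $p_n^*(D_{ij})$ via Kapranov's identification $\oM_{0,5}=Bl_4\bP^2$.

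Next I would compute $J_D^1=\ker(A^*(X_1)\to A^*(D^1))$ generator-by-generator. For the product-type strata this is routine: $J_D^1$ is generated by those $p_n^*(D_{ij})$ whose restriction to $D^1$ vanishes (because the corresponding point or line is disjoint from the relevant factor) together with the linear relations among the surviving classes pulled back from $A^*(\oM_{0,5})$; for the complete-intersection strata one uses Lemma \ref{restrictionformulas}(1) or (3) to read off the kernel from the Chern polynomials of Lemma \ref{Chernformulas}. Pushing forward to $\wM_1(3,6)$ via the pullback formulas of Proposition \ref{PullbackFormulas} (i.e. $p_1^*(D_{ij})\mapsto r_6^*(D_{ij})$, etc.), the generators of $J_D^1$ become either (a) a pullback of a relation among the $D_{ij}$ on $\oM_{0,4}\cong\bP^1$, hence an obvious linear relation, or (b) a product $r_n^*(D_{ab})$ of classes whose associated divisors are disjoint in $\wM_1(3,6)$, which by the explicit intersection description of Remark \ref{BoundaryIntersections} is an obvious multiplicative relation — possibly after subtracting an already-established obvious relation. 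So the bulk of the lemma reduces to a finite, symmetry-reduced case check that each such pushed-forward generator lies in the ideal of obvious relations.

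The main obstacle will be the strata whose strict transform in $X_1$ is not simply a product but a blowup — concretely the $D_{i56,jkl}^1$ (strict transform of the $(1,1)$-hypersurface $Q_{jkl}$, singular before blowup and smooth after) and the loci of the form $D_{ijk,l56}^1$ which sit inside them as complete intersections. For these one must be careful that the restriction map is still surjective and compute the kernel using the normal-bundle/strict-transform formulas (Lemmas \ref{Chernformulas}, \ref{Chernbad}, \ref{restrictionformulas}) rather than a naive product computation; the linear relation $D_{i56,jkl}^1 = D_{ij5,kl6}^1 + D_{j5,ikl6}^1 + D_{ij6,kl5}^1 + D_{j6,ikl5}^1$ in $A^*(X_1)$ will be needed to keep the generating set manageable. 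Once the kernel generators for these exceptional cases are written in terms of the $D_{ij}$ via Proposition \ref{PullbackFormulas}, they again fall into the two obvious families; but verifying this for the $Q_{jkl}$-strata is where the real computation lies, and I would organize it by exploiting the $S_5$-symmetry permuting $\{1,2,3,4,5\}$ (fixing the roles of $5,6$) together with the duality $\varphi_{3,6}$ to cut the number of genuinely distinct cases to a handful.
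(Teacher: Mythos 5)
Your overall strategy --- enumerate the $2$-strata of $X_1$, identify each as a product of boundary divisors of $\oM_{0,5}\times\oM_{0,5}$, compute the kernel of the restriction map from Keel's presentation, and push the generators into $A^*(\wM_1(3,6))$ via Proposition \ref{PullbackFormulas} to check them against the obvious relations --- is exactly the paper's. But your enumeration of cases is off in two ways that matter. First, the loci you single out as ``the main obstacle,'' namely $P_{jkl}\times\bP^2$, $\bP^2\times P_{jkl}$ and the strict transforms $D_{i56,jkl}^1$ of the $Q_{jkl}$, are not $2$-strata of $X_1$ at all: the first two are the centers of $f_1$, so their dominant transforms are the exceptional divisors $p_1^*(D_{i5})$, $p_2^*(D_{i6})$ (threefolds), and $D_{i56,jkl}^1$ is likewise a divisor. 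No normal-bundle or Chern-polynomial machinery (Lemmas \ref{Chernformulas}, \ref{Chernbad}) is needed for this lemma; every relevant $2$-stratum except one is literally a product $D_{a_1b_1}\times D_{a_2b_2}$, the restriction map is the tensor product of the two restrictions $A^*(\oM_{0,5})\to A^*(D_{a_nb_n})$, and the kernel is generated by the $p_n^*(D_{mn})$ with $\lvert mn\cap a_nb_n\rvert = 1$, so the verification is routine.

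Second, and this is the genuine gap: you omit the one $2$-stratum that is not a product, namely $D_{56,1234}^1 = \Delta(\oM_{0,5})$, the strict transform of $\Delta(\bP^2)$. (The ``diagonal-type loci $\Delta(L_{ij})$'' you list are $1$-dimensional, hence not $2$-strata.) For the diagonal the kernel is not generated by boundary classes that restrict to zero, but by the differences $p_1^*(D_{ij}) - p_2^*(D_{ij})$, producing relations of the form $r_6^*(D_{ij})D_{56,1234} = r_5^*(D_{ij})D_{56,1234}$ on $A^*(\wM_1(3,6))$ rather than vanishing products. These are of a different shape from your families (a) and (b), and they are needed downstream (for the strata $D_{56,1234}^k$ at later stages and for the Chern polynomial of the blowup $f_4$), so the argument as written does not cover a case that cannot be skipped.
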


\begin{proof}
  \begin{enumerate}
  \item If $D^1 = D_{56,1234}^1$, then $D^1$ is the diagonal in $X_1 = \oM_{0,5}
    \times \oM_{0,5}$. It follows that $A^*(X_1) \to A^*(D^1)$ is surjective,
    and
    \[
      J_D^1 = \langle p_1^*(D_{ij}) - p_2^*(D_{ij}) \rangle.
    \]
    The relations on $A^*(\wM_1(3,6))$ coming from $J_D^1$ therefore look like
    \[
      r_6^*(D_{ij})D_{56,1234} = r_5^*(D_{ij})D_{56,1234}.
    \]
    These follow from the obvious relations.
  \item If $D^1 \neq D_{56,1234}^1$, then $D^1 = D_{a_1b_1} \times D_{a_2b_2}
    \subset \oM_{0,5} \times \oM_{0,5}$ for some $a_i,b_i$ (not necessarily
    distinct). Keel's results imply that the restriction map $A^*(X_1) \to
    A^*(D^1)$ is the tensor product of the restriction maps $A^*(\oM_{0,5}) \to
    D_{a_ib_i}$, $i=1,2$, and $J_D^1$ is generated by
    \[
      r_6^*(D_{mn}) \text{ for } \lvert mn \cap a_1b_1 \rvert = 1 \;\; \text{
        and }\;\; r_5^*(D_{mn}) \text{ for } \lvert mn \cap a_2b_2 \rvert = 1.
    \]

    If $D^1 = D_{ijk,l56}^1$, then the relations on $A^*(\wM_1(3,6))$ coming
    from $J_D^1$ look like
    \[
      r_6^*(D_{mn})D_{ijk,l56} = 0 \text{ for } \lvert mn \cap l5 \rvert = 1
      \text{ and } r_5^*(D_{mn})D_{ijk,l56} = 0 \text{ for } \lvert mn \cap l6
      \rvert = 1.
    \]

    If $D^1 = D_{ij,kl56}^1$, then the relations on $A^*(\wM_1(3,6))$ coming
    from $J_D^1$ look like
    \[
      r_6^*(D_{mn})(D_{ij,kl56} + D_{ij,kl,56}) = r_5^*(D_{mn})(D_{ij,kl56} +
      D_{ij,kl,56}) = 0 \text{ for } \lvert mn \cap ij \rvert = 1.
    \]

    If $D^1 = D_{ij,k5,l6}^1$, then the relations on $A^*(\wM_1(3,6))$ coming
    from $J_D^1$ look like
    \[
      r_6^*(D_{mn})D_{ij,k5,l6} = 0 \text{ for } \lvert mn \cap ij \rvert = 1
      \text{ and } r_5^*(D_{mn})D_{ij,k5,l6} = 0 \text{ for } \lvert mn \cap l6
      \rvert = 1.
    \]

    If $D^1 = D_{ij,k6,l5}^1$, then the relations on $A^*(\wM_1(3,6))$ coming
    from $J_D^1$ look like
    \[
      r_6^*(D_{mn})D_{ij,k6,l5} = 0 \text{ for } \lvert mn \cap l5 \rvert = 1
      \text{ and } r_5^*(D_{mn})D_{ij,k6,l5} = 0 \text{ for } \lvert mn \cap ij
      \rvert = 1.
    \]

    In any case, all relations follow from the obvious relations on
    $A^*(\wM_1(3,6))$.
  \end{enumerate}
\end{proof}

Recall $f_2 : X_2 \to X_1$ is the blowup of $X_1$ along $D_{ijk,l56}^1$. In
addition to the relations from $J_{D_{ijk,l56}}^1$, this blowup also gives
relations
\[
  D_{ijk,l56}D_{abc,d56} = 0 \text{ for } l \neq d
\]
on $A^*(\wM_1(3,6))$ (because any two $D_{ijk,l56}^1$ are disjoint; these
relations appear from kernels of restriction maps by viewing $f_2 : X_2 \to X_1$
as an iterated blowup). These relations are also contained in the obvious
relations.

\begin{lemma}
  Let $D^2$ be any 2-stratum in $X_2$. Then $A^*(X_2) \to A^*(D^2)$ is
  surjective, and the relations on $A^*(\wM_1(3,6))$ coming from $J_D^2$ are
  contained in the obvious relations.
\end{lemma}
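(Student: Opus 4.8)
The plan is to carry out the same case analysis as in the previous lemma, now tracking the effect of the blowup $f_2 : X_2 \to X_1$ along the four disjoint surfaces $D_{ijk,l56}^1 = D_{l5,ijk6}^1 \cap D_{l6,ijk5}^1 \cong \bP^1 \times \bP^1$. First I would enumerate the $2$-strata of $X_2$. The $2$-strata of $X_1$ disjoint from every center — one checks this holds for $D_{ij,k5,l6}^1$ and $D_{ij,k6,l5}^1$, since in the relevant factor $D_{l6}$ and $D_{d6}$ (resp. $D_{l5}$ and $D_{d5}$) are disjoint in $\oM_{0,5}$ whenever $l \neq d$ — have strict transforms isomorphic to themselves, with unchanged restriction maps and kernels, so they contribute nothing new. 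The remaining $2$-strata of $X_2$ are of two kinds: the strict transforms of the two types of $2$-strata of $X_1$ that do meet a center, namely $D_{56,1234}^1 = \Delta(\oM_{0,5})$ and $D_{ij,kl56}^1 = D_{ij} \times D_{ij}$; and the new $2$-strata lying inside the exceptional divisors $D_{ijk,l56}^2$, which are the intersections of $D_{ijk,l56}^2$ with the strict transforms of the divisors of $X_1$ that meet the center $D_{ijk,l56}^1$ (the two containing it, $D_{l5,ijk6}^1$ and $D_{l6,ijk5}^1$, and those meeting it transversally in a curve: $D_{i56,jkl}^1$ and the products $p_n^*(D_{ab})$ with $ab \subset \{i,j,k\}$).

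For the altered strict transforms I would argue as follows. By the computation in the proof of Lemma \ref{Chern4}, $D_{56,1234}^1$ meets each center $D_{ijk,l56}^1$ in a Cartier divisor of class $e_l$ on $D_{56,1234}^1$, so Lemma \ref{restrictionformulas}(2), applied once per center, shows $A^*(X_2) \to A^*(D_{56,1234}^2)$ is surjective (the strict transform is still $\cong \oM_{0,5}$) with kernel generated by $J_{D_{56,1234}}^1 = \langle p_1^*(D_{ij}) - p_2^*(D_{ij}) \rangle$ together with the classes $D_{ijk,l56}^2 - \alpha_l$, where $\alpha_l \in A^1(X_1)$ restricts to $e_l$. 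Likewise $D_{ij,kl56}^1$ meets exactly the two centers $D_{ijk,l56}^1$ and $D_{ijl,k56}^1$, each transversally in a point, so Lemma \ref{restrictionformulas}(1) applied twice gives surjectivity of $A^*(X_2) \to A^*(D_{ij,kl56}^2)$, with kernel generated by $J_{D_{ij,kl56}}^1$ together with $J_{\mathrm{pt}/X_1} \cdot D_{ijk,l56}^2$ and $J_{\mathrm{pt}/X_1} \cdot D_{ijl,k56}^2$ for the two points of intersection. In both cases surjectivity of $A^*(X_1) \to A^*(D^1)$ is supplied by the previous lemma.

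For the new $2$-strata inside the exceptional divisors I would use the projective-bundle structure $D_{ijk,l56}^2 = \bP(N_{D_{ijk,l56}^1/X_1})$. For the two divisors containing the center, Lemma \ref{restrictionformulas}(4) (taking $Y = D_{l5,ijk6}^1$, resp. $D_{l6,ijk5}^1$, which contains $Z = D_{ijk,l56}^1$) gives surjectivity onto $A^*(D_{ijk,l56}^2 \cap D_{l5,ijk6}^2)$ with kernel $(i^*)^{-1}(J_{Y/Z})$; for the divisors meeting the center transversally in a curve $C$, the intersection with the exceptional divisor is $\bP(N_{Z/X_1}|_C)$, a $\bP^1$-bundle over a curve, and surjectivity together with an explicit kernel follows from the standard description of the Chow ring of a projective bundle and the surjectivity onto $A^*(C)$. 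In all of these cases the kernel is generated by classes of the form $J_{Y/X_1}$ (pulled up) and products of $D_{ijk,l56}^2$ with pullbacks of divisor classes.

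Finally I would push every kernel generator down to $A^*(\wM_1(3,6))$ using the linear relations of Section \ref{PresentationM1} (so $D_{l5,ijk6}^1 = r_6^*(D_{l5})$, $D_{i56,jkl}^1 = f_i^*(D_{56} + D_{jk} + D_{jl} + D_{kl})$, the exceptional divisor $D_{ijk,l56}^2$ becomes $D_{ijk,l56}$, and similarly for the rest) together with the pullback formulas of Proposition \ref{PullbackFormulas}, and then check case by case that the resulting relation lies in the ideal generated by the obvious relations, using the intersection rules of Remark \ref{BoundaryIntersections} and the obvious linear relations. I expect the main obstacle to be the case $D^2 = D_{ij,kl56}^2 \cong Bl_2(\bP^1 \times \bP^1)$ and the new strata inside the exceptional divisors: there the restriction maps and their kernels are genuinely more involved than in the $X_1$ case, and the translated relations now involve combinations such as $r_6^*(D_{l5}) - D_{ijk,l56}$ rather than single boundary divisors, so confirming that they are consequences of the obvious relations requires combining the obvious multiplicative (disjointness) relations with the obvious linear relations, rather than reading off a single disjointness as in the analysis of the $2$-strata of $X_1$.
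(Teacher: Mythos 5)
Your overall strategy matches the paper's: enumerate the 2-strata of $X_2$, apply Lemma \ref{restrictionformulas} to each to get surjectivity and generators of the kernel, translate via the linear relations and Proposition \ref{PullbackFormulas}, and check containment in the obvious relations. The treatments of $D_{56,1234}^2$ (Cartier intersection of class $e_l$, kernel generator $D_{ijk,l56}^2 - D_{l5,ijk6}^1$) and of $D_{ij,kl56}^2$ (two transversal points, kernel generators $\alpha \cdot D_{ijk,l56}^2$, $\alpha \cdot D_{ijl,k56}^2$ for $\deg \alpha > 0$, the hardest case to push into the obvious relations) agree with the paper, modulo the minor omission that the exceptional divisors $D_{ikl,j56}^2, D_{jkl,i56}^2$ over the two centers \emph{disjoint} from $D_{ij,kl56}^1$ also enter the kernel.

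However, there is a genuine error in your case analysis: the claim that $D_{ij,k5,l6}^1$ and $D_{ij,k6,l5}^1$ are disjoint from every center of $f_2$. Your disjointness argument only covers the centers $D_{abc,d56}^1 = D_{d5}\times D_{d6}$ with $d \neq l$. For $d = l$ one has $D_{ij,k5,l6}^1 = D_{ij}\times D_{l6}$ and $D_{ijk,l56}^1 = D_{l5}\times D_{l6}$, whose intersection is $(D_{ij}\cap D_{l5})\times D_{l6} = \mathrm{pt}\times \bP^1$, a line — nonempty and nontransversal. This intersection is a Cartier divisor on $D_{ij,k5,l6}^1$ (so the strict transform is indeed isomorphic to itself, as you say), but by Lemma \ref{restrictionformulas}(2) the kernel $J_{D_{ij,k5,l6}}^2$ acquires the extra generator $D_{ijk,l56}^2 - D_{l5,ijk6}^1$, producing the relation $(D_{ijk,l56} - r_6^*(D_{l5}))D_{ij,k5,l6} = 0$ on $A^*(\wM_1(3,6))$, which still has to be verified to lie in the obvious relations (it does, but this must be checked). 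Since these strata become centers of the final blowup $f_5$, an incomplete description of their kernels propagates into Keel's formula at that stage, so the gap is not merely cosmetic. Separately, the extra strata you introduce inside the exceptional divisors $D_{ijk,l56}^2$ are not 2-strata in the sense used here (they are not dominant transforms of boundary divisors of $\oM(3,6)$, and they are never blown up later), so that part of your analysis is unnecessary.
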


\begin{proof}
  \begin{enumerate}
  \item If $D^2 = D_{ij,kl56}^2$, then $D^1$ intersects $D_{ijk,l56}^1$ and
    $D_{ijl,k56}^1$ transversally in a point each, and is disjoint from the
    other centers of $f_2 : X_2 \to X_1$. By Lemma \ref{restrictionformulas},
    $A^*(X_2) \to A^*(D^2)$ is surjective, with kernel generated by the
    following.
    \begin{enumerate}
    \item $J_D^1$. The induced relations $J_D^1(D_{ij,kl56} + D_{ij,kl,56}) = 0$
      in $A^*(\wM_1(3,6))$ are contained in the obvious relations by the
      previous lemma.
    \item $D_{ikl,j56}^2,D_{jkl,i56}^2$. The induced relations
      \[
        D_{ikl,j56}(D_{ij,kl56} + D_{ij,kl,56}) = D_{jkl,i56}(D_{ij,kl56} +
        D_{ij,kl,56}) = 0
      \]
      in $A^*(\wM_1(3,6))$ are contained in the obvious relations.
    \item $\alpha \cdot D_{ijk,l56}^2, \alpha \cdot D_{ijl,k56}^2$ for $\alpha
      \in A^*(X_1)$ with $\deg \alpha > 0$. The induced relations on
      $A^*(\wM_1(3,6))$ look like
      \[
        r_m^*(D_{I,J})D_{ijk,l56}(D_{ij,kl56} + D_{ij,kl,56}) = 0 \text{ for }
        m=5,6.
      \]
      These are also contained in the obvious relations on $A^*(\wM_1(3,6))$,
      but not as obviously as the other cases.
      \begin{example} \label{badrels} Using the linear relations, write
        \[
          r_6^*(D_{45})D_{123,456}(D_{12,3456} + D_{12,34,56}) = r_6^*(D_{14} +
          D_{35} - D_{13})D_{123,456}(D_{12,3456} + D_{12,34,56}).
        \]
        Expanding the right-hand side using the pullback formulas (Proposition
        \ref{PullbackFormulas}), one sees that this is already zero by the
        obvious relations.
      \end{example}
      We leave the similar calculations for the remaining cases to the reader.
    \end{enumerate}
  \item If $D^2 = D_{56,1234}^2$, then $D^1$ intersects each $D_{ijk,l56}^1$
    nontransversally in a line, which is a Cartier divisor on $D^1$. The class
    $D_{l5,ijk6}^1 \in A^1(X_1)$ restricts to the class of this Cartier divisor
    on $A^*(D^1)$. By Lemma \ref{restrictionformulas}, $A^*(X_2) \to A^*(D^2)$
    is surjective with kernel generated by the following.
    \begin{enumerate}
    \item $J_D^1$. The induced relations $J_D^1D_{56,1234} = 0$ on
      $A^*(\wM_1(3,6))$ are contained in the obvious relations by the previous
      lemma.
    \item $D_{ijk,l56}^2 - D_{l5,ijk6}^1$. The induced relations on
      $A^*(\wM_1(3,6))$ look like
      \[
        (D_{ijk,l56} - r_6^*(D_{l5}))D_{56,1234} = 0,
      \]
      which are contained in the obvious relations.
    \end{enumerate}
  \item If $D^2 = D_{ij,k5,l6}^2$, then $D^1$ intersects $D_{ijk,l56}^1$
    nontransversally in a line, which is a Cartier divisor on $D^1$, and is
    disjoint from the other centers of $f_2 : X_2 \to X_1$. The class
    $D_{l5,ijk6}^1$ in $A^1(X_1)$ restricts to the class of $D^1 \cap
    D_{ijk,l56}^1$ in $A^*(D^1)$. By Lemma \ref{restrictionformulas}, $A^*(X_2)
    \to A^*(D^2)$ is surjective with kernel generated by the following.
    \begin{enumerate}
    \item $J_D^1$. The induced relations $J_D^1D_{ij,k5,l6} = 0$ on
      $A^*(\wM_1(3,6))$ are contained in the obvious relations by the previous
      lemma.
    \item $D_{abc,d56}^2$ for $d \neq l$. The induced relations
      $D_{abc,d56}D_{ij,k5,l6} = 0$ on $A^*(\wM_1(3,6))$ are contained in the
      obvious relations.
    \item $D_{ijk,l56}^2 - D_{l5,ijk6}^1$. The induced relations on
      $A^*(\wM_1(3,6))$ look like
      \[
        (D_{ijk,l56} - r_6^*(D_{l5}))D_{ij,k5,l6} = 0,
      \]
      which are contained in the obvious relations.
    \end{enumerate}
  \item The case $D^2 = D_{ij,k6,l5}^2$ is symmetric to the case $D^2 =
    D_{ij,k5,l6}^2$.
  \end{enumerate}
\end{proof}

Recall that $f_3 : X_3 \to X_2$ is the blowup along the $D_{ij,kl56}^2$. There
is a subtlety here which does not occur in the other blowups, which is that the
$D_{ij,kl56}^2$ intersect. Since they all intersect transversally, we can view
$f_3 : X_3 \to X_2$ as the iterated blowup of the $D_{ij,kl56}^2$ in any order,
but by Lemma \ref{restrictionformulas} we get additional relations on
$A^*(\wM_1(3,6))$ of the following forms.
\begin{enumerate}
\item $(D_{ij,kl56} + D_{ij,kl,56})(D_{ab,cd56} + D_{ab,cd,56}) = 0$ for $\lvert
  ij \cap ab \rvert = 1$. These are contained in the obvious relations.
\item $\alpha (D_{ij,kl56} + D_{ij,kl,56})(D_{kl,ij56} + D_{kl,ij,56}) = 0$ for
  $\alpha \in A^*(X_2)$ with $\deg \alpha > 0$. These are also contained in the
  obvious relations by a calculation similar to Example \ref{badrels} above.
\end{enumerate}

\begin{lemma}
  Let $D^3$ be any 2-stratum in $X_3$. Then $A^*(X_3) \to A^*(D^3)$ is
  surjective, and the relations on $A^*(\wM_1(3,6))$ coming from $J_D^3$ are
  contained in the obvious relations.
\end{lemma}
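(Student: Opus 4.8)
The plan is to run the same case analysis as in the preceding two lemmas, now for the finitely many (up to the symmetry fixing $\{5,6\}$ and the $5\leftrightarrow 6$ swap of the two $\oM_{0,5}$ factors) $2$-dimensional strata $D^3\subset X_3$. Recall that, by the reduction to $\pi_1:M_1=\oM(3,6)\to X_1=\oM_{0,5}\times\oM_{0,5}$, the only blowups intervening between $X_1$ and $X_3$ are $f_2:X_2\to X_1$ (along the four disjoint surfaces $D_{ijk,l56}^1=D_{l5}\times D_{l6}$) and $f_3:X_3\to X_2$ (along the six surfaces $D_{ij,kl56}^2$, which pairwise meet transversally in a point). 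So for each $D^3$ I would first identify its strict transform $D^1\subset X_1$ — either a product surface $D_{a_1b_1}\times D_{a_2b_2}$, or the diagonal $D_{56,1234}^1=\Delta(\oM_{0,5})$, or a $1$-dimensional locus $\Delta(D_{ij})$ that only becomes $2$-dimensional after $f_3$ — and record, using Lemma \ref{CompleteIntersections} and the descriptions in Section \ref{BlowupCenters}, whether $D^1$ is contained in a center of $f_2$, meets it transversally, or meets it in a Cartier divisor, and likewise for the transform $D^2$ against the centers of $f_3$.

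Second, I would feed this into Lemma \ref{restrictionformulas}. Starting from the known surjections $A^*(X_1)\to A^*(D^1)$ and their kernels (the Keel tensor-product computation already used for $2$-strata of $X_1$), each blowup either leaves $D$ untouched or replaces the surjection by a new one whose kernel is enlarged as prescribed by part (1), (2), (3) or (4) of Lemma \ref{restrictionformulas} according to the transversality type. Iterating $f_2$ then $f_3$ — and, where centers of $f_3$ meet $D$ or each other, treating $f_3$ as the iterated blowup, which (as noted just before the lemma) contributes the extra relations $(D_{ij,kl56}+D_{ij,kl,56})(D_{ab,cd56}+D_{ab,cd,56})=0$ and $\alpha\,(D_{ij,kl56}+D_{ij,kl,56})(D_{kl,ij56}+D_{kl,ij,56})=0$ — yields surjectivity of $A^*(X_3)\to A^*(D^3)$ together with an explicit finite generating set for $J_D^3$ in the divisor classes of $X_3$.

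Third, I would translate. By Lemma \ref{ChowGenerators} and Keel's formula, passing from $X_3$ to $\wM_1(3,6)$ through $f_4,f_5$ turns the restriction data $J_D^3$ into relations of the shape $\beta\cdot(\text{future boundary divisor supported on }D^3)=0$ for $\beta$ a generator of $J_D^3$; expanding $\beta$ and the relevant pullbacks $r_5^*(D_{ij}),\,r_6^*(D_{ij}),\,f_i^*(D_{56}+D_{jk}+D_{jl}+D_{kl})$ via Proposition \ref{PullbackFormulas}, one checks each such product lies in the ideal of obvious relations. As in the $2$-strata lemmas for $X_1$ and $X_2$, most are immediate (a product of disjoint boundary divisors), while the "twisted" relations $\alpha\cdot(\cdots)$ with $\deg\alpha>0$ must first be rewritten through the linear relations before expanding, exactly as in Example \ref{badrels}.

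I expect the main obstacle to be the stratum $D^3=D_{56,1234}^3\cong\oM_{0,5}$, the center of the next blowup $f_4$: its strict transform $D^1=\Delta(\oM_{0,5})$ is the diagonal, so $\ker(A^*(X_1)\to A^*(D^1))=\langle p_1^*(D_{ij})-p_2^*(D_{ij})\rangle$, and $D^1$ meets each center $D_{ijk,l56}^1$ non-transversally in a Cartier divisor of class $e_l$ on $Bl_4\bP^2$, forcing use of Lemma \ref{restrictionformulas}(2)–(3) and of the Chern-class bookkeeping already performed in Lemma \ref{Chern4}. Tracking which class in $A^1(X_3)$ restricts to the relevant Cartier divisor on $D^3$, and then checking that the resulting relations $(D_{ijk,l56}-r_6^*(D_{l5}))D_{56,1234}=0$ together with their $f_3$-twists reduce to obvious relations, is where the computation is heaviest; the remaining strata are dispatched exactly along the lines of the previous two lemmas.
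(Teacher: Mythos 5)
Your plan matches the paper's proof: a case-by-case analysis of the 2-strata of $X_3$ (the diagonal $D_{56,1234}^3$, the $D_{ij,k5,l6}^3$ and $D_{ij,k6,l5}^3$ types, and the $D_{ij,kl,56}^3$ type contained in a center of $f_3$, handled via the factorization of $f_3$ and part (4) of Lemma \ref{restrictionformulas}), tracking which divisor class of $X_2$ restricts to the relevant Cartier divisor and checking the resulting relations against the obvious ones. One small correction of emphasis: the diagonal case needs only Lemma \ref{restrictionformulas}(2) with the classes $D_{kl5,ij6}^2$, not the normal-bundle Chern-class bookkeeping of Lemma \ref{Chern4} (that computation is only required for the separate Chern polynomial relation of the blowup $f_4$), and the genuinely delicate stratum is rather $D_{ij,kl,56}^3$, which sits inside a center.
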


\begin{proof}
  \begin{enumerate}
  \item If $D^3 = D_{56,1234}^3$, then $D^2$ intersects each $D_{ij,kl56}^2$
    nontransversally in a line, which is a Cartier divisor on $D^2$, and the
    class $D_{kl5,ij6}^2 \in A^1(X_2)$ restricts to this Cartier divisor on
    $D^2$. By Lemma \ref{restrictionformulas}, $A^*(X_3) \to A^*(D^3)$ is
    surjective with kernel generated by the following.
    \begin{enumerate}
    \item $J_D^2$. The induced relations $J_D^2D_{56,1234} = 0$ in
      $A^*(\wM_1(3,6))$ are contained in the obvious relations by the previous
      lemma.
    \item $D_{ij,kl56}^3 - D_{kl5,ij6}^2$. The induced relations on
      $A^*(\wM_1(3,6))$ look like
      \[
        (D_{ij,kl56} + D_{ij,kl,56} - r_6^*(D_{ij,kl5}))D_{56,1234} = 0,
      \]
      which are contained in the obvious relations.
    \end{enumerate}
  \item If $D^3 = D_{ij,k5,l6}^3$, then $D^2$ intersects $D_{ij,kl56}^2$
    nontransversally in a line, which is a Cartier divisor on $D^2$, and is
    disjoint from the remaining centers. The class $D_{kl6,ij5}^2 \in A^1(X_2)$
    restricts to the class of $D^2 \cap D_{ij,kl56}^2$ in $A^*(D^2)$. By Lemma
    \ref{restrictionformulas}, $A^*(X_3) \to A^*(D^3)$ is surjective with kernel
    generated by the following.
    \begin{enumerate}
    \item $J_D^2$. The induced relations $J_D^2D_{ij,k5,l6} = 0$ in
      $A^*(\wM_1(3,6))$ are contained in the obvious relations by the previous
      lemma.
    \item $D_{ab,cd56}^3$ for $ab \neq ij$. The induced relations
      \[
        (D_{ab,cd56} + D_{ab,cd,56})D_{ij,k5,l6} = 0
      \]
      on $A^*(\wM_1(3,6))$ are contained in the obvious relations.
    \item $D_{ij,kl56}^3 - D_{kl6,ij5}^2$. The induced relation on
      $A^*(\wM_1(3,6))$ looks like
      \[
        (D_{ij,kl56} + D_{ij,kl,56} - r_5^*(D_{ij,kl6}))D_{ij,k5,l6} = 0,
      \]
      which is contained in the obvious relations.
    \end{enumerate}
  \item The case $D^3 = D_{ij,k6,l5}^3$ is symmetric to the case
    $D_{ij,k5,l6}^3$.
  \item Suppose $D^3 = D_{ij,kl,56}^3$. Note that $D^2$ is a line contained in
    $D_{ij,kl56}^2$ in $X_2$. Factor $f_3 : X_3 \to X_2$ into $X_3 \to X_2' \to
    X_2$, where $X_2' \to X_2$ is the blowup along all centers besides
    $D_{ij,kl56}^2$, and $X_3 \to X_2'$ is the blowup along the strict transform
    $D_{ij,kl56}^{2'}$ of $D_{ij,kl56}^{2'}$, and let $D^{2'}$ be the strict
    transform of $D^2$ in $X_2'$. Then $D_{ij,kl56}^{2'} \cong \oM_{0,5}$ and
    $D^{2'} \cong D_{ij,klm} \subset \oM_{0,5}$. In particular
    $A^*(D_{ij,kl56}^{2'}) \to A^*(D^{2'})$ is surjective, and we can compute
    its kernel. By Lemma \ref{restrictionformulas}, we determine that $A^*(X_3)
    \to A^*(D^3)$ is surjective, with kernel generated by the following.
    \begin{enumerate}
    \item $J_{D_{ij,kl56}}^3$. This is described by the previous calculations,
      and the induced relations $J_{D_{ij,kl56}}^3D_{ij,kl,56} = 0$ on
      $A^*(\wM_1(3,6))$ are contained in the obvious relations.
    \item
      $D_{k5,ijl6}^3,D_{k6,ijl5}^3,D_{l5,ijk6}^3,D_{l6,ijk5}^3,D_{ij5,kl6}^3,D_{ij6,kl5}^3$.
      The induced relations on $A^*(\wM_1(3,6))$ are all contained in the
      obvious relations.
    \end{enumerate}
  \end{enumerate}
\end{proof}

Recall $f_4 : X_4 \to X_3$ is the blowup of $X_3$ along $D_{56,1234}^3$.
\begin{lemma}
  Let $D^4$ be any 2-stratum in $X_4$. Then $A^*(X_4) \to A^*(D^4)$ is
  surjective, and the relations on $A^*(\wM_1(3,6))$ coming from $J_D^4$ are
  contained in the obvious relations.
\end{lemma}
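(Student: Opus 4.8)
The plan is to follow the same template as the three preceding lemmas. Since $f_5\colon X_5\to X_4$ is the blowup along the \emph{disjoint} union of the thirty surfaces $D_{ij,kl,mn}^4$, there are no ``iterated blowup'' subtleties of the kind that arose for $f_3$, and the only $2$-strata of $X_4$ that need to be treated are these thirty surfaces. Up to the symmetry of the construction they form two orbits, represented by $D^4=D_{ij,k5,l6}^4$ (equivalently $D_{ij,k6,l5}^4$, via the $5\leftrightarrow 6$ symmetry exchanging the two factors of $\oM_{0,5}\times\oM_{0,5}$) and $D^4=D_{ij,kl,56}^4$. In both cases $D^4$ is the dominant transform under $f_4\colon X_4\to X_3$ of the corresponding $2$-stratum $D^3$ of $X_3$, whose center is $Z_3=D_{56,1234}^3\cong\oM_{0,5}$; and the previous lemma already tells us that $A^*(X_3)\to A^*(D^3)$ is surjective with an explicitly described kernel $J_D^3$, generated by divisor classes, all of whose induced relations on $A^*(\wM_1(3,6))$ are obvious.

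The key step is to record how $D^3$ sits relative to $Z_3$. If $D^3=D_{ij,k5,l6}^3$, then $\widetilde{\Delta}_1$ has no edge joining $g_{ij,k5,l6}$ and $f_{56}$, so $D_{ij,k5,l6}$ and $D_{56,1234}$ are disjoint in $\wM_1(3,6)$; tracing this back, $D^3$ is disjoint from $Z_3$ in $X_3$, so $D^4\cong D^3$ and Lemma \ref{restrictionformulas}(1) gives that $A^*(X_4)\to A^*(D^4)$ is surjective with kernel $\langle J_D^3,\ E\rangle$, where $E=D_{56,1234}$. The resulting relations on $A^*(\wM_1(3,6))$ are $J_D^3\cdot D_{ij,k5,l6}=0$, obvious by the previous lemma, and $D_{56,1234}\cdot D_{ij,k5,l6}=0$, which is one of the obvious disjointness relations of Remark \ref{BoundaryIntersections}. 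If $D^3=D_{ij,kl,56}^3$, then $\{g_{ij,kl,56},f_{56}\}$ \emph{is} an edge of $\widetilde{\Delta}_1$, reflecting that already in $X_1$ one has $D_{ij,kl,56}^1=\Delta(D_{ij})\subset\Delta(\oM_{0,5})=D_{56,1234}^1$; a local computation (in the spirit of Lemma \ref{CompleteIntersections}) then shows that after the blowups $f_2,f_3$ the stratum $D^3$ meets $Z_3$ in a divisor $V$ on the smooth surface $D^3$, hence Cartier. Lemma \ref{restrictionformulas}(2) now gives surjectivity of $A^*(X_4)\to A^*(D^4)$ with kernel $\langle J_D^3,\ E-\alpha\rangle$, where $\alpha\in A^1(X_3)$ restricts to $[V]$ on $A^*(D^3)$; surjectivity of $A^*(X_3)\to A^*(D^3)$ lets us take $\alpha$ to be an explicit combination of boundary divisors.

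It then remains to verify that the new relation $(D_{56,1234}-\alpha)\,D_{ij,kl,56}=0$ on $A^*(\wM_1(3,6))$ lies in the ideal of obvious relations: after substituting the boundary-divisor expression for $\alpha$ and expanding with the pullback formulas of Proposition \ref{PullbackFormulas}, this becomes an identity entirely analogous to the ones appearing in the $D^3=D_{ij,k5,l6}^3$ step of the previous lemma (e.g.\ the reduction of $(D_{ijk,l56}-r_6^*(D_{l5}))D=0$ to obvious relations), so I would carry out one representative instance and leave the rest. The main obstacle is pinning down $V$, equivalently the class $\alpha$: because $Z_3=D_{56,1234}^3$ is only a \emph{locally} complete intersection in $X_3$ (Lemma \ref{CompleteIntersections}(3)), $\alpha$ cannot be read off from a clean global complete-intersection description as in the earlier blowups, and instead has to be extracted by computing the restriction $A^1(X_3)\to A^1(D_{ij,kl,56}^3)$ directly — essentially reusing the bookkeeping behind the normal-bundle computation of $D_{56,1234}^3$ in the proof of Lemma \ref{Chern4}. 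Everything else parallels the preceding lemmas.
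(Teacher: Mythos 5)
Your proposal follows the paper's proof essentially verbatim: the only $2$-strata remaining in $X_4$ are the thirty $D_{ij,kl,mn}^4$, and the two cases you describe — disjointness from $Z_3=D_{56,1234}^3$ for $D_{ij,k5,l6}^4$ (with its $5\leftrightarrow 6$ mirror) giving kernel $\langle J_D^3, D_{56,1234}^4\rangle$, and Lemma \ref{restrictionformulas}(2) for $D_{ij,kl,56}^4$ giving kernel $\langle J_D^3, D_{56,1234}^4-\alpha\rangle$ — are exactly the two cases the paper treats. The one step you leave open, identifying $\alpha$, is resolved in the paper without any of the normal-bundle bookkeeping you anticipate: one takes $\alpha = D_{i56,jkl}^3$ (which restricts to the class of the line $D^3\cap Z_3$ on $D^3$), so that the induced relation becomes $D_{i56,jkl}^4\,D_{ij,kl,56}=0$, which is contained in the obvious relations.
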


\begin{proof}
  \begin{enumerate}
  \item If $D^4 = D_{ij,k5,l6}^4$, then $D^3$ is disjoint from $D_{56,1234}^3$,
    so $A^*(X_4) \to A^*(D_4)$ is surjective with kernel generated by the
    following.
    \begin{enumerate}
    \item $J_D^3$. The induced relations $J_D^3D_{ij,k5,l6} = 0$ on
      $A^*(\wM_1(3,6))$ are contained in the obvious relations by the previous
      lemma.
    \item $D_{56,1234}^4$. The induced relation $D_{56,1234}D_{ij,k5,l6} = 0$ on
      $A^*(\wM_1(3,6))$ is contained in the obvious relations.
    \end{enumerate}
  \item The case $D^4 = D_{ij,k6,l5}^4$ is symmetric to the case $D^4 =
    D_{ij,k5,l6}^4$.
  \item If $D^4 = D_{ij,kl,56}^4$, then $D^3$ intersects $D_{56,1234}^3$
    nontransversally in a line, which is a Cartier divisor on $D^4$. The class
    $D_{i56,jkl}^3 \in A^1(X_3)$ restricts to the class of this Cartier divisor
    in $A^*(D^3)$. Thus $A^*(X_4) \to A^*(D^4)$ is surjective, with kernel
    generated by the following.
    \begin{enumerate}
    \item $J_D^3$. By the previous lemma, the induced relations
      $J_D^3D_{ij,kl,56} = 0$ on $A^*(\wM_1(3,6))$ are contained in the obvious
      relations.
    \item $D_{56,1234}^4 - D_{i56,jkl}^3$. The induced relation
      \[
        (D_{56,1234}^4 - D_{i56,jkl}^3)D_{ij,kl,56} = D_{i56,jkl}^4D_{ij,kl,56}
        = 0,
      \]
      is contained in the obvious relations.
    \end{enumerate}
  \end{enumerate}
\end{proof}

Recall $f_5 : X_5 \to X_4$ is the blowup of $X_4$ along all $D_{ij,kl,mn}^4$.
These are all disjoint, so in addition to the relations from the previous
lemmas, we get relations
\[
  D_{ij,kl,mn}D_{ab,cd,ef} = 0 \text{ for } D_{ij,kl,mn} \neq D_{ab,cd,ef}.
\]
These are all contained in the obvious relations. Thus we have shown that all
restriction relations on $A^*(\wM_1(3,6))$ are contained in the obvious
relations.

\subsubsection{Chow ring of $\wM_1(3,6)$}
The above subsections show that the Chow ring $A^*(\wM_1(3,6))$ of the small
resolution $\wM_1(3,6)$ has the desired presentation of Theorem
\ref{ChowResolutions}.

\subsection{Remaining results} \label{RemainingM1}

Recall from Section \ref{BlowupCenters} the description of the centers of the
blowup sequence $\wM_1(3,6) \to \bP^2 \times \bP^2$.
\begin{observation}
  The irreducible centers of the blowup sequence $\wM_1(3,6) \to \bP^2 \times
  \bP^2$ all look like either $Bl_k\bP^2$ for some $0 \leq k \leq 4$, or
  $Bl_k(\bP^1 \times \bP^1)$ for some $0 \leq k \leq 3$.
\end{observation}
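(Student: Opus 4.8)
The plan is to read off the complete list of irreducible centers directly from the refactored blowup sequence constructed in Section \ref{BlowupCenters}. Recall that there, using the transversality of the components of each center together with \cite{liWonderfulCompactificationArrangement2009}, the composite $\wM_1(3,6) = X_5 \to X_0 = \bP^2 \times \bP^2$ was rewritten as a sequence of blowups along \emph{disjoint} unions of smooth irreducible centers. So it suffices to go through the refactored blowups $f_1,\dots,f_5$ one at a time and check that every irreducible center that appears is either a $Bl_k\bP^2$ with $0\le k\le 4$ or a $Bl_k(\bP^1\times\bP^1)$ with $0\le k\le 3$.

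First I would recall the two identifications already used in the proof of Theorem \ref{BlowupConstruction}: $\oM_{0,5}\cong Bl_4\bP^2$ (Kapranov's blowup of four general points) and $\oM_{0,5}\cong Bl_3(\bP^1\times\bP^1)$. With these in hand, the centers are as follows. The blowup $f_1$ is along four copies of $\bP^2 = Bl_0\bP^2$ and then four copies of $Bl_4\bP^2\cong\oM_{0,5}$. The blowup $f_2$ is along four copies of $D_{l5}\times D_{l6}\cong\bP^1\times\bP^1 = Bl_0(\bP^1\times\bP^1)$, since each factor is a boundary divisor $\cong\oM_{0,4}\cong\bP^1$ of $\oM_{0,5}$. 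The blowup $f_3$ is along three copies of $Bl_2(\bP^1\times\bP^1)$ and then three copies of $Bl_3(\bP^1\times\bP^1)\cong\oM_{0,5}$. The blowup $f_4$ is along the single center $D_{56,1234}^3\cong\oM_{0,5}\cong Bl_4\bP^2$. Finally $f_5$ is along thirty copies of $\bP^1\times\bP^1 = Bl_0(\bP^1\times\bP^1)$. Every center on this list is of the asserted form, which proves the observation.

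There is essentially no obstacle here: all of the geometric content — the identification of the dominant transforms of the boundary divisors, the transversality statements, and the counts of how many points have been blown up on the strict transforms occurring in steps 1 and 3 — was already established in Section \ref{BlowupCenters} in the course of proving Theorem \ref{BlowupConstruction}. The only point requiring care is bookkeeping: one should confirm that the strict transforms appearing in the refactored steps 1b and 3b have acquired exactly the stated extra blowups (so that the $D_{i6,jkl5}$ become $Bl_4\bP^2$ and the second batch of $D_{ij,kl56}$ become $Bl_3(\bP^1\times\bP^1)$), which again follows from the transversality of the relevant centers recorded there.
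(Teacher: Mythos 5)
Your proposal is correct and is essentially the paper's argument: the paper proves the observation simply by recalling the list of irreducible centers already worked out in Section \ref{BlowupCenters} (four $\bP^2$'s and four $Bl_4\bP^2 \cong \oM_{0,5}$'s for $f_1$; four $\bP^1\times\bP^1$'s for $f_2$; three $Bl_2(\bP^1\times\bP^1)$'s and three $Bl_3(\bP^1\times\bP^1)\cong\oM_{0,5}$'s for $f_3$; one $\oM_{0,5}$ for $f_4$; thirty $\bP^1\times\bP^1$'s for $f_5$), exactly as you do. Your bookkeeping and the identifications $\oM_{0,5}\cong Bl_4\bP^2\cong Bl_3(\bP^1\times\bP^1)$ are the same ones the paper relies on, so there is nothing to add.
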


\subsubsection{Ranks of Chow groups} \label{ChowRankM1}

From the observation we can easily compute the ranks of the Chow groups of all
the centers. Then from Corollary \ref{ChowGroupRanks} applied to the sequence of
blowups $\wM_1(3,6) \to \bP^2 \times \bP^2$, we compute
\begin{align*}
  \rk A^1(\wM_1(3,6)) &= 51,\\
  \rk A^2(\wM_1(3,6)) &= 127,\\
  \rk A^3(\wM_1(3,6)) &= 51.
\end{align*}

\subsubsection{Picard groups}

It follows by the description of $A^*(\wM_1(3,6))$ that $\Pic \wM_1(3,6) =
A^1(\wM_1(3,6))$ is generated by the classes of the boundary divisors, modulo
the linear relations. By the linear relations any boundary divisor class in
$A^1(\wM_1(3,6))$ can be written as a linear combination of the desired basis
elements. Since there are 51 such elements and $\rk A^1(\wM_1(3,6)) = 51$, it
follows that these elements indeed form a basis of $\Pic \wM_1(3,6)$.

\subsubsection{Homological results}

By \cite{keelIntersectionTheoryModuli1992}, $X_1 = \oM_{0,5} \times \oM_{0,5}$
is an HI scheme. Each irreducible center of the sequence of blowups $\wM_1 \to
X_1$ is also an HI scheme (cf. Section \ref{BlowupCenters}), thus $\wM_1(3,6)$
is an HI scheme by Lemma \ref{HIschemes}.

\subsubsection{Completion of proof}

The above results establish Theorem \ref{ChowResolutions} for the small
resolution $\wM_1(3,6)$. The theorem follows for all small resolutions by
Proposition \ref{IterativeProposition}.

\begin{remark}
  We independently verified the calculation of $A^*(\wM_1(3,6))$ above on a
  computer by the following method. By Lemma \ref{ChowGenerators},
  $A^*(\wM_1(3,6))$ is generated by the classes of the boundary divisors. Let
  $R$ be the ring generated by the boundary divisors, modulo the obvious
  relations. Then there is a natural surjective ring morphism $R \to
  A^*(\wM_1(3,6))$. Using a computer we verify that $\rk R^k = \rk
  A^k(\wM_1(3,6))$ for all $k$, and all $R^k$ are torsion-free. Since each
  $A^k(\wM_1(3,6))$ is also torsion-free, it follows that $R^k \to
  A^k(\wM_1(3,6))$ is an isomorphism for all $k$, hence $R \to A^k(\wM_1(3,6))$
  is torsion-free.
\end{remark}

\section{Intersection theory of $\oM(3,6)$} \label{IntersectionTheoryM}

By the Chow ring of a singular variety, we mean the operational Chow ring of
Fulton-MacPherson's bivariant intersection theory \cite[Chapter
17]{fultonIntersectionTheory1998}.

Define
\begin{align*}
  \delta_{ijk,lmn} &= D_{ijk,lmn},\\
  \delta_{ij,k,lmn} &= D_{ij,klmn} + D_{kl,ij,mn} + D_{km,ij,ln} + D_{kn,ij,lm} \text{ for } k < l,m,n,\\
  \delta_{ij,kl,mn} &= D_{ij,kl,mn} - D_{kl,ij,mn} \text{ for } k < l,m,n.
\end{align*}
The conditions $k < l,m,n$ are so we do not have to worry about permuting the
indices. Note that there are 20 $\delta_{ijk,lmn}$, 15 $\delta_{ij,k,lmn}$, and
15 $\delta_{ij,kl,mn}$. These divisors are all Cartier by Proposition
\ref{BoundaryDivs}. Also observe $r_k^*(D_{ij}) = \delta_{ijk,lmn} +
\delta_{ij,k,lmn}$.

\begin{theorem} \label{ChowM}\
  \begin{enumerate}
  \item $A^*(\oM(3,6))$ is the subring of $A^*(\wM_1(3,6))$ described by
    $A^k(\oM(3,6)) = A^k(\wM_1(3,6))$ for $k \neq 1$, and
    \[
      A^1(\oM(3,6)) = \{\alpha \in A^1(\wM_1(3,6)) \mid
      \alpha\vert_{L_{ij,kl,mn}} = 0 \text{ for all } P_{ij,kl,mn}\}.
    \] \label{ChowMdescription}
  \item The nontrivial (i.e. $\neq 0,1$) ranks of the Chow groups are
    \begin{align*}
      \rk A^1(\oM(3,6)) &= 36, \\
      \rk A^2(\oM(3,6)) &= 127, \\
      \rk A^3(\oM(3,6)) &= 51.
    \end{align*}
  \item
    \begin{enumerate}
    \item $\Pic \oM(3,6) = A^1(\oM(3,6))$ and is generated by the
      $\delta_{ijk,lmn},\delta_{ij,k,lmn},\delta_{ij,kl,mn}$, modulo the linear
      relations $f^*(0) = f^*(1) = f^*(\infty)$ for any composition $f:\oM(3,6)
      \xrightarrow{r_i} \oM_{0,5} \xrightarrow{f_j} \oM_{0,4}$.
    \item A basis for $\Pic \oM(3,6)$ is
      \begin{enumerate}
      \item
        $\delta_{156,234},\delta_{256,134},\delta_{345,126},\delta_{346,125},\delta_{356,124},\delta_{456,123}$,
      \item all 15 $\delta_{ij,k,lmn}$,
      \item all 15 $\delta_{ij,kl,mn}$.
      \end{enumerate}
    \end{enumerate}
  \item $A^*(\oM(3,6))$ is generated by $A^1(\oM(3,6))$.
  \end{enumerate}
\end{theorem}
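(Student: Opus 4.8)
The plan is to compare $\oM(3,6)$ with its small resolution $\pi\colon\wM_1(3,6)\to\oM(3,6)$ from Theorem~\ref{BlowupConstruction}, using descent for operational Chow rings along proper surjective morphisms. Since $\pi$ is proper and birational it is an envelope, so $\pi^*\colon A^*(\oM(3,6))\to A^*(\wM_1(3,6))$ is injective with image the equalizer of the two pullbacks $A^*(\wM_1(3,6))\rightrightarrows A^*\!\big(\wM_1(3,6)\times_{\oM(3,6)}\wM_1(3,6)\big)$. The first task is to describe the fibre product: away from the $15$ singular points $\pi$ is an isomorphism, and over $P_{ij,kl,mn}$ the fibre is $L_{ij,kl,mn}\cong\bP^1$; hence, up to nilpotents (which operational Chow ignores), the fibre product is the diagonal copy of $\wM_1(3,6)$ together with the $15$ surfaces $L_{ij,kl,mn}\times L_{ij,kl,mn}$, glued along their diagonals. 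Since an operational class on a scheme covered by two closed subschemes is detected by its restrictions to them, the equalizer condition on a homogeneous class $c$ reduces to $p_1^*(c|_{L_{ij,kl,mn}})=p_2^*(c|_{L_{ij,kl,mn}})$ in $A^*(\bP^1\times\bP^1)=\bZ[a,b]/(a^2,b^2)$ for all $P_{ij,kl,mn}$. Writing $c|_{L_{ij,kl,mn}}=n\,p$ with $p$ the point class, this says $n(a-b)=0$, i.e. $n=0$, when $\deg c=1$, and is automatic when $\deg c\ne1$ (as $A^{\ge 2}(\bP^1)=0$ while $A^0$ pulls back diagonally). This gives part (1).

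Part (2) is then immediate for $k\ne1$ from Theorem~\ref{ChowResolutions}, and for $k=1$ part (1) identifies $A^1(\oM(3,6))$ with the kernel of $A^1(\wM_1(3,6))\to\bigoplus_{P_{ij,kl,mn}}A^1(L_{ij,kl,mn})\cong\bZ^{15}$; by Proposition~\ref{RestrictionMaps} the class $D_{ij,kl,mn}$ restricts to the generator of $A^1(L_{ij,kl,mn})$ and to $0$ on the other exceptional lines, so this map is surjective and $\rk A^1(\oM(3,6))=51-15=36$.

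For part (3), one first checks via Proposition~\ref{RestrictionMaps} that every $\delta_{ijk,lmn},\delta_{ij,k,lmn},\delta_{ij,kl,mn}$ restricts to $0$ on all the $L_{ij,kl,mn}$ (e.g. $\delta_{ij,kl,mn}=D_{ij,kl,mn}-D_{kl,ij,mn}$ restricts to $p-p=0$ on $L_{ij,kl,mn}$ and to $0$ elsewhere), so these $50$ classes lie in $A^1(\oM(3,6))$. They are moreover linearly independent, and a short bookkeeping argument identifies them with an integral basis of the sublattice of the free abelian group on the boundary divisors cut out by the $15$ independent conditions ``restriction to $L_{ij,kl,mn}$ is zero'': projecting to the coefficients of the $D_{ij,klmn}$ splits this lattice, with the $\delta_{ij,k,lmn}$ mapping to a basis of the quotient $\bZ^{15}$ and the $\delta_{ijk,lmn},\delta_{ij,kl,mn}$ a basis of the kernel. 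Since the boundary divisors generate $A^1(\wM_1(3,6))$ (Theorem~\ref{ChowResolutions}) and the linear relations already lie in this sublattice, the $\delta$'s generate $A^1(\oM(3,6))$ modulo the linear relations. The identification $\Pic\oM(3,6)=A^1(\oM(3,6))$ follows because $\pi^*\colon\Pic\oM(3,6)\to\Pic\wM_1(3,6)=A^1(\wM_1(3,6))$ is injective ($\oM(3,6)$ being normal) with image inside $A^1(\oM(3,6))$, while conversely a line bundle on $\wM_1(3,6)$ with class in $A^1(\oM(3,6))$ is fibrewise trivial over $\pi$ and hence descends (a local computation at each $P_{ij,kl,mn}$, where $\pi$ is the standard small resolution of the cone over $\bP^1\times\bP^2$). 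The explicit basis then follows from a finite integral rank computation, which may be confirmed by computer as elsewhere in the paper.

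For part (4), recall that $A^*(\wM_1(3,6))$ is generated in degree $1$ by the boundary divisors and that $A^k(\oM(3,6))=A^k(\wM_1(3,6))$ for $k\ge2$, so it suffices to rewrite each product of two boundary divisors as a combination of products of the Cartier classes $\delta_\bullet$. Using the pullback formulas (Proposition~\ref{PullbackFormulas}) and the linear relations, each non-Cartier boundary divisor $D_{ij,klmn}$ or $D_{ij,kl,mn}$ differs from an element of $A^1(\oM(3,6))$ by a sum of divisors $D_{ij,\ast,\ast}$ supported at partitions through the pair $ij$; the multiplicative relations (Remark~\ref{BoundaryIntersections}) annihilate almost all products of such divisors, and the remaining self-intersections $D_{ij,kl,mn}^2$ are controlled by the normal bundle formula coming from Step 5 of Theorem~\ref{BlowupConstruction}, where $D_{ij,kl,mn}$ appears as the exceptional divisor of a blowup along a complete intersection of two divisors, one of which (of type $D_{i56,jkl}$) is already Cartier. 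I expect this last step --- organizing the reduction of arbitrary degree-$2$ monomials and, in particular, pinning down $D_{ij,kl,mn}^2$ inside the subring generated by the $\delta$'s --- to be the main obstacle, since it is the one place where the explicit presentation of Theorem~\ref{ChowResolutions} and the blowup geometry must be used in earnest; confirming it by the same kind of computer verification used for the ranks is a reasonable fallback.
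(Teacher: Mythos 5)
Your proposal is correct and follows the same overall strategy as the paper: identify $A^*(\oM(3,6))$ with a subring of $A^*(\wM_1(3,6))$ via Kimura's descent for operational Chow rings along the small resolution, then read everything off from Proposition \ref{RestrictionMaps} and Theorem \ref{ChowResolutions}. The one place your route genuinely differs is how you extract part (1): you work with the equalizer of the two pullbacks to $\wM_1(3,6)\times_{\oM(3,6)}\wM_1(3,6)$ and reduce it to conditions on the pieces $L_{ij,kl,mn}\times L_{ij,kl,mn}$, invoking the claim that an operational class on a union of two closed subschemes is detected by its restrictions to them. That claim is not automatic for operational Chow groups, and it is precisely what Kimura's Theorem 3.1 (quoted in the paper as Lemma \ref{ChowEnvelope}) packages: for a birational envelope that is an isomorphism off $Z=\bigcup Z_i$, the image of $\pi^*$ is $\{\alpha \mid \alpha\vert_{E_i}\in\pi_i^*A^*(Z_i)\}$, which with $Z_i$ a point gives $\alpha\vert_{L_{ij,kl,mn}}\in\bZ$ and hence part (1) directly; I would cite that statement rather than rederive it. For part (3) the paper is more economical on the inclusion $A^1(\oM(3,6))\subset\Pic\oM(3,6)$: rather than descending a fibrewise-trivial line bundle through the small contraction, it observes that the generators $\delta_{ijk,lmn},\delta_{ij,k,lmn},\delta_{ij,kl,mn}$ were already shown to be Cartier in Proposition \ref{BoundaryDivs} (the reverse inclusion being the projection-formula computation you also give). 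Your rank count $51-15=36$ and the lattice bookkeeping for the basis are correct, and for part (4) you arrive, as the paper does, at a direct computer-assisted verification that $A^{\geq 2}$ is generated by products of classes in $A^1(\oM(3,6))$; your sketch of a hand argument is plausible but, as you acknowledge, not carried out.
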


\subsection{General procedure for determining the Chow ring of a singular
  variety}

A general procedure for determining the Chow ring of a singular variety was
given by Kimura \cite[Remark 3.2]{shun-ichiFractionalIntersectionBivariant1992}.
The key result is the following lemma.

\begin{lemma}[{\cite[Theorem
    3.1]{shun-ichiFractionalIntersectionBivariant1992}}] \label{ChowEnvelope}
  Let $\pi : \wX \to X$ be a proper birational morphism such that every closed
  subvariety of $X$ is the birational image of a closed subvariety of $\wX$.
  Suppose $\pi$ is an isomorphism outside of a closed subscheme $Z \subset X$;
  let $Z_i$ be the irreducible components of $Z$ and $E_i = \pi^{-1}(Z_i)$. Let
  $\pi_i : E_i \to Z_i$ be the restriction of $\pi$.

  Then $\pi^* : A^*(X) \to A^*(\wX)$ is injective, with image
  \[
    \{ \alpha \in A^*(\wX) \mid \alpha\vert_{E_i} \in \pi_i^*(A^*(Z_i)) \text{
      for all } i\}.
  \]
\end{lemma}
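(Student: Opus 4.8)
The plan is to recognize $\pi$ as a \emph{Chow envelope} and to prove the lemma by descent of operational bivariant classes along Chow envelopes, in the style of Kimura. The hypothesis that every closed subvariety of $X$ is the birational image of a closed subvariety of $\wX$ is exactly the assertion that the proper morphism $\pi$ is a Chow envelope; in particular, the proper pushforward $A_*(\wX \times_X Y) \to A_*(Y)$ is surjective for every $Y \to X$, and this property is stable under base change. The central statement I would establish is that for such $\pi$ the sequence
\[
  A^*(X) \xrightarrow{\pi^*} A^*(\wX) \xrightarrow{p_1^* - p_2^*} A^*(\wX \times_X \wX)
\]
is exact with $\pi^*$ injective, where $p_1,p_2$ are the two projections. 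Granting this, the lemma reduces to computing $\wX \times_X \wX$ and identifying the equalizer condition $p_1^*\alpha = p_2^*\alpha$ with the stated condition $\alpha\vert_{E_i} \in \pi_i^* A^*(Z_i)$. So the proof splits into injectivity, the equalizer exactness, and a geometric translation over the exceptional locus.

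For injectivity I would argue from the defining property of the operational ring: a class $c \in A^*(X)$ vanishes iff $c \cap \alpha = 0$ for every $\alpha \in A_*(Y)$ and every $g : Y \to X$. Pulling the envelope back along $g$ gives a Chow envelope $\wX \times_X Y \to Y$, so $\alpha = (\mathrm{pr}_Y)_*\tilde\alpha$ for some $\tilde\alpha$. By the projection formula and commutativity of the fiber square,
\[
  c \cap \alpha = (\mathrm{pr}_Y)_*\big((\mathrm{pr}_{\wX})^*(\pi^* c) \cap \tilde\alpha\big),
\]
which is $0$ whenever $\pi^* c = 0$; hence $c = 0$. This same pullback-and-push mechanism is what drives the descent sequence.

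The inclusion $\mathrm{im}(\pi^*) \subseteq \{\alpha \mid \alpha\vert_{E_i} \in \pi_i^* A^*(Z_i)\}$ is formal: for $c \in A^*(X)$, the commuting square of inclusions $Z_i \hookrightarrow X$ and $E_i \hookrightarrow \wX$ gives $(\pi^* c)\vert_{E_i} = \pi_i^*(c\vert_{Z_i})$. For the reverse inclusion I would use the equalizer: given $\alpha$ with $\alpha\vert_{E_i} = \pi_i^*\gamma_i$, it suffices to check $p_1^*\alpha = p_2^*\alpha$. Over the open locus $U = X \setminus Z$ the map $\pi$ is an isomorphism, so $(\wX \times_X \wX)\vert_U$ is the diagonal and the two pullbacks agree, whence $p_1^*\alpha - p_2^*\alpha$ is supported over $Z$. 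On the part of the fiber product lying over a component $Z_i$ both factors land in $E_i$, and there $p_1^*\alpha - p_2^*\alpha$ restricts to $q_1^*(\alpha\vert_{E_i}) - q_2^*(\alpha\vert_{E_i})$ on $E_i \times_{Z_i} E_i$; substituting $\alpha\vert_{E_i} = \pi_i^*\gamma_i$, both terms become the pullback of $\gamma_i$ along $E_i \times_{Z_i} E_i \to Z_i$ and cancel. Thus $\alpha$ lies in the equalizer and descends.

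The main obstacle is making the equalizer argument over $Z$ rigorous, since $\wX \times_X \wX$ can be non-reduced, reducible, and badly singular, and its components over the overlaps $Z_i \cap Z_j$ need not have the clean form $E_i \times_{Z_i} E_i$. This is exactly where Kimura's machinery is indispensable: one applies the descent sequence not to $\wX \times_X \wX$ itself but to a Chow envelope of it, and runs a Noetherian induction on $\dim Z$. The inductive step verifies the cancellation on the maximal-dimensional components, where the condition on the $E_i$ applies directly, and pushes the residual discrepancy into a closed subscheme of strictly smaller dimension, over which descent is already known. Reproving this exactness—rather than merely quoting it—is the only genuinely hard part; injectivity and the easy inclusion are formal consequences of the projection formula and functoriality of restriction.
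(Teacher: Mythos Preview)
The paper does not prove this lemma at all; it simply cites it as \cite[Theorem 3.1]{shun-ichiFractionalIntersectionBivariant1992} and uses it as a black box in the proof of Theorem~\ref{ChowM}. Your sketch is a faithful outline of Kimura's original argument---envelope, injectivity via the projection formula, and descent through the equalizer sequence with Noetherian induction over the exceptional locus---so there is nothing to compare it against in this paper, and your identification of the equalizer exactness as the only substantive step is accurate.
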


\subsection{Proof of Theorem \ref{ChowM}}

Write the small resolution $\wM_1(3,6)$ as $\pi : \wM_1(3,6) \to \oM(3,6)$.

\subsubsection{Preliminary description}

The small resolution $\pi : \wM_1(3,6) \to \oM(3,6)$ is an isomorphism away from
the 15 exceptional lines $L_{ij,kl,mn}$ mapping to the 15 points $P_{ij,kl,mn}$.
Any $P_{ij,kl,mn}$ is the birational image of any point in $L_{ij,kl,mn}$. Thus
$\pi$ satisfies the hypotheses of Lemma \ref{ChowEnvelope}. Since $P_{ij,kl,mn}$
is a point, $A^*(P_{ij,kl,mn}) = \bZ$, so by Lemma \ref{ChowEnvelope}, the image
of $A^*(\oM(3,6))$ in $A^*(\wM_1(3,6))$ is
\[
  \{\alpha \in A^*(\wM_1(3,6)) \mid \alpha\vert_{L_{ij,kl,mn}} \in \bZ \text{
    for all } L_{ij,kl,mn}\}.
\]
The first part of Theorem \ref{ChowM} now follows from Proposition \ref{RestrictionMaps}.

\subsubsection{Picard group}

From the formula
\[
  A^1(\oM(3,6)) = \{\alpha \in A^1(\wM_1(3,6)) \mid \alpha\vert_{L_{ij,kl,mn}} =
  0 \text{ for all } P_{ij,kl,mn}\}
\]
just established, together with the description of $A^1(\wM_1(3,6))$ from
Theorem \ref{ChowResolutions}, we immediately obtain the desired description of
$A^1(\oM(3,6))$ from part 3 of Theorem \ref{ChowM}. To finish proving part 3,
all that remains is to show
that $\Pic \oM(3,6) = A^1(\oM(3,6))$.

We have
\[
  \Pic \oM(3,6) \cong \pi^*\Pic \oM(3,6) \subset \Pic \wM_1(3,6) \cong
  A^1(\wM_1(3,6)),
\]
and furthermore if $\pi^*\alpha \in \pi^*\Pic \oM(3,6)$, then by the projection
formula
\[
  \pi^*\alpha \cdot L_{ij,kl,mn} = \alpha \cdot \pi_*L_{ij,kl,mn} = \alpha \cdot
  P_{ij,kl,mn} = 0,
\]
so $\pi^*\Pic \oM(3,6) \subset \pi^*A^1(\oM(3,6))$. To show equality, it
suffices to show that each generator of $A^1(\oM(3,6))$ is Cartier. This is
immediate from our description of $A^1(\oM(3,6))$ above, together with
Proposition \ref{BoundaryDivs}. This proves part 3 of Theorem \ref{ChowM}.

\subsubsection{Ranks of Chow groups}

Part 2 of Theorem \ref{ChowM} is immediate from parts 1 and 3 just established,
together with Theorem \ref{ChowResolutions}.

\subsubsection{Generators of $A^*(\oM(3,6))$}

It is a direct calculation that $A^*(\oM(3,6))$ is generated by $A^1(\oM(3,6))$.
(We performed this calculation on a computer.) This establishes part 4 of
Theorem \ref{ChowM}, and thus completes the proof of the Theorem.

\subsubsection{Relations on $A^*(\oM(3,6))$}

\begin{remark}
  Given the description of $A^*(\oM(3,6))$ as a subring of $A^*(\wM_1(3,6))$,
  one can determine the relations on $A^*(\oM(3,6))$ (thus a presentation for
  $A^*(\oM(3,6))$) by pulling back the relations on $A^*(\wM_1(3,6))$. However,
  the relations on $A^*(\oM(3,6))$ obtained in this manner are not as simple as
  the relations on $A^*(\wM_1(3,6))$, and it is easier to just work with
  $A^*(\oM(3,6))$ as a subring of $A^*(\wM_1(3,6))$.
\end{remark}

% \subsubsection{Presentation of Chow ring}
%
% One can directly check (e.g. with a computer) that the morphism
% \[
%   \bZ[\delta_{ijk,lmn},\delta_{ij,k,lmn},\delta_{ij,kl,mn}] \to
%   A^*(\wM_1(3,6)),
% \]
% defined as in part (2) of the theorem, has image $A^*(\oM(3,6)) \subset
% A^*(\wM_1(3,6))$ and kernel consisting of the desired relations of the
% theorem. (Notice that each relation in the theorem follows from the obvious
% relations on $A^*(\wM_1(3,6))$ coming from $\oM(3,6)$.)

\section{Tautological classes} \label{TautologicalClasses}

\subsection{Definitions}

For any $\oM(r,n)$, let $\pi : (\bS,\bB = \sum_{i=1}^n \bB_i) \to \oM(r,n)$
denote the universal family. By \cite[Proposition
5.1]{hackingCompactificationModuliSpace2006}, there are $\binom{n}{r-1}$
sections $\sigma_I: \oM(r,n) \to \bS$ of $\pi$, for $I \subset [n]$ with $\lvert
I \rvert = r-1$, with images $\bB_I = \bigcap_{i \in I} \bB_i$ in $\bS$.
Furthermore, at a fiber $(S,B = \sum B_i)$ of $\pi$, $S$ is smooth and $B$ has
normal crossings at the point $B_I = \bigcap_{i \in I} B_i$.

\begin{definition}
  Define $\bL_I = \sigma_I^*(\omega_{\pi})$ and $\phi_I = c_1(\bL_I)$.
\end{definition}

Observe that $\bL_I$ is a vector bundle whose fiber at a stable hyperplane
arrangement $(S,B)$ is the cotangent space to $S$ at $B_I$.

\begin{definition}
  For $i \in I$, define $\bL_{I,i} = \sigma_I^*(\omega_{\pi}\vert_{\bB_i})$ and
  $\psi_{I,i} = c_1(\bL_{I,i})$.
\end{definition}

By adjunction, the curve $C_{I \setminus i} = \bigcap_{j \in I \setminus i} B_j$
on a stable hyperplane arrangment $(S,B)$ is a stable $(n-r+2)$-pointed curve of
genus zero, where the marked points are $P_k = B_k \cap C_{I \setminus i}$ for
$k \not\in I \setminus i$. Observe that $\bL_{I,i}$ is a line bundle whose fiber
at $(S,B)$ is the cotangent line to $C_{I \setminus i}$ at $B_I$. There is a
decomposition
\[
  \bL_I = \bigoplus_{i \in I} \bL_{I,i}, \;\; \phi_I = \sum_{i \in I}
  \psi_{I,i}.
\]
To understand the vector bundle $\bL_I$ and its first Chern class $\phi_I$, it
is therefore enough to understand the individual line bundles $\bL_{I,i}$ and
their first Chern classes $\psi_{I,i}$.

\begin{example}
  When $r = 2$, $I = \{i\}$, and we write $\psi_{I,i} = \psi_i$. Then $\phi_I =
  \phi_i = \psi_i$ is just the usual $\psi$-class on $\oM_{0,n}$. The linear
  system $\lvert \psi_i \rvert$ defines Kapranov's birational morphism $q_i :
  \oM_{0,n} \to \bP^{n-3}$ \cite{kapranovChowQuotientsGrassmannians1993}.
\end{example}

\begin{example} \label{psi3} When $r=3$, $I = \{i,j\}$, and we write $\psi_{I,j}
  = \psi_{ij}$. Note that $\psi_{ij} = r_i^*(\psi_j)$. The class $\phi_{ij} =
  \psi_{ij} + \psi_{ji}$ is a symmetric version of $\psi_{ij}$. The linear
  system $\lvert \phi_{ij} \rvert$ defines a birational morphism $q_{ij} :
  \oM(3,n) \to \bP^{n-4} \times \bP^{n-4}$. We considered various $q_{ij} :
  \oM(3,6) \to \bP^2 \times \bP^2$ previously.
\end{example}

\begin{example} \label{psigeneral} Generalizing the previous examples, on any
  $\oM(r,n)$ one can write $\psi_{I,i} = r_{I \setminus i}^*(\psi_{i})$, where
  $r_{I \setminus i}: \oM(r,n) \to \oM_{0,n-r+2}$ is the restriction to the
  curve $C_{I \setminus i}$. The linear system $\lvert \phi_I \rvert$ defines a
  birational morphism $q_I : \oM(r,n) \to (\bP^{n-r-1})^{r-1}$.
\end{example}

\subsection{Intersections of $\psi$-classes}

Motivated by the case of curves, where the top intersections of any tautological
classes on a given $\oM_{g,n}$ are governed by the top intersections of the
$\psi$-classes on all $\oM_{g,n}$
\cite{faberAlgorithmsComputingIntersection1999}, and in turn the top
intersections of the $\psi$-classes are governed by Witten's conjecture
\cite{wittenTwoDimensionalGravityIntersection1991},
\cite{kontsevichIntersectionTheoryModuli1992}, we seek a method of determining
top intersections of $\psi$-classes on $\oM(r,n)$.

\subsubsection{$\psi$-classes on $\oM_{0,n}$}

On $\oM_{0,n}$ (and more generally on $\oM_{g,n}$), the $\psi$-classes are
defined by
\[
  \psi_i = c_1(\sigma_i^*(\omega_{\pi})),
\]
where $\pi : \oM_{g,n+1} \to \oM_{g,n}$ is the universal family, with $n$
sections $\sigma_i$. These are determined recursively by the pullback formula:
\[
  \psi_i = f_k^*(\psi_i) + D_{ik},
\]
where $i,k$ are distinct, $f_k : \oM_{0,n+1} \to \oM_{0,n}$ is the $k$th
forgetful map, and $D_{ik}$ is the divisor parameterizing the stable curve with
$i,k$ on one irreducible component and the remaining marked points on the other
\cite{wittenTwoDimensionalGravityIntersection1991}.

From the pullback formula one obtains the expression
\[
  \psi_i = \sum_{i \in I,j,k \in J} D_{I,J}
\]
for $\psi_i$ as a sum of boundary divisors on $\oM_{0,n}$ \cite[Section
4]{getzlerTopologicalRecursionRelations1998a}.

The pullback formula also implies the string equation:
\[
  \int \psi_1^{k_1}\cdots\psi_n^{k_n} \cap [\oM_{0,n+1}] = \sum_{i=1}^n \int
  \psi_1^{k_1}\cdots \psi_i^{k_i-1} \cdots \psi_n^{k_n} \cap [\oM_{0,n}].
\]
Together with the initial condition $\int_{\oM_{0,3}} \psi_i = 1$, this allows
one to compute the following formula
\cite{wittenTwoDimensionalGravityIntersection1991}.
\[
  \int \psi_1^{k_1}\cdots \psi_n^{k_n} \cap [\oM_{0,n}] =
  \binom{n-3}{k_1,\ldots,k_n}
\]

\subsubsection{$\psi$-classes on $\oM(r,n)$}

\begin{lemma}[Pullback formula] \label{PullbackGeneral} On $\oM(r,n)$ ($r \geq
  3$), we have
  \[
    \psi_{I,i} = f_k^*(\psi_{I,i}) + r_{I \setminus i}^*(D_{ik})
  \]
\end{lemma}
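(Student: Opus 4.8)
The plan is to reduce the formula to the already-recalled pullback formula for $\psi$-classes on $\oM_{0,n-r+2}$. Recall from Example \ref{psigeneral} that $\psi_{I,i} = r_{I\setminus i}^*(\psi_i)$, where $r_{I\setminus i}\colon \oM(r,n)\to \oM_{0,n-r+2}$ restricts to the curve $C_{I\setminus i}$ and $\psi_i$ is the usual $\psi$-class at the marked point $P_i$. The key geometric input is that, for $k\notin I$ (so that $P_k$ is a marked point of $C_{I\setminus i}$ distinct from $P_i$), the forgetful and restriction maps fit into a commutative square
\[
  \begin{tikzcd}
    \oM^m(r,n) \ar[r, "r_{I \setminus i}"] \ar[d, "f_k"] & \oM_{0,n-r+2} \ar[d, "f_k"] \\
    \oM^m(r,n-1) \ar[r, "r_{I \setminus i}"] & \oM_{0,n-r+1}
  \end{tikzcd}
\]
where the right-hand vertical map forgets the marked point $P_k$. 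Indeed, passing from a stable hyperplane arrangement $(S,B)$ to the stable pointed curve $C_{I\setminus i}$ and then forgetting $P_k$ yields the same curve as first forgetting the hyperplane $B_k$ and then restricting, because $C_{I\setminus i}$ only involves the hyperplanes indexed by $I\setminus i$, which does not contain $k$, and the two stabilization procedures are compatible; this is the general principle that forgetful and restriction maps commute, cf.\ Section \ref{recursivestructure}.

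Granting the commutative square, the lemma is a one-line computation. On $\oM_{0,n-r+2}$ the standard pullback formula reads $\psi_i = f_k^*(\psi_i) + D_{ik}$. Applying $r_{I\setminus i}^*$ and using Example \ref{psigeneral} on both $\oM(r,n)$ and $\oM(r,n-1)$,
\[
  \psi_{I,i} = r_{I\setminus i}^*(\psi_i) = r_{I\setminus i}^*f_k^*(\psi_i) + r_{I\setminus i}^*(D_{ik}) = f_k^* r_{I\setminus i}^*(\psi_i) + r_{I\setminus i}^*(D_{ik}) = f_k^*(\psi_{I,i}) + r_{I\setminus i}^*(D_{ik}),
\]
where the third equality is commutativity of the square and the last uses $r_{I\setminus i}^*(\psi_i) = \psi_{I,i}$ on $\oM(r,n-1)$. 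This is exactly the claimed identity.

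The main obstacle is the commutativity of the square as an identity of \emph{morphisms} (equivalently, the compatibility $f_k\circ r_{I\setminus i} = r_{I\setminus i}\circ f_k$), rather than merely a pointwise statement on the interior $M(r,n)$, where it is obvious. To settle it one can either invoke the compatibility of forgetful and restriction maps already recorded in Section \ref{recursivestructure} (where these maps are identified, after the duality isomorphism, with the operations of restricting a matroid tiling of $\Delta(r,n)$ to the facets $(x_k=0)$ and $(x_k=1)$, which manifestly commute), or argue directly from the universal families: the universal curve carrying the sections $\sigma_J$ for $|J| = r-1$ over $\oM^m(r,n)$ restricts, along $C_{I\setminus i}$ and along $f_k$, to the universal curves over $\oM_{0,n-r+2}$ and $\oM^m(r,n-1)$ in a compatible way, with the section $\sigma_I$ matching the section at $P_i$. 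One should also keep in mind that the forgetful map $f_k$ is a priori only defined on the main component $\oM^m(r,n)$, so the statement should be read there (this is no restriction for our purposes, since $\oM(3,6) = \oM^m(3,6)$), and the identity, being an equality of divisor classes, may be checked after restricting to $\oM^m(r,n)$ in any case.
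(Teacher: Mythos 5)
Your proof is correct and follows essentially the same route as the paper's: both reduce the identity to the standard pullback formula $\psi_i = f_k^*(\psi_i) + D_{ik}$ on $\oM_{0,n-r+2}$ via $\psi_{I,i} = r_{I\setminus i}^*(\psi_i)$ and the commutativity of the square of forgetful and restriction maps. The only differences are cosmetic: you supply a justification for the commutativity of the square (which the paper simply asserts) and note the caveat about $f_k$ being defined only on the main component, while the paper additionally records the trivial base case $n = r+1$.
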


\begin{proof}
  The result is trivial for $n=r+1$, because $\oM(r,r+1)$ is a point.

  For $n \geq r+2$, the following diagram commutes.
  \[
    \begin{tikzcd}
      \oM(r,n) \ar[d, "f_k"] \ar[r, "r_{I \setminus i}"] & \oM_{0,n-r+2} \ar[d, "f_k"] \\
      \oM(r,n-1) \ar[r, "r_{I \setminus i}"] & \oM_{0,n-r+1}
    \end{tikzcd}
  \]

  The pullback formula $\psi_i = f_k^*(\psi_i) + D_{ik}$ for $\oM_{0,n}$
  together with the formula $\psi_{I,i} = r_{I \setminus i}^*(\psi_i)$ (Example
  \ref{psigeneral}) gives
  \[
    \psi_{I,i} = r_{I \setminus i}^*(f_k^*(\psi_i) + D_{ik}) = r_{I \setminus
      i}^*f_k^*\psi_i + r_{I \setminus i}^*D_{ik} \text{ on } \oM(r,n).
  \]
  Commutativity implies that
  \[
    r_{I \setminus i}^*f_k^*\psi_i = f_k^*(r_{I \setminus i}^*\psi_i) =
    f_k^*(\psi_{I, i}),
  \]
  so the result follows.
\end{proof}

Unfortunately, the intersections of the $r_{I \setminus i}^*(D_{ik})$ are
typically nonzero, which makes recursive computations of intersections of
$\psi$-classes on $\oM(r,n)$ more complicated than in the rank 2 case.

\subsubsection{$\psi$-classes on $\oM(3,n)$}

Using our notation $\psi_{ij}$ for the $\psi$-classes on $\oM(3,n)$ (Example
\ref{psi3}), the pullback formula \ref{PullbackGeneral} for $\oM(3,n)$ takes the
form
\[
  \psi_{ij} = f_k^*(\psi_{ij}) + r_i^*(D_{jk}) \text{ for } i,j,k \text{
    distinct.}
\]

\begin{example} \label{psiM35} On $\oM(3,5)$, we have
  \[
    \psi_{ij} = r_i^*(D_{jk}) = D_{jk,ilm} + D_{lm,ijk},
  \]
  where $r_i : \oM(3,5) \to \oM_{0,4}$ is a restriction map. Since any two
  points on $\oM_{0,4} \cong \bP^1$ are linearly equivalent, it follows that
  $\psi_{ij}$ and $\psi_{ik}$ are linearly equivalent for any $j,k$.

  Under the duality $\oM(3,5) \cong \oM_{0,5}$, we can write
  \[
    \psi_{ij} = \frac{1}{3}\sum_{k=1}^5 \psi_k - \psi_i,
  \]
  where $\psi_k$ are the usual $\psi$-classes on $\oM_{0,5}$. The intersection
  numbers of the $\psi_{ij}$ are given by
  \begin{align*}
    \int \psi_{i_1j_1}\psi_{i_2j_2} \cap [\oM(3,5)] =
    \begin{cases}
      0, & i_1 = i_2,\\
      1, & i_1 \neq i_2.
    \end{cases}
  \end{align*}
\end{example}

\begin{theorem} \label{M36intersectionnumbers} Let $M = \oM(3,6)$ or any of its
  small resolutions. Any intersection number on $M$ can be determined by the
  formula
  \[
    \int \psi_{56}^2\psi_{65}^2 \cap [M] = 1.
  \]
\end{theorem}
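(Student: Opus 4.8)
The plan is to reduce all intersection numbers on $M$ to intersections of $\psi$-classes, then reduce those to the single number $\int \psi_{56}^2\psi_{65}^2$. First I would use the fact that $A^*(M)$ is generated by the boundary divisors (Theorem \ref{ChowResolutions} for a small resolution, Theorem \ref{ChowM} for $\oM(3,6)$ itself via the subring description), so it suffices to compute arbitrary monomials of degree $3$ in the boundary divisors. The key observation is the pullback formula for $\psi$-classes on $\oM(3,6)$ (the $\oM(3,n)$ case of Lemma \ref{PullbackGeneral}): $\psi_{ij} = f_k^*(\psi_{ij}) + r_i^*(D_{jk})$, together with $\psi_{ij} = r_i^*(\psi_j)$ (Example \ref{psi3}) and the explicit formula $\psi_{ij} = D_{jk,ilm} + D_{lm,ijk}$ on $\oM(3,5)$ (Example \ref{psiM35}). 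Pulling back along $r_k$ and using Proposition \ref{PullbackFormulas}, each $r_k^*(D_{ij})$ is an explicit sum of boundary divisors on $\oM(3,6)$; by varying the index choices and taking integer linear combinations, I would show every boundary divisor class on $M$ lies in the subring generated by the $\psi_{ij}$'s modulo the linear relations. (Equivalently: $\Pic$ is generated by the $\psi$-classes, since the $\psi$-classes span a space of the right rank once one accounts for the linear relations $\psi_{ij} \sim \psi_{ik}$ coming from $\oM_{0,4} \cong \bP^1$.)

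Next I would argue that, since $A^*(M)$ is generated in degree one by the $\psi$-classes, every degree-$3$ monomial — hence every intersection number — is an integer linear combination of triple products $\psi_{i_1j_1}\psi_{i_2j_2}\psi_{i_3j_3}$. So it remains to compute these triple products. Here I would use the recursive structure: the forgetful map $f_6 : \oM(3,6) \to \oM(3,5)$ realizes $\oM(3,6)$ as (roughly) the universal family, and the pullback formula lets me write $\psi_{ij} = f_6^*(\psi_{ij}) + r_i^*(D_{j6})$ when $6 \notin \{i,j\}$, while $\psi_{i6} = f_j^*(\psi_{i6}) + r_i^*(D_{6j})$ handles the classes involving $6$. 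Expanding a triple product via these formulas and pushing forward along $f_6$ (using the projection formula and $f_{6*}[\oM(3,6)] $ relating to $[\oM(3,5)]$ on the relevant strata), each term either drops to an intersection number on $\oM(3,5)$ — which is known by the table in Example \ref{psiM35} — or to an intersection involving the divisors $r_i^*(D_{j6})$, whose restrictions to the exceptional curves and to boundary strata are all computable from Propositions \ref{RestrictionMaps} and \ref{PullbackFormulas}. Running this reduction systematically, every triple product is expressed as an integer (a universal combinatorial answer), with the only genuinely new input being a single base case not already determined by $\oM(3,5)$-data; one checks that base case is $\int \psi_{56}^2\psi_{65}^2 = 1$.

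The main obstacle will be bookkeeping in the recursive step: unlike the rank $2$ case, the divisors $r_i^*(D_{jk})$ have nonzero mutual intersections (as noted after Lemma \ref{PullbackGeneral}), so expanding a product of three $\psi$-classes produces many cross terms, and one must verify that the non-$\psi$ contributions either cancel or collapse to multiples of the base number. To control this I would work on the small resolution $\wM_1(3,6)$, where Theorem \ref{ChowResolutions} gives a clean Keel-type presentation (boundary divisors with only linear and obvious multiplicative relations) and Proposition \ref{RestrictionMaps} gives explicit restrictions to the exceptional $\bP^1$'s; then I would transfer the result to $\oM(3,6)$ using Theorem \ref{ChowM} (noting that top intersection numbers are insensitive to a small resolution, since the exceptional loci have codimension $\geq 2$ and contribute nothing to degree-$3$ pushforwards). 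A clean way to finish is to observe that the reduction defines a ring homomorphism from the abstract ``$\psi$-intersection algebra'' to $\bZ$, determined by the recursion and the $\oM(3,5)$ values, and that this homomorphism is pinned down by its value on $\psi_{56}^2\psi_{65}^2$; evaluating that one number by a direct computation on $\wM_1(3,6)$ — expanding $\psi_{56} = D_{65,1234} + \cdots$, $\psi_{65} = D_{56,1234} + \cdots$ in boundary divisors via Example \ref{psiM35} pulled back by $r_5, r_6$, and intersecting using Remark \ref{BoundaryIntersections} — gives $1$, completing the proof.
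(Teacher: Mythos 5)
There is a genuine gap, and it sits in the central reduction of your first paragraph. You claim that every boundary divisor class lies in the subring generated by the $\psi_{ij}$, equivalently that $\Pic$ is generated by the $\psi$-classes. A rank count rules this out: there are only $30$ classes $\psi_{ij}$, while $\rk A^1(\wM_1(3,6)) = 51$ and $\rk A^1(\oM(3,6)) = 36$ (Theorems \ref{ChowResolutions} and \ref{ChowM}), so the $\psi$-classes cannot generate in degree one. (The linear equivalences $\psi_{ij}\sim\psi_{ik}$ you invoke hold on $\oM(3,5)$, where $r_i$ lands in $\oM_{0,4}\cong\bP^1$, but not on $\oM(3,6)$; in any case they would only make the deficit worse.) A second problem: $\dim M = 4$, so intersection numbers are degree-four monomials in divisors, not the ``triple products'' you propose to compute. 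Finally, the recursive reduction via $f_6$ and the pullback formula is exactly the step the paper flags as problematic after Lemma \ref{PullbackGeneral}: the classes $r_i^*(D_{jk})$ have nonvanishing mutual intersections, so there is no string-equation-type recursion, and your claim that ``the only genuinely new input is a single base case'' is asserted rather than proved.

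The statement needs much less than what you attempt. Since $\rk A^4(M) = 1$ (Theorem \ref{ChowResolutions} lists only the ranks $\neq 0,1$), every top-degree class is an integer multiple of any class of degree one, so the entire content of the theorem is the single evaluation $\int\psi_{56}^2\psi_{65}^2 \cap [M]= 1$. For that, the paper observes that $\psi_{56} = q_{56}^*(H_2)$ and $\psi_{65} = q_{56}^*(H_1)$ for the birational morphism $q_{56}\colon M \to \bP^2\times\bP^2$ of Section \ref{birmaps} (using $\psi_{ij} = r_i^*(\psi_j)$ and the fact that Kapranov's $q_j$ is given by $\lvert\psi_j\rvert$), whence $\int \psi_{56}^2\psi_{65}^2\cap[M] = \int_{\bP^2\times\bP^2} H_1^2H_2^2 = 1$ by birationality and the projection formula. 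Your proposed direct expansion of $\psi_{56}^2\psi_{65}^2$ in boundary divisors could also evaluate this base case, but the surrounding reduction must be replaced by the rank-one observation.
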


\begin{proof}
  Observe that $\psi_{56}$ and $\psi_{65}$ are the pullbacks of the generators
  of $A^*(\bP^2 \times \bP^2)$ via the map $q_{56} : M \to \bP^2 \times \bP^2$.
  The formula $\int \psi_{56}^2\psi_{65}^2 \cap [M] = 1$ follows from the
  corresponding formula on $A^*(\bP^2 \times \bP^2)$. Since $\rk A^4(M) = 1$,
  any top intersection on $M$ is necessarily a multiple of
  $\psi_{56}^2\psi_{65}^2$.
\end{proof}

\begin{theorem} \label{psiIntersectionsM36} On $\oM(3,6)$, one has
  \[
    \psi_{ij} = f_n^*(D_{jk}) + f_n^*(D_{lm}) + r_i^*(D_{jn}),
  \]
  and
  \[
    \psi_{i_1j_1}\cdots \psi_{i_4j_4} = 0 \iff \geq 3 \text{ of the }
    i_k\text{'s coincide} .
  \]
  The nonzero intersection numbers of the $\psi_{ij}$ are listed, up to
  $S_6$-symmetry, in the below table.
  \begin{center}
    \begin{tabular}{l | l}
      $n$ & Product \\
      \hline
      1 & $\psi_{12}\psi_{12}\psi_{21}\psi_{21}$, $\psi_{12}\psi_{12}\psi_{21}\psi_{31}$, $\psi_{12}\psi_{12}\psi_{23}\psi_{23}$, $\psi_{12}\psi_{12}\psi_{23}\psi_{32}$, $\psi_{12}\psi_{12}\psi_{31}\psi_{41}$, $\psi_{12}\psi_{12}\psi_{32}\psi_{32}$,  $\psi_{12}\psi_{12}\psi_{34}\psi_{34}$,\\
          & $\psi_{12}\psi_{12}\psi_{34}\psi_{43}$\\
      \hline
      2 & $\psi_{12}\psi_{12}\psi_{21}\psi_{23}$, $\psi_{12}\psi_{12}\psi_{21}\psi_{32}$, $\psi_{12}\psi_{12}\psi_{21}\psi_{34}$, $\psi_{12}\psi_{12}\psi_{23}\psi_{24}$, 
          $\psi_{12}\psi_{12}\psi_{23}\psi_{31}$, $\psi_{12}\psi_{12}\psi_{23}\psi_{34}$,
          $\psi_{12}\psi_{12}\psi_{23}\psi_{41}$,\\
          &$\psi_{12}\psi_{12}\psi_{23}\psi_{42}$, $\psi_{12}\psi_{12}\psi_{23}\psi_{43}$,  $\psi_{12}\psi_{12}\psi_{31}\psi_{32}$, $\psi_{12}\psi_{12}\psi_{31}\psi_{34}$, $\psi_{12}\psi_{12}\psi_{31}\psi_{42}$,
            $\psi_{12}\psi_{12}\psi_{31}\psi_{43}$, $\psi_{12}\psi_{12}\psi_{31}\psi_{45}$,\\
          &$\psi_{12}\psi_{12}\psi_{32}\psi_{34}$,  $\psi_{12}\psi_{12}\psi_{32}\psi_{42}$, 
            $\psi_{12}\psi_{12}\psi_{32}\psi_{43}$, $\psi_{12}\psi_{12}\psi_{34}\psi_{35}$,
            $\psi_{12}\psi_{12}\psi_{34}\psi_{45}$, $\psi_{12}\psi_{12}\psi_{34}\psi_{54}$, 
            $\psi_{12}\psi_{13}\psi_{21}\psi_{31}$,  \\
          &$\psi_{12}\psi_{13}\psi_{21}\psi_{41}$, $\psi_{12}\psi_{13}\psi_{23}\psi_{32}$, $\psi_{12}\psi_{13}\psi_{24}\psi_{42}$,
            $\psi_{12}\psi_{13}\psi_{41}\psi_{51}$, $\psi_{12}\psi_{13}\psi_{45}\psi_{54}$\\
      \hline
      3 & $\psi_{12}\psi_{12}\psi_{23}\psi_{45}$, $\psi_{12}\psi_{12}\psi_{32}\psi_{45}$, $\psi_{12}\psi_{12}\psi_{34}\psi_{56}$, $\psi_{12}\psi_{21}\psi_{31}\psi_{41}$, $\psi_{12}\psi_{21}\psi_{34}\psi_{43}$, $\psi_{12}\psi_{23}\psi_{42}\psi_{52}$, $\psi_{12}\psi_{32}\psi_{42}\psi_{52}$\\
      \hline
      4 & $\psi_{12}\psi_{13}\psi_{21}\psi_{23}$, $\psi_{12}\psi_{13}\psi_{21}\psi_{24}$, $\psi_{12}\psi_{13}\psi_{21}\psi_{32}$,
          $\psi_{12}\psi_{13}\psi_{21}\psi_{34}$, $\psi_{12}\psi_{13}\psi_{21}\psi_{42}$, $\psi_{12}\psi_{13}\psi_{21}\psi_{43}$,
          $\psi_{12}\psi_{13}\psi_{21}\psi_{45}$, \\
          & $\psi_{12}\psi_{13}\psi_{23}\psi_{24}$, $\psi_{12}\psi_{13}\psi_{23}\psi_{34}$,
            $\psi_{12}\psi_{13}\psi_{23}\psi_{41}$, $\psi_{12}\psi_{13}\psi_{23}\psi_{42}$, $\psi_{12}\psi_{13}\psi_{23}\psi_{43}$,
            $\psi_{12}\psi_{13}\psi_{24}\psi_{25}$, $\psi_{12}\psi_{13}\psi_{24}\psi_{34}$, \\
          &$\psi_{12}\psi_{13}\psi_{24}\psi_{41}$,
            $\psi_{12}\psi_{13}\psi_{24}\psi_{43}$, $\psi_{12}\psi_{13}\psi_{24}\psi_{45}$, $\psi_{12}\psi_{13}\psi_{24}\psi_{51}$,
            $\psi_{12}\psi_{13}\psi_{24}\psi_{52}$, $\psi_{12}\psi_{13}\psi_{24}\psi_{54}$, $\psi_{12}\psi_{13}\psi_{41}\psi_{52}$,\\
          &$\psi_{12}\psi_{13}\psi_{41}\psi_{54}$, $\psi_{12}\psi_{13}\psi_{41}\psi_{56}$, $\psi_{12}\psi_{13}\psi_{42}\psi_{43}$,
            $\psi_{12}\psi_{13}\psi_{42}\psi_{45}$, $\psi_{12}\psi_{13}\psi_{42}\psi_{52}$, $\psi_{12}\psi_{13}\psi_{42}\psi_{54}$,
            $\psi_{12}\psi_{13}\psi_{45}\psi_{46}$, \\
          &$\psi_{12}\psi_{13}\psi_{45}\psi_{56}$, $\psi_{12}\psi_{13}\psi_{45}\psi_{65}$,
            $\psi_{12}\psi_{21}\psi_{31}\psi_{42}$, $\psi_{12}\psi_{21}\psi_{31}\psi_{43}$\\
      \hline
      5 & $\psi_{12}\psi_{21}\psi_{34}\psi_{45}$, $\psi_{12}\psi_{21}\psi_{31}\psi_{45}$, $\psi_{12}\psi_{23}\psi_{34}\psi_{53}$, $\psi_{12}\psi_{23}\psi_{42}\psi_{53}$, $\psi_{12}\psi_{23}\psi_{43}\psi_{53}$\\
      \hline
      6 & $\psi_{12}\psi_{13}\psi_{23}\psi_{45}$, $\psi_{12}\psi_{13}\psi_{24}\psi_{35}$, $\psi_{12}\psi_{13}\psi_{24}\psi_{53}$,
          $\psi_{12}\psi_{13}\psi_{24}\psi_{56}$, $\psi_{12}\psi_{13}\psi_{42}\psi_{53}$, $\psi_{12}\psi_{13}\psi_{42}\psi_{56}$,
          $\psi_{12}\psi_{21}\psi_{34}\psi_{54}$, \\
          &$\psi_{12}\psi_{21}\psi_{34}\psi_{56}$, $\psi_{12}\psi_{23}\psi_{31}\psi_{41}$,
            $\psi_{12}\psi_{23}\psi_{34}\psi_{41}$, $\psi_{12}\psi_{23}\psi_{34}\psi_{52}$, $\psi_{12}\psi_{23}\psi_{42}\psi_{56}$,
            $\psi_{12}\psi_{23}\psi_{43}\psi_{54}$, $\psi_{12}\psi_{32}\psi_{42}\psi_{56}$\\
      \hline
      7 & $\psi_{12}\psi_{23}\psi_{34}\psi_{45}$, $\psi_{12}\psi_{23}\psi_{34}\psi_{54}$, $\psi_{12}\psi_{23}\psi_{43}\psi_{56}$, $\psi_{12}\psi_{23}\psi_{45}\psi_{56}$\\
      \hline
      8 & $\psi_{12}\psi_{23}\psi_{34}\psi_{56}$, $\psi_{12}\psi_{23}\psi_{45}\psi_{65}$, $\psi_{12}\psi_{32}\psi_{45}\psi_{65}$\\
      \hline
      9 & $\psi_{12}\psi_{23}\psi_{31}\psi_{45}$
    \end{tabular}
    \captionof{table}{Intersections of $\psi$-classes on $\oM(3,6)$}
  \end{center}
\end{theorem}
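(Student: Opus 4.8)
The plan is to treat the three assertions in turn: the divisor expression for $\psi_{ij}$ and the ``only if'' direction of the vanishing statement follow from structural arguments, while the ``if'' direction together with the table of intersection numbers is reduced to a finite computation in an explicitly presented Chow ring. For the formula, I would apply the pullback formula of Lemma~\ref{PullbackGeneral} to the forgetful map $f_n\colon\oM(3,6)\to\oM(3,5)$, which gives $\psi_{ij}=f_n^*(\psi_{ij})+r_i^*(D_{jn})$, where the class $\psi_{ij}$ on the right now lives on $\oM(3,5)$. By Example~\ref{psiM35}, on $\oM(3,5)\cong\oM_{0,5}$ one has $\psi_{ij}=D_{jk}+D_{lm}$ for the splitting $\{k,l,m\}=[6]\setminus\{i,j,n\}$, independent of the choice of $k$ since any two points of $\oM_{0,4}$ are linearly equivalent. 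Pulling back and distributing $f_n^*$ yields $\psi_{ij}=f_n^*(D_{jk})+f_n^*(D_{lm})+r_i^*(D_{jn})$; substituting the pullback formulas of Proposition~\ref{PullbackFormulas} then rewrites each $\psi_{ij}$ as an explicit $\bZ$-linear combination of boundary divisors, which is the form needed for the intersection computations.

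For the easy half of the vanishing criterion, recall from Example~\ref{psi3} that $\psi_{ij}=r_i^*(\psi_j)$ for the restriction morphism $r_i\colon\oM(3,6)\to\oM_{0,5}$. Hence if $i_1=i_2=i_3=a$, then $\psi_{i_1j_1}\psi_{i_2j_2}\psi_{i_3j_3}=r_a^*(\psi_{j_1}\psi_{j_2}\psi_{j_3})$, which vanishes because $\psi_{j_1}\psi_{j_2}\psi_{j_3}\in A^3(\oM_{0,5})=0$ as $\oM_{0,5}$ is a surface; so the whole four-fold product is zero. This is the implication ``$\ge 3$ of the $i_k$ coincide $\Longrightarrow$ product $=0$.''

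For the converse and the actual values, I would compute all four-fold products directly. Since $\rk A^4(\oM(3,6))=1$ and $\int\psi_{56}^2\psi_{65}^2=1$ (Theorem~\ref{M36intersectionnumbers}), every such product is an integer multiple of $\psi_{56}^2\psi_{65}^2$, and that integer is read off by expanding the product using the boundary-divisor expression for $\psi_{ij}$ above together with the presentation of $A^*(\oM(3,6))$ in Theorem~\ref{ChowM} (equivalently of $A^*(\wM_1(3,6))$ in Theorem~\ref{ChowResolutions}); the multiplicative relations of Remark~\ref{BoundaryIntersections} reduce products of boundary monomials to manageable form and the linear relations allow normalization against $\psi_{56}^2\psi_{65}^2$. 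One then runs through the finitely many $S_6$-orbits of unordered quadruples $\{(i_1,j_1),\dots,(i_4,j_4)\}$, with $S_6$ acting by $\psi_{ij}\mapsto\psi_{\sigma(i)\sigma(j)}$, verifying that every product with at most two coinciding $i_k$ is nonzero and recording its value; this yields both halves of the equivalence and the table. A partial shortcut: for products whose $i_k$ take only two values $a\neq b$, the product equals $r_a^*(\alpha)\,r_b^*(\beta)$ with $\alpha,\beta$ products of $\psi$-classes on $\oM_{0,5}$, and since $q_{ab}$ is an isomorphism on the interior the map $r_a\times r_b\colon\oM(3,6)\to\oM_{0,5}\times\oM_{0,5}$ is birational, so such an intersection number is simply the product of two of the multinomial coefficients $\binom{2}{k_1,\dots,k_5}$ governing top $\psi$-intersections on $\oM_{0,5}$; for the $(2,2)$ index pattern this recovers the values $1,2,4$ with no Chow-ring computation.

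The conceptual input is entirely contained in the formula, the one-line vanishing argument, and Theorem~\ref{M36intersectionnumbers}; the main obstacle is the bulk of the final step --- enumerating the $S_6$-orbits and evaluating a representative of each, in practice on a computer from the explicit Chow-ring presentation --- together with the completeness check, namely that every product with at most two coinciding $i_k$ really appears, up to symmetry, in the table.
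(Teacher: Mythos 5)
Your proposal is correct, and for the substantive content of the theorem --- the boundary-divisor expression, the table, and the completeness of the vanishing criterion --- it follows the same route as the paper: derive the formula for $\psi_{ij}$ from the pullback formula of Lemma \ref{PullbackGeneral} together with Example \ref{psiM35}, then evaluate every product in the explicitly presented Chow ring, normalizing against $\int \psi_{56}^2\psi_{65}^2 \cap [M] = 1$ from Theorem \ref{M36intersectionnumbers}, in practice on a computer. The paper's proof consists of exactly this and delegates the entire second assertion to the machine. What you add, and what the paper does not isolate, are two correct conceptual ingredients: the implication that the product vanishes when at least three of the $i_k$ coincide, via $\psi_{aj_1}\psi_{aj_2}\psi_{aj_3} = r_a^*(\psi_{j_1}\psi_{j_2}\psi_{j_3})$ and $A^3(\oM_{0,5}) = 0$ (this works verbatim in the operational Chow ring, or on $\wM_1(3,6)$ after pulling back, since $r_a^*$ is a ring homomorphism); and the evaluation of the $(a,a,b,b)$-pattern products as products of two $\oM_{0,5}$ intersection numbers via the birationality of $r_a \times r_b$ and the projection formula, which correctly reproduces the values $1,2,4$ appearing in the table for that pattern. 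These give an independent check on a sizable portion of the computer output, but the $(2,1,1)$ and all-distinct patterns still require the full Chow-ring computation, exactly as in the paper, so the overall structure and the irreducible computational core of the argument are the same.
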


\begin{proof}
  The expression for $\psi_{ij}$ follows from the pullback formula
  \[
    \psi_{ij} = f_n^*(\psi_{ij}) + r_i^*(D_{jn})
  \]
  together with the expression for $\psi_{ij}$ on $\oM(3,5)$ from Example
  \ref{psiM35}. Note that by Proposition \ref{PullbackFormulas}, this gives an
  explicit expression for $\psi_{ij}$ as a sum of boundary divisors on
  $\oM(3,6)$.

  The intersection products of the $\psi_{ij}$ can be computed on
  $A^*(\oM(3,6))$ using Theorem \ref{ChowM}. Alternatively, the $\psi_{ij}$ are
  all disjoint from the singular locus of $\oM(3,6)$, so the computations can
  also be performed on any $A^*(\wM_{S_1,S_2}(3,6))$ using Theorem
  \ref{ChowResolutions}. In turn the intersection numbers are determined by
  Theorem \ref{M36intersectionnumbers}. We performed these calculations on
  $A^*(\wM_1(3,6))$ using a computer.
\end{proof}

\begin{question}
  Is there a nice combinatorial formula for the intersection numbers of the
  $\psi$-classes on $\oM(3,n)$ (more generally, on $\oM(r,n)$)?
\end{question}

\section{Birational geometry} \label{BirationalGeometry}

This section is over $\bC$.

The intersection-theoretic computations in this section are performed with
coefficients in $\bQ$. Set $\wM_1 = \wM_1(3,6)$, $\wB_1 = \wM_1 \setminus
M(3,6)$, $\oM = \oM(3,6)$, $B = \oM \setminus M(3,6)$.

\begin{proposition} \label{lcClass}
  \begin{enumerate}
  \item $K_{\wM_1} = -\frac{3}{10} \sum D_{ijk,lmn} - \frac{1}{5}\sum
    D_{ij,klmn} + \frac{1}{5}\sum D_{ij,kl,mn}$
  \item $K_{\wM_1} + \wB_1 = \frac{7}{10} \sum D_{ijk,lmn} + \frac{4}{5}\sum
    D_{ij,klmn} + \frac{6}{5}\sum D_{ij,kl,mn}$
  \end{enumerate}
\end{proposition}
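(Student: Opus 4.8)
The plan is to compute $K_{\wM_1}$ by exploiting the description of $\wM_1 = \wM_1(3,6)$ as an explicit sequence of blowups, and then to read off $K_{\wM_1} + \wB_1$ by adding the total boundary. Recall from Theorem \ref{BlowupConstruction} (in the form of Remark \ref{generalizedconstructions}) that $\wM_1 = X_5 \to X_4 \to \cdots \to X_1 = \oM_{0,5} \times \oM_{0,5}$, a sequence of blowups along smooth irreducible centers (after splitting the reducible centers, as in Section \ref{BlowupCenters}). For each blowup $f_{k+1}: X_{k+1} \to X_k$ along a smooth center $Z$ of codimension $c$ with exceptional divisor $E$, one has $K_{X_{k+1}} = f_{k+1}^* K_{X_k} + (c-1)E$. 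Starting from $K_{\oM_{0,5} \times \oM_{0,5}} = p_1^* K_{\oM_{0,5}} + p_2^* K_{\oM_{0,5}}$, and using Keel's formula $K_{\oM_{0,5}} = -\sum D_{ij}$ (the $n=5$ case of \cite[Lemma 3.5]{keelContractibleExtremalRays1996}, or a direct computation on $\oM_{0,5} \cong Bl_4 \bP^2$), I would push everything forward step by step. Since $\rk A^1(\wM_1) = 51$ and a basis is given explicitly in Theorem \ref{ChowResolutions}(3)(b), the answer is pinned down by its coefficients against the boundary divisor classes; by the $S_6$-symmetry of the construction all 20 classes $D_{ijk,lmn}$ get the same coefficient, all 15 $D_{ij,klmn}$ the same, all 30 $D_{ij,kl,mn}$ the same, so only three rational numbers need to be determined.

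Concretely, I would carry out the following steps. First, express the canonical class of each $X_k$ in terms of $p_1^* D_{ij}$, $p_2^* D_{ij}$, and the exceptional divisors accumulated so far, using the codimension data from Section \ref{BlowupCenters}: the centers of $f_1$ (the map $X_1 \to X_0 = \bP^2\times\bP^2$, already built into $\oM_{0,5}\times\oM_{0,5}$) and of $f_2, f_3, f_5$ all have codimension $3$ in the relevant ambient fourfold, while $D_{56,1234}^3$, the center of $f_4$, also has codimension $3$. Thus each blowup contributes $2E$ to the canonical class. Second, identify each exceptional divisor of the composite $\wM_1 \to \oM_{0,5}\times\oM_{0,5}$ with the strict transform of a boundary divisor: by the Claim at the end of Section \ref{BlowupConstruction} (Proof of Theorem), these are exactly the $D_{ijk,l56}$, $D_{ij,kl56}$, $D_{56,1234}$, and $D_{ij,kl,mn}$. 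Third, convert the pullbacks $p_n^* D_{ij}$ into boundary divisor classes on $\wM_1$ via the relations in Section \ref{PresentationM1} (e.g.\ $D_{i5,jkl6}^1 = r_6^*(D_{i5})$, etc., together with Proposition \ref{PullbackFormulas}), and collect coefficients. Fourth, use $S_6$-symmetry to reduce to three unknowns and solve; alternatively, and more safely, verify the claimed formula directly by checking that $K_{\wM_1}$ — computed as $\pi_5^* K_{X_5}$ with $X_5$ smooth — agrees with the proposed right-hand side, e.g.\ by intersecting both sides against a spanning set of curve classes (the complete-intersection curves used in the proof of Proposition \ref{PullbackFormulas}, plus generic lines pulled back from the $\bP^2$ factors). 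Finally, add $\wB_1 = \sum D_{ijk,lmn} + \sum D_{ij,klmn} + \sum D_{ij,kl,mn}$ to obtain part (2); since $-\tfrac{3}{10} + 1 = \tfrac{7}{10}$, $-\tfrac15 + 1 = \tfrac45$, $\tfrac15 + 1 = \tfrac65$, this is immediate from part (1).

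The main obstacle I expect is bookkeeping: correctly tracking how each successive blowup modifies the coefficients, in particular handling the centers that are contained in earlier exceptional divisors (so that the relevant transform is a dominant/total transform rather than a strict transform, cf.\ Section \ref{blowupNotation}), and correctly translating between the $X_1$-divisors $p_n^* D_{ij}$, $D_{i56,jkl}^1$ and the boundary divisors of $\oM(3,6)$ using the pullback formulas. The center $D_{56,1234}^3 \cong \oM_{0,5}$ of $f_4$ is also slightly delicate because, although it has codimension $3$ and hence contributes $2E$, it is not a complete intersection (Lemma \ref{CompleteIntersections}(3)); but only its codimension matters for the canonical-class formula, so this causes no real trouble. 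Because of these bookkeeping hazards, I would in practice confirm the final answer by the independent curve-intersection check described above (and, as the authors do elsewhere, on a computer), which sidesteps the need to get every intermediate pushforward exactly right.
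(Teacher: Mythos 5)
Your overall strategy is the same as the paper's: compute $K_{\wM_1}$ from the blowup sequence $\wM_1 \to \oM_{0,5}\times\oM_{0,5}$ of Theorem \ref{BlowupConstruction}, convert everything to boundary divisor classes via Proposition \ref{PullbackFormulas} and the linear relations, and deduce part (2) from part (1) by adding $\wB_1$. However, two of your key numerical inputs are wrong, and each would by itself ruin the final coefficients.

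First, $K_{\oM_{0,5}} \neq -\sum D_{ij}$. Keel's formula $K_{\oM_{0,n}} = \sum_{i=2}^{\lfloor n/2\rfloor}\left(\frac{i(n-i)}{n-1}-2\right)B_i$ gives, for $n=5$, the single coefficient $\frac{2\cdot 3}{4}-2=-\frac12$, so $K_{\oM_{0,5}}=-\frac12 B_{\oM_{0,5}}$. (Directly: on $Bl_4\bP^2$ the boundary is $\sum E_i+\sum_{i<j}(H-E_i-E_j)=6H-2\sum E_i$, while $K=-3H+\sum E_i$, which is $-\frac12$ of the boundary.) Second, every center in the blowup sequence is a \emph{surface} in a fourfold --- $P_{jkl}\times\bP^2$, $D_{l5}\times D_{l6}$, $Bl_2(\bP^1\times\bP^1)$, $\oM_{0,5}$, $\bP^1\times\bP^1$ --- hence has codimension $2$, not $3$, so each blowup contributes $(c-1)E = E$ to the canonical class, not $2E$. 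With these corrections one gets, as in the paper's proof, $K_{\wM_1} = -\frac12(r_6^*B_{\oM_{0,5}}+r_5^*B_{\oM_{0,5}}) + \sum D_{ijk,l56}+\sum(D_{ij,kl56}+D_{ij,kl,56})+D_{56,1234}+\sum D_{ij,kl,mn}$, from which the stated coefficients follow by the pullback formulas and linear relations. Your proposed fallback of verifying the answer against curve classes or by computer would catch these mistakes, but as written the computation you describe would not produce the claimed formula.
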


\begin{proof}
  The second part is immediate from the first.

  The first part is obtained from the blowup construction $\wM_1 \to \oM_{0,5}
  \times \oM_{0,5}$ of Theorem \ref{BlowupConstruction}. The canonical class of
  $\oM_{0,5}$ is $K_{\oM_{0,5}} = -\frac{1}{2}B_{\oM_{0,5}}$ \cite[Lemma
  3.5]{keelContractibleExtremalRays1996}, so
  \[
    K_{\oM_{0,5} \times \oM_{0,5}} = -\frac{1}{2}(p_1^*(B_{\oM_{0,5}}) +
    p_2^*(B_{\oM_{0,5}})).
  \]
  From the blowup sequence $\wM_1 \to X_1$, we find that
  \[
    K_{\wM_1} = K_{\oM_{0,5}} \times \oM_{0,5} + D =
    -\frac{1}{2}(r_6^*(B_{\oM_{0,5}}) + r_5^*(B_{\oM_{0,5}})) + D
  \]
  where
  \begin{align*}
    D &= \sum D_{ijk,l56} + \sum (D_{ij,kl56} + D_{ij,kl,56}) + D_{56,1234} + \sum D_{ij,kl,mn}.
  \end{align*}
  The result follows by applying the linear relations on $A^*(\wM_1(3,6))$.
\end{proof}

\begin{theorem} \label{logcanonicalM} The pair $(\oM(3,6),B)$ is log canonical,
  and the log canonical divisor $K_{\oM(3,6)} + B$ is ample and Cartier.
\end{theorem}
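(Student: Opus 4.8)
The plan is to transfer everything to the small resolution $\pi:\wM_1\to\oM(3,6)$ and exploit the description of $A^*(\oM(3,6))$ as a subring of $A^*(\wM_1)$ from Theorem \ref{ChowM}, together with the canonical class computation of Proposition \ref{lcClass}.

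\textbf{Cartier.} Since $\pi$ is a small birational morphism (its exceptional locus is the union of the curves $L_{ij,kl,mn}$), it is crepant and $K_{\wM_1}+\wB_1$ is the strict transform of $K_{\oM(3,6)}+B$; by Proposition \ref{lcClass}(2) this class equals $\tfrac{7}{10}\sum D_{ijk,lmn}+\tfrac{4}{5}\sum D_{ij,klmn}+\tfrac{6}{5}\sum D_{ij,kl,mn}$ in $A^1(\wM_1)$. Restricting to a fixed exceptional line $L_{ij,kl,mn}$ with Proposition \ref{RestrictionMaps}, only the three divisors $D_{ab,cdef}$ with $ab\in\{ij,kl,mn\}$ (each restricting to $-p$) and the two divisors $D_{ab,cd,ef}$ with $\{ab,cd,ef\}=\{ij,kl,mn\}$ (each restricting to $p$) contribute, giving $\tfrac{4}{5}(-3p)+\tfrac{6}{5}(2p)=0$. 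So $K_{\wM_1}+\wB_1$ restricts to $0$ on every $L_{ij,kl,mn}$, hence by Theorem \ref{ChowM}(1) it lies in $A^1(\oM(3,6))=\Pic\oM(3,6)$; comparing this Cartier class with $K_{\oM(3,6)}+B$ in codimension one and using normality of $\oM(3,6)$ identifies the two, so $K_{\oM(3,6)}+B$ is an integral Cartier divisor.

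\textbf{Log canonical.} By Proposition \ref{SmallResolutions}(4) the divisor $\wB_1$ has smooth components meeting with normal crossings, and all its coefficients are $1$, so the pair $(\wM_1,\wB_1)$ is log smooth, in particular log canonical. Because $\pi$ is small, $K_{\wM_1}+\wB_1=\pi^*(K_{\oM(3,6)}+B)$ (now legitimate, $K_{\oM(3,6)}+B$ being Cartier) and $\pi_*\wB_1=B$, so $\pi$ exhibits $(\wM_1,\wB_1)$ as a crepant model of $(\oM(3,6),B)$. The standard fact that a crepant birational contraction of a log canonical pair is log canonical then gives that $(\oM(3,6),B)$ is log canonical.

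\textbf{Ample.} By Kleiman's criterion it suffices to show $K_{\oM(3,6)}+B$ is positive on $\overline{NE}(\oM(3,6))\setminus\{0\}$. Since $\pi$ is a small contraction, $\overline{NE}(\oM(3,6))=\pi_*\overline{NE}(\wM_1)$ with $\pi_*$ collapsing exactly the extremal rays $[L_{ij,kl,mn}]$, and $(K_{\wM_1}+\wB_1)\cdot\gamma=(K_{\oM(3,6)}+B)\cdot\pi_*\gamma$. So I would reduce to showing on $\wM_1$ that (i) $K_{\wM_1}+\wB_1$ is nef, (ii) the only curve classes it pairs to $0$ with span the rays $[L_{ij,kl,mn}]$ (the vanishing on these was checked above), and (iii) $(K_{\wM_1}+\wB_1)^4>0$, which gives bigness. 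The Mori cone $\overline{NE}(\wM_1)$ is finitely generated because $\wM_1$ is the explicit iterated blowup of $\oM_{0,5}\times\oM_{0,5}$ along smooth centers from Theorem \ref{BlowupConstruction}, and a finite generating set of curve classes can be read off from that blowup tower (lines in the successive exceptional divisors, together with strict transforms of the extremal curves of $\oM_{0,5}\times\oM_{0,5}$); one then verifies (i)–(iii) using the presentation of $A^*(\wM_1)$ from Theorem \ref{ChowResolutions} and the intersection numbers of Theorems \ref{M36intersectionnumbers}–\ref{psiIntersectionsM36}. This is a finite, if lengthy, computation that I expect to perform with computer assistance, as with the other intersection-theoretic verifications in the paper.

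\textbf{Main obstacle.} The genuinely substantive step is the ampleness: producing a provably complete list of generators of $\overline{NE}(\wM_1)$ (equivalently of $\overline{NE}(\oM(3,6))$) from the blowup construction and checking positivity of $K_{\wM_1}+\wB_1$ against every one of them. Once that is in hand, the Cartier and log canonicity assertions are essentially formal consequences of $\pi$ being a small crepant resolution with (simple) normal crossings boundary.
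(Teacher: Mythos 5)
Your Cartier and log-canonicity steps are sound and essentially track what the paper does (the restriction computation $\tfrac45(-3p)+\tfrac65(2p)=0$ on each $L_{ij,kl,mn}$ is exactly right, and combining it with Theorem \ref{ChowM} and the crepancy of the small resolution is the intended mechanism). The divergence, and the problem, is in the ampleness step. Your plan is to verify Kleiman's criterion directly by producing a finite generating set for $\overline{NE}(\wM_1)$, and you assert that such a set ``can be read off from the blowup tower (lines in the successive exceptional divisors, together with strict transforms of the extremal curves of $\oM_{0,5}\times\oM_{0,5}$).'' That recipe is not valid: blowing up creates new extremal rays that are not of this form. Already for $Bl_p\bP^2$ the extremal rays are the exceptional curve and the \emph{strict transform of a line through $p$} --- the latter is neither a line in the exceptional divisor nor (the strict transform of) an extremal curve of $\bP^2$. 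Determining Mori cones of iterated blowups of this kind is in general genuinely hard (cf.\ the still-open F-conjecture for $\oM_{0,n}$), so the step you flag as ``a finite, if lengthy, computation'' has no provably complete input list, and the gap is precisely at the point you identify as the crux. Moreover, even granting nefness and vanishing only on the $L_{ij,kl,mn}$, Kleiman requires positivity on all nonzero classes in the \emph{closed} cone with nonzero pushforward, which does not follow from checking actual curves unless the cone is known to be rational polyhedral with known generators.

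The paper avoids the Mori cone entirely. It proves (i) $\wB_1$ supports an effective nef divisor (the pullback of an ample divisor from $\bP^2\times\bP^2$, Lemma \ref{D1props}); (ii) $K_{\wM_1}+\wB_1$ is nef, checked by the inductive boundary-restriction criterion of Lemma \ref{nefcriterion} (any curve not in $\wB_1$ meets the effective boundary representative non-negatively, and curves inside the boundary are handled on the boundary divisors $\oM_{0,6}$, $(\bP^1)^3$, and the small resolution of $\oM_{0,5}\times_{\oM_{0,4}}\oM_{0,5}$), with vanishing exactly on the 15 lines; and (iii) bigness via $(K_{\wM_1}+\wB_1)^4>0$. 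Then Lemma \ref{keylemma}, an application of the Basepoint-Free Theorem to the klt pair $(\wM_1,\wB_1-\epsilon D)$, shows $|m(K_{\wM_1}+\wB_1)|$ is semiample and the induced contraction collapses exactly the locus where $K_{\wM_1}+\wB_1$ vanishes, i.e.\ the 15 lines. That contraction is therefore $\pi$ itself, so $\oM(3,6)$ is the log canonical model and $K_{\oM(3,6)}+B$ is ample by definition. If you replace your Kleiman/Mori-cone step with this Basepoint-Free argument (your nefness and bigness checks are already the right inputs), the proof closes.
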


This theorem was previously proven by Luxton
\cite{luxtonLogCanonicalCompactification2008} using tropical methods. We give an
independent proof relying only on explicit birational geometry.

\begin{lemma} \label{keylemma}\
  \begin{enumerate}
  \item Let $(Z,B_Z)$ be a smooth projective variety $Z$ with boundary $B_Z$ a
    normal crossings divisor.
  \item Assume there is an effective nef divisor $D$ on $Z$ with $\Supp D =
    B_Z$.
  \item Suppose $K_Z + B_Z$ is nef and big.
  \end{enumerate}
  If all of the above hold, then for $m \gg 0$, $\lvert m(K_Z + B_Z) \rvert$ is
  basepoint-free, and gives a contraction $\varphi : Z \to Z_{lc}$ to the log
  canonical model of $(Z,B_Z)$. Furthermore, the locus contracted by $\varphi$
  is exactly the locus $L$ where $(K_Z + B_Z)\vert_L = 0$.
\end{lemma}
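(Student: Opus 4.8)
The plan is to deduce the statement from the base point free theorem, using the nef divisor $D$ to produce a klt perturbation of the pair $(Z,B_Z)$. Since $D$ is effective with $\Supp D = B_Z$, write $B_Z = \sum_i D_i$ and $D = \sum_i a_i D_i$ with all $a_i > 0$. Then for every sufficiently small $\varepsilon > 0$ the $\bQ$-divisor $\Delta_\varepsilon := B_Z - \varepsilon D = \sum_i (1 - \varepsilon a_i) D_i$ is a normal crossings divisor all of whose coefficients lie in $(0,1)$; as $Z$ is smooth, $(Z,\Delta_\varepsilon)$ is klt. This is the only place where all three hypotheses on $D$ are used.

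Next I would note that $K_Z + B_Z$ is nef and $\bQ$-Cartier (hypothesis (3), $Z$ smooth), and that
\[
  2(K_Z + B_Z) - (K_Z + \Delta_\varepsilon) = (K_Z + B_Z) + \varepsilon D
\]
is nef and big: $K_Z + B_Z$ is nef and big by hypothesis, $\varepsilon D$ is nef, and the sum of a nef and big divisor with a nef divisor is again nef and big (nef is clear, and the top self-intersection number only increases). Hence the Kawamata--Shokurov base point free theorem applies to the nef $\bQ$-Cartier divisor $L = K_Z + B_Z$ and the klt pair $(Z, \Delta_\varepsilon)$, giving that $\lvert m(K_Z + B_Z)\rvert$ is basepoint-free for all sufficiently large and divisible $m$; in particular the section ring $R = \bigoplus_m H^0(Z, m(K_Z + B_Z))$ is finitely generated.

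Let $\varphi : Z \to Z_{lc} := \operatorname{Proj} R$ be the associated morphism; by definition $Z_{lc}$ is the log canonical model of $(Z, B_Z)$, $\varphi$ has connected fibers, and $m(K_Z + B_Z) = \varphi^* A$ for an ample divisor $A$ on $Z_{lc}$ (for $m$ divisible enough). Since $K_Z + B_Z$ is big, $\varphi$ is generically finite, hence birational. For the last claim, let $L \subseteq Z$ be an irreducible subvariety; then $\varphi|_L : L \to \varphi(L)$ is surjective, so $\varphi$ contracts $L$ to a point if and only if $A|_{\varphi(L)} \equiv 0$, which by ampleness of $A$ happens if and only if $(K_Z + B_Z)|_L = \tfrac{1}{m} (\varphi|_L)^* (A|_{\varphi(L)})$ is numerically trivial, and since $K_Z + B_Z$ is nef this is in turn equivalent to $(K_Z + B_Z)^{\dim L} \cdot L = 0$. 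Thus the locus contracted by $\varphi$ is exactly the union of the subvarieties $L$ on which $(K_Z + B_Z)|_L$ vanishes.

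I do not expect a serious obstacle in this lemma itself: it is a routine perturbation-plus-base-point-free argument, and the only subtlety is ensuring $\Delta_\varepsilon$ has coefficients strictly below $1$ (so the pair is klt, not merely log canonical), which is precisely what a nef divisor supported on all of $B_Z$ provides. The real work lies in verifying the hypotheses for $Z = \wM_1(3,6)$: that $K_{\wM_1} + \wB_1$ is nef and big, and that an effective nef divisor with support the whole boundary exists --- both to be extracted from Proposition \ref{lcClass} together with the intersection theory of $\wM_1(3,6)$ from Section \ref{IntersectionTheorySmall}.
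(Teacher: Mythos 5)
Your proposal is correct and follows essentially the same route as the paper: the same klt perturbation $\Delta_\varepsilon = B_Z - \varepsilon D$ (which is exactly where the effective nef divisor supported on all of $B_Z$ is used), the same application of the Kawamata--Shokurov basepoint-free theorem to $K_Z+B_Z$ relative to the klt pair $(Z,\Delta_\varepsilon)$, and the same identification of the contracted locus as the locus where $K_Z+B_Z$ pulls back an ample class and hence vanishes. Your writeup even quietly fixes a typo in the paper's displayed identity (the paper writes $\epsilon\Delta$ where it means $\epsilon D$), but the argument is the same.
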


\begin{proof}
  Let $\Delta = B_Z - \epsilon D$. By construction this looks like $\sum
  b_iB_i$, with $0 < b_i < 1$, where $B_i$ are the irreducible components of
  $B_Z$. (We choose $\epsilon$ small enough to ensure $0 < b_i < 1$.) Since
  $B_Z$ is a normal crossings divisor, the pair $(Z,\Delta)$ is klt.

  Now
  \begin{align*}
    m(K_Z + B_Z) - (K_Z + \Delta) = (m-1)(K_Z + B_Z) + \epsilon \Delta
  \end{align*}
  is the sum of a big and nef divisor, and a nef divisor, hence is big and nef.
  Then $\lvert m(K_Z + B_Z) \rvert$ is basepoint-free by the Basepoint-Free
  Theorem \cite[Theorem 3.3]{kollarBirationalGeometryAlgebraic1998}, and by
  definition it gives a contraction $f : (Z,B) \to (Z_{lc},B_{lc})$ to the log
  canonical model of $(Z,B_Z)$.

  By definition, the log canonical class $K_{Z_{lc}} + B_{Z_{lc}}$ of the log
  canonical model is ample. We have $K_Z + B_Z = f^*(K_{Z{lc}} + B_{Z_{lc}})$,
  so the locus $L$ where $K_Z + B_Z$ is not ample must be exactly the locus
  contracted by $f$. Since $K_Z + B_Z$ is nef, $L$ is exactly the locus where
  $(K_Z + B_Z)\vert_{L} = 0$.
\end{proof}

\begin{lemma} \label{nefcriterion} Let $(Z,B_Z)$ be a smooth projective variety
  $Z$ with boundary $B_Z$ a normal crossings divisor. Suppose $K_Z + B_Z$ is
  linearly equivalent to an effective divisor with support $B_Z$. Then $K_Z +
  B_Z$ is nef if and only if $(K_Z + B_Z)\vert_D = K_D + B_D$ is nef for all
  irreducible components $D$ of $B_Z$.
\end{lemma}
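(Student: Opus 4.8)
The plan is to prove nefness of $K_Z + B_Z$ by testing it against curves, using the two hypotheses to handle the two possible positions of a curve relative to the boundary. The forward implication I would dispatch first, and it is essentially immediate: the restriction of a nef divisor to any closed subvariety is nef, so if $K_Z + B_Z$ is nef then so is its restriction to each irreducible component $D$ of $B_Z$. Since $B_Z$ is normal crossings, each component $D$ is smooth and $B_Z - D$ restricts to a normal crossings divisor $B_D := (B_Z - D)\vert_D$ on $D$, so adjunction gives $(K_Z + B_Z)\vert_D = K_D + B_D$. This is the only place where I need to invoke the normal crossings hypothesis, and it is the one identification worth stating carefully.

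For the converse I would argue directly. Assume $(K_Z + B_Z)\vert_D = K_D + B_D$ is nef for every irreducible component $D$ of $B_Z$, and fix an effective divisor $E$ with $E \sim K_Z + B_Z$ and $\Supp E = B_Z$, which exists by hypothesis. To prove $K_Z + B_Z$ is nef it suffices to show $(K_Z + B_Z)\cdot C \geq 0$ for every irreducible curve $C \subset Z$. If $C \not\subset B_Z$, then $C$ is not contained in $\Supp E$, so $(K_Z + B_Z)\cdot C = E \cdot C \geq 0$ because $C$ meets the effective divisor $E$ properly. If instead $C \subset B_Z$, then $C$ is contained in some component $D$, and $(K_Z + B_Z)\cdot C = \bigl((K_Z + B_Z)\vert_D\bigr)\cdot C = (K_D + B_D)\cdot C \geq 0$ by the assumption on $D$. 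In either case the intersection number is nonnegative, so $K_Z + B_Z$ is nef.

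I do not anticipate a genuine obstacle: the argument is the standard ``nefness ascends from the boundary'' dichotomy, and the only input beyond elementary intersection theory is the linear equivalence of $K_Z + B_Z$ to an effective divisor supported exactly on $B_Z$ — precisely the feature that makes curves off the boundary automatic. The one point requiring minor care is the adjunction identification $(K_Z + B_Z)\vert_D = K_D + B_D$; in all applications in this paper (in particular $Z = \wM_1(3,6)$) the boundary is simple normal crossings with smooth components, so this is unproblematic, but I would state it as a preliminary remark so that the reader sees where the hypotheses enter.
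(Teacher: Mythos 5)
Your proposal is correct and follows essentially the same argument as the paper: the same dichotomy on whether the curve $C$ lies in $B_Z$, using the effective representative supported on $B_Z$ for curves off the boundary and the nefness of $(K_Z+B_Z)\vert_D = K_D + B_D$ for curves inside a component $D$. Your explicit remark on the adjunction identification is a minor elaboration the paper leaves implicit, but the substance is identical.
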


\begin{proof}
  ($\implies$) Immediate.
  
  ($\impliedby$) Suppose $K_D + B_D$ is nef for all irreducible components $D$
  of $B_Z$.

  Let $C$ be any irreducible curve in $Z$.
  \begin{enumerate}
  \item If $C \subset B_Z$, then $C \subset D$ for some irreducible boundary
    divisor $D$. Since $K_D + B_D$ is nef, it follows that
    \[
      (K_Z + B_Z) \cdot L = (K_Z + B_Z)\vert_D \cdot L = (K_D + B_D)L \geq 0.
    \]
  \item If $C \not\subset B_Z$, then $C \not\subset D$ for any boundary divisor.
    Because $K_Z + B_Z$ is an effective sum of boundary divisors, it follows
    that $(K_Z + B_Z)C \geq 0$.
  \end{enumerate}
\end{proof}

\subsection{Proof of Theorem \ref{logcanonicalM}}
We will prove Theorem \ref{logcanonicalM} by applying Lemma \ref{keylemma} to
$\wM_1 = \wM_1(3,6)$.

\subsubsection{$\wM_1$ is a smooth projective variety and $\wB_1$ is a normal
  crossings divisor} This is immediate from Proposition \ref{SmallResolutions}.

\subsubsection{There is an effective nef divisor $D$ on $\wM_1$ with $\Supp D = \wB_1$}
Recall the morphism $f:\wM_1 \to X_0 = \bP^2 \times \bP^2$ constructed in
Theorem \ref{BlowupConstruction}. Let $D_0 \subset \bP^2 \times \bP^2$ be the
sum of the boundary divisors $L_{ij} \times \bP^2$, $\bP^2 \times L_{ij}$,
$Q_{ijk}$ in $\bP^2 \times \bP^2$.

\begin{lemma} \label{D1props}
  \begin{enumerate}
  \item $D_0$ contains all the blown up centers of $f:\wM_1 \to X_0$.
  \item $D_0$ is ample.
  \end{enumerate}
\end{lemma}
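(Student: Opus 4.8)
The plan is to prove both parts by direct inspection, using the explicit list of centers from Theorem~\ref{BlowupConstruction} together with their images in $X_0 = \bP^2\times\bP^2$ recorded in Section~\ref{blowupNotation}.

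For part (1): up to the $S_6$-action, the subvarieties blown up in the sequence $\wM_1 \to X_0$ are, as (dominant transforms of) subvarieties of $X_0$, the surfaces $P_{jkl}\times\bP^2$, $\bP^2\times P_{jkl}$, $P_{ijk}\times P_{ijk}$, $L_{ij}\times L_{ij}$, $\Delta(\bP^2)$, and the curves $L_{ij}\times P_{ijk}$, $P_{ijk}\times L_{ij}$, $\Delta(L_{ij})$. I would check that each sits inside a component of $D_0$: since $P_{jkl}=\{z_j=z_k=z_l\}\subset L_{jk}$, every one of these except $\Delta(\bP^2)$ is contained in some $L_{ab}\times\bP^2$ or $\bP^2\times L_{ab}$; and $\Delta(\bP^2)$ — hence also $\Delta(L_{ij})\subset\Delta(\bP^2)$ — is contained in every $Q_{ijk}$, because along the diagonal the last two rows of the matrix defining $Q_{ijk}$ coincide, so its determinant vanishes. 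Finally, since $D_0$ contains the center of the first blowup, its transform in $X_1$ is the total transform, and an inclusion of a subscheme into $D_0$ in $X_0$ is preserved; iterating, the center $Z_k\subset X_k$ of each blowup is contained in the transform of $D_0$ there. (This is precisely what forces $f^*D_0$ to contain every exceptional divisor, which is how the lemma is used in the next subsection.)

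For part (2): I would compute $[D_0]\in\Pic(\bP^2\times\bP^2)=\bZ H_1\oplus\bZ H_2$. After fixing the $5$th and $6$th columns the remaining indices lie in $\{1,2,3,4\}$, so $D_0$ is the sum of the six divisors $L_{ij}\times\bP^2$ of class $H_1$, the six divisors $\bP^2\times L_{ij}$ of class $H_2$, and the four divisors $Q_{ijk}$, each of type $(1,1)$ by Section~\ref{blowupNotation}. Thus $[D_0]=10H_1+10H_2=10(H_1+H_2)$, and since $\cO_{\bP^2\times\bP^2}(H_1+H_2)$ is the pullback of $\cO_{\bP^8}(1)$ under the Segre embedding it is very ample; hence $D_0$ is ample. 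I do not expect a genuine obstacle: (1) is a finite check of containments among explicit products of points, lines, diagonals and $(1,1)$-hypersurfaces in $\bP^2\times\bP^2$, and (2) is a one-line Picard computation; the only thing requiring a little care is the bookkeeping of which subvarieties appear as centers and the observation that ``being inside $D_0$'' is stable under passing to dominant transforms, since $D_0$ contains every center.
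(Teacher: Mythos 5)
Your proposal is correct and follows essentially the same route as the paper: the paper declares part (1) immediate (your containment check, including $\Delta(\bP^2)\subset Q_{ijk}$ because the last two rows of the defining matrix coincide on the diagonal, is exactly the verification being elided), and for part (2) it likewise computes $D_0\sim 6H_1+6H_2+4(H_1+H_2)=10H_1+10H_2$ and concludes ampleness. Your additional remark that containment in $D_0$ persists to the total transforms at each stage of the blowup sequence is the right way to make the lemma usable in the following subsection.
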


\begin{proof}
  The first part is immediate.

  Let $H_1 = c_1(\cO(1,0)),H_2 = c_1(\cO(0,1))$ be the two hyperplane classes on
  $\bP^2 \times \bP^2$. Then $D_0 \sim 10H_1 + 10H_2$, hence is ample.
\end{proof}

Let $D = f^*D_0$. It follows from the lemma that $D$ is an effective nef divisor
on $\wM_1$ with $\Supp D = \wB_1$.

\subsubsection{$K_{\wM_1} + \wB_1$ is nef and big} \label{NefBigSmallRes}

\begin{lemma} \label{nefcanonical} $K_{\wM_1} + \wB_1$ is nef, and zero exactly
  on the exceptional lines $L_{ij,kl,mn}$.
\end{lemma}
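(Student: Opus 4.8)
The plan is to apply Lemma \ref{nefcriterion} to the pair $(\wM_1,\wB_1)$, reducing the nefness of $K_{\wM_1}+\wB_1$ to the nefness of its restriction $K_D + B_D$ to each irreducible boundary divisor $D$, and then to pin down the locus where the class is zero by a direct intersection computation against curves. By Proposition \ref{lcClass}, $K_{\wM_1}+\wB_1 = \tfrac{7}{10}\sum D_{ijk,lmn} + \tfrac{4}{5}\sum D_{ij,klmn} + \tfrac{6}{5}\sum D_{ij,kl,mn}$ is linearly equivalent to an effective divisor supported exactly on $\wB_1$, so the hypothesis of Lemma \ref{nefcriterion} is satisfied. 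Hence it suffices to check that $(K_{\wM_1}+\wB_1)\vert_D$ is nef for each of the three types of boundary divisor $D_{ijk,lmn}\cong\oM_{0,6}$, $D_{ij,klmn}$ (a small resolution of $\oM(3,5)\times_{\oM_{0,4}}\oM_{0,5}$), and $D_{ij,kl,mn}\cong(\bP^1)^3$, as described in Proposition \ref{SmallResolutions}.

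First I would restrict the explicit expression of Proposition \ref{lcClass}(2) to each boundary divisor using the intersection rules recorded in Remark \ref{BoundaryIntersections} and Proposition \ref{RestrictionMaps}. On $D_{ij,kl,mn}\cong(\bP^1)^3$ the restriction is a completely explicit divisor class on $(\bP^1)^3$ (a nonnegative combination of the three pullback point-classes, possibly plus exceptional contributions), and one checks by inspection that it is effective, hence nef. On $D_{ijk,lmn}\cong\oM_{0,6}$, the restriction should come out to $K_{\oM_{0,6}}+B_{\oM_{0,6}}$, which is nef (indeed ample) by \cite[Lemma 3.6]{keelContractibleExtremalRays1996}; more precisely, by adjunction $(K_{\wM_1}+\wB_1)\vert_{D} = K_D + (\wB_1 - D)\vert_D = K_D + B_D$ where $B_D$ is the induced boundary, and for $D_{ijk,lmn}$ disjoint from the singular locus this is literally $\oM_{0,6}$ with its full boundary. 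On $D_{ij,klmn}$, a small resolution of $\oM(3,5)\times_{\oM_{0,4}}\oM_{0,5}$, I would use that this divisor itself admits a blowup description analogous to the one for $\wM_1$, and reduce the nefness of $K_D+B_D$ to nefness on its boundary strata, all of which are products of copies of $\oM_{0,4}$, $\oM_{0,5}$ and points where the analogous log canonical classes are nef; alternatively one can realize $K_D+B_D$ explicitly and verify effectivity directly.

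Once nefness is established, the zero locus of the nef class $K_{\wM_1}+\wB_1$ is cut out by the curves $C$ with $(K_{\wM_1}+\wB_1)\cdot C = 0$. I would test the exceptional lines $L_{ij,kl,mn} = D_{ij,klmn}\cap D_{kl,ijmn}\cap D_{mn,ijkl}$: by Proposition \ref{RestrictionMaps}, $D_{ab,cdef}\vert_L = -p$ for $ab\in\{ij,kl,mn\}$ and $0$ otherwise, while $D_{ab,cd,ef}\vert_L = p$ only for $\{ab,cd,ef\}=\{ij,kl,mn\}$ and $0$ otherwise, and $D_{abc,def}\vert_L = 0$; plugging into Proposition \ref{lcClass}(2) gives $(K_{\wM_1}+\wB_1)\cdot L = 3\cdot\tfrac{4}{5}\cdot(-1) + 2\cdot\tfrac{6}{5}\cdot(1) = 0$, so each $L_{ij,kl,mn}$ is contracted. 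Conversely, to show these are the only contracted curves, I would invoke Lemma \ref{keylemma}: $K_{\wM_1}+\wB_1 = f^*(K_{\wM_1^{lc}}+B^{lc})$ pulls back an ample class from the log canonical model, so the contracted locus is a union of fibers of the induced contraction, and a stratum-by-stratum check (using the explicit restriction of $K_{\wM_1}+\wB_1$ to each boundary divisor, which is strictly positive on every curve not contained in an $L_{ij,kl,mn}$) shows no higher-dimensional stratum is contracted. I expect the main obstacle to be the bookkeeping in the $D_{ij,klmn}$ case: unlike the other two boundary divisor types it is not a product of standard spaces but a small resolution of a fiber product, so verifying nefness of its log canonical class requires either iterating the argument one dimension down or an explicit toric/blowup computation of $K_D+B_D$.
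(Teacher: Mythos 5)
Your strategy is the same as the paper's --- reduce nefness to the boundary divisors via Lemma \ref{nefcriterion}, treat the three types of divisors case by case, and iterate one dimension down for $D_{ij,klmn}$ --- and your direct check that $(K_{\wM_1}+\wB_1)\cdot L_{ij,kl,mn}=3\cdot\tfrac45\cdot(-1)+2\cdot\tfrac65=0$ via Proposition \ref{RestrictionMaps} is correct and a nice supplement. But there are two genuine gaps. First, to show the class vanishes \emph{only} on the exceptional lines you must rule out complete curves $C$ with $C\cap\wB_1=\emptyset$: since $K_{\wM_1}+\wB_1$ is an effective sum of boundary divisors, any such curve would also have zero intersection. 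Your stratum-by-stratum check only sees curves contained in the boundary, and your appeal to Lemma \ref{keylemma} is circular here --- that lemma's conclusion merely identifies the contracted locus with the zero locus, so it gives no independent handle on either. The paper closes this by pushing curves forward along $f:\wM_1\to\bP^2\times\bP^2$ and using that the image $D_0$ of the boundary is ample ($\sim 10H_1+10H_2$), so every complete curve meets $\wB_1$ and hence meets $K_{\wM_1}+\wB_1$ positively unless it lies in the boundary.

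Second, the $D_{ij,klmn}$ case is where the content lies, and your description of it is inaccurate: its boundary divisors are not all products of $\oM_{0,4}$'s and $\oM_{0,5}$'s. The intersections $D_{ij,klmn}\cap D_{ab,cdef}$ are isomorphic to $Bl_1(\bP^1\times\bP^1)$, on which $K+B=h_1+h_2$ is nef but \emph{not} ample, vanishing exactly on the exceptional curve --- which is one of the lines $L_{ij,kl,mn}$. This stratum is precisely what is responsible for the failure of ampleness, so it cannot be absorbed into a claim that all sub-strata have nef (let alone ample) log canonical class by inspection. Your fallback of ``verifying effectivity directly'' is also insufficient: effectivity implies nefness on $(\bP^1)^3$ because there the effective and nef cones of divisors coincide, but not on $Bl_1(\bP^1\times\bP^1)$ or on the small resolution of $\oM(3,5)\times_{\oM_{0,4}}\oM_{0,5}$ itself. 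The paper handles this by showing $K_D+B_D$ is an effective sum of boundary divisors of $D$ (via Knudsen's realization of $\oM_{0,6}$ as a blowup of $\oM_{0,5}\times_{\oM_{0,4}}\oM_{0,5}$ along a locus in the boundary), applying Lemma \ref{nefcriterion} once more, and computing $K_{D'}+B_{D'}$ explicitly on each of the three types of boundary divisor $D'$ of $D$; this is also where the paper locates the vanishing on the exceptional lines.
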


\begin{proof}
  By Proposition \ref{lcClass}, $K_{\wM_1} + \wB_1$ is linearly equivalent to an
  effective sum of boundary divisors.

  As in the proof of Lemma \ref{nefcriterion}, if $C \not\subset \wB_1$, then
  $(K_{\wM_1} + \wB_1)C \geq 0$. Furthermore, $(K_{\wM_1} + \wB_1)C = 0 \iff C
  \cap \wB_1 = \emptyset$. But if $C \cap \wB_1 = \emptyset$, then $f(C) \cap
  f(\wB_1) = \emptyset$, where $f : \wM_1 \to X_0$ as above. Since $C
  \not\subset \wB_1$, $f(C)$ is still a curve, but $f(\wB_1) = D_0$ is ample, so
  we must have $D_0f_*(C) > 0$. We conclude that any curve in $\wM_1$ meets the
  boundary, and $(K_{\wM_1} + \wB_1)C > 0$ for $C \not\subset \wB_1$.

  By Lemma \ref{nefcriterion}, to show $K_{\wM_1} + \wB_1$ is nef it is enough
  to show that $(K_{\wM_1} + \wB_1)\vert_D = K_D + B_D$ is nef for all
  irreducible boundary divisors $D$ of $\wM_1$.
  \begin{enumerate}
  \item If $D = D_{ijk,lmn}$, then $D \cong \oM_{0,6}$ with its natural
    boundary, so $K_{D} + B_D$ is ample by \cite[Lemma
    3.6]{keelContractibleExtremalRays1996}.
  \item If $D = D_{ij,kl,mn}$, then $D \cong (\bP^1)^3$ with boundary $p \times
    \bP^1 \times \bP^1$, $\bP^1 \times p \times \bP^1$, $\bP^1 \times \bP^1
    \times p$, for $p=0,1,\infty$. Thus $K_D + B_D = h_1 + h_2 + h_3$, where
    $h_i$ is the pullback along the $i$th projection of a hyperplane class in
    $\bP^1$. In particular, $K_D + B_D$ is ample.
  \item If $D = D_{ij,klmn}$, then $D$ is a small resolution of $\oM_{0,5}
    \times_{\oM_{0,4}} \oM_{0,5}$. By Knudsen's construction, $\oM_{0,6}$ is a
    blowup of $\oM_{0,5} \times_{\oM_{0,4}} \oM_{0,5}$ along a locus contained
    in the boundary. Furthermore, the boundary divisors of $\oM_{0,6}$ are the
    strict transforms of the boundary divisors of $D$, together with the
    exceptional divisors. Since $K_{\oM_{0,6}} + B_{\oM_{0,6}}$ is linearly
    equivalent to an effective sum of boundary divisors on $\oM_{0,6}$, it
    follows that $K_{D} + B_D$ is linearly equivalent to an effective sum of
    boundary divisors on $D$. Thus by Lemma \ref{nefcriterion}, it is enough to
    show that $(K_D + B_D)\vert_{D'} = K_{D'} + B_{D'}$ for any boundary divisor
    in $D$. There are three cases.
    \begin{enumerate}
    \item If $D' = D \cap D_{abc,def}$, then $D'$ is also a boundary divisor in
      $D_{abc,def}$, hence $K_{D'} + B_{D'}$ is ample.
    \item If $D' = D \cap D_{ab,cd,ef}$, then $D'$ is also a boundary divisor in
      $D_{ab,cd,ef}$, so again $K_{D'} + B_{D'}$ is ample.
    \item If $D' = D \cap D_{ab,cdef}$, then $D' \cong Bl_1(\bP^1 \times
      \bP^1)$. The boundary of $D'$ consists of the strict transforms of $p
      \times \bP^1, \bP^1 \times p$, for $p=0,1,\infty$, as well as the
      exceptional line on $Bl_1(\bP^1 \times \bP^1)$. We have
      \[
        K_{D'} + B_{D'} = h_1 + h_2,
      \]
      which is nef, and zero exactly on the exceptional line. Note the
      exceptional line is one of the exceptional lines $L_{ij,kl,mn}$ on
      $\wM_1(3,6)$.
    \end{enumerate}
    Finally, by a similar argument to the beginning of this proof, $(K_D + B_D)C
    > 0$ for any irreducible curve $C \subset D$ which is not contained in the
    boundary of $D$.
  \end{enumerate}
  From the above we conclude that $K_{\wM_1} + \wB_1$ is nef, and vanishes
  exactly on the exceptional lines $L_{ij,kl,mn}$, as desired.
\end{proof}

\begin{lemma}
  $K_{\wM_1} + \wB_1$ is big.
\end{lemma}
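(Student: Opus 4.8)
The plan is to exploit the birational morphism $f : \wM_1 \to X_0 = \bP^2 \times \bP^2$ from Theorem \ref{BlowupConstruction}, together with the fact (already established in Lemma \ref{nefcanonical}) that $K_{\wM_1} + \wB_1$ is nef. Since $\wM_1$ is a smooth projective fourfold, a nef divisor is big if and only if its top self-intersection is positive, so it suffices to check $(K_{\wM_1} + \wB_1)^4 > 0$.

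The cleanest route is to write $K_{\wM_1} + \wB_1 = f^*D_0 + R$ where $D_0 \subset X_0$ is the ample divisor of Lemma \ref{D1props} (so $D = f^*D_0$ is the effective nef divisor with support $\wB_1$), and $R = (K_{\wM_1} + \wB_1) - f^*D_0$ is the ``difference''. By Proposition \ref{lcClass} the class $K_{\wM_1} + \wB_1$ is an explicit effective sum of boundary divisors, and pulling back $D_0 \sim 10H_1 + 10H_2$ and expressing $f^*H_i$ in terms of boundary divisors (using the pushforward/pullback formulas from Section \ref{blowupNotation} and the linear relations on $A^*(\wM_1(3,6))$ from Theorem \ref{ChowResolutions}) shows that $R$ is again linearly equivalent to an effective divisor supported on $\wB_1$, in fact on the exceptional locus of $f$. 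Then one computes
\[
  (K_{\wM_1} + \wB_1)^4 = (f^*D_0 + R)^4 = (f^*D_0)^4 + \sum_{k=1}^{4}\binom{4}{k}(f^*D_0)^{4-k}\cdot R^k .
\]
The leading term is $(f^*D_0)^4 = D_0^4 = (10H_1 + 10H_2)^4 = \binom{4}{2}\cdot 10^4 = 6\cdot 10^4 > 0$ by the projection formula, since $f$ is birational. The remaining terms are intersection numbers on $\wM_1$ of boundary classes, all of which can be computed from the presentation of $A^*(\wM_1(3,6))$ in Theorem \ref{ChowResolutions} (ultimately normalized by Theorem \ref{M36intersectionnumbers}); since $K_{\wM_1}+\wB_1$ and $f^*D_0$ are both nef, each cross term $(f^*D_0)^{4-k}\cdot R^k = (f^*D_0)^{4-k}\cdot\bigl((K_{\wM_1}+\wB_1) - f^*D_0\bigr)^k$ can be controlled, and in any case a direct computation gives a positive total.

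Alternatively — and this is the form I would actually write up, since it avoids a four-fold intersection computation — one can argue: $K_{\wM_1} + \wB_1$ is nef (Lemma \ref{nefcanonical}) and $f^*D_0$ is a nef and big divisor (pullback of an ample divisor under a birational morphism). Their difference $R$ is effective by the computation above. Hence $K_{\wM_1} + \wB_1 = f^*D_0 + R$ is the sum of a big and nef divisor and an effective divisor, and the sum of a big divisor with an effective divisor is big. Therefore $K_{\wM_1} + \wB_1$ is big. The main obstacle is purely bookkeeping: verifying that $R = (K_{\wM_1}+\wB_1) - f^*D_0$ is effective, which requires writing $f^*(10H_1 + 10H_2)$ explicitly as a sum of boundary divisors via the exceptional-divisor contributions of the five blowups $f_5,\dots,f_1$ and then checking the coefficient of every boundary divisor in $R$ is $\geq 0$ after applying the linear relations; this is routine but must be done carefully, and is best relegated to the computer verification already invoked elsewhere in the paper.
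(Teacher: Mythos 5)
Your reduction to $(K_{\wM_1}+\wB_1)^4>0$ (nef and big $\iff$ nef with positive top self-intersection on a projective fourfold) is exactly the paper's argument, and the paper then simply evaluates $(K_{\wM_1}+\wB_1)^4$ directly in $A^*(\wM_1(3,6))$ using Theorems \ref{ChowResolutions} and \ref{M36intersectionnumbers}. The problem is with the route you say you would actually write up: the class $R=(K_{\wM_1}+\wB_1)-f^*D_0$ is \emph{not} effective, and not even pseudoeffective. To see this, take a general line $\ell=\ell_0\times\{q\}$ in $X_0=\bP^2\times\bP^2$; it avoids all centers of the blowup sequence of Theorem \ref{BlowupConstruction}, so it lifts to a curve $\tilde\ell\subset\wM_1$ disjoint from every $f$-exceptional divisor, and these lifts cover a dense open subset of $\wM_1$. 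Then $f^*D_0\cdot\tilde\ell=D_0\cdot\ell=(10H_1+10H_2)\cdot\ell=10$, while $(K_{\wM_1}+\wB_1)\cdot\tilde\ell=(K_{X_0}+D_0)\cdot\ell=(7H_1+7H_2)\cdot\ell=7$ (equivalently: $\tilde\ell$ meets exactly the ten non-contracted boundary divisors $D_{ij5,kl6}$ and $D_{i56,jkl}$, each once, each with coefficient $\tfrac{7}{10}$ in Proposition \ref{lcClass}). Hence $R\cdot\tilde\ell=-3<0$ on a covering family, so $R$ cannot be linearly equivalent to an effective divisor; concretely, each non-contracted boundary divisor appears in $f^*D_0$ with coefficient $1$ but in $K_{\wM_1}+\wB_1$ with coefficient only $\tfrac{7}{10}$. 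So the "big plus effective" conclusion fails.

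For the same reason the binomial-expansion version does not close: the cross terms $(f^*D_0)^{4-k}\cdot R^k$ cannot be "controlled" by the nefness of $K_{\wM_1}+\wB_1$ and $f^*D_0$, since $R$ itself is neither nef nor pseudoeffective, and indeed there is no inequality $(K_{\wM_1}+\wB_1)^4\geq(f^*D_0)^4=6\cdot 10^4$ available. What remains of your argument after removing the decomposition is precisely the paper's proof: compute $(K_{\wM_1}+\wB_1)^4$ outright from the presentation of $A^*(\wM_1(3,6))$ (Theorem \ref{ChowResolutions}), normalized by $\int\psi_{56}^2\psi_{65}^2=1$ (Theorem \ref{M36intersectionnumbers}), and observe that the result is positive. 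If you want a softer argument avoiding the full degree-four computation, you would need a genuinely different auxiliary divisor --- one whose difference from $K_{\wM_1}+\wB_1$ really is effective --- and $f^*D_0$ is not it.
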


\begin{proof}
  Since $K_{\wM_1} + \wB_1$ is nef, it suffices to show that $(K_{\wM_1} +
  \wB_1)^4 > 0$. This is an immediate calculation in the Chow ring of $\wM_1$,
  using Theorems \ref{ChowResolutions} and \ref{M36intersectionnumbers}.
  % \cite[Theorem 2.2.16]{Lazarsfeld}.
\end{proof}

\subsubsection{Completion of proof}
By the previous subsections, we can apply Lemma \ref{keylemma} to
$(\wM_1,\wB_1)$. We obtain a contraction $\wM_1 \to \oM_{lc}$ to the log
canonical model, given by the basepoint-free linear system $\lvert m(K_{\wM_1} +
\wB_1) \rvert$ for $m \gg 0$. This contraction is an isomorphism everywhere
except the 15 exceptional lines $L_{ij,kl,mn}$ on $\wM_1$, which it contracts to
points. Since the map $\wM_1 \to \oM$ has the same description (Proposition
\ref{SmallResolutions}), we conclude that the normalization of $\oM$ is the log
canonical model $\oM_{lc}$. Since $\oM$ is already normal (by our standing
assumption), $\oM = \oM_{lc}$. Since $K_{\wM_1} + \wB_1 = \pi^*(K_{\oM} + B)$ is
nef and zero only on the exceptional lines, $K_{\oM} + B$ is ample.

\bibliography{IntersectionTheoryofM36} \bibliographystyle{amsalpha}

\providecommand{\bysame}{\leavevmode\hbox to3em{\hrulefill}\thinspace}
\providecommand{\MR}{\relax\ifhmode\unskip\space\fi MR }
% \MRhref is called by the amsart/book/proc definition of \MR.
\providecommand{\MRhref}[2]{%
  \href{http://www.ams.org/mathscinet-getitem?mr=#1}{#2}
}
\providecommand{\href}[2]{#2}
\begin{thebibliography}{{Shu}92}

\bibitem[Ale08]{alexeevWeightedGrassmanniansStable2008}
Valery Alexeev, \emph{Weighted {{Grassmannians}} and stable hyperplane
  arrangements}, arXiv:0806.0881 [math] (2008), 19 (en).

\bibitem[Ale15]{alexeevModuliWeightedHyperplane2015}
\bysame, \emph{Moduli of {{Weighted Hyperplane Arrangements}}}, Advanced
  {{Courses}} in {{Mathematics}} - {{CRM Barcelona}}, {Springer Basel},
  {Basel}, 2015 (en).

\bibitem[Alu10]{aluffiChernClassesBlowups2010a}
Paolo Aluffi, \emph{Chern classes of blow-ups}, Mathematical Proceedings of the
  Cambridge Philosophical Society \textbf{148} (2010), no.~2, 227--242 (en).

\bibitem[Cor20]{coreyInitialDegenerationsGrassmannians2020}
Daniel Corey, \emph{Initial degenerations of {{Grassmannians}}},
  arXiv:1708.03060 [math] (2020), Comment: 43 pages, 3 TikZ figures.

\bibitem[Fab99]{faberAlgorithmsComputingIntersection1999}
Carel Faber, \emph{Algorithms for computing intersection numbers on moduli
  spaces of curves, with an application to the class of the locus of
  {{Jacobians}}}, New {{Trends}} in {{Algebraic Geometry}} (K.~Hulek, M.~Reid,
  C.~Peters, and F.~Catanese, eds.), {Cambridge University Press}, first ed.,
  May 1999, pp.~93--110.

\bibitem[FM94]{fultonCompactificationConfigurationSpaces1994}
William Fulton and Robert MacPherson, \emph{A {{Compactification}} of
  {{Configuration Spaces}}}, The Annals of Mathematics \textbf{139} (1994),
  no.~1, 183 (en).

\bibitem[Ful98]{fultonIntersectionTheory1998}
William Fulton, \emph{Intersection {{Theory}}}, {Springer New York}, {New York,
  NY}, 1998 (en).

\bibitem[Get98]{getzlerTopologicalRecursionRelations1998a}
E.~Getzler, \emph{Topological recursion relations in genus \$2\$}, Integrable
  Systems and Algebraic Geometry ({{Kobe}}/{{Kyoto}}, 1997), {World Sci. Publ.,
  River Edge, NJ}, 1998, pp.~73--106. \MR{1672112}

\bibitem[Gia16]{giansiracusaDualComplexOverlineM2016}
Noah Giansiracusa, \emph{The dual complex of \${{\o verlineM}}\_0,n\$ via
  phylogenetics}, Archiv der Mathematik \textbf{106} (2016), no.~6, 525--529.
  \MR{3504186}

\bibitem[GR17]{gallardoWonderfulCompactificationsModuli2017a}
Patricio Gallardo and Evangelos Routis, \emph{Wonderful compactifications of
  the moduli space of points in affine and projective space}, European Journal
  of Mathematics \textbf{3} (2017), no.~3, 520--564. \MR{3687430}

\bibitem[Har77]{hartshorneAlgebraicGeometry1977}
Robin Hartshorne, \emph{Algebraic geometry}, {Springer-Verlag, New
  York-Heidelberg}, 1977, Graduate Texts in Mathematics, No. 52. \MR{0463157}

\bibitem[Has03]{hassettModuliSpacesWeighted2003}
Brendan Hassett, \emph{Moduli spaces of weighted pointed stable curves},
  Advances in Mathematics \textbf{173} (2003), no.~2, 316--352 (en).

\bibitem[HKT06]{hackingCompactificationModuliSpace2006}
Paul Hacking, Sean Keel, and Jenia Tevelev, \emph{Compactification of the
  moduli space of hyperplane arrangements}, Journal of Algebraic Geometry
  \textbf{15} (2006), no.~4, 657--680. \MR{2237265}

\bibitem[Kap93]{kapranovChowQuotientsGrassmannians1993}
M.~Kapranov, \emph{Chow quotients of {{Grassmannians}}. {{I}}}, {{ADVSOV}}
  (Sergei Gelfand and Simon Gindikin, eds.), vol. 16.2, {American Mathematical
  Society}, {Providence, Rhode Island}, August 1993, pp.~29--110 (en).

\bibitem[Kee92]{keelIntersectionTheoryModuli1992}
Sean Keel, \emph{Intersection {{Theory}} of {{Moduli Space}} of {{Stable
  N}}-{{Pointed Curves}} of {{Genus Zero}}}, Transactions of the American
  Mathematical Society \textbf{330} (1992), no.~2, 545 (en).

\bibitem[KM96]{keelContractibleExtremalRays1996}
Sean Keel and James McKernan, \emph{Contractible {{Extremal Rays}} on
  \textbackslash overline\{\vphantom\}{{M}}\vphantom\{\}\_\{0,N\}},
  arXiv:alg-geom/9607009 (1996) (en), Comment: 16 pages,AMSTex.

\bibitem[KM98]{kollarBirationalGeometryAlgebraic1998}
J{\'a}nos Koll{\'a}r and Shigefumi Mori, \emph{Birational geometry of algebraic
  varieties}, Cambridge Tracts in Mathematics, no. 134, {Cambridge University
  Press}, {Cambridge ; New York}, 1998 (en).

\bibitem[Knu83]{knudsenProjectivityModuliSpace1983}
Finn~F. Knudsen, \emph{The projectivity of the moduli space of stable curves,
  {{II}}: {{The}} stacks \${{M}}\_\{g,N\}\$}, MATHEMATICA SCANDINAVICA
  \textbf{52} (1983), 161--199 (en).

\bibitem[Kon92]{kontsevichIntersectionTheoryModuli1992}
Maxim Kontsevich, \emph{Intersection theory on the moduli space of curves and
  the matrix airy function}, Communications in Mathematical Physics
  \textbf{147} (1992), no.~1, 1--23 (en).

\bibitem[KT06]{keelGeometryChowQuotients2006}
Sean Keel and Jenia Tevelev, \emph{Geometry of {{Chow}} quotients of
  {{Grassmannians}}}, Duke Mathematical Journal \textbf{134} (2006), no.~2,
  259--311 (en).

\bibitem[Li09]{liWonderfulCompactificationArrangement2009}
Li~Li, \emph{Wonderful compactification of an arrangement of subvarieties},
  Michigan Math. J. \textbf{58} (2009).

\bibitem[Lux08]{luxtonLogCanonicalCompactification2008}
Mark~Andrew Luxton, \emph{The log canonical compactification of the moduli
  space of six lines in {{P}}\^2}, Ph.{{D}}., The University of Texas at
  Austin, {United States -- Texas}, 2008.

\bibitem[Mum]{mumfordEnumerativeGeometryModuli}
David Mumford, \emph{Towards an {{Enumerative Geometry}} of the {{Moduli
  Space}} of {{Curves}}}, 58 (en).

\bibitem[Pet17]{petersenChowRingFultonMacPherson2017}
Dan Petersen, \emph{The {{Chow}} ring of a {{Fulton}}-{{MacPherson}}
  compactification}, Michigan Mathematical Journal \textbf{66} (2017), no.~1,
  195--202. \MR{3619742}

\bibitem[{Shu}92]{shun-ichiFractionalIntersectionBivariant1992}
Kimura {Shun-ichi}, \emph{Fractional intersection and bivariant theory},
  Communications in Algebra \textbf{20} (1992), no.~1, 285--302 (en).

\bibitem[SS04]{speyerTropicalGrassmannian2004}
David Speyer and Bernd Sturmfels, \emph{The tropical {{Grassmannian}}},
  Advances in Geometry \textbf{4} (2004), no.~3, 389--411. \MR{2071813}

\bibitem[Wit91]{wittenTwoDimensionalGravityIntersection1991}
Edward Witten, \emph{Two-dimensional gravity and intersection theory on moduli
  space}, Surveys in Differential Geometry ({{Cambridge}}, {{MA}}, 1990),
  {Lehigh Univ., Bethlehem, PA}, 1991, pp.~243--310. \MR{1144529}

\end{thebibliography}
\end{document}